\setlist[enumerate, 1]{nosep, label=(\arabic*)}
\setlist[enumerate, 2]{nosep, label=(\roman*)}
\setlist[itemize, 1]{nosep, label={\bfseries $\bullet$}}
\setlist[itemize, 2]{nosep, label=$\circ$} 
      \theoremstyle{plain}
      \newtheorem{theorem}{Theorem}
      \newtheorem*{theorem*}{Theorem}
      \newtheorem{lemma}[theorem]{Lemma}
      \newtheorem*{lemma*}{Lemma}
      \newtheorem{cor}[theorem]{Corollary}
      \newtheorem*{cor*}{Corollary}
      \newtheorem{prop}[theorem]{Proposition}
      \newtheorem*{prop*}{Proposition}
      \theoremstyle{definition}
      \newtheorem{definition}[theorem]{Definition}
      \newtheorem*{definition*}{Definition}
      \newtheorem{notation}[theorem]{Notation}
 \newtheorem{exercise}{Exercise}    
\newtheorem*{exercise*}{Exercise}
      \theoremstyle{remark}
      \newtheorem{remark}[theorem]{Remark}
      \newtheorem*{remark*}{Remark}     
      \newtheorem{example}[theorem]{Example}
      \newtheorem*{example*}{Example}    
      \newtheorem*{question*}{Question}    
      \newtheorem*{fact*}{Fact}
\newcommand{\N}{\mathbb{N}}
\newcommand{\Z}{\mathbb{Z}}
\newcommand{\R}{\mathbb{R}}
\newcommand{\C}{\mathbb{C}}
\let\P\relax
\newcommand{\P}{\mathbb{P}}
\newcommand{\G}{\mathcal{G}}
\newcommand{\A}{\mathcal{A}}
\newcommand{\PP}{\mathcal{P}}
\newfont{\suet}{suet14}
\DeclareTextFontCommand{\scrpt}{\suet}
\newcommand{\pair}{\mathcal{P}_2}
\newcommand{\ncpair}{\mathcal{NC}_2}
\newcommand{\ee}{\varepsilon}
\newcommand{\tov}{\overset{v}{\to}}
\newcommand{\tow}{\overset{w}{\to}}
\newcommand{\Nto}{\overset{N\to\infty}{\longrightarrow}}
\newcommand{\GUE}{\hbox{\textsc{gue}}}
\newcommand{\GUEN}{\hbox{\textsc{gue(n)}}}
\newcommand{\GOE}{\hbox{\textsc{goe}}}
\newcommand{\GSE}{\hbox{\textsc{gse}}}
\newcommand{\GOEN}{\hbox{\textsc{goe(n)}}}
\newcommand{\ff}{\varphi}
\newcommand{\abs}[1]{\left|{#1}\right|}
\newcommand{\norm}[1]{\left\|{#1}\right\|}
\newcommand{\floor}[1]{\left\lfloor {#1} \right\rfloor}
\newcommand{\ev}[1]{\mathbb{E}\left[{#1}\right]}
\newcommand{\evpartition}[2]{\mathbb{E}_{#1}\left[{#2}\right]}
\newcommand{\prob}[1]{\mathbb{P}\left[{#1}\right]}
\newcommand{\var}[1]{\mathbb{V}\text{ar}\left[{#1}\right]}
\newcommand{\vari}[1]{\mathbb{V}\text{ar}^{(i)}\left[{#1}\right]}
\let\Re\relax
\let\Im\relax
\DeclareMathOperator{\Re}{Re}
\DeclareMathOperator{\Im}{Im}
\DeclareMathOperator{\sgn}{sgn}
\DeclareMathOperator{\tr}{tr}
\DeclareMathOperator{\diag}{diag}
\DeclareMathOperator{\id}{id}
\DeclareMathOperator{\Tr}{Tr}
\DeclareMathOperator{\vol}{vol}
\DeclareMathOperator{\Ai}{Ai}
\DeclareMathOperator{\Tab}{Tab}
\newcommand{\td}{\,\mathrm{d}}
\newcommand{\bs}{\backslash}
\begin{document}

%\pagestyle{fancy}
%\cfoot{\thepage \ / \pageref{LastPage}}
%\lhead{Summer 18}
%\chead{Random matrices}

\includepdf{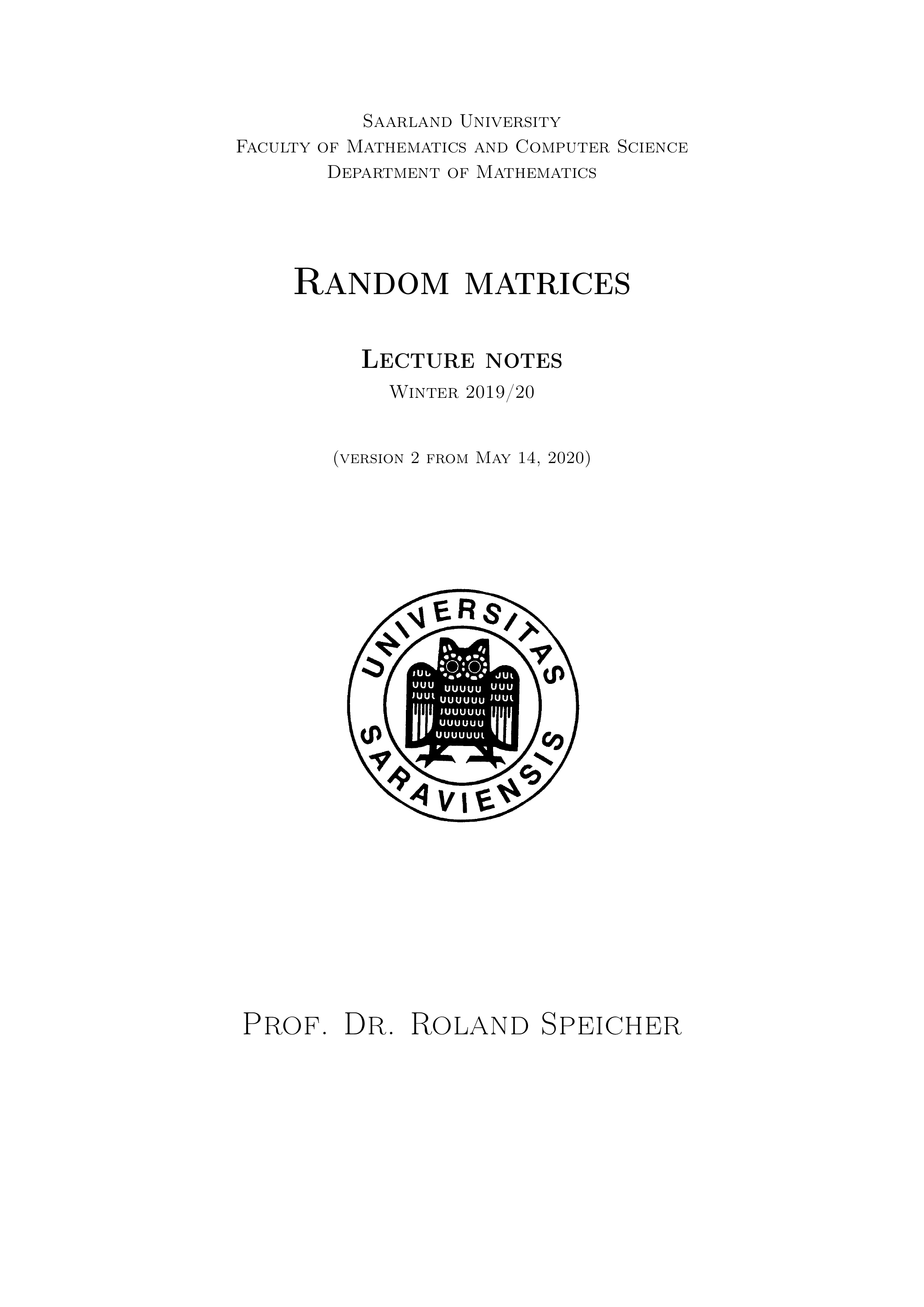}

This in an introduction to random matrix theory, giving an impression of some of the most important aspects of this modern subject. In particular, it
covers the basic combinatorial and analytic theory around Wigner's semicircle law, featuring also concentration phenomena, and the Tracy--Widom distribution of the largest eigenvalue. The circular law and a discussion of Voiculescu's multivariate extension of the semicircle law, as an appetizer for free probability theory, also make an appearance.

This manuscript here is an updated version of a joint manuscript with Marwa Banna from an earlier version of this course; it relies substantially on  the sources given in the literature;
in particular, the lecture notes of Todd Kemp were inspiring and very helpful at various places.

The material here was presented in the winter term 2019/20 at Saarland University in 24 lectures of 90 minutes each. The lectures were recorded and can be found online at \url{https://www.math.uni-sb.de/ag/speicher/web_video/index.html}.

$$\includegraphics[width=2.8in]{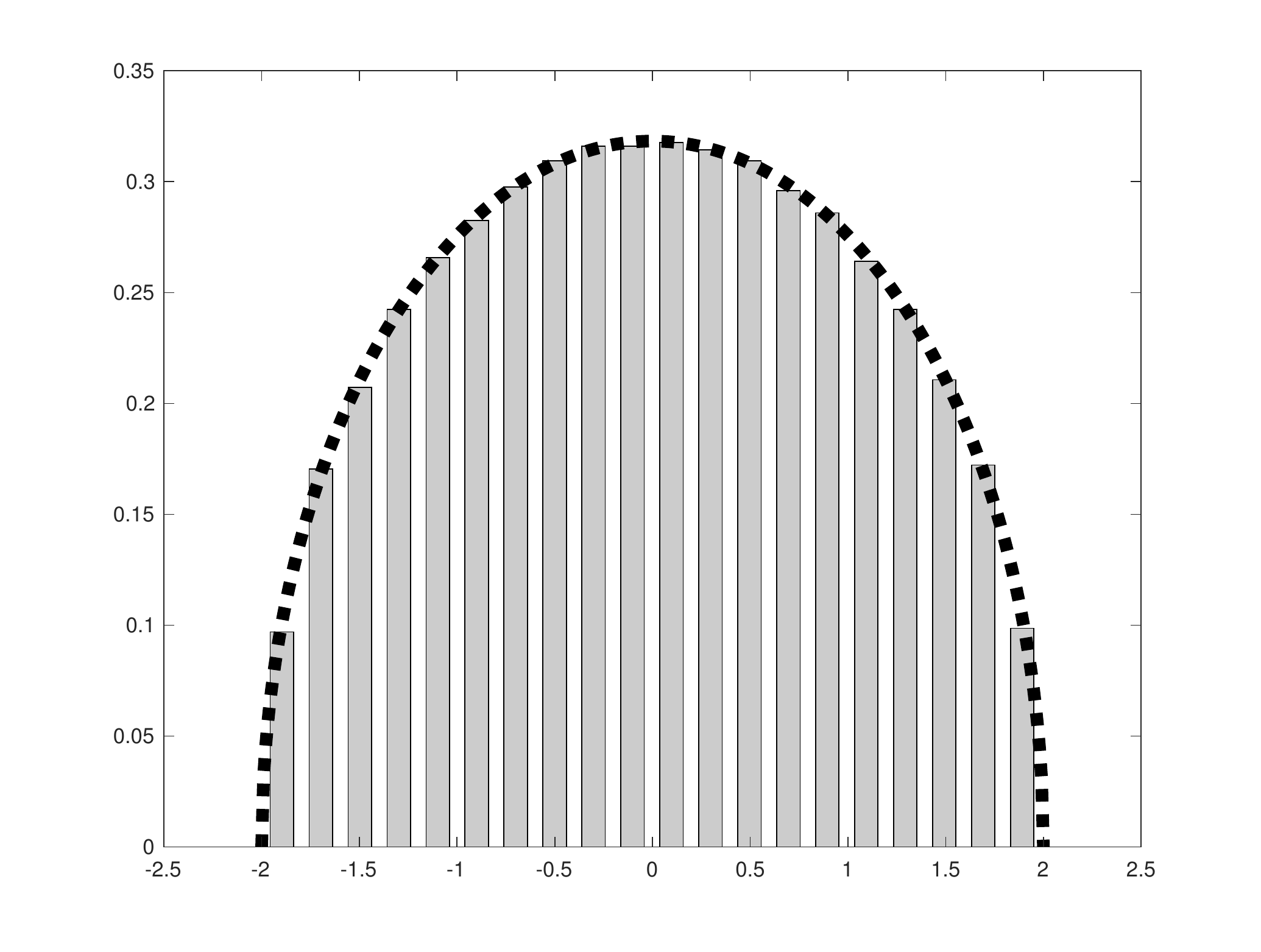}\quad
\includegraphics[width=2.8in]{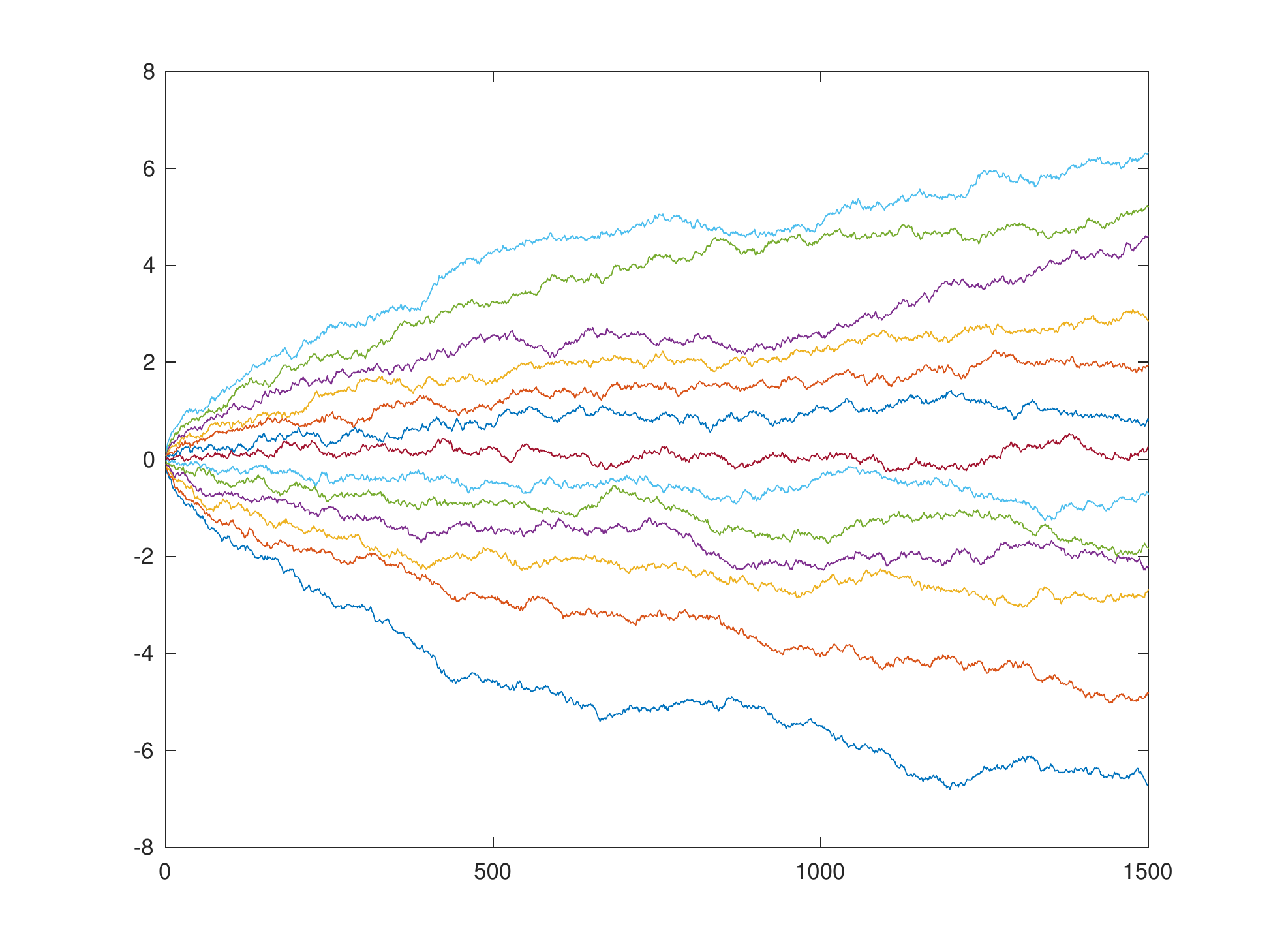}$$
$$\includegraphics[width=2.8in]{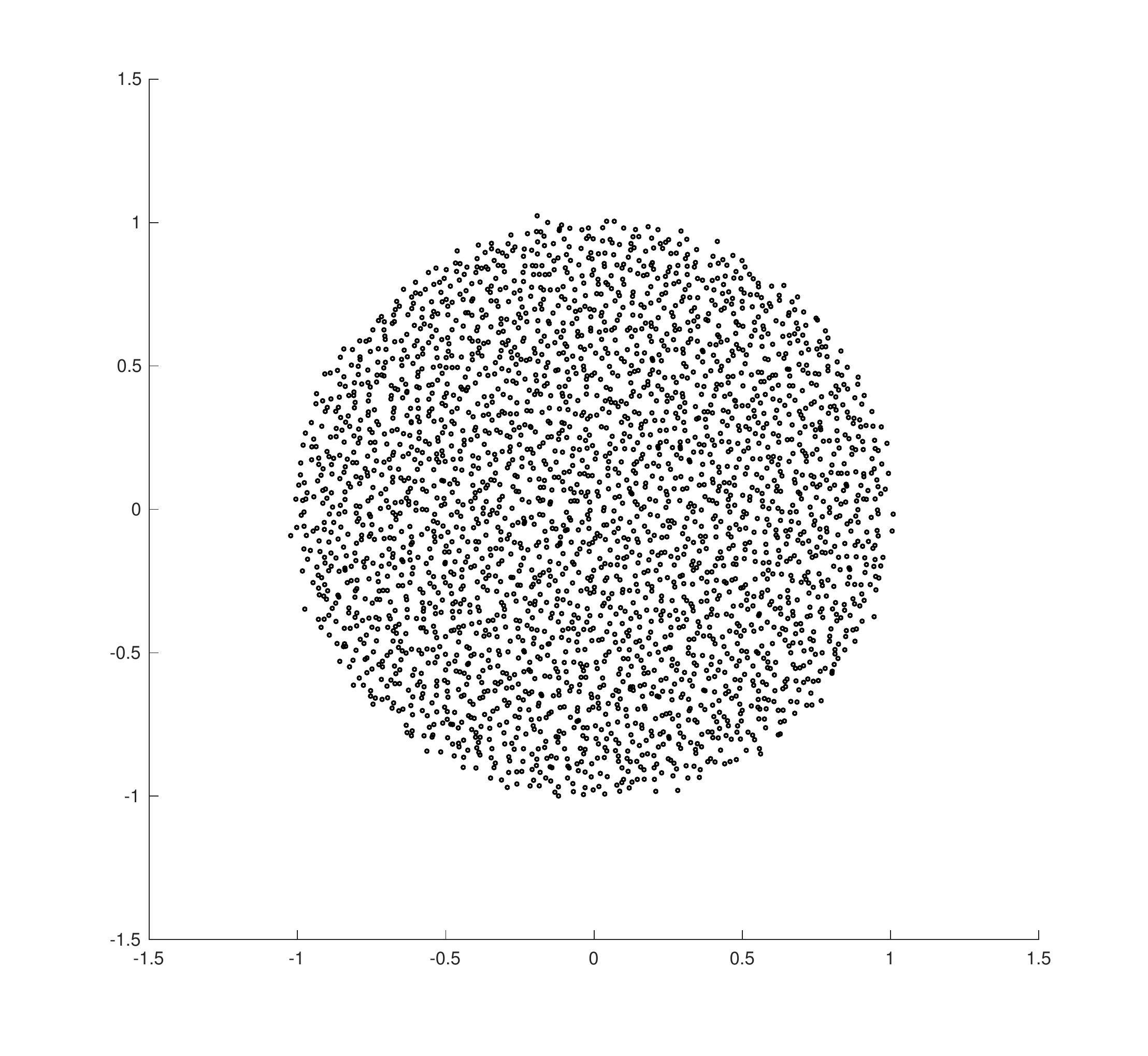}\quad
\includegraphics[width=2.8in]{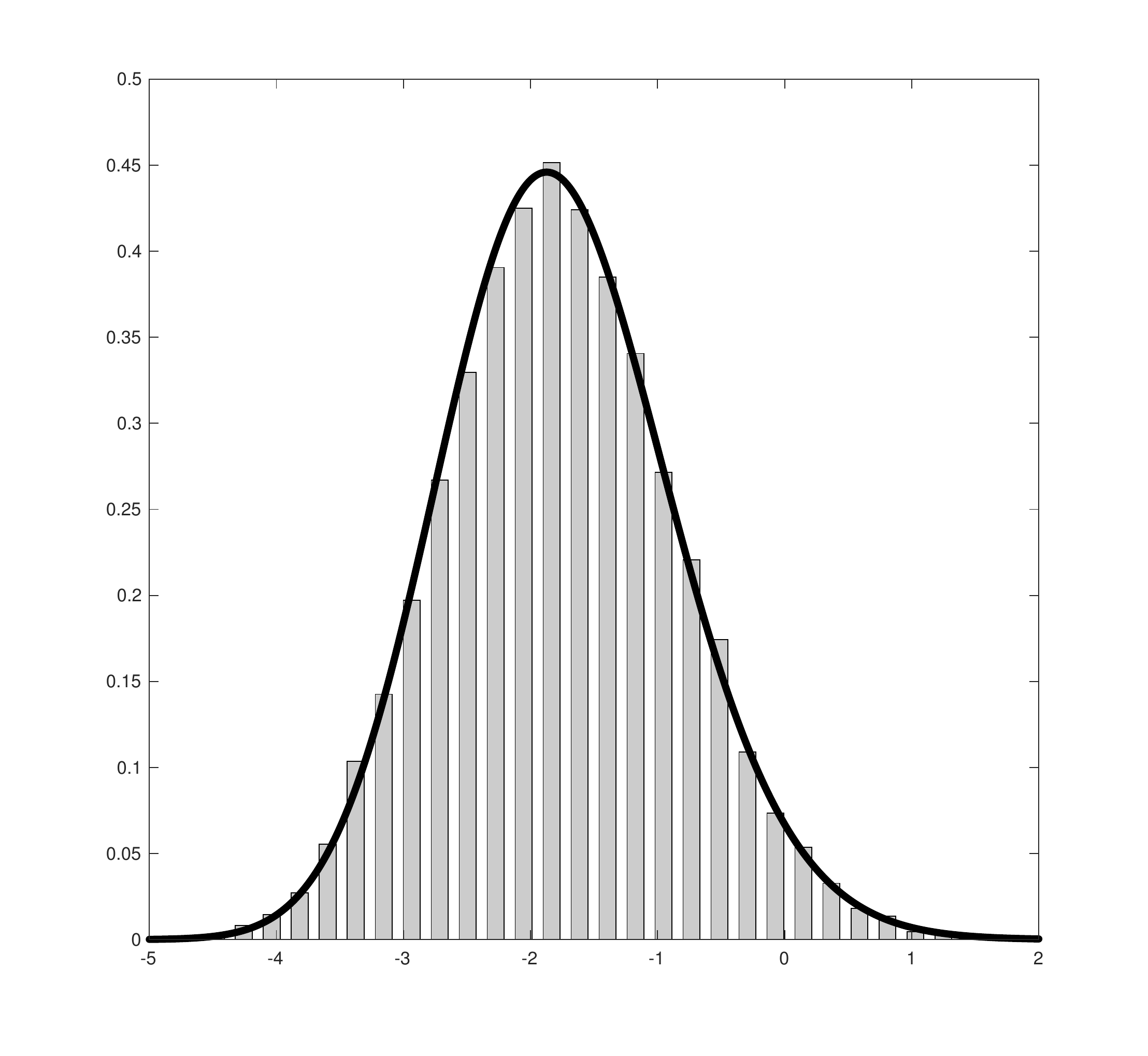}$$

\renewcommand{\contentsname}{Table of contents}
\tableofcontents

\numberwithin{theorem}{chapter}

\chapter{Introduction}

We start with giving a brief history of the subject and a feeling for some the basic objects, questions, and methods - this is just motivational and should be seen as an appetizer. Rigorous versions of statements will come in later chapters.

%\begin{remark}[History]
\section{Brief history of random matrix theory}

\begin{itemize}
	\item 1897: the paper \textit{Über die Erzeugung der Invarianten durch Integration} by Hurwitz is, according to Diaconis and Forrester, the first paper on
random matrices in mathematics;
	\item 1928: usually, the first appearance of random matrices, for fixed size $N$, is attributed to a paper of Wishart in statistics;
	\item 1955: Wigner introduced random matrices as statistical models for heavy nuclei and studied in particular the asymptotics for $N \to\infty$ (the so-called \enquote{large $N$ limit}); this set off a lot of activity around random matrices in physics;

	\item since 1960's: random matrices have become important tools in physics; in particular in the context of \textit{quantum chaos} and \textit{universality} questions; 
	important work was done by by Mehta and Dyson;
\item 1967: the first and influential book on \textit{Random Matrices} by Mehta appeared;
\item 1967: Marchenko and Pastur calculated the asymptotics $N\to\infty$ of Wishart matrices;
	\item $\sim$ 1972: a relation between the statistics of eigenvalues of random matrices and the zeros of the Riemann $\zeta$-function was conjectured by Montgomery and Dyson; with substantial evidence given by numerical calculations of Odlyzko; this made the subject more and more popular in mathematics;
	\item since 1990's: random matrices are studied more and more extensively in mathematics, in the context of quite different topics, like
\begin{itemize}
\item
Tracy-Widom distribution of largest eigenvalue 
\item
free probability theory 
\item
universality of fluctuations
\item
\enquote{circular law}
\item
and many more
\end{itemize}
\end{itemize}
%\end{remark}

%\begin{remark}[What is a random matrix?]
\section{What are random matrices and what do we want to know about them?}

A \emph{random matrix} is a matrix $A=(a_{ij})_{i,j=1}^N$ where the entries $a_{ij}$ are chosen randomly and
we are mainly interested in the \emph{eigenvalues} of the matrices. 
Often we require $A$ to be selfadjoint, which guarantees that its eigenvalues are real.

\begin{example}
	Choose $a_{ij} \in \{-1,+1\}$ with $a_{ij} = a_{ji}$ for all $i,j$. We consider all such matrices and ask for typical or generic behaviour of the eigenvalues. 
In a more probabilistic language we declare all allowed matrices to have the same probability and we ask for probabilities of properties of the eigenvalues. We can do this for 
different sizes $N$.
To get a feeling, let us look at different $N$.

\begin{itemize}
\item
For $N=1$ we have two matrices.

\medskip
\begin{tabular}{cccc}
& matrix & eigenvalues & probability of the matrix\\ 
%&&&\\
	& $(1)$ & $+1$ & $\frac{1}{2}$ \\ 
&&&\\
	& $(-1)$ & $-1$ & $\frac{1}{2}$ 
	\end{tabular}
\medskip
\item
For $N=2$ we have eight matrices.
\begin{longtable}{cccc}
&matrix & eigenvalues & probability of the matrix\\ 
%&&&\\
	& $\begin{pmatrix} 1 & 1 \\ 1 & 1 \end{pmatrix}$ & $0,2$  & $\frac{1}{8}$ \\ 
&&&\\
	& $\begin{pmatrix} 1 & 1 \\ 1 & -1 \end{pmatrix}$ & $-\sqrt{2}, \sqrt{2}$ & $\frac{1}{8}$ \\ 
&&&\\
	& $\begin{pmatrix} 1 & -1 \\ -1 & 1 \end{pmatrix}$ \qquad& $0,2$ & $\frac{1}{8}$ \\ 
&&&\\
& $\begin{pmatrix} -1 & 1 \\ 1 & -1 \end{pmatrix}$ \qquad& $-\sqrt{2}, \sqrt{2}$ & $\frac{1}{8}$ \\ 
&&&\\
& $\begin{pmatrix} 1 & -1 \\ -1 & -1 \end{pmatrix}$ \qquad& $-\sqrt{2}, \sqrt{2}$ & $\frac{1}{8}$ \\ 
&&&\\
& $\begin{pmatrix} -1 & 1 \\ 1 & -1 \end{pmatrix}$ \qquad& $-{2}, 0$ & $\frac{1}{8}$ \\ 
&&&\\
& $\begin{pmatrix} -1 & -1 \\ -1 & 1 \end{pmatrix}$ \qquad& $-\sqrt{2}, \sqrt{2}$ & $\frac{1}{8}$ \\ 
&&&\\
	& $\begin{pmatrix} -1 & -1 \\ -1 & -1 \end{pmatrix}$ & $-2,0$ & $\frac{1}{8}$
\end{longtable} 

\item
For
general $N$, we have $2^{N(N+1) /2}$ matrices, each counting with probability $2^{-N(N+1) /2}$.
Of course, there are always very special matrices such as
\[ A = \begin{pmatrix}
1 & \cdots & 1\\
\vdots & \ddots & \vdots \\
1 & \cdots & 1
\end{pmatrix},
\]
where all entries are $+1$. Such a matrix has an atypical eigenvalue behaviour (namely, only two eigenvalues, $N$ and $0$, the latter with multiplicity $N-1$); however,  the probability of such atypical behaviours will become small if we increase $N$. 
What we are interested in is the behaviour of most of the matrices for large $N$.

\begin{question*}
	What is the \enquote{typical} behaviour of the eigenvalues?
\end{question*}	
%\end{remark}

\item
Here are two ``randomly generated'' (by throwing a coin for each of the entries on and above the diagonal) $8\times 8$ symmetric matrices with $\pm 1$ entries ...

\begin{scriptsize}
{$$\qquad\begin{pmatrix}
 -1&    -1&     1&     1&     1&    -1&     1&    -1\\
   -1&    -1&     1&    -1&    -1&    -1&     1&    -1\\
    1&     1&     1&     1&    -1&    -1&     1&    -1\\
    1&    -1&     1&    -1&    -1&    -1&     1&     1\\
    1&    -1&    -1&    -1&     1&     1&    -1&    -1\\
   -1&    -1&    -1&    -1&     1&     1&    -1&     1\\
    1&     1&     1&     1&    -1&    -1&    -1&    -1\\
   -1&    -1&    -1&     1&    -1&     1&    -1&    -1
\end{pmatrix}\qquad\qquad\begin{pmatrix}
   1&    -1&    -1&     1&    -1&    -1&     1&    -1\\
   -1&     1&    -1&     1&     1&    -1&     1&     1\\
   -1&    -1&    -1&    -1&     1&     1&     1&    -1\\
    1&     1&    -1&     1&    -1&     1&     1&     1\\
   -1&     1&     1&    -1&    -1&    -1&    -1&    -1\\
   -1&    -1&     1&     1&    -1&     1&    -1&    -1\\
    1&     1&     1&     1&    -1&    -1&    -1&    -1\\
   -1&     1&    -1&     1&    -1&    -1&    -1&    -1
\end{pmatrix}$$}
\end{scriptsize}

... and their corresponding histograms of the 8 eigenvalues

{\includegraphics[width=3.in]{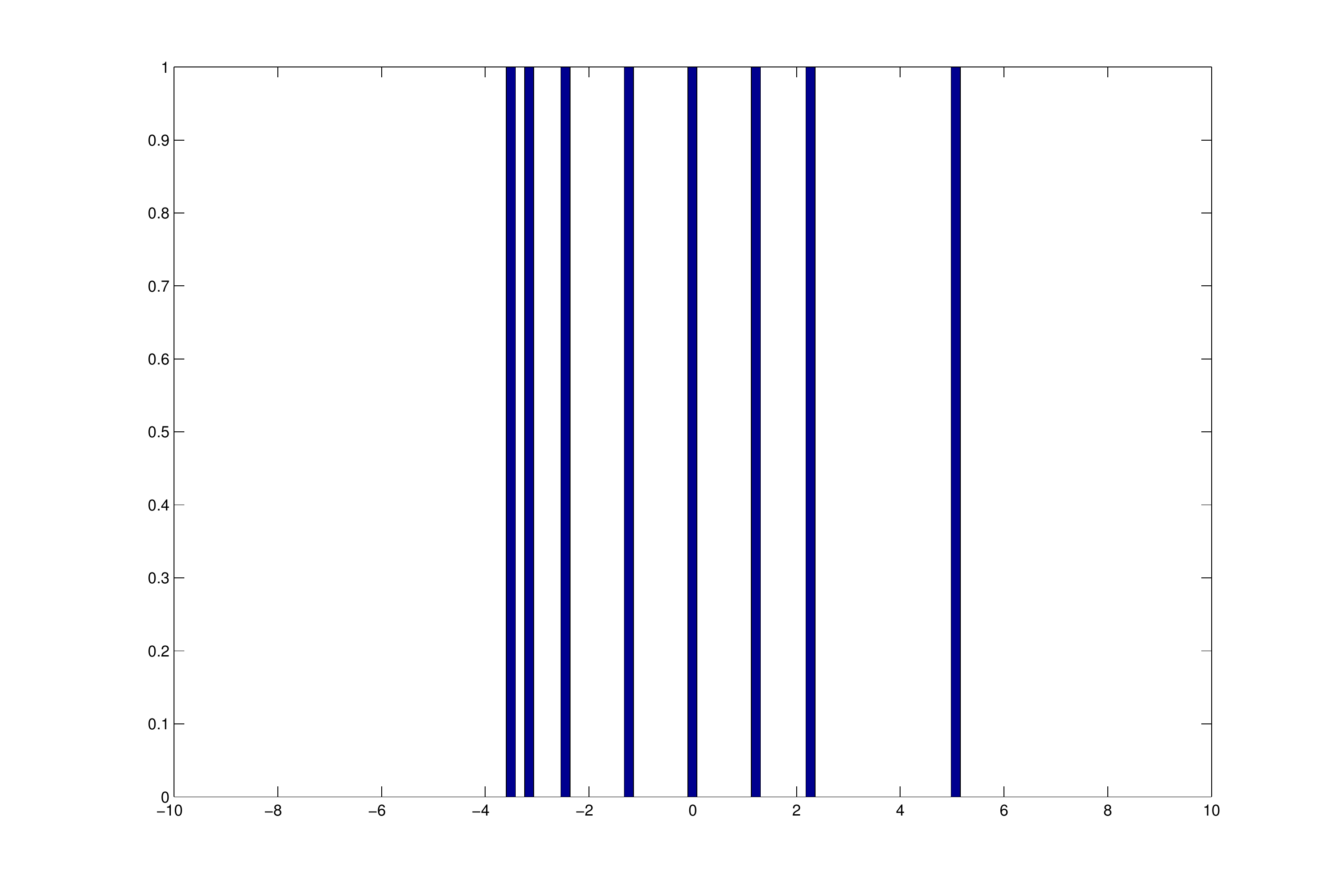}} {\includegraphics[width=3.in]{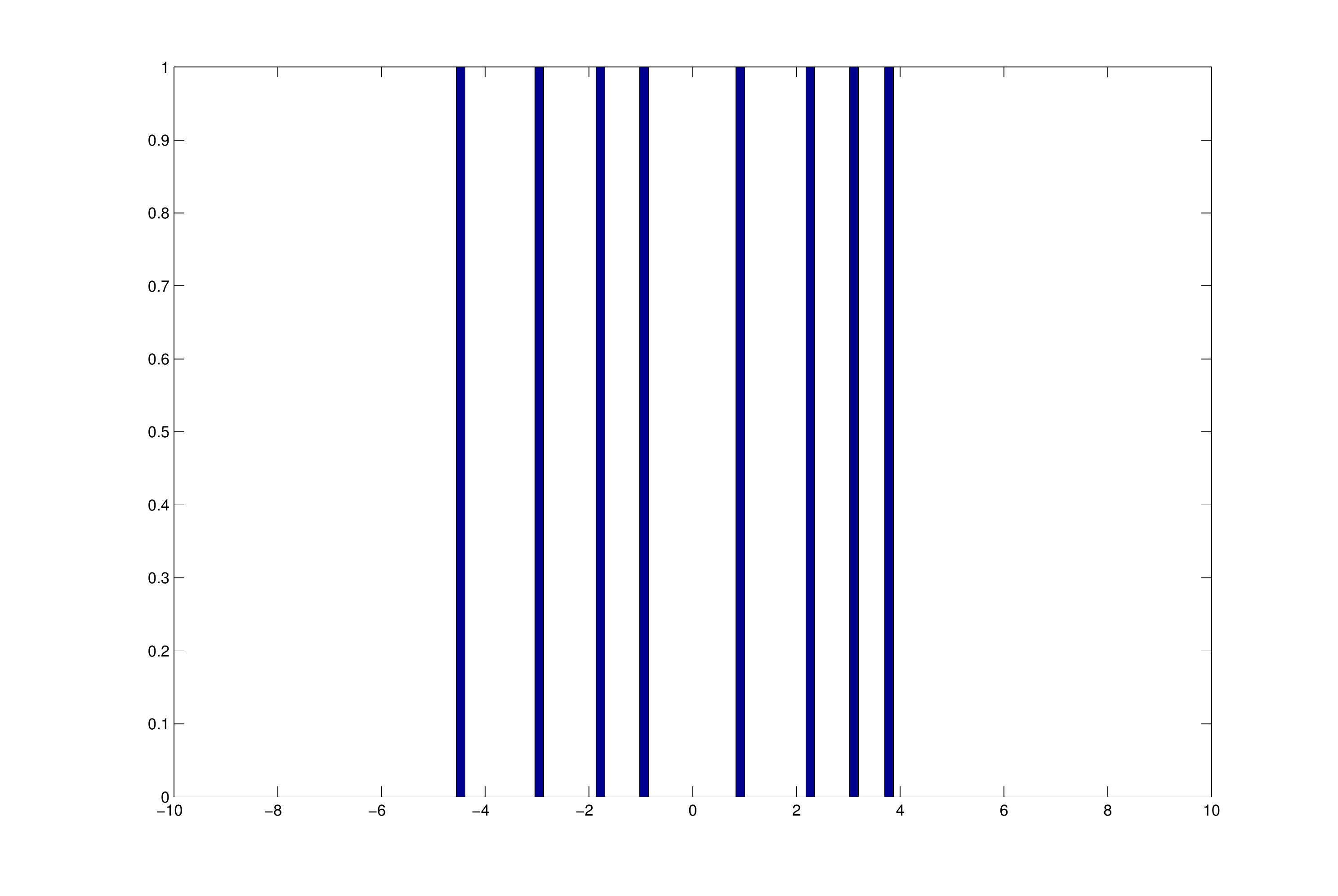}}

The message of those histograms is not so clear - apart from maybe that degeneration of eigenvalues is atypical. However, if we increase $N$ further then there will appear much more structure.

\item
Here are the eigenvalue histograms for two ``random'' $100\times 100$ matrices ...

$$\includegraphics[width=2.5in]{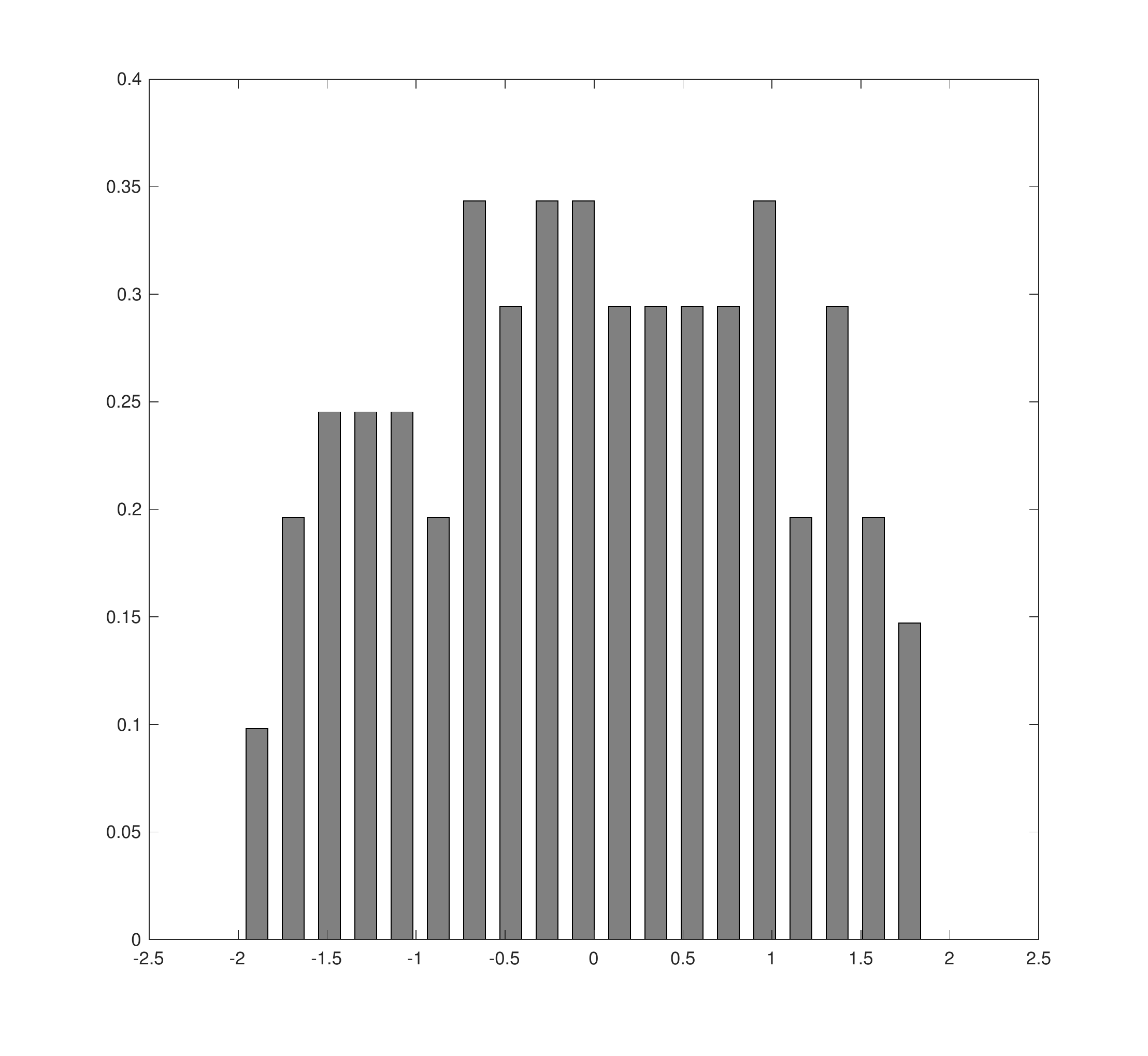}
\includegraphics[width=2.5in]{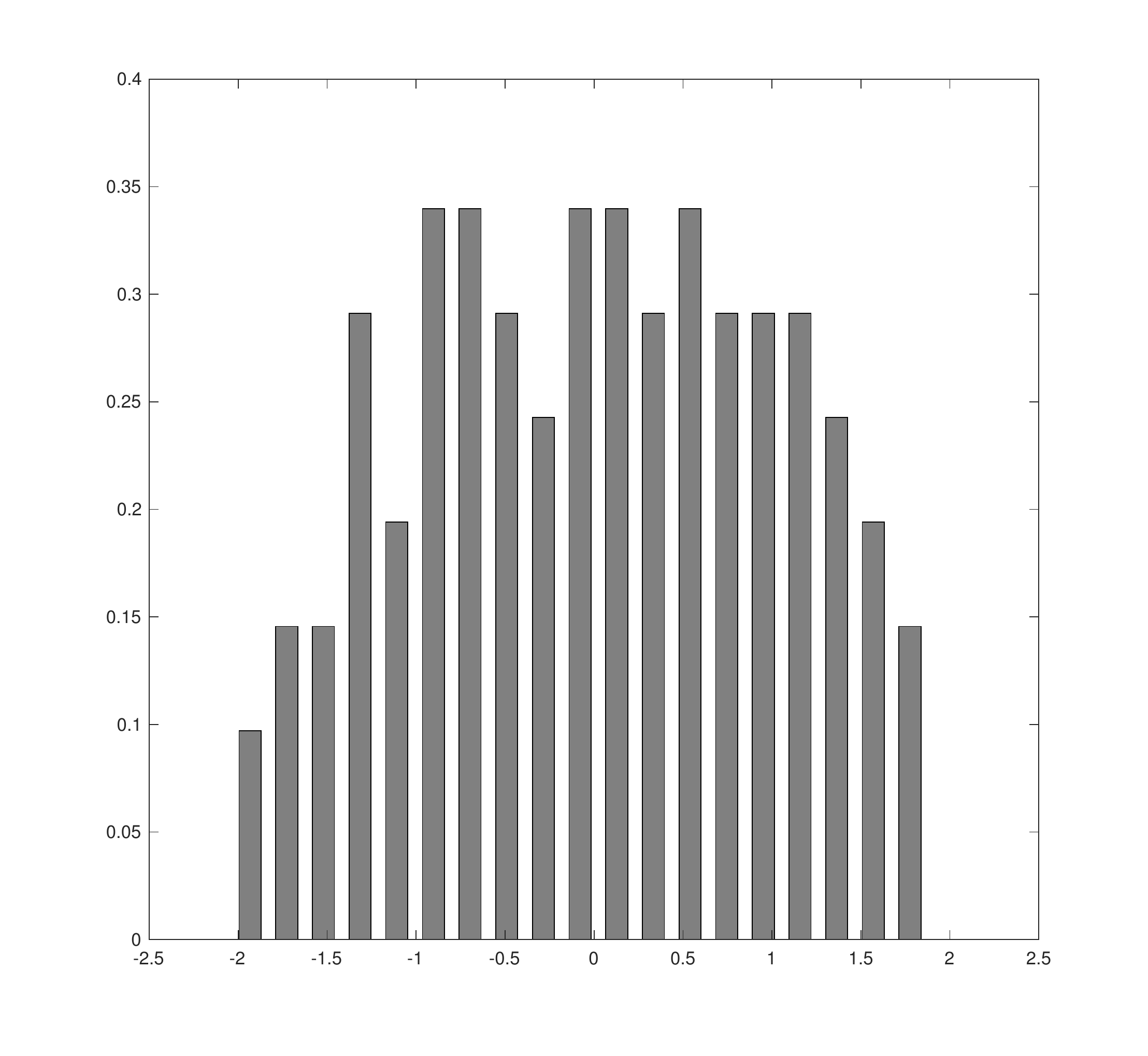}$$

\item
... and here for two ``random'' $3000\times 3000$ matrices ...

$$\includegraphics[width=2.5in]{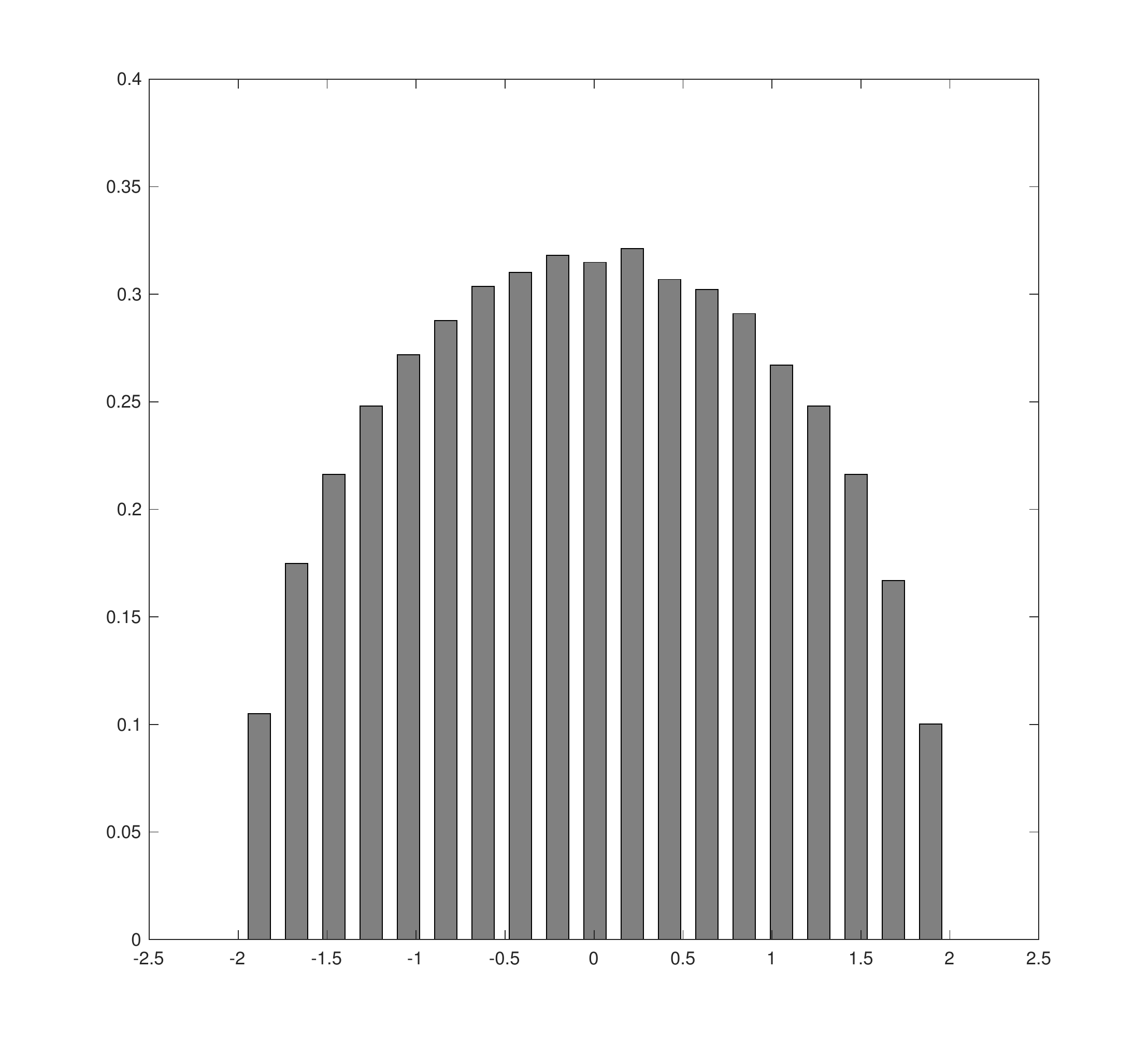}
\includegraphics[width=2.5in]{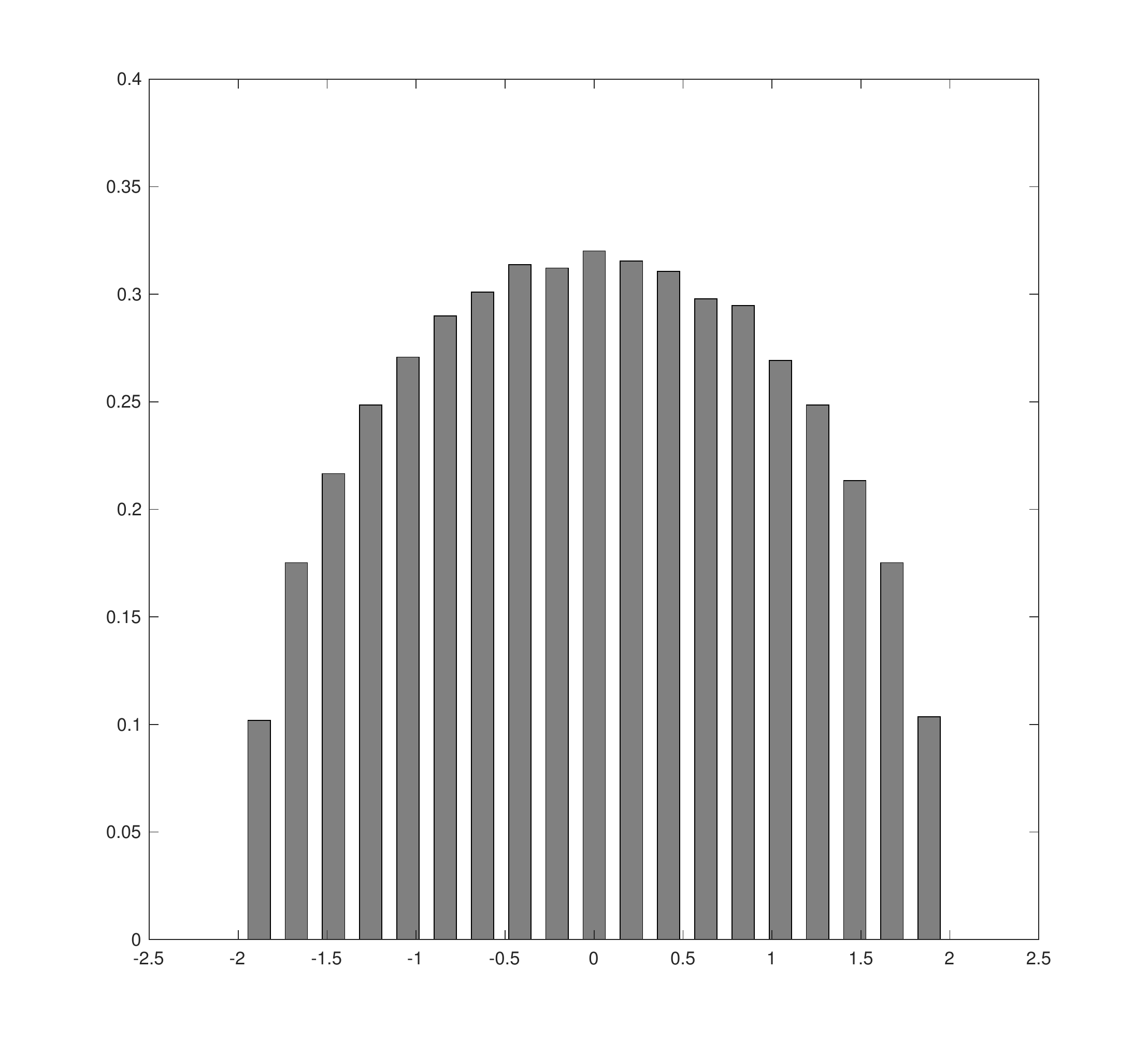}$$

\end{itemize}
\end{example}

To be clear, no coins were thrown for producing those matrices, but we relied on the MATLAB procedure for creating random matrices. Note that we also rescaled our matrices, as we will address in Section \ref{sec:scaling}.

%\begin{remark}[Wigner's smicircle law]
\section{Wigner's semicircle law}

What we see in the above figures is the most basic and important result of random matrix theory, the so-called \emph{Wigner's semicirlce law} \dots

$$\includegraphics[width=4.in]{semi-mit}$$

\dots which says that typically the eigenvalue distribution of such a random matrix converges to Wigner's semicircle for $N\to\infty$.

$$\text{eigenvalue distribution} \qquad \to \qquad\qquad \text{semicircle}\qquad\qquad$$
$$\includegraphics[width=2.5in]{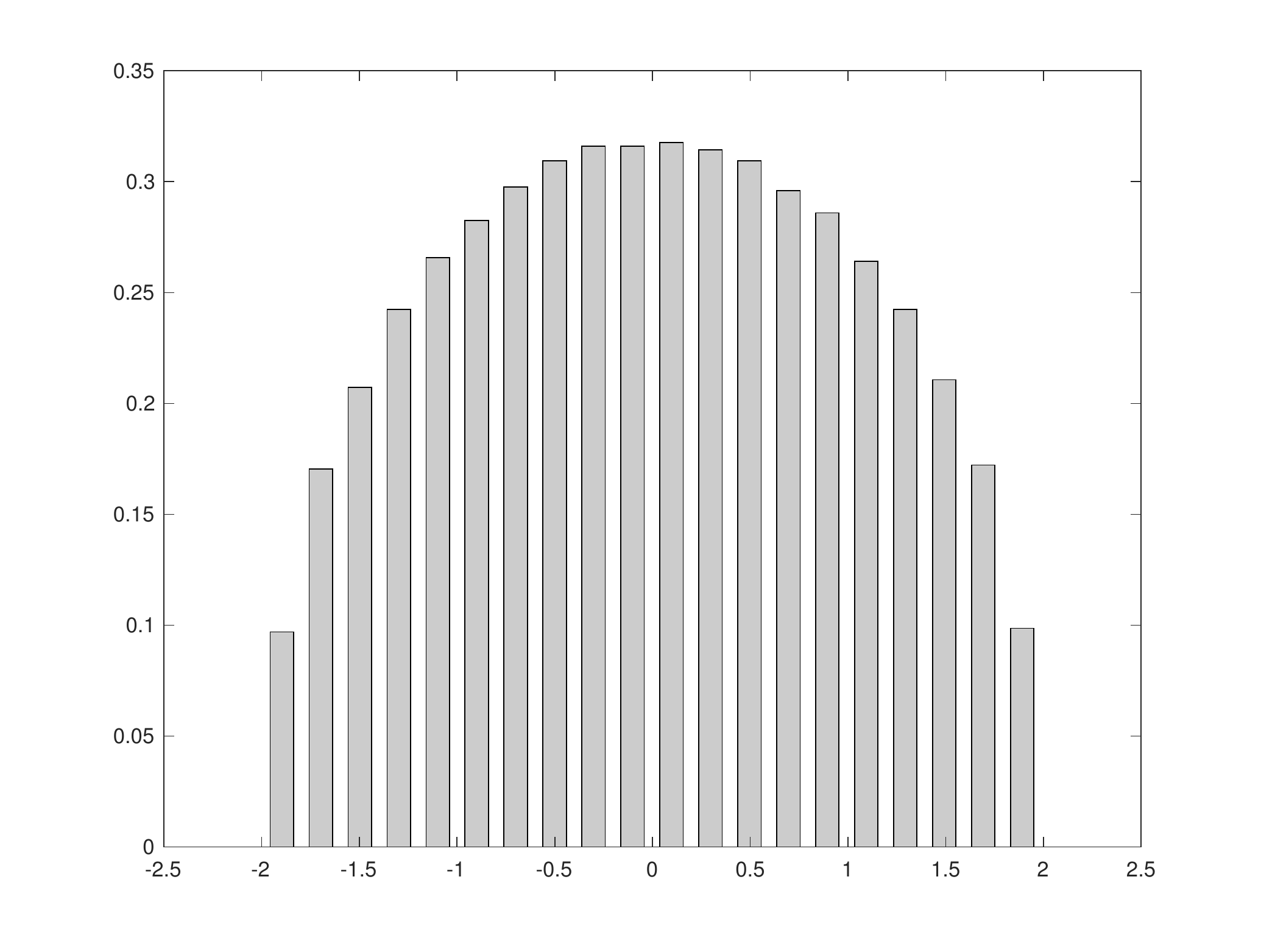}  
\includegraphics[width=2.5in]{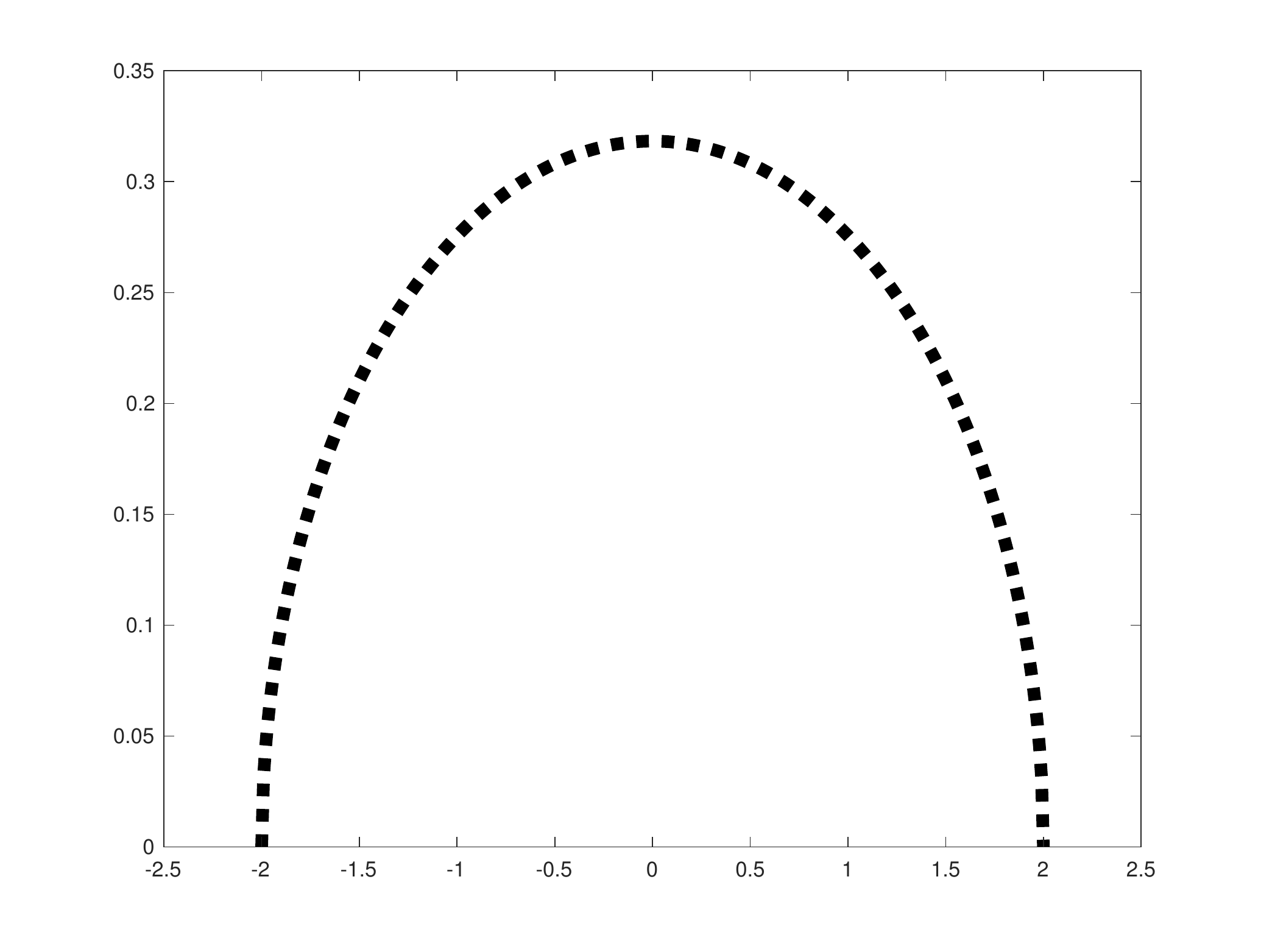}$$

Note the quite surprising feature that the limit of the random eigenvalue distribution for $N\to\infty$ is a deterministic object - the semicircle distribution.  The randomness disappears for large $N$.
%\end{remark}

%\begin{remark}[Universality]
\section{Universality}

This statement is valid much more generally.
Choose the $a_{ij}$ not just from $\{-1,+1\}$ but, for example, 
\begin{itemize}
	\item $a_{ij} \in \{1,2,3,4,5,6\}$,
	\item $a_{ij}$ normally (Gauß) distributed,
	\item $a_{ij}$ distributed according to your favorite distribution,
\end{itemize}
but still independent (apart from symmetry), then we still have the same result:
The eigenvalue distribution typically converges to a semicircle for $N \to \infty$.	
%\end{remark}

%\begin{remark}[Concentration phenomena]
\section{Concentration phenomena}

The (quite amazing) fact that the a priori random eigenvalue distribution is, for $N\to\infty$, not random anymore, but concentrated on one deterministic distribution (namely the semicircle) is an example of the general high-dimensional phenomenon of \enquote{measure concentration}.	
%\end{remark}

\begin{example}
To illustrate this let us give an easy but illustrating example of such a concentration in high dimensions; namely that the volume of a ball is essentially sitting in the surface.
	
	Denote by $B_r(0)$ the ball of radius $r$ about $0$ in $\R^n$ and for $0 <\varepsilon<1$ consider the $\varepsilon$-neighborhood of the surface inside the ball,
	$B:= \left\{ x \in \R^n \, | \, 1- \varepsilon \leq \norm{x} \leq 1 \right\}$.
	As we know the volume of balls
	\[ \vol(B_r(0)) = r^n \frac{\pi^{\frac{n}{2}}}{\left( \frac {n}{2} -1 \right)!},
	\]
	we can calculate the volume of $B$ as
	\[ \vol(B) = \vol(B_1(0)) - \vol(B_{1-\varepsilon}(0))
	= \frac{\pi^{\frac{n}{2}}}{\left( \frac{n}{2} -1 \right)!} \left( 1- (1-\varepsilon)^n \right).
	\]
	Thus,
	\[ \frac{\vol(B)}{\vol(B_1(0))} = 1- (1-\varepsilon)^n \overset{n\to\infty}\longrightarrow 1.
	\]
This says that in high dimensions the volume of a ball is concentrated in an arbitrarily small neighborhood of the surface. This is, of course, not true in small dimension - hence from our usual 3-dimensional perspective this appears quite counter-intuitive.
%\end{remark}
\end{example}

%\begin{remark}[From histograms to moments]
\section{From histograms to moments}

Let $A_N = A = (a_{ij})_{i,j=1}^N$ be our selfadjoint matrix with $a_{ij} = \pm 1$ randomly chosen.
Then we typically see for the eigenvalues of $A$:
\[ \begin{tikzpicture}[scale=0.50]
\tikzset{dot/.style={circle,fill=#1,inner sep=0,minimum size=4pt}}

\node[dot=black] at (4, 0)   (1)  {};
\node[dot=black] at (5.5, 0)   (2)  {};

\draw (4,-0.6) -- node {$s$} (4,-0.6);
\draw (5.5,-0.6) -- node {$t$} (5.5,-0.6);

\draw (0,0) -- (11,0);
\draw (0.5,0) |- (1.5,1.5) -- (1.5,0);
\draw (2,0) |- (3,2.5) -- (3,0);
\draw (3.5,0) |- (4.5,3.5) -- (4.5,0);
\draw (5,0) |- (6, 4.5) |- (6,0);
\draw (6.5,0) |- (7.5,2.5) -- (7.5,0);
\draw (8,0) |- (9,2.5) -- (9,0);
\draw (9.5,0) |- (10.5,1) -- (10.5,0);

\draw [->] (11.5,1.25) -- node [midway,above,align=center] {$N \to \infty$} (15, 1.25);

\node[dot=black] at (20, 0)   (3)  {};
\node[dot=black] at (21.5, 0)   (4)  {};

\draw (20,-0.6) -- node {$s$} (20,-0.6);
\draw (21.5,-0.6) -- node {$t$} (21.5,-0.6);

\draw (16,0) -- (27,0);

\draw (16.5,0) -- (26.5,0) arc(0:180:5) --cycle;

\begin{scope}
\clip (16.5,0) -- (26.5,0) arc(0:180:5) --cycle;
\draw [pattern=north east lines, pattern color=black] (20,0) rectangle (21.5,5);
\end{scope}
\end{tikzpicture}
\]	

This convergence means
\[ \frac{\#\{ \text{eigenvalues in } [s,t] \}}{N} \xrightarrow{N \to \infty}
\int_s^t \td \mu_W = \int_s^t p_W(x) \td x,
\]
where $\mu_W$ is the semicircle distribution, with density $p_W$.

The left-hand side of this is difficult to calculate directly, but we note that the above statement is the same as
\begin{align}
\frac{1}{N} \sum_{i=1}^{N} 1_{[s,t]} (\lambda_i) \xrightarrow{N \to\infty} \int_\R 1_{[s,t]} (x) \td \mu_W(x), \tag{$\star$}
\end{align}
where $\lambda_1, \dots, \lambda_N$ are the eigenvalues of $A$ counted with multiplicity and
$1_{[s,t]}$ is the characteristic function of $[s,t]$, i.e,
\[1_{[s,t]}(x) = \begin{cases}
1, &\mbox{} x \in [s,t], \\
0, &\mbox{} x \not \in [s,t].
\end{cases}
\]
Hence in ($\star$) we are claiming that 
\[ \frac{1}{N} \sum_{i=1}^{N} f(\lambda_i) \xrightarrow{N \to\infty} \int_\R f(x) \td \mu_W(x)
\]
for all $f=1_{[s,t]}$.
It is easier to calculate this for other functions $f$, in particular, for $f$ of the form $f(x)=x^n$, i.e.,
\begin{align}
\frac{1}{N} \sum_{i=1}^{N} \lambda_i^n \xrightarrow{N \to\infty} \int_\R x^n \td \mu_W(x); \tag{$\star\star$}
\end{align}
the latter are the \emph{moments} of $\mu_W$. (Note that $\mu_W$ must necessarily be a probability measure.)
 
We will see later that in our case the validity of ($\star$) for all $s<t$ is equivalent to 
the validity of ($\star\star$) for all $n$. Hence we want to show ($\star\star$) for all $n$.	
%\end{remark}

\begin{remark}
The above raises of course the question:
What is the advantage of $(\star\star)$ over $(\star)$, or of
$x^n$ over $1_{[s,t]}$?

Note that $A=A^*$ is selfadjoint and hence can be diagonalized, i.e., $A=UDU^*$, where
$U$ is unitary and $D$ is diagonal with $d_{ii} = \lambda_i$ for all $i$ (where $\lambda_1,\dots,\lambda_N$ are the eigenvalues of $A$, counted with multiplicity). Moreover, we have
\[ A^n = (UDU^*)^n = UD^nU^*
\qquad
\text{with}\qquad
 D^n = \begin{pmatrix}
\lambda_1^n & & \\
& \ddots & \\
& & \lambda_N^n
\end{pmatrix},
\]
hence
\[ \sum_{i=1}^{N} \lambda_i^n
= \Tr(D^n)
=\Tr(UD^nU^*)
=\Tr(A^n)
\]
and thus
\[ \frac{1}{N} \sum_{i=1}^{N} \lambda_i^n
= \frac{1}{N} \Tr(A^n).
\]	
\end{remark}

%\begin{notation}
\begin{notation}
	We denote by $\tr = \frac{1}{N} \Tr$ the \emph{normalized trace} of matrices, i.e.,
	\[ \tr \left( (a_{ij})_{i,j=1}^N  \right)
	= \frac{1}{N} \sum_{i=1}^{N} a_{ii}.
	\]
\end{notation}

	So we are claiming that for our matrices we typically have that
	\[ \tr(A_N^n) \xrightarrow{N\to\infty} \int x^n \td \mu_W(x).
	\]
The advantage in this formulation is that the quantity $\tr(A_N^n)$ can be expressed in terms of the entries of the matrix, without actually having to calculate the eigenvalues.

%\begin{remark}[Choice of scaling]
\section{Choice of scaling}\label{sec:scaling}

	Note that we need to choose the right scaling in $N$ for the existence of the limit $N\to\infty$.
	For the case $a_{ij} \in \{\pm 1 \}$ with $A_N=A_N^*$ we have
	\[ \tr(A_N^2) = \frac{1}{N} \sum_{i,j=1}^{N} a_{ij}a_{ji}=\frac 1N\sum_{i,j=1} a_{ij}^2
	=\frac{1}{N} N^2
	=N.
	\]
	Since this has to converge for $N\to\infty$ we should rescale our matrices
	\[ A_N \rightsquigarrow \frac{1}{\sqrt{N}} A_N,
	\]
	i.e., we consider matrices $A_N = (a_{ij})_{i,j=1}^N$, where $a_{ij} = \pm \frac{1}{\sqrt{N}}$.
	For this scaling we claim that we typically have that
	\[ \tr(A_N^n) \xrightarrow{N\to\infty} \int x^n \td \mu_W(x)
	\]
	for a deterministic probability measure $\mu_W$.
%\end{remark}

\section{The semicircle and its moments}

It's now probably time to give the precise definition of the semicircle distribution and, in the light of what we have to prove, also its moments.

\begin{definition}
\begin{itemize}
	\item[(1)] The (standard) \emph{semicircular distribution} $\mu_W$ is the measure on $[-2,2]$ with density 
	\[ \td\mu_W(x) = \frac{1}{2\pi} \sqrt{4-x^2} \td x.
	\]
	\[ \begin{tikzpicture}	
	\tikzset{dot/.style={circle,fill=#1,inner sep=0,minimum size=4pt}}

	\node[dot=black] at (-2, 0)   (1) {};
	\node[dot=black] at (2, 0)   (2) {};
	\node[dot=black] at (0, 2)   (3) {};
		
	\draw (-2,-0.3) -- node {$-2$} (-2,-0.3);
	\draw (2,-0.3) -- node {$2$} (2,-0.3);
	\draw (0.3,2.3) -- node {$\frac{1}{\pi}$} (0.3,2.3);

	\draw [->] (-2.5,0) -- (2.5,0);	
	\draw [->] (0,-0.5) -- (0,2.5);	
	\draw (-2,0) -- (2,0) arc(0:180:2) --cycle;
	\end{tikzpicture}
	\]	
	
	\item[(2)] The \emph{Catalan numbers} $(C_k)_{k\geq 0}$ are given by
	\[ C_k = \frac{1}{k+1} \binom{2k}{k}.
	\]
	They look like this: $1,1,2,5,14,42,132,\dots$
\end{itemize}
\end{definition}

\begin{theorem}
	\begin{itemize}\phantomsection{\label{thm:1.4}}
		\item[(1)] 
The Catalan numbers have the following properties.
		\begin{itemize}
			\item[(i)] The Catalan numbers satisfy the following recursion:
			\[ C_k = \sum_{l=0}^{k-1} C_lC_{k-l-1} \qquad (k \geq1)
			\]
			\item[(ii)] The Catalan numbers are uniquely determined by this recursion and by the initial value $C_0=1$.
		\end{itemize}
		\item[(2)] The semicircular distribution $\mu_W$ is a probability measure, i.e., 
		\[\frac 1{2\pi} \int_{-2}^{2} \sqrt{4-x^2} \td x = 1
		\]
		and its moments are given by
		\[ \frac 1{2\pi}\int_{-2}^{2} x^n \sqrt{4-x^2} \td x = \begin{cases}
		0, &\mbox{} n \text{ odd,} \\
		C_k, &\mbox{} n=2k \text{ even.}
		\end{cases}
		\]
	\end{itemize}
\end{theorem}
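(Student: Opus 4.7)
The plan is to handle the items in the order (1)(ii), (2), (1)(i): uniqueness is immediate, the integral identities in (2) follow from a single substitution, and the recursion in (1)(i) is most transparent via generating functions.

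First, \emph{(1)(ii)} is a one-line induction: once $C_0=1$ is fixed, the recursion expresses each $C_k$ for $k\geq 1$ entirely in terms of $C_0,\dots,C_{k-1}$, so all values are determined.

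For \emph{(2)}, the normalization is just the area of the upper half-disk of radius $2$, namely $2\pi$, divided by $2\pi$. Odd moments vanish because the integrand $x^{2k+1}\sqrt{4-x^2}$ is an odd function on the symmetric interval $[-2,2]$. For even moments I would substitute $x=2\cos\theta$, so that $\td x=-2\sin\theta\,\td\theta$ and $\sqrt{4-x^2}=2\sin\theta$; flipping the limits yields
\[
\frac{1}{2\pi}\int_{-2}^{2} x^{2k}\sqrt{4-x^2}\,\td x \;=\; \frac{2^{2k+1}}{\pi}\int_0^{\pi}\cos^{2k}\theta\,\sin^2\theta\,\td\theta.
\]
Writing $\sin^2\theta=1-\cos^2\theta$ and invoking the Wallis formula $\int_0^{\pi}\cos^{2m}\theta\,\td\theta=\pi\binom{2m}{m}/2^{2m}$ for $m=k$ and $m=k+1$, the identity $\binom{2k+2}{k+1}=\tfrac{2(2k+1)}{k+1}\binom{2k}{k}$ collapses the whole expression to $\tfrac{1}{k+1}\binom{2k}{k}=C_k$.

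For \emph{(1)(i)}, I would argue via generating functions. I define
\[
F(z)\;:=\;\sum_{k\geq 0}\tfrac{1}{k+1}\binom{2k}{k}\,z^k
\]
and verify, by expanding $\sqrt{1-4z}$ in its generalized binomial series, that $F(z)=\tfrac{1-\sqrt{1-4z}}{2z}$. A direct computation then yields $zF(z)^2-F(z)+1=0$, equivalently $F(z)-1=zF(z)^2$; comparing the coefficient of $z^k$ on both sides for $k\geq 1$ reads off precisely $C_k=\sum_{l=0}^{k-1}C_lC_{k-l-1}$.

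\emph{Main obstacle.} The one genuine calculation is the even-moment integral in (2); everything hinges on the Wallis formula and on the algebraic cancellation that produces the clean prefactor $\tfrac{1}{k+1}$. The generating-function step in (1)(i) is formally routine, but the binomial-series bookkeeping, essentially the identity $(-4)^k\binom{1/2}{k}=-\tfrac{2}{k}\binom{2k-2}{k-1}$ for $k\geq 1$, also needs to be carried out carefully.
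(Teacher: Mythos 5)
Your proof is correct and follows essentially the route the paper intends: the paper delegates the proof to Exercises \ref{exercise:2} and \ref{exercise:3}, where part (1) is handled by exactly the generating-function argument you describe (the exercise runs it from the recursion towards the closed form, you run it from the closed form $C_k=\frac{1}{k+1}\binom{2k}{k}$ towards the recursion, but both pass through the same functional equation $f(z)=1+zf(z)^2$ and the same square root $\sqrt{1-4z}$), and part (2) by direct integration. Your computations — the substitution $x=2\cos\theta$, the Wallis-formula evaluation of the even moments, and the binomial-series identity $(-4)^k\binom{1/2}{k}=-\frac{2}{k}\binom{2k-2}{k-1}$ — all check out.
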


Exercises \ref{exercise:2} and \ref{exercise:3} address the proof of Theorem \ref{thm:1.4}.

%\begin{remark}[Type of convergence]
\section{Types of convergence}

So we are claiming that typically 
$$
\tr(A_N^2) \rightarrow 1,\qquad
\tr(A_N^4) \rightarrow 2, \qquad
\tr(A_N^6) \rightarrow 5, \quad
\tr(A_N^8) \rightarrow 14, \qquad
\tr(A_N^{10}) \rightarrow 42,
$$
and so forth. But what do we mean by \enquote{typically}?
The mathematical expression for this is \enquote{almost surely}, but for now let us look on the more intuitive \enquote{convergence in probability} for
$\tr(A_N^{2k}) \rightarrow C_k$.

Denote by $\Omega_N$ the set of our considered matrices, that is
\[ \Omega_N= \left\{  A_N = \frac{1}{\sqrt{N}} (a_{ij})_{i,j=1}^N \, | \,
A_N=A_N^* \text{ and } a_{ij} \in \{\pm 1\}
\right\}.
\]
Then convergence in probability means that for all $\varepsilon > 0$ we have
\begin{align}
\frac{\#\left\{
	A_N \in \Omega_N \, \big| \, \abs{ \tr\left(A_N^{2k}\right) -C_k }>\varepsilon
	\right\}}{\#\Omega_N}=
P\left(A_N \, \big| \, \abs{ \tr\left(A_N^{2k}\right) -C_k }>\varepsilon \right)
\xrightarrow{N\to \infty} 0.
\tag{$\star$}
\end{align}
How can we show ($\star$)? Usually, one proceeds as follows.
\begin{itemize}
	\item[(1)] First we show the weaker form of convergence in average, i.e., 
	\[ 
	\frac{ \sum_{A_N \in \Omega_N} \tr\left(A_N^{2k} \right)}{\#\Omega_N}
	=\ev{\tr\left(A_N^{2k} \right)} \xrightarrow{N \to \infty} C_k.
	\]
	\item[(2)] Then we show that with high probability the derivation from the average will become small as $N \rightarrow \infty$. 
\end{itemize}

We will first consider step (1); (2) is a concentration phenomenon and will be treated later.	

Note that step (1) is giving us the insight into why the semicircle shows up. Step (2) is more of a theoretical nature, adding nothing to our understanding of the semicircle, but making the (very interesting!) statement that in high dimensions the typical behaviour is close to the average behaviour.
%\end{remark}

\begin{comment}
\begin{remark}
Note that
\begin{align}
\frac{1}{N} \sum_{i=1}^{N} f(\lambda_i) \xrightarrow{N \to\infty} \int_\R f(x) \td \mu_W(x)
\tag{$\star$}
\end{align}
is actually a statement on convergence of measures, since
\[ \frac{1}{N} \sum_{i=1}^{N} f(\lambda_i) 
= \int_\R f(x) \td \mu_N(x)
\]
for the \emph{empirical spectral measure} or \emph{density of states}
\[ \mu_N = \frac{1}{N} \left( \delta_{\lambda_1} + \cdots+ \delta_{\lambda_N}   \right),
\]
where $\delta_\lambda$ is the \textbf{Dirac measure}
\[ \delta_\lambda(E) = \begin{cases}
0, &\mbox{} \lambda \not \in E, \\
1, &\mbox{} \lambda \in E.
\end{cases}
\]
Hence ($\star$) says that
\[ \int_\R f(x) \td \mu_N(x) \xrightarrow{N \to\infty} \int_\R f(x) \td \mu_W(x).
\]
If we require this for sufficiently many $f$, this is a kind of convergence $\mu_N \to \mu_W$
of measures.
We will need to understand such convergence better and develop tools (Stieltjes transform)
to deal with them.	
\end{remark}
\end{comment}

\chapter{Gaussian Random Matrices: Wick Formula and Combinatorial Proof of Wigner's Semicircle}

We want to prove convergence of our random matrices to the semicircle by showing
\[ \ev{\tr A_N^{2k}} \xrightarrow{N\to\infty} C_k,
\]
where the $C_k$ are the Catalan numbers.

Up to now our matrices were of the form
$A_N = \frac{1}{\sqrt{N}} (a_{ij})_{i,j=1}^N$ with $a_{ij} \in \{-1,1\}$.
From an analytic and also combinatorial point of view it is easier to deal with another choice for the $a_{ij}$, namely we will take them as Gaussian (aka normal) random variables; different $a_{ij}$ will still be, up to symmetry, independent. If we want to calculate the expectations $\tr(A^{2k})$, then we should of course understand how to calculate moments of independent Gaussian random variables. 

\section{Gaussian random variables and Wick formula}

\begin{definition}\label{def:2.1}
A \emph{standard Gaussian} (or \emph{normal}) random variable $X$ is a real-valued Gaussian random variable with mean $0$ and variance $1$, i.e., it has distribution
\[ \prob{t_1 \leq X \leq t_2} = \frac{1}{\sqrt{2\pi}} \int_{t_1}^{t_2} e^{- \frac{t^2}{2}} \td t
\]
and hence its moments are given by
\[ \ev{X^n} = \frac{1}{\sqrt{2\pi}} \int_\R t^n e^{- \frac{t^2}{2}} \td t.
\]
\end{definition}

\begin{prop}\label{prop:2.2}
The moments of a standard Gaussian random variable are of the form
\[  \frac{1}{\sqrt{2\pi}} \int_{-\infty}^\infty t^n e^{- \frac{t^2}{2}} \td t
= \begin{cases}
0, &\mbox{} n \text{ odd,} \\
(n-1)!!, &\mbox{} n \text{ even,}
\end{cases}
\]
where the ``double factorial'' is defined, for $m$ odd, as
\[ m!! = m(m-2)(m-4)\cdots 5\cdot3\cdot1. 
\]
\end{prop}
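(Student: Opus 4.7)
The plan is to establish both cases by direct analysis of the defining integral, together with a single integration by parts that yields a recursion for the even moments.

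First I would dispense with the odd case by symmetry: for $n$ odd, the integrand $t^n e^{-t^2/2}$ is an odd function of $t$, and since the superpolynomial decay of $e^{-t^2/2}$ ensures absolute integrability, the integral over $\R$ vanishes.

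For the even case, my strategy is to derive the recursion $\ev{X^n} = (n-1)\ev{X^{n-2}}$ for $n \geq 2$ via integration by parts, exploiting the identity $t\,e^{-t^2/2} = -\frac{d}{dt} e^{-t^2/2}$. Writing $t^n = t^{n-1} \cdot t$ and integrating by parts gives
\[
\int_{-\infty}^{\infty} t^n e^{-t^2/2} \td t = \left[-t^{n-1} e^{-t^2/2}\right]_{-\infty}^{\infty} + (n-1)\int_{-\infty}^{\infty} t^{n-2} e^{-t^2/2} \td t,
\]
where the boundary term vanishes thanks to the Gaussian decay. Dividing by $\sqrt{2\pi}$ gives the desired recursion.

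To close the induction I need the base case $\ev{X^0} = 1$, which is the normalization of the Gaussian density (and may be cited as a known fact, or proved by the standard polar-coordinates trick on $(\int e^{-t^2/2}\td t)^2$). Iterating the recursion $n/2$ times for $n = 2k$ yields
\[
\ev{X^{2k}} = (2k-1)(2k-3)\cdots 3 \cdot 1 \cdot \ev{X^0} = (2k-1)!!,
\]
which matches the claimed formula $(n-1)!!$. I do not expect any genuine obstacle: the only points requiring care are the decay justification for the vanishing boundary term and the legitimacy of interchanging limits implicit in the integration by parts, both of which are immediate from the super-exponential decay of $e^{-t^2/2}$ against any polynomial.
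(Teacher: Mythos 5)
Your proof is correct and is the standard argument the paper has in mind (the paper relegates this to Exercise 5 rather than proving it in the text): odd moments vanish by symmetry of the integrand, and the even moments follow from the integration-by-parts recursion $m_n = (n-1)m_{n-2}$ anchored at the normalization $m_0 = 1$. All the analytic points you flag (vanishing boundary terms, absolute integrability) are indeed immediate from the Gaussian decay, so there is nothing to add.
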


You are asked to prove this in Exercise \ref{exercise:5}.

\begin{remark}
From an analytic point of view
it is surprising that those integrals evaluate to natural numbers.
They actually count interesting combinatorial objects,
\[ \ev{X^{2k}} = \#\{ \text{pairings of } 2k \text{ elements} \}.
\]
\end{remark}

\begin{definition}\label{def:2.4}
\begin{enumerate}
\item For a natural number $n \in\N$ we put $[n] = \{1,\dots, n \}$.
\item A \emph{pairing} $\pi$ of $[n]$ is a decomposition of $[n]$ into disjoints subsets of size $2$, i.e., $\pi = \{ V_1, \dots, V_k \}$ such that for all $i,j=1,\dots, k$ with $i\neq j$, we have:
\begin{itemize}
\item $V_i \subset [n]$
\item $\# V_i = 2$
\item $V_i \cap V_j = \emptyset$
\item $\bigcup_{i=1}^k V_i = [n]$
\end{itemize}
Note that necessarily $k = \frac{n}{2}$.
\item The set of all pairings of $[n]$ is denoted by
\[ \pair(n) = \{\pi\,|\, \pi \text{ is a pairing of } [n]  \}.
\]
\end{enumerate}
\end{definition}

\begin{prop}
\begin{enumerate}
\item We have 
\[\#\pair(n) = \begin{cases}
0, &\mbox{} n \text{ odd,} \\
(n-1)!!, &\mbox{} n \text{ even.}
\end{cases}
\]
\item Hence for a standard Gaussian variable $X$ we have
\[ \ev{X^n} = \#\pair(n).
\]
\end{enumerate}
\end{prop}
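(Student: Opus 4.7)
The proof splits naturally along the two parts. For part (2), nothing new is required: combining Proposition \ref{prop:2.2} with the claim of part (1) immediately gives $\ev{X^n} = \#\pair(n)$ in both parities, since both sides vanish when $n$ is odd and both equal $(n-1)!!$ when $n$ is even. So all the content lies in part (1).

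For the odd case of part (1), the argument is immediate: a pairing partitions $[n]$ into blocks of size $2$, so $n$ must equal twice the number of blocks and is therefore even. Hence $\pair(n) = \emptyset$ when $n$ is odd.

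For the even case, my plan is to establish a one-step recursion. Writing $n = 2k$, I single out the element $1 \in [n]$ and classify a pairing $\pi \in \pair(n)$ according to the partner of $1$. There are exactly $n-1$ possible partners, namely any element of $\{2, 3, \dots, n\}$; once this partner is chosen, the block of $\pi$ containing $1$ is fixed, and the remaining $n-2$ elements form a set which must itself be partitioned into pairs in all possible ways. This gives
\[ \#\pair(n) = (n-1) \cdot \#\pair(n-2). \]
Together with the base case $\#\pair(0) = 1$ (the empty set has a unique, empty pairing), a routine induction on $k$ yields
\[ \#\pair(n) = (n-1)(n-3) \cdots 3 \cdot 1 = (n-1)!!, \]
as claimed.

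There is essentially no technical obstacle here — the whole proof is a clean one-line recursion. The only minor points to watch are the base case and the correct matching of the recursion with the definition of the double factorial. The interesting content is conceptual rather than technical: via Proposition \ref{prop:2.2} this proposition identifies Gaussian moments with a combinatorial count of pairings, which is exactly the identification the Wick-formula approach to Wigner's semicircle law will build upon.
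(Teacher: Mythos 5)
Your proof is correct and follows essentially the same route as the paper: single out the element $1$, note its partner can be chosen in $n-1$ ways, derive the recursion $\#\pair(n) = (n-1)\cdot\#\pair(n-2)$, and iterate, with part (2) then following from Proposition \ref{prop:2.2}. The only cosmetic difference is your choice of base case ($\#\pair(0)=1$ together with a separate parity argument for odd $n$, versus the paper's $\#\pair(1)=0$, $\#\pair(2)=1$), which changes nothing of substance.
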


\begin{proof}
\begin{enumerate}
\item
Count elements in $\pair(n)$ in a recursive way. Choose the pair which contains the element $1$, for this we have $n-1$ possibilities. Then we are left with choosing a pairing of the remaining $n-2$ numbers. Hence we have
\[ \#\pair(n) = (n-1) \cdot \#\pair(n-2).
\]
Iterating this and noting that $\#\pair(1)=0$ and $\#\pair(2)=1$ gives the desired result.

\item
Follows from (i) and Proposition \ref{prop:2.2}.
\end{enumerate}
\end{proof}

\begin{example}
Usually we draw our partitions by connecting the elements in each pair.
Then $\ev{X^2} = 1$ corresponds to the single partition
\[ \begin{tikzpicture}[thick,font=\small, node distance=1cm and 5mm, baseline={([yshift=-.5ex]current bounding box.center)}]
\tikzset{dot/.style={circle,fill=#1,inner sep=0,minimum size=4pt}}

\node (1) 					{1};
\node (2)	[right=of 1] 	{2};

\node [dot=black] at ($ (1) + (0,-.3) $) {};	
\node [dot=black] at ($ (2) + (0,-.3) $) {};

\draw (1)	-- ++(0,-.7)		-| (2);	
\end{tikzpicture}
\]
and $\ev{X^4} = 3$ corresponds to the three partitions
\[ \begin{tikzpicture}[thick,font=\small, node distance=1cm and 5mm, baseline={([yshift=-.5ex]current bounding box.center)}]
\tikzset{dot/.style={circle,fill=#1,inner sep=0,minimum size=4pt}}

\node (1) 					{1};
\node (2)	[right=of 1] 	{2};
\node (3) 	[right=of 2]	{3};
\node (4)	[right=of 3] 	{4};

\node [dot=black] at ($ (1) + (0,-.3) $) {};	
\node [dot=black] at ($ (2) + (0,-.3) $) {};	
\node [dot=black] at ($ (3) + (0,-.3) $) {};	
\node [dot=black] at ($ (4) + (0,-.3) $) {};

\draw (1)	-- ++(0,-.7)		-| (2);	
\draw (3)	-- ++(0,-.7)		-| (4);	
\end{tikzpicture}, \qquad
\begin{tikzpicture}[thick,font=\small, node distance=1cm and 5mm, baseline={([yshift=-.5ex]current bounding box.center)}]
\tikzset{dot/.style={circle,fill=#1,inner sep=0,minimum size=4pt}}

\node (1) 					{1};
\node (2)	[right=of 1] 	{2};
\node (3) 	[right=of 2]	{3};
\node (4)	[right=of 3] 	{4};

\node [dot=black] at ($ (1) + (0,-.3) $) {};	
\node [dot=black] at ($ (2) + (0,-.3) $) {};	
\node [dot=black] at ($ (3) + (0,-.3) $) {};	
\node [dot=black] at ($ (4) + (0,-.3) $) {};

\draw (1)	-- ++(0,-.7)		-| (3);	
\draw (2)	-- ++(0,-1.1)		-| (4);	
\end{tikzpicture}, \qquad
\begin{tikzpicture}[thick,font=\small, node distance=1cm and 5mm, baseline={([yshift=-.5ex]current bounding box.center)}]
\tikzset{dot/.style={circle,fill=#1,inner sep=0,minimum size=4pt}}

\node (1) 					{1};
\node (2)	[right=of 1] 	{2};
\node (3) 	[right=of 2]	{3};
\node (4)	[right=of 3] 	{4};

\node [dot=black] at ($ (1) + (0,-.3) $) {};	
\node [dot=black] at ($ (2) + (0,-.3) $) {};	
\node [dot=black] at ($ (3) + (0,-.3) $) {};	
\node [dot=black] at ($ (4) + (0,-.3) $) {};

\draw (1)	-- ++(0,-1.1)		-| (4);	
\draw (2)	-- ++(0,-.7)		-| (3);	
\end{tikzpicture}.
\]
\end{example}

\begin{remark}[Independent Gaussian random variables]
We will have several, say two, Gaussian random variables $X,Y$ and have to calculate their joint moments.
The random variables are independent; this means that their joint distribution is the product measure of the single distributions,
\[ \prob{t_1\leq X\leq t_2, s_1\leq Y\leq s_2}
= \prob{t_1\leq X\leq t_2}\cdot \prob{s_1\leq Y\leq s_2},
\]
so in particular, for the moments we have
$$\ev{X^nY^m} 
= \ev{X^n}\cdot  \ev{Y^m}.$$
This gives then also a combinatorial description for their mixed moments:
\begin{align*}
\ev{X^nY^m} 
&= \ev{X^n} \cdot\ev{Y^m} \\
&=\#\{\text{pairings of } \underbrace{X \cdots X}_{n}  \}
	\cdot \#\{\text{pairings of } \underbrace{Y \cdots Y}_{m}  \} \\
&=\#\{\text{pairings of } \underbrace{X \cdots X}_{n}\underbrace{Y \cdots Y}_{m} \text{ which connect $X$ with $X$ and $Y$ with $Y$}  \}.
\end{align*}

\begin{example*} We have $\ev{XXYY}=1$ since the only possible pairing is
\[ \begin{tikzpicture}[thick,font=\small, node distance=1cm and 5mm, baseline={([yshift=-.5ex]current bounding box.center)}]
\tikzset{dot/.style={circle,fill=#1,inner sep=0,minimum size=4pt}}

\node (1) 					{X};
\node (2)	[right=of 1] 	{X};
\node (3) 	[right=of 2]	{Y};
\node (4)	[right=of 3] 	{Y};

\node [dot=black] at ($ (1) + (0,-.3) $) {};	
\node [dot=black] at ($ (2) + (0,-.3) $) {};	
\node [dot=black] at ($ (3) + (0,-.3) $) {};	
\node [dot=black] at ($ (4) + (0,-.3) $) {};

\draw (1)	-- ++(0,-.7)		-| (2);	
\draw (3)	-- ++(0,-.7)		-| (4);	
\end{tikzpicture}.
\]
On the other hand, $\ev{XXXYXY}=3$ since we have the following three possible pairings:

\resizebox{\textwidth}{!}{%
\begin{tikzpicture}[thick,font=\small, node distance=1cm and 5mm, baseline={([yshift=-.5ex]current bounding box.center)}]
\tikzset{dot/.style={circle,fill=#1,inner sep=0,minimum size=4pt}}

\node (1) 					{X};
\node (2)	[right=of 1] 	{X};
\node (3) 	[right=of 2]	{X};
\node (4)	[right=of 3] 	{Y};
\node (5)	[right=of 4] 	{X};
\node (6)	[right=of 5] 	{Y};

\node [dot=black] at ($ (1) + (0,-.3) $) {};	
\node [dot=black] at ($ (2) + (0,-.3) $) {};	
\node [dot=black] at ($ (3) + (0,-.3) $) {};	
\node [dot=black] at ($ (4) + (0,-.3) $) {};
\node [dot=black] at ($ (5) + (0,-.3) $) {};	
\node [dot=black] at ($ (6) + (0,-.3) $) {};

\draw (1)	-- ++(0,-.7)		-| (2);	
\draw (3)	-- ++(0,-.7)		-| (5);	
\draw (4)	-- ++(0,-1.1)		-| (6);	
\end{tikzpicture}
\qquad\qquad
\begin{tikzpicture}[thick,font=\small, node distance=1cm and 5mm, baseline={([yshift=-.5ex]current bounding box.center)}]
\tikzset{dot/.style={circle,fill=#1,inner sep=0,minimum size=4pt}}

\node (1) 					{X};
\node (2)	[right=of 1] 	{X};
\node (3) 	[right=of 2]	{X};
\node (4)	[right=of 3] 	{Y};
\node (5)	[right=of 4] 	{X};
\node (6)	[right=of 5] 	{Y};

\node [dot=black] at ($ (1) + (0,-.3) $) {};	
\node [dot=black] at ($ (2) + (0,-.3) $) {};	
\node [dot=black] at ($ (3) + (0,-.3) $) {};	
\node [dot=black] at ($ (4) + (0,-.3) $) {};
\node [dot=black] at ($ (5) + (0,-.3) $) {};	
\node [dot=black] at ($ (6) + (0,-.3) $) {};

\draw (1)	-- ++(0,-1.1)		-| (3);	
\draw (2)	-- ++(0,-.7)		-| (5);	
\draw (4)	-- ++(0,-1.1)		-| (6);	
\end{tikzpicture}
\qquad\qquad
\begin{tikzpicture}[thick,font=\small, node distance=1cm and 5mm, baseline={([yshift=-.5ex]current bounding box.center)}]
\tikzset{dot/.style={circle,fill=#1,inner sep=0,minimum size=4pt}}

\node (1) 					{X};
\node (2)	[right=of 1] 	{X};
\node (3) 	[right=of 2]	{X};
\node (4)	[right=of 3] 	{Y};
\node (5)	[right=of 4] 	{X};
\node (6)	[right=of 5] 	{Y};

\node [dot=black] at ($ (1) + (0,-.3) $) {};	
\node [dot=black] at ($ (2) + (0,-.3) $) {};	
\node [dot=black] at ($ (3) + (0,-.3) $) {};	
\node [dot=black] at ($ (4) + (0,-.3) $) {};
\node [dot=black] at ($ (5) + (0,-.3) $) {};	
\node [dot=black] at ($ (6) + (0,-.3) $) {};

\draw (1)	-- ++(0,-1.1)		-| (5);	
\draw (2)	-- ++(0,-.7)		-| (3);	
\draw (4)	-- ++(0,-.7)		-| (6);	
\end{tikzpicture}
}
\end{example*}

Consider now $x_1,\dots, x_n \in \{X,Y\}$. Then we still have 
\[ \ev{x_1 \dots x_n} =\#\{ \text{pairings which connect $X$ with $X$ and $Y$ with $Y$}  \}.
\]
Can we decide in a more abstract way whether $x_i=x_j$ or $x_i \neq x_j$? Yes, we can read this from the corresponding second moment, since
\[ \ev{x_ix_j} = \begin{cases}
\ev{x_i^2} = 1, &\mbox{} x_i = x_j, \\
\ev{x_i}\ev{x_j} = 0, &\mbox{} x_i \neq x_j. \\
\end{cases}
\]
Hence we have:
\begin{align*}
\ev{x_1 \cdots x_n}
&= \sum_{\pi\in \pair(n) }  \prod_{(i,j) \in \pi} \ev{x_ix_j}
\end{align*}
\end{remark}

\begin{theorem}[Wick 1950, physics; Isserlis 1918, statistics]\label{thm:2.8}
Let $Y_1,\dots, Y_p$ be independent standard Gaussian random variables and consider 
$x_1,\dots, x_n \in\{Y_1,\dots, Y_p \}$. Then we have the \emph{Wick formula}
\[ \ev{x_1 \cdots x_n} = \sum_{\pi\in \pair(n) }  \evpartition{\pi}{x_1, \dots ,x_n},
\]
where, for $\pi \in \pair(n)$, we use the notation
\[\evpartition{\pi}{x_1, \dots, x_n} = \prod_{(i,j) \in \pi} \ev{x_ix_j}.
\]
\end{theorem}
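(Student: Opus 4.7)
The plan is to reduce the formula to the single-variable case (Proposition 2.2) via independence, and then match the resulting factorization with an analogous factorization on the combinatorial side.

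First I would group the $x_i$'s by color: for each $j \in \{1, \dots, p\}$, let $I_j = \{i \in [n] \mid x_i = Y_j\}$ and $n_j = \#I_j$, so that $I_1, \dots, I_p$ partition $[n]$ and $n_1 + \cdots + n_p = n$. By independence of the $Y_j$ and by Proposition 2.2,
\[
\ev{x_1 \cdots x_n} \;=\; \prod_{j=1}^{p} \ev{Y_j^{n_j}} \;=\; \prod_{j=1}^{p} \# \pair(n_j),
\]
with the convention that $\#\pair(n_j) = 0$ whenever $n_j$ is odd.

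Next I would analyze the right-hand side. Because the $Y_j$ have mean $0$ and variance $1$, and are independent, the second moments satisfy $\ev{x_i x_j} = \mathbf{1}_{x_i = x_j}$; equivalently, $\ev{x_i x_j} = 1$ if $i, j$ lie in the same color class $I_k$, and $= 0$ otherwise. Consequently a summand $\evpartition{\pi}{x_1,\dots,x_n} = \prod_{(i,j)\in\pi}\ev{x_i x_j}$ is either $0$ or $1$, and it equals $1$ exactly when every pair $(i,j) \in \pi$ is \emph{monochromatic}, i.e.\ $i, j \in I_k$ for some $k$. Such a pairing of $[n]$ decomposes uniquely as a disjoint union $\pi = \pi_1 \sqcup \cdots \sqcup \pi_p$ where $\pi_k \in \pair(I_k) \cong \pair(n_k)$. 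Hence
\[
\sum_{\pi \in \pair(n)} \evpartition{\pi}{x_1, \dots, x_n} \;=\; \#\{\text{monochromatic pairings of } [n]\} \;=\; \prod_{j=1}^{p} \#\pair(n_j),
\]
which matches the expression for $\ev{x_1 \cdots x_n}$ derived above. Note that if some $n_j$ is odd then no monochromatic pairing exists, so both sides vanish, consistent with the factor $\#\pair(n_j) = 0$.

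I do not expect a serious obstacle here: everything follows from independence and the already-established single-variable result. The one thing to handle carefully is precisely the odd-$n_j$ case, which must be seen to make both sides vanish simultaneously; beyond that the argument is essentially the bijection between monochromatic pairings of $[n]$ and the product $\prod_j \pair(I_j)$.
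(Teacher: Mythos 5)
Your proof is correct and follows essentially the same route as the paper: the remark preceding Theorem \ref{thm:2.8} derives the formula (for two independent Gaussians, with the general case implicit) exactly by combining independence with the single-variable moment count $\ev{X^{2k}}=\#\pair(2k)$ and observing that $\ev{x_ix_j}$ is the indicator of $x_i=x_j$, so that only ``monochromatic'' pairings contribute. Your write-up for general $p$, including the explicit bijection with $\prod_j\pair(I_j)$ and the odd-$n_j$ case, is a careful version of that same argument.
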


Note that the Wick formula is linear in the $x_i$, hence it remains valid if we replace the $x_i$ by linear combinations of the $x_j$. In particular, we can go over to complex Gaussian variables.

\begin{definition}
A \emph{standard complex Gaussian} random variable $Z$ is of the form
\[ Z = \frac{X+iY}{\sqrt{2}},
\]
where $X$ and $Y$ are independent standard real Gaussian variables.
\end{definition}

\begin{remark}
Let $Z$ be a standard complex Gaussian, i.e., $Z = \frac{X+iY}{\sqrt{2}}$.
Then we have $\bar Z= \frac{X-iY}{\sqrt{2}}$ and the first and second moments are given by
\begin{itemize}
\item
$\ev{Z} 
= 0=
\ev{\bar{Z}}$
\item
$\ev{Z^2}
= \ev{ZZ}
= \frac{1}{2} \left[\ev{XX} - \ev{YY} + i \left( \ev{XY} + \ev{YX}  \right)  \right] = 0$
\item
$\ev{\bar{Z}^2} =0$
\item
$\ev{\vert {Z}\vert^2}
= \ev{Z\bar{Z}}
= \frac{1}{2} \left[ \ev{XX} + \ev{YY}+i\left( \ev{YX} - \ev{YX}  \right)   \right]
=1$
\end{itemize}
Hence, for $z_1,z_2 \in \{ Z,\bar{Z} \}$ and 
$ \pi = \begin{tikzpicture}[thick,font=\small, node distance=1cm and 5mm, baseline={([yshift=-.5ex]current bounding box.center)}]
\tikzset{dot/.style={circle,fill=#1,inner sep=0,minimum size=4pt}}

\node (1) 					{$z_1$};
\node (2)	[right=of 1] 	{$z_2$};

\node [dot=black] at ($ (1) + (0,-.3) $) {};	
\node [dot=black] at ($ (2) + (0,-.3) $) {};

\draw (1)	-- ++(0,-.7)		-| (2);	
\end{tikzpicture}
$
we have
\[ \ev{z_1z_2} = \begin{cases}
1, &\mbox{} \text{$\pi$ connects $Z$ with $\bar{Z}$,} \\
0, &\mbox{} \text{$\pi$ connects ($Z$ with $Z$) or ($\bar{Z}$ with $\overline{Z}$).}
\end{cases}
\]
\end{remark}

\begin{theorem}\label{thm:2.11}
Let $Z_1, \dots, Z_p$ be independent standard complex Gaussian random variables and consider $z_1, \dots, z_n \in \{ Z_1, \bar{Z_1}, \dots, Z_p, \bar{Z_p} \}$. Then we have the Wick formula
\begin{align*}
\ev{z_1 \cdots z_n}
&= \sum_{\pi \in \pair(n)}  \evpartition{\pi}{z_1,\dots,z_n} \\
&= \#\{\text{pairings of $[n]$ which connect $Z_i$ with $\bar{Z_i}$}\}.
\end{align*}
\end{theorem}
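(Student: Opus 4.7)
The plan is to deduce the complex Wick formula directly from the real Wick formula (Theorem \ref{thm:2.8}) together with the second-moment computations for a complex Gaussian already recorded in the preceding remark.

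First, observe that each $z_k \in \{Z_1,\bar Z_1,\dots,Z_p,\bar Z_p\}$ is, by definition of a standard complex Gaussian $Z_j=(X_j+iY_j)/\sqrt{2}$, a $\C$-linear combination of the $2p$ independent real standard Gaussians $X_1,Y_1,\dots,X_p,Y_p$. Since expectation is $\C$-linear and the real Wick formula (Theorem \ref{thm:2.8}) is multilinear in its arguments, it extends verbatim to complex linear combinations of independent real Gaussians (this is precisely the remark the author makes between Theorem \ref{thm:2.8} and Definition 2.9). Applying this extension to $z_1,\dots,z_n$ immediately yields the first claimed identity
\[
\ev{z_1\cdots z_n}=\sum_{\pi\in\pair(n)}\evpartition{\pi}{z_1,\dots,z_n}.
\]

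Next, the second equality reduces to evaluating each factor $\ev{z_iz_j}$ appearing in $\evpartition{\pi}{z_1,\dots,z_n}$. For $z_i,z_j$ belonging to the family $\{Z_1,\bar Z_1,\dots,Z_p,\bar Z_p\}$, I split into cases. If $z_i$ and $z_j$ involve different indices (say one comes from $Z_k$ and the other from $Z_\ell$ with $k\neq\ell$), independence gives $\ev{z_iz_j}=\ev{z_i}\ev{z_j}=0$, since each standard complex Gaussian has mean zero. If they involve the same index $k$, the preceding remark's computation shows $\ev{Z_kZ_k}=\ev{\bar Z_k\bar Z_k}=0$ while $\ev{Z_k\bar Z_k}=1$. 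Consequently $\ev{z_iz_j}\in\{0,1\}$, and it equals $1$ if and only if $\{z_i,z_j\}=\{Z_k,\bar Z_k\}$ for some $k$.

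It follows that the product $\evpartition{\pi}{z_1,\dots,z_n}=\prod_{(i,j)\in\pi}\ev{z_iz_j}$ is $1$ precisely when every block of $\pi$ pairs some $Z_k$ with its conjugate $\bar Z_k$, and is $0$ otherwise. Summing over $\pi\in\pair(n)$ therefore counts exactly the pairings of $[n]$ that connect each $Z_i$ with a $\bar Z_i$, giving the second equality. There is no real obstacle here: the only nontrivial ingredient is the legitimacy of invoking the real Wick formula for $\C$-linear combinations of real Gaussians, and this was already granted in the remark following Theorem \ref{thm:2.8}.
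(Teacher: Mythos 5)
Your proposal is correct and follows essentially the same route as the paper, which states Theorem \ref{thm:2.11} as an immediate consequence of the linearity remark after Theorem \ref{thm:2.8} together with the second-moment computations $\ev{Z^2}=\ev{\bar Z^2}=0$ and $\ev{Z\bar Z}=1$ from the preceding remark. Your case analysis of $\ev{z_iz_j}$ and the resulting $0$--$1$ evaluation of $\evpartition{\pi}{z_1,\dots,z_n}$ is exactly the intended argument.
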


\section{Gaussian random matrices and genus expansion}

Now we are ready to consider random matrices with such complex Gaussians as entries.

\begin{definition}
A \emph{Gaussian random matrix} is of the form $A_N = \frac{1}{\sqrt{N}} \left(a_{ij}\right)_{i,j=1}^N$, where
\begin{itemize}
\item $A_N =A_N^*$, i.e., $a_{ij} = {\bar a_{ji}}$ for all $i,j$,
\item $\{ a_{ij} \,|\, i \geq j \}$ are independent,
\item each $a_{ij}$ is a standard Gaussian random variable, which is complex for $i\neq j$ and real for $i=j$.
\end{itemize}
\end{definition}

\begin{remark}
\begin{enumerate}
\item More precisely, we should address the above as \emph{selfadjoint} Gaussian random matrices.
\item Another common name for those random matrices is \emph{\GUE}, which stands for \emph{Gaussian unitary ensemble}. \enquote{Unitary} corresponds here to the fact that the entries are complex, since such matrices are invariant under unitary transformations. 
With $\GUEN$ we denote the \GUE\ of size $N\times N$.
There are also real and quaternionic versions, \emph{Gaussian orthogonal ensembles} \GOE, and \emph{Gaussian symplectic ensembles} \GSE.
\item Note that we can also express this definition in terms of the Wick formula \ref{thm:2.11} as 
\begin{align*}
\ev{a_{i(1)j(1)} \cdots a_{i(n)j(n)} } 
&= \sum_{\pi \in \pair(n)}   \evpartition{\pi}{a_{i(1)j(1)}, \dots, a_{i(n)j(n)}},
\end{align*}
for all $n$ and $1 \leq i(1),j(1), \dots, i(n),j(n) \leq N$, and where the second moments are given by
\begin{align*}
\ev{a_{ij}a_{kl}}
&= \delta_{il} \delta_{jk}.
\end{align*}
So we have for example for the fourth moment
\begin{align*}
\ev{a_{i(1)j(1)}a_{i(2)j(2)}a_{i(3)j(3)}a_{i(4)j(4)}}=
&\delta_{i(1)j(2)}\delta_{j(1)i(2)}\delta_{i(3)j(4)}\delta_{j(3)i(4)}\\
&+ \delta_{i(1)j(3)}\delta_{j(1)i(3)}\delta_{i(2)j(4)}\delta_{j(2)i(4)}\\
&+ \delta_{i(1)j(4)}\delta_{j(1)i(4)}\delta_{i(2)j(3)}\delta_{j(2)i(3)},
\end{align*}
and more concretely,
$\ev{a_{12}a_{21}a_{11}a_{11}}=1$ and $\ev{a_{12}a_{12}a_{21}a_{21}}=2$.

\end{enumerate}
\end{remark}

\begin{remark}[Calculation of $\ev{\tr(A_N^m)}$]\label{rem:2.14}
For our Gaussian random matrix we want to calculate their moments
\begin{align*}
\ev{\tr(A_N^m)}
&= \frac{1}{N}  \frac{1}{N^{m/2}} \sum_{i(1), \dots, i(m) = 1}^N \ev{a_{i(1)i(2)}a_{i(2)i(3)} \cdots a_{i(m)i(1)} }.
\end{align*}
Let us first consider small examples before we treat the general case:
\begin{enumerate}
\item $$ \ev{\tr(A_N^2)}
= \frac{1}{N^2}   \sum_{i,j = 1}^N \underbrace{\ev{a_{ij}a_{ji} }}_{=1} 
=\frac{1}{N^2} N^2 = 1, $$
and hence: $\ev{\tr(A_N^2)}=1=C_1$ for all $N$.
\item We consider the partitions

\resizebox{.9\textwidth}{!}{%  
$\pi_1=$  \begin{tikzpicture}[thick,font=\small, node distance=1cm and 5mm, baseline={([yshift=-.5ex]current bounding box.center)}, scale=0.8]
\tikzset{dot/.style={circle,fill=#1,inner sep=0,minimum size=4pt}}

\node (1) 					{$1$};
\node (2)	[right=of 1] 	{$2$};
\node (3) 	[right=of 2]	{$3$};
\node (4)	[right=of 3] 	{$4$};

\node [dot=black] at ($ (1) + (0,-.3) $) {};	
\node [dot=black] at ($ (2) + (0,-.3) $) {};	
\node [dot=black] at ($ (3) + (0,-.3) $) {};	
\node [dot=black] at ($ (4) + (0,-.3) $) {};

\draw (1)	-- ++(0,-.7)		-| (2);	
\draw (3)	-- ++(0,-.7)		-| (4);	
\end{tikzpicture},
$\quad\pi_2 =$ \begin{tikzpicture}[thick,font=\small, node distance=1cm and 5mm, baseline={([yshift=-.5ex]current bounding box.center)},scale=0.8]
\tikzset{dot/.style={circle,fill=#1,inner sep=0,minimum size=4pt}}

\node (1) 					{$1$};
\node (2)	[right=of 1] 	{$2$};
\node (3) 	[right=of 2]	{$3$};
\node (4)	[right=of 3] 	{$4$};

\node [dot=black] at ($ (1) + (0,-.3) $) {};	
\node [dot=black] at ($ (2) + (0,-.3) $) {};	
\node [dot=black] at ($ (3) + (0,-.3) $) {};	
\node [dot=black] at ($ (4) + (0,-.3) $) {};

\draw (1)	-- ++(0,-.7)		-| (3);	
\draw (2)	-- ++(0,-1.1)		-| (4);	
\end{tikzpicture},
$\quad\pi_3 =$ \begin{tikzpicture}[thick,font=\small, node distance=1cm and 5mm, baseline={([yshift=-.5ex]current bounding box.center)},scale=0.8]
\tikzset{dot/.style={circle,fill=#1,inner sep=0,minimum size=4pt}}

\node (1) 					{$1$};
\node (2)	[right=of 1] 	{$2$};
\node (3) 	[right=of 2]	{$3$};
\node (4)	[right=of 3] 	{$4$};

\node [dot=black] at ($ (1) + (0,-.3) $) {};	
\node [dot=black] at ($ (2) + (0,-.3) $) {};	
\node [dot=black] at ($ (3) + (0,-.3) $) {};	
\node [dot=black] at ($ (4) + (0,-.3) $) {};

\draw (1)	-- ++(0,-1.1)		-| (4);	
\draw (2)	-- ++(0,-.7)		-| (3);	
\end{tikzpicture}.
}

With this, we have
\[\ev{\tr(A_N^4)}
= \frac{1}{N^3}  \sum_{i,j,k,l = 1}^N  \underbrace{\ev{a_{ij}a_{jk}a_{kl}a_{li} }}_{=\evpartition{\pi_1}{\dots}+\evpartition{\pi_2}{\dots}+\evpartition{\pi_3}{\dots}}
\]
and calculate
\begin{align*}
\sum_{i,j,k,l = 1}^N\evpartition{\pi_1}{a_{ij},a_{jk},a_{kl},a_{li}}
&= \sum_{\stackrel{i,j,k,l = 1}{i=k}}^N 1 = N^3, \\
\sum_{i,j,k,l = 1}^N\evpartition{\pi_2}{a_{ij},a_{jk},a_{kl},a_{li}}
&= \sum_{\stackrel{i,j,k,l = 1}{j=l}}^N 1 = N^3, \\
\sum_{i,j,k,l = 1}^N\evpartition{\pi_3}{a_{ij},a_{jk},a_{kl},a_{li}}
&= \sum_{\stackrel{i,j,k,l = 1}{i=l, j=k, j=i, k=l}}^N 1 
= \sum_{i=1}^N 1 = N,
\end{align*}
hence 
\[ \ev{\tr(A_N^4)}
= \frac{1}{N^3}  \left( N^3+N^3+N  \right)
= 2+ \frac{1}{N^2}.
\]
So we have
\[\lim\limits_{N\to\infty}  \ev{\tr(A_N^4)} = 2 = C_2.
\]
\item
Let us now do the general case.
\begin{align*}
\ev{a_{i(1)i(2)}a_{i(2)i(3)} \cdots a_{i(m)i(1)} }
&= \sum_{\pi\in\pair(m)} \evpartition{\pi}{a_{i(1)i(2)},a_{i(2)i(3)}, \dots, a_{i(m)i(1)} } \\
&= \sum_{\pi\in\pair(m)} \prod_{(k,l) \in \pi} \ev{a_{i(k)i(k+1)}a_{i(l)i(l+1)} }. 
\end{align*}
We use the notation $[i=j] = \delta_{ij}$ and, by identifying a pairing $\pi$ with a permutation $\pi \in S_m$ via
\[ (k,l) \in \pi \leftrightarrow \pi(k)=l, \pi(l) =k,
\]
find that
\begin{align*}
\ev{\tr(A_N^m)}
&= \frac{1}{N^{m/2+1}} \sum_{i(1), \dots, i(m) = 1}^N
	\sum_{\pi\in\pair(m)} \prod_{(k,l) \in \pi} \ev{a_{i(k)i(k+1)}a_{i(l)i(l+1)} } \\
&= \frac{1}{N^{m/2+1}} \sum_{\pi\in\pair(m)} \sum_{i(1), \dots, i(m) = 1}^N
	 \prod_{k} \bigl[ i(k) = i(\underbrace{\pi(k)+1}_{\gamma \pi(k)})  \bigr],
\end{align*}
where $\gamma = (1,2, \dots, m) \in S_m$ is the shift by $1$ modulo $m$. The above product is different from $0$ if and only if $ i \colon [m] \to [N]$
is constant on the cycles of $\gamma\pi \in S_m$. Thus we get finally
\begin{align*}
\ev{\tr(A_N^m)}
&= \frac{1}{N^{m/2+1}} \sum_{\pi\in\pair(m)} N^{\#(\gamma\pi)},
\end{align*}
where $\#(\gamma\pi)$ is the number of cycles of the permutation $\gamma\pi$.
\end{enumerate}
\end{remark}

Hence we have proved the following.

\begin{theorem}\label{thm:2.15}
Let $A_N$ be a \GUEN\ random matrix. Then we have for all $m\in\N$,
\begin{align*}
\ev{\tr(A_N^m)}
&=  \sum_{\pi\in\pair(m)} N^{\#(\gamma\pi)- \frac{m}{2} -1}.
\end{align*}
\end{theorem}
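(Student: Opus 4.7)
The plan is to chase through the definition of $\tr(A_N^m)$, apply the Wick formula from Theorem \ref{thm:2.11}, and translate the resulting constraints on indices into a count of cycles of a certain permutation. In fact all of these steps appear already in Remark \ref{rem:2.14}; my proof would simply present them cleanly in the general case.

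First, I would write out
\[ \ev{\tr(A_N^m)} = \frac{1}{N^{m/2+1}} \sum_{i(1),\dots,i(m)=1}^N \ev{a_{i(1)i(2)} a_{i(2)i(3)} \cdots a_{i(m)i(1)}} \]
and then apply the complex Wick formula (Theorem \ref{thm:2.11}) to each inner expectation, expanding it as a sum over $\pi \in \pair(m)$ of products of second moments $\ev{a_{i(k)i(k+1)} a_{i(l)i(l+1)}}$ over pairs $(k,l)\in\pi$. Using the covariance formula $\ev{a_{ij}a_{kl}} = \delta_{il}\delta_{jk}$ from Remark 2.13(iii), each such factor becomes $\delta_{i(k),i(l+1)} \delta_{i(k+1),i(l)}$, where indices are read modulo $m$.

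The key combinatorial step is to interpret these constraints via a permutation. I would identify $\pi \in \pair(m)$ with the involution in $S_m$ sending $k \mapsto l$ whenever $(k,l) \in \pi$. Letting $\gamma = (1,2,\dots,m) \in S_m$ be the cyclic shift, the constraint $i(k+1) = i(l) = i(\pi(k))$ together with $i(k) = i(\pi(k)+1) = i(\gamma\pi(k))$ is the single requirement that the function $i : [m] \to [N]$ is constant on the cycles of the permutation $\gamma\pi$. (One of the two constraints is redundant because $\pi$ is an involution.)

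The final step is to count: the number of maps $i : [m] \to [N]$ constant on cycles of $\gamma\pi$ is exactly $N^{\#(\gamma\pi)}$, one choice per cycle. Summing over $\pi$ and combining with the prefactor $N^{-(m/2+1)}$ produces the stated formula
\[ \ev{\tr(A_N^m)} = \sum_{\pi \in \pair(m)} N^{\#(\gamma\pi) - m/2 - 1}. \]
The only slightly subtle point — and the step I would want to state most carefully — is the bookkeeping that shows the two delta-constraints coming from each pair collapse to the single statement ``$i$ is constant on the cycles of $\gamma\pi$''; everything else is direct substitution.
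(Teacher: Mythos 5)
Your proposal is correct and follows exactly the paper's own argument (Remark \ref{rem:2.14}(3)): expand $\tr(A_N^m)$ over indices, apply the Wick formula with the covariance $\ev{a_{ij}a_{kl}}=\delta_{il}\delta_{jk}$, reinterpret the resulting delta-constraints as the statement that $i\colon[m]\to[N]$ is constant on the cycles of $\gamma\pi$, and count $N^{\#(\gamma\pi)}$ such maps. One small correction to your parenthetical: neither of the two deltas coming from a pair $(k,l)$ is redundant — they are precisely the two factors $[\,i(k)=i(\gamma\pi(k))\,]$ and $[\,i(l)=i(\gamma\pi(l))\,]$, so the product over pairs equals the product of $[\,i(j)=i(\gamma\pi(j))\,]$ over all $j\in[m]$, and it is this full product (not half of it) that forces constancy on each cycle.
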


\begin{example}\label{ex:2.16}
\begin{enumerate}
\item This says in particular that all odd moments are zero, since $\pair(2k+1)  = \emptyset$.
\item Let $m=2$, then $\gamma = (1,2)$ and we have only one $\pi = (1,2)$; then $\gamma\pi = \id = (1) (2)$, and thus
$\#(\gamma\pi) =2$ and
\[ \#(\gamma\pi) - \frac{m}{2} - 1 = 0.
\]
Thus,
\[ \ev{ \tr \left( A_N^2 \right) } = N^0 =1.
\]
\item Let $m=4$ and $\gamma =(1,2,3,4)$. Then there are three $\pi\in\pair(4)$ with the following contributions:
\[\begin{array}{l|l|l|l}
\pi & \gamma\pi & \# (\gamma\pi) -3 & \text{contribution} \\
\hline
(1,2)(34) & (1,3)(2)(4) & 0& N^0 = 1\\
(13)(24)  & (1,4,3,2) & -2& N^{-2} = \frac{1}{N^2} \\
(14)(23) & (1)(2,4)(3)& 0& N^0=1\\
\end{array}
\]
so that
\[ \ev{ \tr \left( A_N^4 \right) } = 2+ \frac{1}{N^2}.
\]
\item In the same way one can calculate that
\begin{align*}
\ev{ \tr \left( A_N^6 \right) } &= 5+ 10\frac{1}{N^2}, \\
\ev{ \tr \left( A_N^8 \right) } &= 14+ 70\frac{1}{N^2} + 21 \frac{1}{N^4}.
\end{align*}
\item For $m=6$ the following $5$ pairings give contribution $N^0$:
\[ \begin{tikzpicture}[thick,font=\small, node distance=1cm and 5mm, baseline={([yshift=-.5ex]current bounding box.center)}]
\tikzset{dot/.style={circle,fill=#1,inner sep=0,minimum size=4pt}}

\node (1) 					{$1$};
\node (2)	[right=of 1] 	{$2$};
\node (3) 	[right=of 2]	{$3$};
\node (4)	[right=of 3] 	{$4$};
\node (5)	[right=of 4] 	{$5$};
\node (6)	[right=of 5] 	{$6$};

\node [dot=black] at ($ (1) + (0,-.3) $) {};	
\node [dot=black] at ($ (2) + (0,-.3) $) {};	
\node [dot=black] at ($ (3) + (0,-.3) $) {};	
\node [dot=black] at ($ (4) + (0,-.3) $) {};	
\node [dot=black] at ($ (5) + (0,-.3) $) {};	
\node [dot=black] at ($ (6) + (0,-.3) $) {};

\draw (1)	-- ++(0,-.7)		-| (2);	
\draw (3)	-- ++(0,-.7)		-| (4);	
\draw (5)	-- ++(0,-.7)		-| (6);	
\end{tikzpicture}
\]
\[ \begin{tikzpicture}[thick,font=\small, node distance=1cm and 5mm, baseline={([yshift=-.5ex]current bounding box.center)}]
\tikzset{dot/.style={circle,fill=#1,inner sep=0,minimum size=4pt}}

\node (1) 					{$1$};
\node (2)	[right=of 1] 	{$2$};
\node (3) 	[right=of 2]	{$3$};
\node (4)	[right=of 3] 	{$4$};
\node (5)	[right=of 4] 	{$5$};
\node (6)	[right=of 5] 	{$6$};

\node [dot=black] at ($ (1) + (0,-.3) $) {};	
\node [dot=black] at ($ (2) + (0,-.3) $) {};	
\node [dot=black] at ($ (3) + (0,-.3) $) {};	
\node [dot=black] at ($ (4) + (0,-.3) $) {};	
\node [dot=black] at ($ (5) + (0,-.3) $) {};	
\node [dot=black] at ($ (6) + (0,-.3) $) {};

\draw (1)	-- ++(0,-1.1)		-| (4);	
\draw (2)	-- ++(0,-.7)		-| (3);	
\draw (5)	-- ++(0,-.7)		-| (6);	
\end{tikzpicture}
\]
\[ \begin{tikzpicture}[thick,font=\small, node distance=1cm and 5mm, baseline={([yshift=-.5ex]current bounding box.center)}]
\tikzset{dot/.style={circle,fill=#1,inner sep=0,minimum size=4pt}}

\node (1) 					{$1$};
\node (2)	[right=of 1] 	{$2$};
\node (3) 	[right=of 2]	{$3$};
\node (4)	[right=of 3] 	{$4$};
\node (5)	[right=of 4] 	{$5$};
\node (6)	[right=of 5] 	{$6$};

\node [dot=black] at ($ (1) + (0,-.3) $) {};	
\node [dot=black] at ($ (2) + (0,-.3) $) {};	
\node [dot=black] at ($ (3) + (0,-.3) $) {};	
\node [dot=black] at ($ (4) + (0,-.3) $) {};	
\node [dot=black] at ($ (5) + (0,-.3) $) {};	
\node [dot=black] at ($ (6) + (0,-.3) $) {};

\draw (1)	-- ++(0,-.7)		-| (2);	
\draw (3)	-- ++(0,-1.1)		-| (6);	
\draw (4)	-- ++(0,-.7)		-| (5);	
\end{tikzpicture}
\]
\[ \begin{tikzpicture}[thick,font=\small, node distance=1cm and 5mm, baseline={([yshift=-.5ex]current bounding box.center)}]
\tikzset{dot/.style={circle,fill=#1,inner sep=0,minimum size=4pt}}

\node (1) 					{$1$};
\node (2)	[right=of 1] 	{$2$};
\node (3) 	[right=of 2]	{$3$};
\node (4)	[right=of 3] 	{$4$};
\node (5)	[right=of 4] 	{$5$};
\node (6)	[right=of 5] 	{$6$};

\node [dot=black] at ($ (1) + (0,-.3) $) {};	
\node [dot=black] at ($ (2) + (0,-.3) $) {};	
\node [dot=black] at ($ (3) + (0,-.3) $) {};	
\node [dot=black] at ($ (4) + (0,-.3) $) {};	
\node [dot=black] at ($ (5) + (0,-.3) $) {};	
\node [dot=black] at ($ (6) + (0,-.3) $) {};

\draw (1)	-- ++(0,-1.1)		-| (6);	
\draw (2)	-- ++(0,-.7)		-| (3);	
\draw (4)	-- ++(0,-.7)		-| (5);	
\end{tikzpicture}
\]
\[ \begin{tikzpicture}[thick,font=\small, node distance=1cm and 5mm, baseline={([yshift=-.5ex]current bounding box.center)}]
\tikzset{dot/.style={circle,fill=#1,inner sep=0,minimum size=4pt}}

\node (1) 					{$1$};
\node (2)	[right=of 1] 	{$2$};
\node (3) 	[right=of 2]	{$3$};
\node (4)	[right=of 3] 	{$4$};
\node (5)	[right=of 4] 	{$5$};
\node (6)	[right=of 5] 	{$6$};

\node [dot=black] at ($ (1) + (0,-.3) $) {};	
\node [dot=black] at ($ (2) + (0,-.3) $) {};	
\node [dot=black] at ($ (3) + (0,-.3) $) {};	
\node [dot=black] at ($ (4) + (0,-.3) $) {};	
\node [dot=black] at ($ (5) + (0,-.3) $) {};	
\node [dot=black] at ($ (6) + (0,-.3) $) {};

\draw (1)	-- ++(0,-1.5)		-| (6);	
\draw (2)	-- ++(0,-1.1)		-| (5);	
\draw (3)	-- ++(0,-.7)		-| (4);	
\end{tikzpicture}
\]
Those are non-crossing pairings, all other pairings $\pi\in\pair(6)$ have a crossing, e.g.:
\[ \begin{tikzpicture}[thick,font=\small, node distance=1cm and 5mm, baseline={([yshift=-.5ex]current bounding box.center)}]
\tikzset{dot/.style={circle,fill=#1,inner sep=0,minimum size=4pt}}

\node (1) 					{$1$};
\node (2)	[right=of 1] 	{$2$};
\node (3) 	[right=of 2]	{$3$};
\node (4)	[right=of 3] 	{$4$};
\node (5)	[right=of 4] 	{$5$};
\node (6)	[right=of 5] 	{$6$};

\node [dot=black] at ($ (1) + (0,-.3) $) {};	
\node [dot=black] at ($ (2) + (0,-.3) $) {};	
\node [dot=black] at ($ (3) + (0,-.3) $) {};	
\node [dot=black] at ($ (4) + (0,-.3) $) {};	
\node [dot=black] at ($ (5) + (0,-.3) $) {};	
\node [dot=black] at ($ (6) + (0,-.3) $) {};

\draw (1)	-- ++(0,-1.5)		-| (5);	
\draw (2)	-- ++(0,-1.1)		-| (6);	
\draw (3)	-- ++(0,-.7)		-| (4);	
\end{tikzpicture}
\]
\end{enumerate}
\end{example}

\section{Non-crossing pairings}

\begin{definition}
A pairing $\pi \in\pair(m)$ is \emph{non-crossing} (\emph{NC}) if there are no pairs $(i,k)$ and $(j,l)$ in $\pi$ with $i < j < k <l$, i.e., we don't have a crossing in $\pi$.
\[ \begin{tikzpicture}[thick,font=\small, node distance=1cm and 5mm, baseline={([yshift=-.5ex]current bounding box.center)}]
\tikzset{dot/.style={circle,fill=#1,inner sep=0,minimum size=4pt}}

\node (1) 					{$i$};
\node (2)	[right=of 1] 	{$j$};
\node (3) 	[right=of 2]	{$k$};
\node (4)	[right=of 3] 	{$k$};

\node [dot=black] at ($ (1) + (0,-.3) $) {};	
\node [dot=black] at ($ (2) + (0,-.3) $) {};	
\node [dot=black] at ($ (3) + (0,-.3) $) {};	
\node [dot=black] at ($ (4) + (0,-.3) $) {};

\draw (1)	-- ++(0,-.7)		-| (3);	
\draw (2)	-- ++(0,-1.1)		-| (4);	
\end{tikzpicture} \text{ is not allowed!}
\]
We put
\[ \ncpair(m) = \left\{  \pi \in \pair(m) \, | \, \pi \text{ is non-crossing}  \right\}.
\]
\end{definition}

\begin{example}
\begin{enumerate}
\item $\ncpair(2)=\pair(2) = \{   
\begin{tikzpicture}[thick,font=\small, node distance=6mm and 3mm, baseline={([yshift=-1.5ex]current bounding box.center)}]

\node (1) 					{};
\node (2)	[right=of 1] 	{};

\draw (1)	-- ++(0,-.4)	-| (2);	
\end{tikzpicture}
\}$
\item $\ncpair(4)= \{   
\begin{tikzpicture}[thick,font=\small, node distance=6mm and 3mm, baseline={([yshift=-1.5ex]current bounding box.center)}]

\node (1) 					{};
\node (2)	[right=of 1] 	{};
\node (3) 	[right=of 2]	{};
\node (4)	[right=of 3] 	{};

\draw (1)	-- ++(0,-.4)	-| (2);	
\draw (3)	-- ++(0,-.4)	-| (4);	
\end{tikzpicture}, 
\begin{tikzpicture}[thick,font=\small, node distance=6mm and 3mm, baseline={([yshift=-1.5ex]current bounding box.center)}]

\node (1) 					{};
\node (2)	[right=of 1] 	{};
\node (3) 	[right=of 2]	{};
\node (4)	[right=of 3] 	{};

\draw (1)	-- ++(0,-.7)	-| (4);	
\draw (2)	-- ++(0,-.4)	-| (3);	
\end{tikzpicture}
\}$ 
and $\pair(4) \backslash \ncpair(4) = \{\begin{tikzpicture}[thick,font=\small, node distance=6mm and 3mm, baseline={([yshift=-1.5ex]current bounding box.center)}]

\node (1) 					{};
\node (2)	[right=of 1] 	{};
\node (3) 	[right=of 2]	{};
\node (4)	[right=of 3] 	{};

\draw (1)	-- ++(0,-.4)	-| (3);	
\draw (2)	-- ++(0,-.7)	-| (4);	
\end{tikzpicture}
\}$
\item The 5 elements of $\ncpair(6)$ are given in Example \ref{ex:2.16} (v), $\pair(6)$ contains 15 elements; thus there are $15-5=10$ more elements in $\pair(6)$ with crossings.
\end{enumerate}
\end{example}

\begin{remark}\label{rem:2.19}
Note that NC-pairings have a recursive structure, which usually is crucial for dealing with them.
\begin{enumerate}
\item The first pair of $\pi \in \ncpair(2k)$ must necessarily be of the form $(1,2l)$
		and the remaining pairs can only pair within $\{2, \dots, 2l-1 \}$ or within
		$\{ 2l+1, \dots, 2l \}$.
\[ \begin{tikzpicture}[thick,font=\small, node distance=1cm and 5mm, baseline={([yshift=-.5ex]current bounding box.center)}]
\tikzset{dot/.style={circle,fill=#1,inner sep=0,minimum size=4pt}}

\node (1) 					{$1$};
\node (2) 	[right=of 1] 	{};
\node (3)	[right=of 2] 	{};
\node (4) 	[right=of 3]	{};
\node (5)	[right=of 4] 	{};
\node (6)	[right=of 5] 	{$2l$};
\node (7)	[right=of 6] 	{};
\node (8)	[right=of 7] 	{};
\node (9)	[right=of 8] 	{};
\node (10)	[right=of 9] 	{};

\node [dot=black] at ($ (1) + (0,-.3) $) {};	
\node [dot=black] at ($ (2) + (0,-.3) $) {};
\node [] at ($ (3) + (0,-.3) $) {$\dots$};		
\node [] at ($ (4) + (0,-.3) $) {$\dots$};	
\node [dot=black] at ($ (5) + (0,-.3) $) {};
\node [dot=black] at ($ (6) + (0,-.3) $) {};
\node [dot=black] at ($ (7) + (0,-.3) $) {};	
\node [] at ($ (8) + (0,-.3) $) {$\dots$};		
\node [] at ($ (9) + (0,-.3) $) {$\dots$};	
\node [dot=black] at ($ (10) + (0,-.3) $) {};

\draw (1)	-- ++(0,-1.1)		-| (6);	
\end{tikzpicture}
\]
\item Iterating this shows that we must find in any $\pi\in\ncpair(2k)$ at least one pair of the form $(i,i+1)$ with $1\leq i \leq 2k-1$. Removing this pair gives a NC-pairing of $2k-2$ points.
This characterizes the NC-pairings as those pairings, which can be reduced to the empty set by iterated removal of pairs, which consist of neighbors. 
\end{enumerate}

An example for the reduction of a non-crossing pairing is the following.
\[ \begin{tikzpicture}[thick,font=\small, node distance=1cm and 5mm, baseline={([yshift=-.5ex]current bounding box.center)}]
\tikzset{dot/.style={circle,fill=#1,inner sep=0,minimum size=4pt}}

\node (1) 					{$1$};
\node (2)	[right=of 1] 	{$2$};
\node (3) 	[right=of 2]	{$3$};
\node (4)	[right=of 3] 	{$4$};
\node (5)	[right=of 4] 	{$5$};
\node (6)	[right=of 5] 	{$6$};
\node (7)	[right=of 6] 	{$7$};
\node (8)	[right=of 7] 	{$8$};

\node [dot=black] at ($ (1) + (0,-.3) $) {};	
\node [dot=black] at ($ (2) + (0,-.3) $) {};	
\node [dot=black] at ($ (3) + (0,-.3) $) {};	
\node [dot=black] at ($ (4) + (0,-.3) $) {};	
\node [dot=black] at ($ (5) + (0,-.3) $) {};	
\node [dot=black] at ($ (6) + (0,-.3) $) {};
\node [dot=black] at ($ (7) + (0,-.3) $) {};
\node [dot=black] at ($ (8) + (0,-.3) $) {};

\draw (1)	-- ++(0,-1.5)		-| (8);	
\draw (2)	-- ++(0,-1.1)		-| (5);	
\draw (3)	-- ++(0,-.7)		-| (4);	
\draw (6)	-- ++(0,-.7)		-| (7);	
\end{tikzpicture}
\to
\begin{tikzpicture}[thick,font=\small, node distance=1cm and 5mm, baseline={([yshift=-.5ex]current bounding box.center)}]
\tikzset{dot/.style={circle,fill=#1,inner sep=0,minimum size=4pt}}

\node (1) 					{$1$};
\node (2)	[right=of 1] 	{$2$};
\node (3) 	[right=of 2]	{$5$};
\node (4)	[right=of 3] 	{$8$};

\node [dot=black] at ($ (1) + (0,-.3) $) {};	
\node [dot=black] at ($ (2) + (0,-.3) $) {};	
\node [dot=black] at ($ (3) + (0,-.3) $) {};	
\node [dot=black] at ($ (4) + (0,-.3) $) {};

\draw (1)	-- ++(0,-1.1)		-| (4);	
\draw (2)	-- ++(0,-.7)		-| (3);	
\end{tikzpicture}
\to
\begin{tikzpicture}[thick,font=\small, node distance=1cm and 5mm, baseline={([yshift=-.5ex]current bounding box.center)}]
\tikzset{dot/.style={circle,fill=#1,inner sep=0,minimum size=4pt}}

\node (1) 					{$1$};
\node (2)	[right=of 1] 	{$8$};

\node [dot=black] at ($ (1) + (0,-.3) $) {};	
\node [dot=black] at ($ (2) + (0,-.3) $) {};

\draw (1)	-- ++(0,-.7)		-| (2);	
\end{tikzpicture}
\to \emptyset
\]

In the case of a crossing pairing, some reductions might be possible, but eventually one arrives at a point, where no further reduction can be done.
\[ \begin{tikzpicture}[thick,font=\small, node distance=1cm and 5mm, baseline={([yshift=-.5ex]current bounding box.center)}]
\tikzset{dot/.style={circle,fill=#1,inner sep=0,minimum size=4pt}}

\node (1) 					{$1$};
\node (2)	[right=of 1] 	{$2$};
\node (3) 	[right=of 2]	{$3$};
\node (4)	[right=of 3] 	{$4$};
\node (5)	[right=of 4] 	{$5$};
\node (6)	[right=of 5] 	{$6$};

\node [dot=black] at ($ (1) + (0,-.3) $) {};	
\node [dot=black] at ($ (2) + (0,-.3) $) {};	
\node [dot=black] at ($ (3) + (0,-.3) $) {};	
\node [dot=black] at ($ (4) + (0,-.3) $) {};	
\node [dot=black] at ($ (5) + (0,-.3) $) {};	
\node [dot=black] at ($ (6) + (0,-.3) $) {};

\draw (1)	-- ++(0,-1.)		-| (5);	
\draw (2)	-- ++(0,-.7)		-| (3);	
\draw (4)	-- ++(0,-.7)		-| (6);	
\end{tikzpicture}
\to
\begin{tikzpicture}[thick,font=\small, node distance=1cm and 5mm, baseline={([yshift=-.5ex]current bounding box.center)}]
\tikzset{dot/.style={circle,fill=#1,inner sep=0,minimum size=4pt}}

\node (1) 					{$1$};
\node (2)	[right=of 1] 	{$4$};
\node (3) 	[right=of 2]	{$5$};
\node (4)	[right=of 3] 	{$6$};

\node [dot=black] at ($ (1) + (0,-.3) $) {};	
\node [dot=black] at ($ (2) + (0,-.3) $) {};	
\node [dot=black] at ($ (3) + (0,-.3) $) {};	
\node [dot=black] at ($ (4) + (0,-.3) $) {};

\draw (1)	-- ++(0,-1.1)		-| (3);	
\draw (2)	-- ++(0,-.7)		-| (4);	
\end{tikzpicture}
\text{ no further reduction possible!}
\]

\end{remark}

\begin{prop}\label{prop:2.20}
Consider $m$ even and let $\pi\in\pair(m)$, which we identify with a permutation $\pi\in S_m$.
As before, $\gamma =(1,2, \dots, m) \in S_m$. Then we have:
\begin{enumerate}
\item $\#(\gamma\pi) - \frac{m}{2} -1 \leq 0$ for all $\pi \in \pair(m)$.
\item $\#(\gamma\pi) - \frac{m}{2}-1  = 0$  if and only if  $\pi \in \ncpair(m)$.
\end{enumerate}
\end{prop}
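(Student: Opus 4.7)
My strategy is to prove (i) and (ii) together through the ribbon-graph / Euler-characteristic interpretation of the pair $(\gamma, \pi)$, which is precisely the viewpoint that justifies calling Theorem \ref{thm:2.15} a \emph{genus expansion}. From $\pi$ I build a $2$-dimensional CW complex as follows: take a single vertex carrying $m$ outgoing half-edges labelled $1, \ldots, m$ in cyclic order dictated by $\gamma = (1, 2, \ldots, m)$; glue them pairwise according to $\pi$ into $m/2$ full edges; then attach $2$-cells canonically induced by the ribbon structure. The faces of the resulting closed orientable surface are in bijection with the cycles of $\gamma\pi$, because a face is traced by starting on a half-edge $k$, crossing the edge to its partner $\pi(k)$, advancing to the next half-edge $\gamma\pi(k)$ at the vertex, and iterating until returning. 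Writing $V = 1$, $E = m/2$, $F = \#(\gamma\pi)$, the Euler characteristic formula $V - E + F = 2 - 2g$ for orientable genus $g \geq 0$ yields immediately
\[ \#(\gamma\pi) - \tfrac{m}{2} - 1 \;=\; -2g \;\leq\; 0, \]
proving (i), with equality iff $g = 0$.

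Part (ii) then amounts to the equivalence $g(\pi) = 0 \Longleftrightarrow \pi \in \ncpair(m)$, which geometrically is the fact that a chord diagram inside a disk with cyclically labelled boundary $1, \ldots, m$ is planar iff its chords pairwise do not cross. For the direction ``non-crossing $\Rightarrow g = 0$'' I would give an independent inductive proof using Remark \ref{rem:2.19}: if $(i, i+1) \in \pi$ is an adjacent pair, then $i+1$ is a fixed point of $\gamma\pi$ (since $\pi(i+1)=i$ and $\gamma(i)=i+1$), and removing $i, i+1$ with the obvious relabeling gives $\pi' \in \pair(m-2)$ satisfying
\[ \#(\gamma_m\pi) \;=\; \#(\gamma_{m-2}\pi') + 1, \]
because the remaining cycles of $\gamma_m\pi$ on $[m]\setminus\{i+1\}$ are in bijection with those of $\gamma_{m-2}\pi'$. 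Hence the quantity $\#(\gamma\pi) - m/2 - 1$ is preserved under this reduction, as is the non-crossing property; iterating via Remark \ref{rem:2.19}(ii) brings any $\pi \in \ncpair(m)$ down to the empty pairing, on which the quantity visibly equals $0$.

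\textbf{Main obstacle.} The converse ``crossing $\Rightarrow g \geq 1$'' is the subtle half. If a crossing $\pi$ still contains an adjacent pair, one reduces as above: the resulting $\pi' \in \pair(m-2)$ is still crossing (removing an adjacent pair cannot undo a crossing involving other elements), and the inductive hypothesis supplies strict inequality for $\pi'$, hence for $\pi$. The genuinely hard subcase---which I expect to be the main obstacle---is a crossing $\pi$ with \emph{no} adjacent pair (which must be crossing by Remark \ref{rem:2.19}(ii)), since one cannot reduce directly. Here I would perform a combinatorial \emph{uncrossing} swap, replacing a crossing pair $(a, c), (b, d) \in \pi$ with $a<b<c<d$ by $(a, b), (c, d)$, noting that the new $\tilde\pi$ differs from $\pi$ by composition with the two disjoint transpositions $(a\,d)(b\,c)$, so $\#(\gamma\tilde\pi) - \#(\gamma\pi) \in \{-2, 0, +2\}$. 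Exhausting the two possible swap options at a chosen crossing shows that one can always strictly increase $\#(\gamma\pi)$ by $2$, and iterating terminates at a non-crossing pairing of the same size with the maximal value $m/2 + 1$; this forces every crossing $\pi$ to satisfy $\#(\gamma\pi) < m/2 + 1$, completing the proof.
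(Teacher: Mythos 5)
Your Euler--characteristic argument for part (i), and your treatment of the direction ``non-crossing $\Rightarrow$ equality'', are fine. For (i) this is a genuinely different route from the paper: the paper never builds the surface but proves (i) and (ii) simultaneously by a purely combinatorial reduction, whereas you get $\#(\gamma\pi)-\tfrac m2-1=-2g\le 0$ for free from $V-E+F=2-2g$ once the ribbon-graph construction is accepted; this buys the conceptual interpretation of the defect as $-2g$ (which the paper only states in a remark after Theorem \ref{thm:2.21}), at the cost of importing the topology. Your reduction for the non-crossing case (adjacent pair of $\pi$ $\leftrightarrow$ fixed point of $\gamma\pi$; removing it drops $m$ by $2$ and $\#(\gamma\pi)$ by $1$) is exactly the paper's.

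The gap is in ``crossing $\Rightarrow$ strict inequality'' in the subcase where $\pi$ has no adjacent pair. Everything there hangs on the claim that at a chosen crossing $(a,c),(b,d)$ at least one of the two uncrossing swaps increases $\#(\gamma\pi)$ by exactly $2$. You assert this (``exhausting the two possible swap options \dots shows that one can always strictly increase'') but give no argument, and it is not obvious: each swap multiplies $\gamma\pi$ by two disjoint transpositions, and one must show that for one of the two swaps \emph{both} transpositions split cycles rather than merge them. That the choice of swap really matters is visible already for $\pi=(1,4)(2,5)(3,6)\in\pair(6)$, where $\#(\gamma\pi)=2$: uncrossing $(1,4),(2,5)$ to $(1,5),(2,4)$ leaves the cycle count at $2$, while uncrossing to $(1,2),(4,5)$ gives $4$. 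So the claim is at best a nontrivial lemma requiring a case analysis of how $a,b,c,d$ sit in the cycles of $\gamma\pi$, and it carries the entire weight of the hardest direction (you also need it at every stage of the iteration for the termination argument to land on a non-crossing pairing).

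The repair is much closer than the uncrossing machinery: in this subcase you do not need to uncross at all. You already have the equivalence that $(i,\gamma(i))\in\pi$ if and only if $\gamma(i)$ is a fixed point of $\gamma\pi$ --- provided ``adjacent'' is read cyclically, so that $(m,1)$ also counts. Hence a $\pi$ with no (cyclically) adjacent pair gives a $\gamma\pi$ with no fixed points, so every cycle of $\gamma\pi$ has length at least $2$ and $\#(\gamma\pi)\le \tfrac m2<\tfrac m2+1$. Combined with your reduction in the case where an adjacent pair does exist (which preserves both the defect $\#(\gamma\pi)-\tfrac m2-1$ and the property of being crossing), this closes the argument; it is exactly what the paper does, and as a byproduct it yields an elementary proof of (i) with no surface needed.
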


\begin{proof}
First we note that a pair $(i,i+1)$ in $\pi$ corresponds to a fixed point of $\gamma\pi$. More precisely, in such a situation we have $ i+1 \xrightarrow{\pi} i \xrightarrow{\gamma} i+1$ and $ i \xrightarrow{\pi} i+1 \xrightarrow{\gamma} i+2$. Hence $\gamma \pi$ contains the cycles $(i+1)$ and $(\dots, i ,i +2, \dots)$.

This implication also goes in the other direction:
If $\gamma\pi(i+1) = i+1$ then $\pi(i+1) = \gamma^{-1}(i+1) = i$. Since $\pi$ is a pairing we must then also have $\pi(i)=i+1$ and hence we have the pair $(i,i+1)$ in $\pi$.

If we have $(i,i+1)$ in $\pi$, we can remove the points $i$ and $i+1$, yielding another pairing $\tilde{\pi}$. By doing so, we remove in $\gamma\pi$ the cycle $(i+1)$ and we remove in the cycle  $(\dots, i ,i +2, \dots)$  the point $i$, yielding $\gamma \tilde{\pi}$. We reduce thus $m$ by $2$ and $\#(\gamma\pi)$ by $1$.

If $\pi$ is NC we can iterate this until we arrive at $\tilde{\pi}$ with $m=2$. Then we have $\tilde{\pi} = (1,2)$ and $\gamma = (1,2)$ such that $\gamma\tilde{\pi} = (1)(2)$ and $\#(\gamma\tilde{\pi}) = 2$.
If $m=2k$ we did $k-1$ reductions where we reduced in each step the number of cycles by $1$ and at the end we remain with $2$ cycles, hence
\[ \#(\gamma{\pi}) = (k-1) \cdot 1 +2 = k+1 = \frac{m}{2} + 1.
\]
Here is an example for this:

\resizebox{\textwidth}{!}{%  
$\pi =$ 
\begin{tikzpicture}[thick,font=\small, node distance=1cm and 5mm, baseline={([yshift=-.5ex]current bounding box.center)},scale=1]
\tikzset{dot/.style={circle,fill=#1,inner sep=0,minimum size=4pt}}

\node (1) 					{$1$};
\node (2)	[right=of 1] 	{$2$};
\node (3) 	[right=of 2]	{$3$};
\node (4)	[right=of 3] 	{$4$};
\node (5)	[right=of 4] 	{$5$};
\node (6)	[right=of 5] 	{$6$};
\node (7)	[right=of 6] 	{$7$};
\node (8)	[right=of 7] 	{$8$};

\node [dot=black] at ($ (1) + (0,-.3) $) {};	
\node [dot=black] at ($ (2) + (0,-.3) $) {};	
\node [dot=black] at ($ (3) + (0,-.3) $) {};	
\node [dot=black] at ($ (4) + (0,-.3) $) {};	
\node [dot=black] at ($ (5) + (0,-.3) $) {};	
\node [dot=black] at ($ (6) + (0,-.3) $) {};
\node [dot=black] at ($ (7) + (0,-.3) $) {};
\node [dot=black] at ($ (8) + (0,-.3) $) {};

\draw (1)	-- ++(0,-1.5)		-| (8);	
\draw (2)	-- ++(0,-1.1)		-| (5);	
\draw (3)	-- ++(0,-.7)		-| (4);	
\draw (6)	-- ++(0,-.7)		-| (7);	
\end{tikzpicture}
$\underset{(3,4), (6,7)}{\xrightarrow{\text{remove}}}$
\begin{tikzpicture}[thick,font=\small, node distance=1cm and 5mm, baseline={([yshift=-.5ex]current bounding box.center)}]
\tikzset{dot/.style={circle,fill=#1,inner sep=0,minimum size=4pt}}

\node (1) 					{$1$};
\node (2)	[right=of 1] 	{$2$};
\node (3) 	[right=of 2]	{$5$};
\node (4)	[right=of 3] 	{$8$};

\node [dot=black] at ($ (1) + (0,-.3) $) {};	
\node [dot=black] at ($ (2) + (0,-.3) $) {};	
\node [dot=black] at ($ (3) + (0,-.3) $) {};	
\node [dot=black] at ($ (4) + (0,-.3) $) {};

\draw (1)	-- ++(0,-1.1)		-| (4);	
\draw (2)	-- ++(0,-.7)		-| (3);	
\end{tikzpicture}
$\underset{(2,5)}{\xrightarrow{\text{remove}}}$
\begin{tikzpicture}[thick,font=\small, node distance=1cm and 5mm, baseline={([yshift=-.5ex]current bounding box.center)}]
\tikzset{dot/.style={circle,fill=#1,inner sep=0,minimum size=4pt}}

\node (1) 					{$1$};
\node (2)	[right=of 1] 	{$8$};

\node [dot=black] at ($ (1) + (0,-.3) $) {};	
\node [dot=black] at ($ (2) + (0,-.3) $) {};

\draw (1)	-- ++(0,-.7)		-| (2);	
\end{tikzpicture}
}

\[ \qquad\qquad \quad\gamma \pi = (1)(268)(35)(4)(7) \quad\qquad\longrightarrow \qquad (1)(28)(5) \qquad \longrightarrow \quad (1)(8)
\]

\bigskip For a general $\pi\in\pair(m)$ we remove cycles $(i,i+1)$ as long as possible. If $\pi$ is crossing we arrive at a pairing $\tilde{\pi}$, where this is not possible anymore.
It suffices to show that such a $\tilde{\pi}\in\pair(m)$ satisfies $\#(\gamma\tilde{\pi}) - \frac{m}{2} -1< 0$.
But since $\tilde{\pi}$ has no cycle $(i,i+1)$, $\gamma\tilde{\pi}$ has no fixed point.
Hence each cycle has at least $2$ elements, thus 
\[ \#(\gamma\tilde{\pi})  \leq \frac{m}{2} < \frac{m}{2}  +1.
\]
Note that in the above arguments, 
with $(i,i+1)$ we actually mean $(i,\gamma(i))$; thus also 
$(1,m)$ counts as a pair of neighbors for a $\pi\in \pair(m)$, in order to have the characterization of fixed points right. Hence, when reducing a general pairing to one without fixed points we have also to remove such cyclic neighbors as long as possible.
\end{proof}

\section{Semicircle law for GUE}

\begin{theorem}[Wigner's semicircle law for GUE, averaged version]\label{thm:2.21}
Let $A_N$ be a Gaussian (GUE) $N\times N$ random matrix. Then we have for all $m\in \N$:
\[ \lim\limits_{N\to\infty} \ev{\tr \left( A_N^m \right) }
= \frac{1}{2\pi} \int_{-2}^{2} x^m \sqrt{4-x^2} \td x.
\]
\end{theorem}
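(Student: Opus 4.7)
The plan is to combine the genus expansion from Theorem \ref{thm:2.15} with the topological estimate of Proposition \ref{prop:2.20} to isolate the non-crossing pairings as the only contributors in the limit, then identify their count with the Catalan numbers and invoke Theorem \hyperref[thm:1.4]{1.4} to match against the semicircle moments.

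First I would dispose of the odd case: if $m$ is odd, then $\pair(m)=\emptyset$, so Theorem \ref{thm:2.15} gives $\ev{\tr(A_N^m)}=0$, in agreement with the odd moments of $\mu_W$ stated in Theorem \hyperref[thm:1.4]{1.4}(2). For $m=2k$ even, Theorem \ref{thm:2.15} rewrites the expected moment as
\[
\ev{\tr(A_N^{2k})} = \sum_{\pi\in\pair(2k)} N^{\#(\gamma\pi)-k-1}.
\]
By Proposition \ref{prop:2.20}, every exponent $\#(\gamma\pi)-k-1$ is $\leq 0$, with equality exactly for $\pi\in\ncpair(2k)$. Splitting the sum,
\[
\ev{\tr(A_N^{2k})} = \#\ncpair(2k) + \sum_{\pi\in\pair(2k)\setminus\ncpair(2k)} N^{\#(\gamma\pi)-k-1},
\]
and the second sum is a finite combination of strictly negative powers of $N$, hence tends to $0$ as $N\to\infty$. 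Therefore
\[
\lim_{N\to\infty} \ev{\tr(A_N^{2k})} = \#\ncpair(2k).
\]

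Next I would identify $\#\ncpair(2k)$ with the Catalan number $C_k$ using the recursive structure from Remark \ref{rem:2.19}(i). Any $\pi\in\ncpair(2k)$ is determined by the block containing $1$, which must be of the form $\{1,2l\}$ for some $l\in\{1,\dots,k\}$, after which the remaining pairs partition $\{2,\dots,2l-1\}$ and $\{2l+1,\dots,2k\}$ into non-crossing pairings independently. Setting $c_k := \#\ncpair(2k)$ with the convention $c_0=1$, this bijection gives
\[
c_k = \sum_{l=1}^{k} c_{l-1}\, c_{k-l} = \sum_{j=0}^{k-1} c_j\, c_{k-1-j}.
\]
This is precisely the Catalan recursion of Theorem \hyperref[thm:1.4]{1.4}(1)(i), with the same initial value $c_0=1=C_0$. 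By the uniqueness in Theorem \hyperref[thm:1.4]{1.4}(1)(ii), $c_k=C_k$ for all $k$.

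Finally, combining the two parts and invoking the moment computation of the semicircle from Theorem \hyperref[thm:1.4]{1.4}(2),
\[
\lim_{N\to\infty} \ev{\tr(A_N^{2k})} = C_k = \frac{1}{2\pi}\int_{-2}^{2} x^{2k}\sqrt{4-x^2}\,\td x,
\]
which together with the vanishing of odd moments on both sides yields the claim for all $m\in\N$. The only nontrivial step in this chain is already done, namely Proposition \ref{prop:2.20}; I expect the main bookkeeping obstacle to be a clean statement that the crossing-pairing remainder is $O(1/N^2)$ (which is automatic from the integrality of $\#(\gamma\pi)-k-1$ and its being $\leq -1$ for crossing $\pi$), while the combinatorial identification of $\ncpair(2k)$ with Catalan numbers and the moment computation of the semicircle are both delegated to earlier results.
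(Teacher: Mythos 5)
Your proposal is correct and follows essentially the same route as the paper's own proof: genus expansion (Theorem \ref{thm:2.15}) plus the exponent estimate of Proposition \ref{prop:2.20} to reduce to $\#\ncpair(2k)$, then the first-block decomposition of Remark \ref{rem:2.19} to recover the Catalan recursion and match the semicircle moments via Theorem \ref{thm:1.4}. No gaps.
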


\begin{proof}
This is true for $m$ odd, since then both sides are equal to zero. Consider $m=2k$ even.
Then Theorem \ref{thm:2.15} and Proposition \ref{prop:2.20} show that
\begin{align*}
\lim\limits_{\N\to\infty} \ev{\tr \left( A_N^m \right) }
&= \sum_{\pi\in\pair(m) } \lim\limits_{N\to\infty} N^{ \#(\gamma\pi) - \frac{m}{2} -1 }
= \sum_{\pi\in\ncpair(m) } 1
= \#\ncpair(m).
\end{align*}
Since the moments of the semicircle are given by the Catalan numbers, it remains to see that
$\#\ncpair(2k)$ is equal to the Catalan number $C_k$. To see this,
we now count $d_k := \#\ncpair(2k)$ according to the recursive structure of NC-pairings as in \ref{rem:2.19} (i). 
\[ \begin{tikzpicture}[thick,font=\small, node distance=1cm and 5mm, baseline={([yshift=-.5ex]current bounding box.center)}]
\tikzset{dot/.style={circle,fill=#1,inner sep=0,minimum size=4pt}}

\node (1) 					{$1$};
\node (2) 	[right=of 1] 	{};
\node (3)	[right=of 2] 	{};
\node (4) 	[right=of 3]	{};
\node (5)	[right=of 4] 	{};
\node (6)	[right=of 5] 	{$2l$};
\node (7)	[right=of 6] 	{};
\node (8)	[right=of 7] 	{};
\node (9)	[right=of 8] 	{};
\node (10)	[right=of 9] 	{2k};

\node [dot=black] at ($ (1) + (0,-.3) $) {};	
\node [dot=black] at ($ (2) + (0,-.3) $) {};
\node [] at ($ (3) + (0,-.3) $) {$\dots$};		
\node [] at ($ (4) + (0,-.3) $) {$\dots$};	
\node [dot=black] at ($ (5) + (0,-.3) $) {};
\node [dot=black] at ($ (6) + (0,-.3) $) {};
\node [dot=black] at ($ (7) + (0,-.3) $) {};	
\node [] at ($ (8) + (0,-.3) $) {$\dots$};		
\node [] at ($ (9) + (0,-.3) $) {$\dots$};	
\node [dot=black] at ($ (10) + (0,-.3) $) {};

\draw (1)	-- ++(0,-1.1)		-| (6);	
\end{tikzpicture}
\]

\quad

Namely, we can identify $\pi \in \ncpair(2k)$ with $\{ (1,2l) \} \cup \pi_0 \cup \pi_1$, where
$l \in \{1, \dots, k \}$, $\pi_0 \in \ncpair(2(l-1))$ and $\pi_1 \in \ncpair(2(k-l))$.
Hence we have 
\begin{align*}
d_k
&= \sum_{l=1}^{k} d_{l-1} d_{k-l}, \qquad \text{where } d_0 = 1.
\end{align*}
This is the recursion for the Catalan numbers, whence $d_k = C_k$ for all $k \in \N$.
\end{proof}

\begin{remark}
\begin{enumerate}
\item One can refine 
\[ \#(\gamma \pi) - \frac{m}{2} - 1 \leq 0
\qquad\text{to}\qquad
 \#(\gamma \pi) - \frac{m}{2} - 1 =-2g(\pi)
\]
for $g(\pi) \in \N_0$. This $g$ has the meaning that it is the minimal genus of a surface on which $\pi$ can be drawn without crossings. NC pairings are also called \emph{planar}, they correspond to $g=0$. Theorem \ref{thm:2.15} is usually addressed as \emph{genus expansion},
\begin{align*}
\ev{\tr(A_N^m)}
&=  \sum_{\pi\in\pair(m)} N^{-2g(\pi)}.
\end{align*}
\item For example, $(1,2)(3,4) \in \ncpair(4)$ has $g=0$,
	but the crossing pairing $(1,3)(2,4)\in\pair(4)$ has genus $g=1$. It has a crossing in the plane but this can be avoided on a torus.	
\[ \begin{tikzpicture}	
\tikzset{dot/.style={circle,fill=#1,inner sep=0,minimum size=4pt}}

\node[dot=black] at (-1, 0)   (4) {};
\node[dot=black] at (1, 0)   (2) {};
\node[dot=black] at (0, -1)   (3) {};
\node[dot=black] at (0, 1)   (1) {};

\draw (-1.3,0) -- node {$4$} (-1.3,0);
\draw (1.3,0) -- node {$2$} (1.3,0);
\draw (0,-1.3) -- node {$3$} (0,-1.3);
\draw (0,1.3) -- node {$1$} (0,1.3);

\draw (0,0) circle (1);

\path [bend right] (1) edge (2);
\path [bend right] (3) edge (4);
\end{tikzpicture}
\qquad \qquad
\begin{tikzpicture}	
\tikzset{dot/.style={circle,fill=#1,inner sep=0,minimum size=4pt}}

\node[dot=black] at (-1, 0)   (4) {};
\node[dot=black] at (1, 0)   (2) {};
\node[dot=black] at (0, -1)   (3) {};
\node[dot=black] at (0, 1)   (1) {};

\draw (-1.3,0) -- node {$4$} (-1.3,0);
\draw (1.3,0) -- node {$2$} (1.3,0);
\draw (0,-1.3) -- node {$3$} (0,-1.3);
\draw (0,1.3) -- node {$1$} (0,1.3);

\draw (0,0) circle (1);

\draw (1) -- (3);
\draw (4) -- (-0.3,0);
\draw (2) -- (0.3,0);
\draw (0.3,0) arc(0:180:0.3);
\end{tikzpicture}
\]	
\item If we denote
\[ \varepsilon_g(k) = \#\left\{
\pi\in\pair(2k) \, | \, \pi \text{ has genus } g
\right\}
\]
then the genus expansion \ref{thm:2.15} can be written as
\begin{align*}
\ev{\tr(A_N^{2k})}
&=  \sum_{g \geq 0} \varepsilon_g(k) N^{-2g}.
\end{align*}
We know that 
\[ \varepsilon_g(0) = C_k = \frac{1}{k+1} \binom{2k}{k},
\]
but what about the $\varepsilon_g(k)$ for $g>0$?
There does not exist an explicit formula for them, but Harer and Zagier have shown in 1986 that
\[ \varepsilon_g(k) =  \frac{(2k)!}{(k+1)!(k-2g)!} \cdot \lambda_g(k),
\]
where $\lambda_g(k)$ is the coefficient of $x^{2g}$ in 
\[ \left( \frac{  \frac{x}{2}  }{ \tanh \frac{x}{2} }  \right)^{k+1}.
\]
We will come back later to this statement of Harer and Zagier; see Theorem \ref{thm:9.2}
\end{enumerate}
\end{remark}

\chapter{Wigner Matrices: Combinatorial Proof of Wigner's Semicircle Law}\label{ch:3}

Wigner's semicircle law does not only hold for Gaussian random matrices, but more general for so-called Wigner matrices; there we keep the independence and identical distribution of the entries, but allow arbitrary distribution instead of Gaussian. As there is no Wick formula any more, there is no clear advantage of the complex over the real case any more, hence we will consider in the following the real one.

\section{Wigner matrices}

\begin{definition}
Let $\mu$ be a probability distribution on $\R$. A corresponding \emph{Wigner random matrix} is of the form $A_N = \frac{1}{\sqrt{N}} \left( a_{ij} \right)_{i,j=1}^N$, where
\begin{itemize}
\item $A_N=A_N^*$, i.e., $a_{ij} = a_{ji}$ for all $i,j$,
\item $\{ a_{ij} \, | \, i \geq j \}$ are independent,
\item each $a_{ij}$ has distribution $\mu$.
\end{itemize}
\end{definition}

\begin{remark}
\begin{enumerate}
\item In our combinatorial setting we will assume that all moments of $\mu$ exist; that the first moment is $0$; and the second moment will be normalized to $1$. In an analytic setting one can deal with more general situations: usually only the existence of the second moment is needed; and one can also allow non-vanishing mean.
\item Often one also allows different distributions for the diagonal and the off-diagonal entries.
\item Even more general, one can give up the assumption of identical distribution of all entries and replace this by uniform bounds on their moments.
\item 
We will now try to imitate our combinatorial proof from the Gaussian case also in  this more general situation. Without a precise Wick formula for the higher moments of the entries, we will not aim at a precise genus expansion; it suffices to see that the leading contributions are still given by the Catalan numbers.
\end{enumerate}
\end{remark}

\section{Combinatorial description of moments of Wigner matrices}

Consider a Wigner matrix $A_N = \frac{1}{\sqrt{N}} \left( a_{ij} \right)_{i,j=1}^N$, where $\mu$ has all moments and
\[ \int_\R x \td \mu(x) = 0, \qquad \int_\R x^2 \td \mu(x) = 1.
\]
Then
$$
\ev{\tr(A_N^m)}
=  \frac{1}{N^{1+\frac{m}{2}}} \sum_{i_1,\dots,i_m=1}^{N} \ev{a_{i_1i_2}a_{i_2i_3} \cdots a_{i_mi_1}}
=  \frac{1}{N^{1+\frac{m}{2}}}  \sum_{\sigma \in \PP(m)} \, \sum_{\stackrel{i \colon [m] \to [N]}{\ker i = \sigma}} \ev{\sigma},
$$
where we group the appearing indices $(i_1,\dots,i_m)$ according to their ``kernel'', which is a ``partition $\sigma$ of $\{1,\dots,m\}$.

\begin{definition}
\begin{enumerate}
\item A \emph{partition} $\sigma$ of $[n]$ is a decomposition of $[n]$ into disjoint, non-empty subsets (of arbitrary size), i.e., $\sigma = \{ V_1, \dots, V_k \}$, where
\begin{itemize}
\item $V_i \subset [n]$ for all $i$,
\item $V_i \neq \emptyset$ for all $i$,
\item $V_i \cap V_j = \emptyset$ for all $i\neq j$,
\item $\bigcup_{i=1}^k V_i = [n]$.
\end{itemize}
The $V_i$ are called \emph{blocks} of $\sigma$. The set of all partitions of $[n]$ is denoted by
\[ \PP(n) := \left\{
\sigma \, | \, \sigma \text{ is a partition of } [n]
\right\}.
\]
\item For a multi-index $i=(i_1,\dots,i_m)$ we denote by $\ker i$ its \emph{kernel}; this is the partition $\sigma \in \PP(m)$ such that we have $i_k = i_l$ if and only if $k$ and $l$ are in the same block of $\sigma$.
If we identify $i$ with a function $i\colon[m] \to [N]$ via $i(k)=i_k$ then we can also write
\[ \ker i = \left\{
i^{-1}(1), i^{-1}(2), \dots, i^{-1}(N)
\right\},
\]
where we discard all empty sets.
\end{enumerate}
\end{definition}

\begin{example}
For $i=(1,2,1,3,2,4,2)$ we have

\[
\begin{tikzpicture}[thick,font=\small, node distance=1cm and 5mm, baseline={([yshift=-.5ex]current bounding box.center)}]
\tikzset{dot/.style={circle,fill=#1,inner sep=0,minimum size=4pt}}

\node (0)                      {$i_k$};
\node (1) 	[right=of 0]			{$1$};
\node (2)	[right=of 1] 	{$2$};
\node (3) 	[right=of 2]	{$1$};
\node (4)	[right=of 3] 	{$3$};
\node (5)	[right=of 4] 	{$2$};
\node (6)	[right=of 5] 	{$4$};
\node (7)	[right=of 6] 	{$2$};

\node (0) 	[below=.1cm of 0]				{$k$};
\node (1) 	[right=of 0]				{$1$};
\node (2)	[right=of 1] 	{$2$};
\node (3) 	[right=of 2]	{$3$};
\node (4)	[right=of 3] 	{$4$};
\node (5)	[right=of 4] 	{$5$};
\node (6)	[right=of 5] 	{$6$};
\node (7)	[right=of 6] 	{$7$};

\node [dot=black] at ($ (1) + (0,-.3) $) {};	
\node [dot=black] at ($ (2) + (0,-.3) $) {};	
\node [dot=black] at ($ (3) + (0,-.3) $) {};	
\node [dot=black] at ($ (4) + (0,-.3) $) {};	
\node [dot=black] at ($ (5) + (0,-.3) $) {};	
\node [dot=black] at ($ (6) + (0,-.3) $) {};
\node [dot=black] at ($ (7) + (0,-.3) $) {};

\draw (1)	-- ++(0,-1.1)		-| (3);	
\draw (2)	-- ++(0,-.7)		-| (5);	
\draw (2)	-- ++(0,-.7)		-| (7);	
\end{tikzpicture}
\]
such that
\[
 \ker i = \left\{
 (1,3), (2,5,7), (4), (6)
 \right\} \in \PP(7).
\]
\end{example}

\begin{remark}
The relevance of this kernel in our setting is the following:

For $i=(i_1,\dots,i_m)$ and $j=(j_1,\dots,j_m)$ with $\ker i = \ker j$ we have
\[ \ev{a_{i_1i_2}a_{i_2i_3} \cdots a_{i_mi_1}} = \ev{a_{j_1j_2}a_{j_2j_3} \cdots a_{j_mj_1}}.
\]
For example, for $i=(1,1,2,1,1,2)$ and $j=(2,2,7,2,2,7)$ we have
\[
\ker i =
\begin{tikzpicture}[thick,font=\small, node distance=1cm and 5mm, baseline={([yshift=-.5ex]current bounding box.center)}]
\tikzset{dot/.style={circle,fill=#1,inner sep=0,minimum size=4pt}}

\node (1) 					{$1$};
\node (2)	[right=of 1] 	{$2$};
\node (3) 	[right=of 2]	{$3$};
\node (4)	[right=of 3] 	{$4$};
\node (5)	[right=of 4] 	{$5$};
\node (6)	[right=of 5] 	{$6$};

\node [dot=black] at ($ (1) + (0,-.3) $) {};	
\node [dot=black] at ($ (2) + (0,-.3) $) {};	
\node [dot=black] at ($ (3) + (0,-.3) $) {};	
\node [dot=black] at ($ (4) + (0,-.3) $) {};	
\node [dot=black] at ($ (5) + (0,-.3) $) {};	
\node [dot=black] at ($ (6) + (0,-.3) $) {};

\draw (1)	-- ++(0,-.7)		-| (2);	
\draw (1)	-- ++(0,-.7)		-| (4);	
\draw (1)	-- ++(0,-.7)		-| (5);	
\draw (3)	-- ++(0,-1.1)		-| (6);	
\end{tikzpicture}
= \ker j
\]
and
\begin{align*}
\ev{a_{11}a_{12}a_{21}a_{11}a_{12}a_{21}}
&=\ev{a_{11}^2} \ev{a_{12}^4}
=\ev{a_{22}^2} \ev{a_{27}^4}
=\ev{a_{22}a_{27}a_{72}a_{22}a_{27}a_{72}}.
\end{align*}
We denote this common value by
\[ \ev{\sigma} := \ev{a_{i_1i_2}a_{i_2i_3} \cdots a_{i_mi_1}}
\qquad\text{
if $\ker i = \sigma$.}
\]
 Thus we get:
\begin{align*}
\ev{\tr(A_N^m)}
&=  \frac{1}{N^{1+\frac{m}{2}}}  \sum_{\sigma \in \PP(m)} \ev{\sigma} \cdot \# \left\{
i\colon [m] \to [N] \, | \, \ker i = \sigma
\right\}.
\end{align*}
To understand the contribution corresponding to a $\sigma\in\PP(m)$ we associate to $\sigma$ a graph $\G_\sigma$.
\end{remark}

\begin{definition}
For $\sigma = \{ V_1,\dots,V_k \} \in\PP(m)$ we define a corresponding graph $\G_\sigma$ as follows.
The vertices of $\G_\sigma$ are given by the blocks $V_p$ of $\sigma$ and there is an edge betwees $V_p$ and $V_q$ if there is an $r\in[m]$ such that $r\in V_p$ and $r+1 (\text{mod } m) \in V_q$.

Another way of saying this is that we start with a graph with vertices $1,2,\dots, m$ and edges 
$(1,2), (2,3), (3,4), \dots, (m-1,m),(m,1)$ and then identify vertices according to the blocks of $\sigma$. We keep loops, but erase multiple edges.
\end{definition}

\begin{example}
\begin{enumerate}
\item For $$\sigma = \{ (1,3), (2,5), (4) \} = \begin{tikzpicture}[thick,font=\small, node distance=6mm and 3mm, baseline={([yshift=-1.5ex]current bounding box.center)}]

\node (1) 					{};
\node (2)	[right=of 1] 	{};
\node (3) 	[right=of 2]	{};
\node (4)	[right=of 3] 	{};
\node (5)	[right=of 4] 	{};

\draw (1)	-- ++(0,-.4)	-| (3);	
\draw (2)	-- ++(0,-.7)	-| (5);	
\draw (4)	-- ++(0,-.4);	
\end{tikzpicture}$$
we have
\[\begin{tikzpicture}[node distance=1cm]
\tikzset{bubble/.style={rectangle, rounded corners, minimum width=1.4cm, minimum height=0.9cm,text centered, draw=black}};

\node [bubble] (1) {$1=3$};
\node [bubble, below right = of 1] (2) {$2=5$};
\node [bubble, below left = of 1] (3) {$4$};

\draw (1) -- (2);
\draw (2) -- (3);
\draw (3) -- (1);

\node [below left = 0.2cm and 2.4cm of 1] {$\G_\sigma =$};
\end{tikzpicture}
\]

\item For $$\sigma = \{ (1,5), (2,4), (3) \} = \begin{tikzpicture}[thick,font=\small, node distance=6mm and 3mm, baseline={([yshift=-1.5ex]current bounding box.center)}]

\node (1) 					{};
\node (2)	[right=of 1] 	{};
\node (3) 	[right=of 2]	{};
\node (4)	[right=of 3] 	{};
\node (5)	[right=of 4] 	{};

\draw (1)	-- ++(0,-1.0)	-| (5);	
\draw (2)	-- ++(0,-.7)	-| (4);	
\draw (3)	-- ++(0,-.4);	
\end{tikzpicture}$$
we have
\[\begin{tikzpicture}[node distance=1cm]
\tikzset{bubble/.style={rectangle, rounded corners, minimum width=1.4cm, minimum height=0.9cm,text centered, draw=black}};

\node [bubble] (1) {$1=5$};
\node [bubble, below right = of 1] (2) {$2=4$};
\node [bubble, below left = of 1] (3) {$3$};

\draw (1) -- (2);
\draw (2) -- (3);

\begin{scope}
\clip (-0.7, 0.45) rectangle (0.7,1.2);
\draw (0,0.75) circle(0.4);
\end{scope}

\node [below left = 0.2cm and 2.4cm of 1] {$\G_\sigma =$};
\end{tikzpicture}
\]
\item For $$\sigma = \{ (1,3), (2), (4) \} = \begin{tikzpicture}[thick,font=\small, node distance=6mm and 3mm, baseline={([yshift=-1.5ex]current bounding box.center)}]

\node (1) 					{};
\node (2)	[right=of 1] 	{};
\node (3) 	[right=of 2]	{};
\node (4)	[right=of 3] 	{};

\draw (1)	-- ++(0,-.7)	-| (3);	
\draw (2)	-- ++(0,-.4);
\draw (4)	-- ++(0,-.4);	
\end{tikzpicture}$$
we have
$$\begin{tikzpicture}[node distance=1cm]
\tikzset{bubble/.style={rectangle, rounded corners, minimum width=1.4cm, minimum height=0.9cm,text centered, draw=black}};

\node [bubble] (1) {$1=3$};
\node [bubble, below right = of 1] (2) {$4$};
\node [bubble, below left = of 1] (3) {$2$};

\draw (1) -- (2);
\draw (3) -- (1);

\node [below left = 0.2cm and 2.4cm of 1] {$\G_\sigma =$};
\end{tikzpicture}
$$
\end{enumerate}
\end{example}

The term $ \ev{a_{i_1i_2}a_{i_2i_3} \cdots a_{i_mi_1}}$ corresponds now to a walk in $\G_\sigma$, with $\sigma = \ker i$, along the edges with steps
\[ i_1 \to i_2 \to i_3 \to \cdots \to i_m \to i_1.
\]
Hence we are using each edge in $\G_\sigma$ at least once. Note that different edges in $\G_\sigma$ correspond to independent random variables. Hence, if we use an edge only once in our walk, then $\ev{\sigma} = 0$, because the expectation factorizes into a product with one factor being the first moment of $a_{ij}$, which is assumed to be zero. Thus, every edge must be used at least twice, but this implies 
\begin{align*}
\# \text{ edges in } \G_\sigma  
\leq \frac{\# \text{ steps in the walk} }{2}
= \frac{m}{2}.
\end{align*}
Since the number of $i$ with the same kernel is
\[ \# \left\{
i \colon [m] \to [N] \,|\, \ker i = \sigma
\right\}
=N(N-1)(N-2) \cdots (N- \#\sigma +1),
\]
where $\# \sigma$ is the number of blocks in $\sigma$, we finally get
\begin{align*}\tag{$\star$}
\ev{\tr\left(A_N^m\right)}
&= \frac{1}{N^{1+\frac{m}{2}}} \sum_{\stackrel{\sigma\in\PP(m)}{ \# \text{edges}\left( \G_\sigma \right) \leq \frac{m}{2} }}  \ev{\sigma}
\underbrace{N(N-1)(N-2) \cdots (N- \#\sigma +1)}_{\sim N^{\#\sigma} \text{ for } N \to \infty }.
\end{align*}

We have to understand what the constraint on the number of edges in $\G_\sigma$ gives us for the number of vertices in $\G_\sigma$ (which is the same as $\#\sigma)$. For this,
we will now use the following well-known basic result from graph theory.

\begin{prop}\label{prop:3.8}
Let $\G=(V,E)$ be a connected finite graph with vertices $V$ and edges $E$. (We allow loops and multi-edges.) Then we have that
\[ \# V \leq \#E +1
\]
and we have equality if and only if $\G$ is a \emph{tree}, i.e., a connected graph without cycles.
\end{prop}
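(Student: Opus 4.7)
The plan is to prove the inequality $\#V \leq \#E+1$ by induction on $\#E$, and then extract the equality characterization from the same argument.

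For the inequality, the base case $\#E=0$ is immediate: connectedness forces $\#V=1$, so $\#V=1=\#E+1$. For the inductive step with $\#E\geq 1$, I would pick any edge $e$ and distinguish two cases. If $e$ is not a bridge of $\G$ (which is automatic whenever $e$ is a loop, or whenever $e$ lies on some cycle), then $\G\setminus\{e\}$ is connected on the same vertex set, and the induction hypothesis gives $\#V\leq (\#E-1)+1\leq \#E+1$. Otherwise $e$ is a bridge, and $\G\setminus\{e\}$ decomposes into two connected components $\G_i=(V_i,E_i)$ with $V_1\sqcup V_2=V$ and $E_1\sqcup E_2=E\setminus\{e\}$; applying the hypothesis to each yields $\#V=\#V_1+\#V_2\leq(\#E_1+1)+(\#E_2+1)=\#E+1$.

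For the direction ``equality implies tree'', I would argue by contradiction: if $\#V=\#E+1$ but $\G$ contained a cycle, then removing any edge $e$ of that cycle would preserve connectedness (the remainder of the cycle still connects its endpoints), and the inequality already established would give $\#V\leq\#E$, a contradiction. For the direction ``tree implies equality'', I would induct on $\#V$. The case $\#V=1$ forces $\#E=0$ since a loop would form a cycle. For $\#V\geq 2$, the key step is to produce a leaf, which I would do via the standard longest-path trick: taking a simple path $v_0,v_1,\dots,v_\ell$ of maximal length in the finite tree, the endpoint $v_0$ cannot have any neighbor outside $\{v_1\}$, for such a neighbor would either extend the path (contradicting maximality) or close a cycle (contradicting the tree property). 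Removing $v_0$ and its unique incident edge produces a smaller tree on $\#V-1$ vertices and $\#E-1$ edges, and the induction hypothesis gives $\#V=\#E+1$.

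The only step that is not essentially bookkeeping is the existence of a leaf in a finite tree with at least two vertices; everything else follows mechanically from how removal of an edge or a vertex affects the counts and preserves connectedness in the relevant cases.
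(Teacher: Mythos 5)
Your proof is correct. There is nothing in the paper to compare it against: Proposition \ref{prop:3.8} is quoted there as a ``well-known basic result from graph theory'' and used as a black box in the proof of Theorem \ref{thm:3.9}, with no proof supplied. Your argument is the standard one — induction on $\#E$ with the bridge/non-bridge dichotomy for the inequality, removal of a cycle edge for ``equality implies tree'', and leaf removal (with the maximal-path trick producing the leaf) for ``tree implies equality'' — and it goes through.

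Two small points are worth making explicit, since the statement allows loops and multi-edges. First, in the bridge case you apply the hypothesis to two graphs each with strictly fewer edges, so the induction should be phrased as strong induction on $\#E$; this is cosmetic. Second, in the leaf-removal step the \emph{uniqueness} of the edge incident to $v_0$ needs the observation that a loop is a cycle of length one and a pair of parallel edges is a cycle of length two, so neither occurs in a tree; the same observation is what forces $\#E=0$ in your base case $\#V=1$, and it also justifies that deleting $v_0$ together with its single edge preserves connectedness. Both are routine, and with them your proof is complete.
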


\section{Semicircle law for Wigner matrices}

\begin{theorem}[Wigner's semicircle law for Wigner matrices, averaged version]\label{thm:3.9}
Let $A_N$ be a Wigner matrix corresponding to $\mu$, which has all moments, with mean $0$ and second moment $1$. Then we have for all $m\in\N$:
\begin{align*}
\lim\limits_{N\to\infty} \ev{\tr\left(A_N^m\right)}
&=\frac{1}{2\pi} \int_{-2}^{2} x^m \sqrt{4-x^2} \td x.
\end{align*}
\end{theorem}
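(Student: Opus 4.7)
The plan is to leverage the expression $(\star)$ derived above, namely
$$\ev{\tr(A_N^m)} = \frac{1}{N^{1+m/2}} \sum_{\sigma \in \PP(m)} \ev{\sigma} \cdot N(N-1)\cdots(N-\#\sigma + 1),$$
and to identify which partitions survive the $N \to \infty$ limit. Since the falling factorial is asymptotic to $N^{\#\sigma}$, the $\sigma$-contribution scales as $\ev{\sigma} \cdot N^{\#\sigma - 1 - m/2}$. Because $\mu$ has vanishing first moment, $\ev{\sigma}$ is zero unless every edge of $\G_\sigma$ is traversed at least twice by the cyclic walk $i_1 \to i_2 \to \cdots \to i_m \to i_1$, which forces $\#E(\G_\sigma) \leq m/2$. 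Moreover, $\G_\sigma$ is connected (the walk visits every block), so Proposition \ref{prop:3.8} gives $\#\sigma = \#V(\G_\sigma) \leq \#E(\G_\sigma) + 1 \leq m/2 + 1$.

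For $m$ odd this bound becomes $\#\sigma \leq (m+1)/2$, so the exponent of $N$ is at most $-1/2$. Since $\PP(m)$ is finite and $\ev{\sigma}$ is uniformly bounded (all moments of $\mu$ exist and block sizes are at most $m$), the whole sum vanishes in the limit, matching $\int x^m \td\mu_W(x) = 0$. For $m = 2k$ the leading contribution requires equality throughout: $\G_\sigma$ is a tree with $k$ edges, and each edge is traversed exactly twice in the walk. For such $\sigma$, independence together with the normalization $\ev{a_{ij}^2} = 1$ gives $\ev{\sigma} = 1$, whereas every other $\sigma$ satisfies $\#\sigma \leq k$ and thus contributes $O(N^{-1})$. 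It therefore suffices to count the partitions $\sigma \in \PP(2k)$ with $\G_\sigma$ a tree on $k+1$ vertices each of whose edges is traversed exactly twice.

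The main obstacle, and the combinatorial heart of the argument, is identifying this count with $\#\ncpair(2k) = C_k$. I would set up a bijection as follows: given such $\sigma$, pair the indices $r, s \in [2k]$ whose steps cross the same edge of the tree $\G_\sigma$. The resulting $\pi_\sigma \in \pair(2k)$ is non-crossing because a closed walk on a tree that uses every edge exactly twice is a depth-first traversal, which produces a planar pairing. Inductively, one can always locate a leaf whose edge is crossed in two consecutive steps, yielding a pair of the form $(r, r+1)$ in $\pi_\sigma$ that can be peeled off as in Remark \ref{rem:2.19}; this simultaneously collapses $\G_\sigma$ to a smaller tree and reduces $\pi_\sigma$ to a smaller non-crossing pairing, so an induction on $k$ both confirms non-crossingness and shows that the construction is reversible. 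Conversely, every $\pi \in \ncpair(2k)$ determines a plane tree together with its canonical closed walk, whose vertex-sequence has the required kernel $\sigma$. Combining this bijection with Theorem \ref{thm:1.4}, which identifies $C_k$ with the $2k$-th moment of $\mu_W$, finishes the proof.
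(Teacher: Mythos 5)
Your proposal is correct and follows essentially the same route as the paper: the same formula $(\star)$, the same graph-theoretic reduction via Proposition \ref{prop:3.8} to partitions $\sigma$ whose graph $\G_\sigma$ is a tree traversed twice along each edge, and the same identification of the count with $\#\ncpair(2k)=C_k$. The only difference is that you also sketch the tree--pairing bijection (pairing the two traversals of each edge and peeling off leaf edges), which the paper delegates to Exercise \ref{exercise:9}; your sketch of that bijection is the intended one and is sound.
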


\begin{proof}
From ($\star$) we get
\begin{align*}
\lim\limits_{N\to\infty} \ev{\tr\left(A_N^m\right)}
&=  \sum_{\sigma\in\PP(m)} \ev{\sigma} \lim_{N\to\infty} N^{\#V(\G_\sigma)-\frac{m}{2}-1}.
\end{align*}
In order to have $\ev{\sigma} \neq 0$, we can restrict to $\sigma$ with $\#E(\G_\sigma) \leq\frac{m}{2}$, which by Proposition \ref{prop:3.8} implies that
\[ \#V(\G_\sigma) \leq \#E(\G_\sigma) +1 \leq\frac{m}{2}+1.
\]
Hence all terms converge and the only contribution in the limit $N\to\infty$ comes from those $\sigma$, where we have equality, i.e.,
\[ \#V(\G_\sigma) = \#E(\G_\sigma) +1 = \frac{m}{2}+1.
\]
Thus, $\G_\sigma$ must be a tree and in our walk we use each edge exactly twice (necessarily in opposite directions). For such a $\sigma$ we have $\ev{\sigma} =1$; thus
\begin{align*}
\lim\limits_{N\to\infty} \ev{\tr\left(A_N^m\right)}
&=  \# \left\{
\sigma \in \PP(m) \, | \, \G_\sigma \text{ is a tree}
\right\}.
\end{align*}
We will check in Exercise \ref{exercise:9} that the latter number is also counted by the Catalan numbers.
\end{proof}

\begin{remark}
Note that our $\G_\sigma$ are not just abstract trees, but they are coming with the walks, which encode
\begin{itemize}
\item
a starting point, i.e., the $\G_\sigma$ are rooted trees
\item
a cyclic order of the outgoing edges at a vertex, which gives a planar drawing of our graph
\end{itemize}
Hence what we have to count are rooted planar trees.

Note also that a rooted planar tree determines uniquely the corresponding walk.
\end{remark}

\chapter{Analytic Tools: Stieltjes Transform and Convergence of Measures}

%\begin{remark}
Let us recall our setting and goal. We have, for each $N\in\N$, selfadjoint $N\times N$ random matrices, which are given by a probability measure $\P_N$ on the entries of the matrices; the prescription of $\P_N$ should be kind of uniform in $N$.

For example, for the \GUEN\ we have $A=(a_{ij})_{i,j=1}^N$ with the complex variables $a_{ij}=x_{ij}+\sqrt{-1} y_{ij}$ having real part $x_{ij}$ and imaginary part $y_{ij}$. Since $a_{ij}=\bar a_{ji}$, we have $y_{ii}=0$ for all $i$ and we remain with the $N^2$ many ``free variables''
$x_{ii}$ ($i=1,\dots,N$) and 
 $x_{ij},y_{ij}$ ($1\leq i<j\leq N$).
All those are independent and Gaussian distributed, which can be written in the compact form
$$d\P(A)=c_N \exp(-N \frac {\Tr(A^2)}2) dA,$$
where $dA$ is the product of all differentials of the $N^2$ variables and $c_N$ is a normalization constant, to make $\P_N$ a probability measure.

We want now statements about our matrices with respect to this measure $\P_N$, either in average or in probability. Let us be a bit more specific on this.

Denote by $\Omega _N$ the space of our selfadjoint $N\times N$ matrices, i.e., 
$$\Omega_N:=\{A=(x_{ij}+\sqrt{-1} y_{ij})_{i,j=1}^N\mid
x_{ii} \in\R (i=1,\dots,N), x_{ij},y_{ij}\in\R (i<j)\}
\hat = \R^{N^2},
$$
then, for each $N\in\N$, $\P_N$ is a probability measure on $\Omega_N$. 

For $A\in\Omega_N$ we consider its $N$ eigenvalues $\lambda_1,\dots,\lambda_N$, counted with multiplicity. We encode those eigenvalues in a probability measure $\mu_A$ on $\R$:
$$\mu_A:=\frac 1N(\delta_{\lambda_1}+\cdots+ \delta_{\lambda_N}),$$
which we call the \emph{eigenvalue distribution of $A$}. Our claim is now that $\mu_A$ converges under $\P_N$, for $N\to\infty$ to the semicircle distribution $\mu_W$,
\begin{itemize}
\item
in average, i.e.,
$$\mu_N:=\int_{\Omega_N} \mu_A d\P_N(A)=\ev{\mu_a} \overset{N\to\infty}{\longrightarrow} \mu_W$$
\item
and, stronger, in probability or almost surely.
\end{itemize}
So what we have to understand now is:
\begin{itemize}
\item
What kind of convergence $\mu_N\to \mu$ do we have here, for probability measures on $\R$?
\item
How can we describe probability measures (on $\R$) and their convergence with analytic tools?
\end{itemize}
%\end{remark}

The relevant notions of convergence are the ``vague'' and ''weak'' convergence and our analytic tool will be the Stieltjes transform. We start with describing the latter.

\section{Stieltjes transform}

\begin{definition}
Let $\mu$ be a Borel measure on $\R$.
\begin{enumerate}
\item 
$\mu$ is \emph{finite} if $\mu(\R)<\infty$.
\item 
$\mu$ is a \emph{probability measure} if $\mu(\R)=1$.
\item
For a finite measure $\mu$ on $\R$ we define its \emph{Stieltjes transform} $S_\mu$ on $\C\backslash\R$ by
$$S_\mu(z)=\int_{\R} \frac 1{t-z} d\mu(t)\qquad (z\in \C\backslash\R).$$
\item
$-S_\mu=G_\mu$ is also called the \emph{Cauchy transform}.
\end{enumerate}
\end{definition}

\begin{theorem}\label{thm:4.3}
The Stieltjes transform has the following properties.
\begin{enumerate}
\item
Let $\mu$ be a finite measure on $\R$ and $S=S_\mu$ its Stieltjes transform. Then one has:
\begin{enumerate}
\item
$S:\C^+\to\C^+$, where
$\C^+:=\{z\in\C\mid \Im(z) >0\}$;
\item
$S$ is analytic on $\C^+$;
\item
$\lim_{y\to\infty} iy S(iy)=-\mu(\R)$.
\end{enumerate}
\item 
$\mu$ can be recovered from $S_\mu$ via the \emph{Stieltjes inversion formula}: for $a<b$ we have
$$\lim_{\varepsilon \searrow 0} \frac 1\pi \int_a^b \Im S_\mu(x+i\varepsilon)dx=
\mu((a,b))+\frac 12 \mu(\{a,b\}).$$
\item
In particular, we have for two finite measures $\mu$ and $\nu$: $S_\mu=S_\nu$ implies that $\mu=\nu$.
\end{enumerate}
\end{theorem}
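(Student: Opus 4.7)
The plan is to take the three claims in order, with almost all the content sitting in (2); parts (1) and (3) are essentially bookkeeping once the Poisson-kernel picture is in place.

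For (1), I would write $z=x+iy$ with $y>0$ and expand $\frac{1}{t-z}=\frac{(t-x)+iy}{(t-x)^2+y^2}$. Taking imaginary parts shows the integrand has strictly positive imaginary part $y/((t-x)^2+y^2)$ for every $t\in\R$, so integrating against the finite measure $\mu$ gives $\Im S_\mu(z)>0$, settling (a). For analyticity (b), on any compact $K\subset\C^+$ the distance to $\R$ is bounded below by some $\delta>0$, so $|1/(t-z)|\leq 1/\delta$ uniformly for $t\in\R$, $z\in K$; this uniform domination lets me either differentiate under the integral or invoke Morera plus Fubini. For (c), after rationalizing I get $iyS_\mu(iy)=\int \frac{iyt-y^2}{t^2+y^2}\,d\mu(t)$; the integrand is bounded by $1$ in modulus and converges pointwise to $-1$ as $y\to\infty$, so dominated convergence produces $-\mu(\R)$.

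For (2), the crucial observation is that $\Im S_\mu(x+i\varepsilon)=\int_\R \frac{\varepsilon}{(t-x)^2+\varepsilon^2}\,d\mu(t)$, i.e.\ the Poisson integral of $\mu$. Applying Tonelli (the integrand is nonnegative),
\[
\frac{1}{\pi}\int_a^b \Im S_\mu(x+i\varepsilon)\,dx=\int_\R \phi_\varepsilon(t)\,d\mu(t),\qquad \phi_\varepsilon(t):=\frac{1}{\pi}\bigl[\arctan\tfrac{b-t}{\varepsilon}-\arctan\tfrac{a-t}{\varepsilon}\bigr].
\]
As $\varepsilon\searrow 0$ the function $\phi_\varepsilon$ converges pointwise to $\mathbf{1}_{(a,b)}(t)+\tfrac{1}{2}\mathbf{1}_{\{a,b\}}(t)$ (the value $\tfrac{1}{2}$ at $t=a$ or $t=b$ comes from $\arctan(\pm\infty)=\pm\pi/2$, while for $t$ strictly inside or outside $[a,b]$ both arctangents tend to the same sign). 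Since $0\leq\phi_\varepsilon\leq 1$ and $\mu$ is finite, dominated convergence gives $\mu((a,b))+\tfrac{1}{2}\mu(\{a,b\})$ as desired.

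Part (3) is then immediate: if $S_\mu=S_\nu$, then (2) forces $\mu((a,b))+\tfrac{1}{2}\mu(\{a,b\})=\nu((a,b))+\tfrac{1}{2}\nu(\{a,b\})$ for all $a<b$. Restricting to $a,b$ outside the (at most countable) joint atomic set of $\mu$ and $\nu$ removes the boundary terms and yields $\mu((a,b))=\nu((a,b))$ on a dense family of open intervals, which by a standard $\pi$--$\lambda$ argument determines the Borel measures. The only genuinely technical step is (2); the main obstacle there is bookkeeping the pointwise limit of $\phi_\varepsilon$ at the endpoints $\{a,b\}$ cleanly, so that the half-mass factor comes out correctly, and checking that the dominating bound $\phi_\varepsilon\leq 1$ suffices to swap limit and integral against a generic finite $\mu$.
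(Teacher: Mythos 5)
Your argument is correct and follows essentially the same route as the paper: the Poisson-kernel identity for $\Im S_\mu(x+i\varepsilon)$, Tonelli, the arctangent computation with pointwise limits $0$, $\tfrac{\pi}{2}$, $\pi$, and then the countability of atoms to upgrade the inversion formula to uniqueness. The only difference is that you also spell out part (1), which the paper defers to an exercise, and your treatment there (positivity of $\Im\frac{1}{t-z}$, uniform domination on compacts, and the bound $\bigl|\frac{iyt-y^2}{t^2+y^2}\bigr|\leq 1$ for dominated convergence) is sound.
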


\begin{proof}
\begin{enumerate}
\item
This is Exercise \ref{exercise:10}.
\item
We have
$$\Im S_\mu(x+ i\ee)=\int_\R \Im\left(\frac 1{t-x-i\ee}\right)d\mu(t)=
\int_\R \frac \ee{(t-x)^2+\ee} d\mu(t)$$
and thus
$$\int_a^b\Im S_\mu(x+i\ee)dx=\int_\R\int_a^b \frac \ee{(t-x)^2+\ee^2}dx d\mu(t).$$
For the inner integral we have
\begin{align*}
\int_a^b \frac \ee{(t-x)^2+\ee^2}dx=\int_{(a-b)/\ee}^{(b-t)/\ee} \frac 1{x^2+1} dx
&=\tan^{-1}\left(\frac {b-t}\ee\right) -\tan^{-1}\left(\frac {a-t}\ee\right)\\
&\overset{\ee\searrow 0}\longrightarrow
\begin{cases}
0,& t\not\in [a,b]\\
\pi/2,& t\in\{a,b\}\\
\pi,& t\in (a,b)
\end{cases}
\end{align*}
From this the assertion follows.
\item
Now assume that $S_\mu=S_\nu$. By the Stieltjes inversion formula it follows then that $\mu((a,b))=\nu((a,b))$ for all open intervals such that $a$ and $b$ are atoms neither of $\mu$ nor of $\nu$. Since there can only be countably many atoms we can write any interval as
$$(a,b)=\bigcup_{n=1}^\infty (a+\ee_n,b-\ee_n),$$
where the sequence $\ee_n\searrow 0$ is chosen such that all $a+\ee_n$ and $b-\ee_n$ are no atoms of $\mu$ nor $\nu$. By monotone convergence for measures we get then
$$\mu((a,b)=\lim_{n\to\infty}\mu((a+\ee_n,b-\ee_n))=
\lim_{n\to\infty} \nu((a+\ee_n,b-\ee_n))=\nu((a,b)).$$
\end{enumerate}
\end{proof}

\begin{remark}
If we put $\mu_\ee=p_\ee \lambda$ (where $\lambda$ is Lebesgue measure) with density
$$p_\ee(x):=\frac 1\pi\Im S_\mu(x+i\ee)=\frac 1\pi\int_\R \frac \ee{(t-x)^2+\ee^2}d\mu(t),$$
then $\mu_\ee=\gamma_\ee *\mu$, where $\gamma_\ee$ is the Cauchy distribution, and we have checked explicitly in our proof of the Stieltjes inversion formula the well-known fact that $\gamma_\ee *\mu$ converges weakly to $\mu$ for $\ee\searrow 0$.
We will talk about weak convergence later, see Definition \ref{def:4.8}.
\end{remark}

\begin{prop}\label{prop:4.5}
Let $\mu$ be a compactly supported probability measure, say
$\mu([-r,r])=1$ for some $r>0$. Then $S_\mu$ has a power series expansion (about $\infty$) as follows
$$S_\mu(z)=-\sum_{n=0}^\infty \frac {m_n}{z^{n+1}}\qquad \text{for $\vert z\vert>r$,}$$
where $m_n:=\int_\R t^n d\mu(t)$ are the moments of $\mu$.
\end{prop}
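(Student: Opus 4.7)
The plan is to expand the integrand $\tfrac{1}{t-z}$ as a geometric series in $t/z$, valid on the support of $\mu$ whenever $\vert z\vert>r$, and then interchange summation and integration. So first I would write, for fixed $z$ with $\vert z\vert > r$,
\[ \frac{1}{t-z}=-\frac{1}{z}\cdot\frac{1}{1-t/z}=-\sum_{n=0}^\infty\frac{t^n}{z^{n+1}}. \]
Since $\mu$ is supported in $[-r,r]$, for every $t$ in the support we have $\vert t/z\vert\leq r/\vert z\vert<1$, so the geometric series converges (absolutely) pointwise on $\supp\mu$.

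Next I would justify the interchange of sum and integral. The simplest route is uniform convergence: for $t\in[-r,r]$ and $\vert z\vert>r$, the partial sums satisfy
\[ \Bigl\vert\sum_{n=0}^N\frac{t^n}{z^{n+1}}\Bigr\vert\leq\frac{1}{\vert z\vert}\sum_{n=0}^\infty\Bigl(\frac{r}{\vert z\vert}\Bigr)^n=\frac{1}{\vert z\vert-r}, \]
and the tail $\bigl\vert\sum_{n=N+1}^\infty t^n/z^{n+1}\bigr\vert$ is bounded uniformly in $t\in[-r,r]$ by $(r/\vert z\vert)^{N+1}/(\vert z\vert-r)$, which tends to $0$ as $N\to\infty$. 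Hence the series converges uniformly on $\supp\mu$, and since $\mu$ is a finite measure we may exchange the sum and the integral (alternatively one invokes dominated convergence with the constant majorant $1/(\vert z\vert-r)$). This yields
\[ S_\mu(z)=\int_\R\frac{1}{t-z}\,d\mu(t)=-\sum_{n=0}^\infty\frac{1}{z^{n+1}}\int_\R t^n\,d\mu(t)=-\sum_{n=0}^\infty\frac{m_n}{z^{n+1}}, \]
which is exactly the claimed expansion.

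There is no real obstacle here — the compact support of $\mu$ makes everything converge uniformly and the moments $m_n$ are all finite (in fact $\vert m_n\vert\leq r^n$, so the series has radius of convergence at least $1/r$ in the variable $1/z$, consistent with validity for $\vert z\vert>r$). The only small subtlety worth mentioning is that the representation is an expansion about $\infty$ rather than about a finite point, but this is just a Laurent expansion of the analytic function $S_\mu$ on the exterior $\{\vert z\vert>r\}$, and its coefficients are automatically the moments of $\mu$ by the calculation above.
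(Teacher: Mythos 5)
Your proof is correct and follows exactly the paper's argument: expand $\frac{1}{t-z}$ as a geometric series in $t/z$, note the uniform convergence on the compact support $[-r,r]$ for $\vert z\vert>r$, and interchange sum and integral. Your extra detail on the uniform tail bound is a fine (if slightly more explicit) justification of the same step the paper handles with the phrase ``the convergence on $[-r,r]$ is uniform''.
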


\begin{proof}
For $\vert z\vert >r$ we can expand
$$\frac 1{t-z}=-\frac 1{z(1-\frac tz)}=-\frac 1z \sum_{n=0}^\infty \left(\frac tz\right)^n$$
for all $t\in[-r,r]$; the convergence on $[-r,r]$ is uniform, hence
$$S_\mu(z)=\int_{-r}^r \frac 1{t-z}d\mu(t)=-\sum_{n=0}^\infty \int_{-r}^r \frac {t^n}{z^{n+1}}d\mu(t)=-\sum_{n=0}^\infty \frac {m_n}{z^{n+1}}.$$
\end{proof}

\begin{prop}\label{prop:4.6}
The Stieltjes transform $S(z)$ of the semicircle distribution, $d\mu_W(t)=\frac 1{2\pi}\sqrt{t^2-4}dt$, is, for $z\in\C^+$, uniquely determined by 
\begin{itemize}
\item
$S(z)\in\C^+$;
\item
$S(z)$ is the solution of the equation
$S(z)^2+z S(z)+1=0$.
\end{itemize}
Explicitly, this means
$$S(z)=\frac {-z+\sqrt{z^2-4}}2\qquad (z\in\C^+).$$
\end{prop}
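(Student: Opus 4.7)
The plan is to leverage the moment expansion of $S$ to derive the quadratic relation on $\{|z|>2\}$, extend it to all of $\C^+$ by analyticity, and finally use the positivity condition $S(z)\in\C^+$ together with asymptotics to pin down the correct branch of the explicit formula.

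First, I would apply Proposition \ref{prop:4.5} together with Theorem \ref{thm:1.4}(2) to obtain the expansion
\[
S(z) = -\sum_{k=0}^\infty \frac{C_k}{z^{2k+1}}, \qquad |z|>2,
\]
since the odd moments of $\mu_W$ vanish and the even moments are the Catalan numbers. Squaring termwise and regrouping by total degree, the Catalan recursion $C_{n+1} = \sum_{l=0}^n C_l C_{n-l}$ from Theorem \ref{thm:1.4}(1)(i) yields $S(z)^2 = \sum_{n=0}^\infty C_{n+1}/z^{2n+2}$. Comparing with $zS(z) = -\sum_{k=0}^\infty C_k/z^{2k}$ then gives $S(z)^2 + zS(z) + 1 = 0$ on $\{|z|>2\}$. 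Since both sides are analytic on the connected open set $\C^+$ by Theorem \ref{thm:4.3}(1)(b) and agree on the nonempty open subset $\{|z|>2\}\cap\C^+$, the identity theorem extends the equation to all of $\C^+$. The property $S(z)\in\C^+$ is precisely Theorem \ref{thm:4.3}(1)(a).

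For the uniqueness, I would fix $z\in\C^+$ and let $w_1,w_2$ be the two roots of $w^2+zw+1=0$; by Vieta they satisfy $w_1+w_2=-z$ and $w_1w_2=1$. Since $\Im(w_1+w_2)=-\Im z<0$ they cannot both lie in $\C^+$, and a real root would force $z$ to be real; hence exactly one root lies in $\C^+$, and this must be $S(z)$. Solving the quadratic gives $S(z)=(-z\pm\sqrt{z^2-4})/2$, and the correct sign is singled out by matching the moment asymptotics $S(z)\sim -C_0/z=-1/z$ as $|z|\to\infty$: with the branch of $\sqrt{z^2-4}$ behaving as $z-2/z+O(1/z^3)$ at infinity, the $+$ sign produces $-1/z+O(1/z^3)$, while the $-$ sign produces $-z+1/z+\cdots$, which is inconsistent.

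The main subtlety I expect is specifying a single analytic branch of $\sqrt{z^2-4}$ throughout $\C^+$, since $z^2-4$ can take negative real values on $\C^+$ (e.g.\ along the imaginary axis), so the principal branch of $\sqrt{\cdot}$ does not apply directly. The cleanest resolution is to note that $\C^+$ is simply connected and $z^2-4$ is nonvanishing there (its zeros $\pm 2$ lie on $\R$), so an analytic branch of the square root on $\C^+$ exists and is uniquely determined by its asymptotic behavior $\sim z$ at infinity; this branch is then consistent with the $\Im S>0$ selection rule established above, so the two conditions in the proposition genuinely pin down $S$.
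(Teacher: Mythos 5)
Your proof is correct and follows essentially the same route as the paper: Laurent expansion via the Catalan moments, the Catalan recursion to get the quadratic relation for $|z|>2$, analytic continuation to $\C^+$, and selection of the root lying in $\C^+$. You supply details the paper leaves implicit — the Vieta argument showing that at most one root can lie in $\C^+$, and the careful choice of the analytic branch of $\sqrt{z^2-4}$ on the simply connected domain $\C^+$ fixed by its asymptotics at infinity — which is a welcome tightening rather than a different method.
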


\begin{proof}
By Proposition \ref{prop:4.5}, we know that for large $\vert z\vert$:
$$S(z)=-\sum_{k=0}^\infty \frac {C_k}{z^{2k+1}},$$
where $C_k$ are the Catalan numbers. By using the recursion for the Catalan numbers (see Theorem \ref{thm:1.4}), this implies that for large $\vert z\vert$ we have $S(z)^2+zS(z)+1=0$. Since we know that $S$ is analytic on $\C^+$, this equation is, by analytic extension, then valid for all $z\in\C^+$.

This equation has two solutions,
$(-z\pm \sqrt{z^2-4})/2$, 
and only the one with the $+$-sign is in $\C^+$.
\end{proof}

\begin{remark}
Proposition \ref{prop:4.6} gave us the Stieltjes transform of $\mu_W$ just from the knowledge of the moments. From $S(z)=(-z-\sqrt{z^2-4})/2$ we can then get the density of $\mu_W$ via the Stieltjes inversion formula:
$$
\frac 1\pi \Im S(x+i\ee)=\frac 1{2\pi}\Im\sqrt{(x+i\ee)^2-4}
\overset{\ee\searrow 0}{\longrightarrow}\frac 1{2\pi}\Im\sqrt{x^4-4}
=\begin{cases}
0,& \vert x\vert >2\\\frac 1{2\pi} \sqrt{4-x^2},& \vert x\vert\leq 2.
\end{cases}
$$
Thus this analytic machinery gives an effective way to calculate a distribution from its moments (without having to know the density in advance).
\end{remark}

\section{Convergence of probability measures}

%\begin{remark}
Now we want to consider the convergence $\mu_N\to \mu$. We can consider (probability) measures from two equivalent perspectives:
\begin{enumerate}
\item
measure theoretic perspective:
$\mu$ gives us the measure (probability) of sets, i.e., $\mu(B)$ for measurable sets $B$, or just $\mu(\text{intervals})$
\item
functional analytic perspective:
$\mu$ allows to integrate continuous functions, i.e, it gives us
$\int fd\mu$ for continuous $f$
\end{enumerate}
According to this there are two canonical choices for a notion of convergence:
\begin{enumerate}
\item
$\mu_N(B)\to \mu(B)$ for all measurable sets $B$, or maybe for all intervals $B$
\item
$\int fd\mu_N\to \int f d\mu$ for all continuous $f$
\end{enumerate}
The first possibility is problematic in this generality, as it does treat atoms too restrictive.

Example: Take $\mu_N=\delta_{1-1/N}$ and $\mu=\delta_1$. Then we surely want that $\mu_N\to\mu$, but for $B=[1,2]$ we have
$\mu_N([1,2])=0$ for all $N$, but $\mu([1,2])=1.$

Thus the second possibility above is the better definition. But we have to be careful about which class of continuous functions we allow; we need bounded ones, otherwise $\int fd\mu$ might not exist in general; and, for compactness reasons, it is sometimes better to ignore the behaviour of the measures at infinity.

%\end{remark}

\begin{definition}\label{def:4.8}
\begin{enumerate}
\item
We use the notations
\begin{enumerate}
\item 
$C_0(\R):=\{f\in C(\R)\mid \lim_{\vert t\vert \to\infty} f(t) =0\}$ are the 
continuous functions on $\R$ vanishing at infinity
\item
$C_b(\R):=\{f\in C(\R)\mid \exists M>0: \vert f(t)\vert \leq M \quad\forall t\in \R\}$
are the continuous bounded functions on $\R$
\end{enumerate}
\item
Let $\mu$ and $\mu_N$ ($N\in N$) be finite measures. Then we say that
\begin{enumerate}
\item
$\mu_N$ \emph{converges vaguely} to $\mu$, denoted by $\mu_N \tov \mu$,
if 
$$\int f(t)d\mu(t)\to \int f(t) d\mu(t)\qquad\text{for all $f\in C_0(\R)$};$$
\item
$\mu_N$ \emph{converges weakly} to $\mu$, denoted by $\mu_N\tow \mu$, if
$$\int f(t) d\mu_N(t) \to \int f(t) d\mu(t) \qquad \text{for all $f\in C_b(\R)$}.$$
\end{enumerate}
\end{enumerate}
\end{definition}

\begin{remark}
\begin{enumerate}
\item
Note that weak convergence includes in particular that
$$\mu_N(\R)=\int 1 d\mu_N(t)\to \int 1d\mu(t)=\mu(\R),$$
and thus the weak limit of probability measures must again be a probability measure. For the vague convergence this is not true; there we can loose mass at infinity.

Example: Consider $\mu_N=\frac 12 \delta_1 +\frac 12 \delta_N$ and
$\mu=\frac 12 \delta_1$; then
$$\int f(t) d\mu_N(t)=\frac 12 f(1)+\frac 12 f(N) \to \frac 12 f(1)=\int f(t) d\mu(t)$$
for all $f\in C_0(\R)$. Thus the sequence of probability measures $\frac 12 \delta_1 +\frac 12\delta _N$ converges, for $N\to \infty$, to the finite measure $\frac 12 \delta_1$ with total mass $1/2$.

\item
The relevance of the vague convergence, even if we are only interested in probability measures, is that the probability measures are precompact in the vague topology, but not in the weak topology. E.g., in the above example, $\mu_N=\frac 12 \delta_1+\frac 12 \delta_N$ has no subsequence which converges weakly (but it has a subsequence, namely itself, which converges vaguely).
\end{enumerate}
\end{remark}

\begin{theorem}\label{thm:4.11}
The space of probability measures on $\R$ is precompact in the vague topology: every sequence $(\mu_N)_{N\in\N}$ of probability measures on $\R$ has a subsequence which converges vaguely to a finite measure $\mu$, with $\mu(\R)\leq 1$.
\end{theorem}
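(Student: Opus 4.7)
My plan is to prove this as an instance of the Banach--Alaoglu-type compactness for the dual of $C_0(\R)$, since by the Riesz--Markov representation theorem finite positive Borel measures on $\R$ correspond precisely to positive continuous linear functionals on $C_0(\R)$. Each probability measure $\mu_N$ defines $\Lambda_N\colon C_0(\R)\to \C$ by $\Lambda_N(f)=\int f\,d\mu_N$, and clearly $|\Lambda_N(f)|\leq \|f\|_\infty$, so the sequence $(\Lambda_N)$ sits in the closed unit ball of the dual of $C_0(\R)$.

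The first step is to use separability of $C_0(\R)$ (which holds because $\R$ is $\sigma$-compact and second countable): choose a countable dense subset $\{f_k\}_{k\in\N}\subset C_0(\R)$. For each fixed $k$, the scalar sequence $(\Lambda_N(f_k))_N$ is bounded by $\|f_k\|_\infty$, so by the Bolzano--Weierstrass theorem and a standard diagonal argument, I can extract a subsequence $(N_j)$ along which $\Lambda_{N_j}(f_k)$ converges for \emph{every} $k$. The uniform bound $\|\Lambda_N\|\leq 1$ and an $\varepsilon/3$-argument using density of $\{f_k\}$ then upgrade this to: $\Lambda(f):=\lim_j \Lambda_{N_j}(f)$ exists for every $f\in C_0(\R)$. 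The limit $\Lambda$ is linear, bounded with $\|\Lambda\|\leq 1$, and positive (because each $\Lambda_{N_j}$ is positive and positivity is preserved under pointwise limits of scalars).

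The second step is to invoke the Riesz--Markov representation theorem: every positive continuous linear functional on $C_0(\R)$ is of the form $f\mapsto \int f\,d\mu$ for a unique finite positive Borel measure $\mu$ on $\R$, and $\mu(\R)=\|\Lambda\|$. This immediately gives $\mu_{N_j}\tov \mu$ by the very definition of vague convergence, and $\mu(\R)=\|\Lambda\|\leq 1$. Alternatively, to verify $\mu(\R)\leq 1$ directly without quoting the norm identity, pick any increasing sequence $g_n\in C_0(\R)$ with $0\leq g_n\leq 1$ and $g_n\uparrow 1$ pointwise; then $\int g_n\,d\mu=\lim_j \int g_n\,d\mu_{N_j}\leq 1$ for every $n$, and monotone convergence gives $\mu(\R)\leq 1$.

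The main obstacle is really the combination of the diagonal extraction with the $\varepsilon/3$-argument that promotes pointwise convergence on a countable dense set to convergence on all of $C_0(\R)$; this is the technical heart and is exactly where the uniform norm bound $\|\Lambda_N\|\leq 1$ together with separability is needed. Everything else is bookkeeping and a citation of Riesz--Markov. It is worth noting that the upper bound $\mu(\R)\leq 1$, rather than equality, cannot in general be improved: mass can escape to infinity, as illustrated by $\mu_N=\tfrac12\delta_1+\tfrac12\delta_N$ in the preceding remark.
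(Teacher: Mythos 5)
Your proof is correct. It is worth noting, though, that it takes the route the paper only \emph{sketches} in one sentence (the ``functional analytic perspective'': Banach--Alaoglu plus the identification of the dual of $C_0(\R)$ with measures), whereas the proof the paper actually carries out in detail is the measure-theoretic one: Helly's selection theorem, phrased in terms of distribution functions $F_\mu(t)=\mu((-\infty,t])$, with a diagonal extraction over a countable dense set of points $t\in\R$ and the subsequent repair $F(t):=\inf\{F_T(s)\mid s\in T,\ s>t\}$ to restore right-continuity. The two arguments are structurally parallel --- both hinge on a Bolzano--Weierstrass diagonal argument over a countable dense family, followed by an upgrade to the full object --- but they trade different black boxes: you invoke separability of $C_0(\R)$ and the Riesz--Markov representation theorem to produce the limit measure, while the paper invokes the correspondence between finite measures and their distribution functions and the characterization of vague convergence via convergence of $F_N(t)$ at continuity points of $F$. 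Your version has the advantage that vague convergence of $\mu_{N_j}$ to $\mu$ is immediate from the construction (it is literally the definition), and your direct verification of $\mu(\R)\leq 1$ via $g_n\uparrow 1$ and monotone convergence is clean; the paper's version is more elementary in that it needs no functional analysis, at the cost of having to check that the repaired $F$ is a distribution function and that convergence holds at its continuity points. You also correctly identify that abstract weak$^*$ compactness alone does not yield a convergent \emph{subsequence}; sequential extraction genuinely requires the separability-plus-diagonalization step you supply, which is a point the paper's one-line functional-analytic remark glosses over.
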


\begin{proof}
\begin{enumerate}
\item
From a functional analytic perspective this is a special case of the Banach-Alaoglu theorem; since complex measures on $\R$ are the dual space of the Banach space $C_0(\R)$, and its weak$^*$ topology is exactly the vague topology.
\item
From a measure theory perspective this is known as Helly's (Selection) Theorem. Here are the main ideas for the proof in this setting.
\begin{enumerate}
\item
We describe a finite measure $\mu$ by its distribution function $F_\mu$, given by
$$F_\mu:\R\to\R; \quad F_\mu(t):=\mu((-\infty,t]).$$
Such distribution functions can be characterized as functions $F$ with the properties:
\begin{itemize}
\item
$t\mapsto F(t)$ is non-decreasing
\item
$F(-\infty):=\lim_{t\to-\infty}F(t)=0$ and $F(+\infty):=\lim_{t\to\infty}F(t)<\infty$
\item
$F$ is continuous on the right
\end{itemize}
\item
The vague convergence of $\mu_N\tov \mu$ can also be described in terms of their distribution functions $F_N$, $F$; $\mu_N\tov \mu$ is equivalent to:
$$F_N(t)\to F(t)\qquad \text{for all $t\in\R$ at which $F$ is continuous.}$$
\item
Let now a sequence $(\mu_N)_N$ of probability measures be given. We consider the corresponding distribution functions $(F_N)_N$ and want to find a convergent subsequence (in the sense of (ii)) for those.

For this choose a countable dense subset $T=\{t_1,t_2,\dots\}$ of $\R$. Then, by choosing subsequences of subsequences and taking the ``diagonal'' subsequence, we get convergence for all $t\in T$.  More precisely: Choose subsequence $(F_{N_1(m)})_m$ such that
$$F_{N_1(m)}(t_1) \overset{m\to\infty}\longrightarrow F_T(t_1),$$
choose then a subsequence $(F_{N_2(m)})_m$ of this such that
$$F_{N_2(m)}(t_1) \overset{m\to\infty}\longrightarrow F_T(t_1),\qquad
F_{N_2(m)}(t_2) \overset{m\to\infty}\longrightarrow F_T(t_2);$$
iterating this gives subsequences $(F_{N_k(m)})_m$ such that
$$F_{N_k(m)}(t_i) \overset{m\to\infty}\longrightarrow F_T(t_i)\qquad
\text{for all $i=1,\dots,k$.}$$
The diagonal subsequence $(F_{N_m(m)})_m$ converges then at all $t\in T$ to $F_T(t)$. We improve now $F_T$ to the wanted $F$ by
$$F(t):=\inf\{F_T(s)\mid s\in T, s>t\}$$
and show that 
\begin{itemize}
\item
$F$ is a distribution function;
\item
$F_{N_m(m)}(t) \overset{m\to\infty}\longrightarrow F(t)$ at all continuity points of $F$.
\end{itemize}
According to (ii) this gives then the convergence $\mu_{N_m(m)} \overset{m\to\infty}\longrightarrow \mu$, where $\mu$ is the finite measure corresponding to the distribution function $F$.
Note that $F_N(+\infty)=1$ for all $N\in\N$ gives $F(+\infty)\leq 1$, but we cannot guarantee $F(+\infty)=1$ in general.
\end{enumerate}
\end{enumerate}

\end{proof}

\begin{remark}
If we want compactness in the weak topology, then we must control the mass at $\infty$ in a uniform way. This is given by the notion of tightness. A sequence $(\mu_N)_N$ of probability measures is \emph{tight} if: for all $\ee>0$ there exists an interval $I=[-R,R]$ such that $\mu_N(I^c)<\ee$ for all $N$.

Then one has: Any tight sequence of probability measures has a subsequence which converges weakly; the limit is then necessarily a probability measure.

\end{remark}

\section{Probability measures determined by moments}

We can now also relate weak convergence to convergence of moments; which shows that our combinatorial approach (using moments) and analytic approach (using Stieltjes transforms) for proving the semicircle law are essentially equivalent. We want to make this more precise in the following.

\begin{definition}
A probability measure $\mu$ on $\R$ is \emph{determined by its moments} if
\begin{enumerate}
\item[(i)]
all moments $\int t^kd\mu(t)<\infty$ ($k\in\N$) exist;
\item[(ii)]
$\mu$ is the only probability measure with those moments: if $\nu$ is a probability measure and $\int t^kd\nu(t)=\int t^kd\mu(t)$ for all $k\in \N$, then $\nu=\mu$.
\end{enumerate}
\end{definition}

\begin{theorem}\label{thm:4.12}
Let $\mu$ and $\mu_N$ ($N\in\N$) be probability measures for which all moments exist. Assume that $\mu$ is determined by its moments. 
Assume furthermore that we have convergence of moments, i.e., 
$$\lim_{N\to\infty} \int t^k d\mu_N(t)=\int t^k d\mu(t)\qquad \text{for all $k\in\N$.}$$
Then we have weak convergence: 
$\mu_N\tow \mu$.
\end{theorem}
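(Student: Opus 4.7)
The strategy is the standard tightness-plus-identification approach. First I will argue that the sequence $(\mu_N)$ is tight, so that by (the tight refinement of) Theorem \ref{thm:4.11} every subsequence admits a weakly convergent sub-subsequence with a probability measure as limit. Then I will show that any such weak limit $\nu$ has the same moments as $\mu$, hence $\nu = \mu$ by the determinacy assumption. A routine subsequence-of-subsequences argument then upgrades this to weak convergence of the full sequence.

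Tightness is immediate from the convergence of the second moment: Markov's inequality gives
\[ \mu_N([-R,R]^c) \leq \frac{1}{R^2}\int t^2 \td \mu_N(t), \]
and the right-hand side tends to $R^{-2}\int t^2 \td\mu(t)$, so it is bounded uniformly in $N$ and can be made arbitrarily small by enlarging $R$. Thus any subsequence has a further subsequence $\mu_{N_l} \tow \nu$ for some probability measure $\nu$ on $\R$.

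The technical heart is to identify the moments of $\nu$. The obstacle is that $t \mapsto t^k$ is not in $C_b(\R)$, so weak convergence does not directly test against monomials. I will handle this by a cutoff argument: pick $\chi_R \in C_b(\R)$ with $\chi_R \equiv 1$ on $[-R,R]$, $\chi_R \equiv 0$ off $[-R-1,R+1]$, and $0 \leq \chi_R \leq 1$. Then $t^k \chi_R(t) \in C_b(\R)$ and weak convergence yields
\[ \int t^k \chi_R \td\nu = \lim_{l\to\infty} \int t^k \chi_R \td\mu_{N_l}. \]
The truncation error is controlled uniformly by
\[ \left| \int t^k \td\mu_{N_l} - \int t^k \chi_R \td\mu_{N_l} \right| \leq \int_{|t|\geq R} |t|^k \td\mu_{N_l}(t) \leq \frac{1}{R^2} \int t^{k+2} \td\mu_{N_l}(t), \]
and the last expression is bounded uniformly in $l$ because $\int t^{k+2}\td\mu_{N_l}$ converges to $\int t^{k+2} \td\mu(t) < \infty$. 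The hypothesis then gives $\lim_l \int t^k \td\mu_{N_l} = \int t^k \td\mu$, hence
\[ \left| \int t^k \chi_R \td\nu - \int t^k \td\mu \right| = O(R^{-2}). \]
Finally, Fatou applied to $t^{2k}$ shows $\int t^{2k}\td\nu \leq \liminf_l \int t^{2k}\td\mu_{N_l} < \infty$, so $\nu$ has all moments, and dominated convergence yields $\int t^k \chi_R \td\nu \to \int t^k \td\nu$ as $R\to\infty$. Combining, $\int t^k \td\nu = \int t^k \td\mu$ for every $k\in\N$. By determinacy, $\nu = \mu$.

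The main obstacle is precisely this cutoff bookkeeping: ensuring the $O(R^{-2})$ error is truly uniform in $l$, that $\nu$ inherits all moments, and that the orders of limits in $l$ and $R$ can be interchanged. Everything else — tightness from Markov, and the standard principle that if every subsequence has a sub-subsequence converging to the same limit then the whole sequence converges — is routine.
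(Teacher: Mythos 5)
Your proof is correct and follows essentially the same route as the paper's (which is only a sketch, deferring the uniform-integrability step to Billingsley): tightness via Markov, extraction of a weak limit $\nu$, identification of the moments of $\nu$ with those of $\mu$ by a truncation argument, determinacy, and the subsequence principle — you have simply made explicit the uniform-integrability bookkeeping the paper outsources. The only blemish is the bound $\int_{|t|\geq R}|t|^k\,\td\mu_{N_l}\leq R^{-2}\int t^{k+2}\,\td\mu_{N_l}$, which as written only controls things when $k+2$ is even; for odd $k$ use instead $R^{-k}\int t^{2k}\,\td\mu_{N_l}$ (or insert absolute values and dominate by an even moment via Cauchy--Schwarz) — a cosmetic fix.
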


\begin{proof}[Rough idea of proof] 
One has to note that convergence of moments implies tightness, which implies the  existence of a weakly convergent subsequence, $\mu_{N_m} \to \nu$. Furthermore, the assumption that the moments converge implies that they are uniformly integrable, which implies then that the moments of this subsequence converge to the moments of $\nu$. (These are kind of standard measure theoretic arguments, though a bit involved; for details see the book of Billingsley, in particular, his Theorem 25.12 and its Corollary.) However, the moments of the subsequence converge, as the moments of the whole sequence, by assumption to the moments of $\mu$; this means that $\mu$ and $\nu$ have the same moments and hence, by our assumption that $\mu$ is determined by its moments, we have that $\nu=\mu$.
\\
In the same way all weakly convergent subsequences of $(\nu_N)_N$ must converge to the same $\mu$, and thus the whole sequence must converge weakly to $\mu$.

\end{proof}

\begin{remark}
\begin{enumerate}
\item[(0)]
Note that in the first version of these notes (and also in the recorded lectures) it was claimed that, under the assumption that the limit is determined by its moments, convergence in moments is equivalent to weak convergence. This is clearly not true as the following simple example shows. Consider
$$\mu_N=(1-\frac 1N)\delta_0+\frac 1N \delta_N\qquad\text{and}\qquad
\mu=\delta_0.$$
Then it is clear that $\mu_N\tow \mu$, and $\mu$ is also determined by its moments. But there is no convergence of moments. For example,
the first moment converges, but to the wrong limit
$$\int t d\mu_N(t)=\frac 1N N=1 \to 1\not= 0=\int td\mu(t),$$
and the other moments explode
$$\int t^k d\mu_N(t)=\frac 1N N^k=N^{k-1}\to\infty\qquad\text{for $k\geq 2$}.$$
In order to have convergence of moments one needs a uniform integrability assumption; see Billingsley, in particular, his Theorem 25.12 and its Corollary.
\item
Note that there exist measures for which all moments exist but which, however, are not determined by their moments. Weak convergence to them cannot be checked by just looking on convergence of moments.

Example: The log-normal distribution with density
$$d\mu(t)=\frac 1{\sqrt{2\pi}} \frac 1x e^{-(\log x)^2/2} dt\qquad \text{on $[0,\infty)$}$$
(which is the distribution of $e^X$ for $X$ Gaussian) is not determined by its moments.
\item
Compactly supported measures (like the semicircle) or also the Gaussian distribution are determined by their moments.
\end{enumerate}
\end{remark}

\section{Description of weak convergence via the Stieltjes transform}

\begin{theorem}\label{thm:4.15}
Let $\mu$ and $\mu_N$ ($N\in\N$) be probability measures on $\R$. Then the following are equivalent.
\begin{enumerate}
\item[(i)]
$\mu_N\tow\mu$.
\item[(ii)]
For all $z\in\C^+$ we have: $\lim_{N\to\infty} S_{\mu_N}(z)=S_\mu(z)$.
\item[(iii)]
There exists a set $D\subset \C^+$, which has an accumulation point in $\C^+$, such that: $\lim_{N\to\infty} S_{\mu_N}(z)=S_\mu(z)$ for all $z\in D$.
\end{enumerate}
\end{theorem}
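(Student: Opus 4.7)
The plan is to prove the cycle $(i) \Rightarrow (ii) \Rightarrow (iii) \Rightarrow (i)$. The first implication is immediate: for each fixed $z \in \C^+$, the function $f_z(t) = 1/(t-z)$ is bounded (by $1/\Im(z)$) and continuous on $\R$, so it lies in $C_b(\R)$, and the definition of $\mu_N \tow \mu$ applied to $f_z$ gives exactly $S_{\mu_N}(z) \to S_\mu(z)$. The implication $(ii) \Rightarrow (iii)$ is trivial (take $D = \C^+$).

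For the main implication $(iii) \Rightarrow (i)$, I would use a subsequence argument driven by Helly's theorem. Given any subsequence of $(\mu_N)$, Theorem \ref{thm:4.11} furnishes a further subsequence $(\mu_{N_k})$ with $\mu_{N_k} \tov \nu$ for some finite measure $\nu$ with $\nu(\R) \leq 1$. The key observation is that $f_z \in C_0(\R)$ for every $z \in \C^+$ (since $f_z(t) \to 0$ as $|t| \to \infty$), so vague convergence gives $S_{\mu_{N_k}}(z) \to S_\nu(z)$ for all $z \in \C^+$. Combined with the hypothesis in $(iii)$, we get $S_\nu(z) = S_\mu(z)$ for every $z \in D$. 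Since both Stieltjes transforms are analytic on the connected open set $\C^+$ (Theorem \ref{thm:4.3}(1)(b)) and $D$ has an accumulation point there, the identity theorem for analytic functions forces $S_\nu \equiv S_\mu$ on $\C^+$. The injectivity part of Theorem \ref{thm:4.3}(3) then gives $\nu = \mu$; in particular $\nu$ is a probability measure.

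Having shown that every subsequence of $(\mu_N)$ admits a further subsequence converging vaguely to $\mu$, we conclude $\mu_N \tov \mu$. The remaining task is to upgrade vague to weak convergence, using $\mu(\R) = 1$ to prevent loss of mass. For any $\ee > 0$, I would choose $R$ with $\mu([-R,R]) > 1 - \ee/2$ and a compactly supported continuous cutoff $\phi$ with $0 \leq \phi \leq 1$ and $\phi \equiv 1$ on $[-R,R]$; since $\phi \in C_0(\R)$, vague convergence yields $\int \phi \, d\mu_N \to \int \phi \, d\mu > 1 - \ee/2$, which forces $\mu_N$ to place mass at least $1 - \ee$ on the support of $\phi$ for all large $N$. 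This establishes tightness of $(\mu_N)$. For a given $f \in C_b(\R)$, I would then split $f = f\phi_R + f(1-\phi_R)$: the first term lies in $C_0(\R)$ so vague convergence handles it, while the second is controlled uniformly in $N$ by $\|f\|_\infty \cdot \ee$ thanks to tightness. Letting $\ee \to 0$ completes the argument.

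The step I expect to be the main obstacle is this last upgrade from vague to weak convergence. The rest of the argument is structural (identity theorem plus uniqueness of the Stieltjes transform, both quoted from Theorem \ref{thm:4.3}), but the vague-to-weak passage genuinely uses that $\mu$ is a probability measure and requires the tightness extraction described above. This is precisely the phenomenon highlighted in the remark following Theorem \ref{thm:4.11}, where an atom could escape to infinity under mere vague convergence; here the identification $\nu = \mu$ rules this out.
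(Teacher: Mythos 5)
Your proposal is correct and follows essentially the same route as the paper: the easy implications via $f_z\in C_0(\R)\subset C_b(\R)$, and for (iii)$\Rightarrow$(i) the combination of Helly's selection theorem, the identity theorem for the analytic Stieltjes transforms, the injectivity from Theorem \ref{thm:4.3}, and a subsequence argument. The only difference is that you spell out the vague-to-weak upgrade (via tightness and a cutoff function), which the paper delegates to Exercise \ref{exercise:12}; your execution of that step is fine.
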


\begin{proof}
\begin{itemize}
\item
(i)$\implies$(ii): 
Assume that $\mu_N\tow \mu$. For $z\in\C^+$ we consider
$$f_z:\R\to\C \qquad\text{with}\qquad f_z(t)=\frac 1{t-z}.$$
Since $\lim_{\vert t\vert \to\infty} f_z(t)=0$, we have
$f_z\in C_0(\R)\subset C_b(\R)$ and thus, by definition of weak convergence:
$$S_{\mu_N}(z)=\int f_z(t)d\mu_N(t)\to \int f_z(t) d\mu(t)=S_\mu(z).$$
\item
(ii)$\implies$(iii): clear
\item
(iii)$\implies$(i): By Theorem \ref{thm:4.11}, we know that $(\mu_N)_N$
has a subsequence $(\mu_{N(m)})_m$ which converges vaguely to some finite measure $\nu$ with $\nu(\R)\leq 1$. Then, as above, we have for all $z\in D$:
$$S_\mu(z)=\lim_{m\to\infty} S_{\mu_N(m)}(z) =S_\nu(z).$$
Thus the analytic functions $S_\mu$ and $S_\nu$ agree on $D$ and hence, but the identity therem for analytic functions, also on $\C^+$, i.e., $S_\mu=S_\nu$. But this implies, by Theorem \ref{thm:4.3}, that $\nu=\mu$.

Thus the subsequence $(\mu_{N(m)})_m$ converges vaguely to the probability measure $\mu$ (and thus also weakly, see Exercise \ref{exercise:12}). In the same way, any weak cluster point of $(\mu_N)_N$ must be equal to $\mu$, and thus the whole sequence must converge weakly to $\mu$.
\end{itemize}

\end{proof}

\begin{remark}
If we only assume that $S_{\mu_N}(z)$ converges to a limit function $S(z)$, then $S$ must be the Stieltjes transform of a measure $\nu$ with $\nu(\R)\leq 1$ and we have the vague convergence $\mu_N\tov \nu$.
\end{remark}

\chapter{Analytic Proof of Wigner's Semicircle Law for Gaussian Random Matrices}

Now we are ready to give an analytic proof of Wigner's semicircle law, relying on the analytic tools we developed in the last chapter. As for the combinatorial approach, the Gaussian case is easier compared to the general Wigner case and thus we will restrict to this. The difference between real and complex is here not really relevant, instead of \GUE\ we will treat the \GOE\ case.

\section{GOE random matrices}

\begin{definition}\label{def:5.1}
\emph{Real Gaussian random matrices (\GOE)} are of the form
$A_N=(x_{ij})_{i,j=1}^N$, where $x_{ij}=x_{ji}$ for all $i,j$ and $\{x_{ij}\mid i\leq j\}$ are i.i.d. (independent identically distributed) random variables with Gaussian distribution of mean zero and variance $1/N$.
More formaly, on the space of symmetric $N\times N$ matrices
$$\Omega_N=\{A_N=(x_{ij})_{i,j=1}^N\mid x_{ij}\in\R, x_{ij}=x_{ji}\,\forall i,j\}$$
we consider the probability measure
$$d\P_N(A_N)=c_N\exp(-\frac N4 \Tr(A_N^2))\prod_{i\leq j} dx_{ij},$$
with a normalization constant $c_N$ such that $\P_N$ is a probability measure.
\end{definition}

\begin{remark}\label{rem:5.2}
\begin{enumerate}
\item
Note that with this choice of $\P_N$, which is invariant under orthogonal rotations, we have actually different variances on and off the diagonal:
$$\ev{x_{ij}^2}=\frac 1N \quad (i\not=j)\qquad\text{and}\qquad
\ev{x_{ii}^2}=\frac 2N.$$
\item
We consider now, for each $N\in \N$, the averaged eigenvalue distribution
$$\mu_N:=\ev{\mu_{A_N}}=\int_{\Omega_N} \mu_A dP_N(A).$$
We want to prove that $\mu_N\tow \mu_W$. 
According to Theorem \ref{thm:4.15} we can prove this by showing
$\lim_{N\to\infty} S_{\mu_N}(z)=S_{\mu_W}(z)$ for all $z\in\C^+$.

\item
Note that
$$
S_{\mu_N}(z)=\int_\R \frac 1{t-z} d\mu_N(t)
=\ev{\int_\R \frac 1{t-z} d\mu_{A_N}(t)}
=\ev{\tr[(A_N-z1)^{-1}]},
$$
since, by Assignement ...., $S_{\mu_{A_N}}(t)=\tr[(A_N-z1)^{-1}]$.
So what we have to see, is for all $z\in\C^+$:
$$\lim_{N\to\infty} \ev{\tr[(A_N-z1)^{1-}]}=S_{\mu_W}(z).$$
For this, we want to see that $S_{\mu_N}(z)$ satisfies approximately the quadratic equation for $S_{\mu_W}(z)$, from \ref{prop:4.6}.
\item\label{rem:5.2ii}
Let us use for the resolvents of our matrices $A$ the notation
$$R_A(z)=\frac 1{A-z1},\qquad\text{so that}\qquad
S_{\mu_N}(z)=\ev{\tr(R_{A_N}(z))}.
$$
In the following we will usually suppress the index $N$ at our matrices; thus write just $A$ instead of $A_N$, as long as the $N$ is fixed and clear.

We have then $(A-z1)R_A(z)=1$, or
$A\cdot R_A(z)-z R_A(z)=1$, thus
$$R_A(z)=-\frac 1z 1 +\frac 1z A R_A(z).
$$
Taking the normalized trace and expectation of this yields
$$\ev{\tr(R_{A}(z))}=-\frac 1z +\frac 1z \ev{\tr(A R_{A}(z))}.$$ 
The left hand side is our Stieltjes transform, but what about the right hand side; can we relate this also to the Stieltjes transform? Note that the function under the expectation is 
$\frac 1N\sum_{k,l} x_{kl} [R_A(z)]_{lk}$; thus a sum of terms which are the product of one of our Gaussian variables times a function of all the independent Gaussian variables. There exists actually a very nice and important formula to deal with such expectations of independent Gaussian variables. In a sense, this is the analytic version of the combinatorial Wick formula. 

\end{enumerate}
\end{remark}

\section{Stein's identity for independent Gaussian variables}

\begin{prop}[Stein's identity]\label{prop:5.3}
Let $X_1,\dots,X_k$ be independent random variables with Gaussian distribution, with mean zero and variances $\ev{X_i}=\sigma_i^2$. Let 
$h:\R^k\to\C$ be continuously differentiable such that $h$ and all partial derivatives are of polynomial growth. Then we have for $i=1,\dots,k$:
$$\ev{X_i h(X_1,\dots,X_k)}=\sigma_i^2 \ev{\frac{\partial h}{\partial x_i}
(X_1,\dots,X_k)}.$$
More explicitly,
\begin{multline*}
\int_{\R^k} x_i h(x_1,\dots,x_k)
\exp\left(-\frac{x_1^2}{2\sigma_1^2}-\cdots -\frac{x_k^2}{2\sigma_k^2}\right)
dx_1\dots dx_k=\\
\sigma_i^2 \int_{\R^k} \frac{\partial h}{\partial x_i}(x_1,\dots,x_k)
\exp\left(-\frac{x_1^2}{2\sigma_1^2}-\cdots -\frac{x_k^2}{2\sigma_k^2}\right)
dx_1\dots dx_k
\end{multline*}

\end{prop}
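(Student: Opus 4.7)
The plan is to prove this by integration by parts, reducing the problem to the one-dimensional Gaussian case via independence (i.e., the factorization of the joint density), and then carefully justifying the vanishing of boundary terms using the polynomial growth hypothesis on $h$.

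First I would fix $i$ and write the joint density of $(X_1,\dots,X_k)$ as a product
\[
\rho(x_1,\dots,x_k)=\prod_{j=1}^k \rho_j(x_j),\qquad \rho_j(x)=\frac{1}{\sqrt{2\pi\sigma_j^2}}\exp\!\left(-\frac{x^2}{2\sigma_j^2}\right),
\]
which is where independence enters. The key one-variable observation is the differential identity
\[
x\,\rho_i(x)=-\sigma_i^2\,\rho_i'(x),
\]
which rewrites the factor $x_i$ in the integrand as a derivative of $\rho_i$. Substituting this into $\ev{X_ih(X_1,\dots,X_k)}$ and applying Fubini to isolate the integral over $x_i$ (with the remaining variables as parameters), the inner one-dimensional integral becomes
\[
\int_{\R}h(\dots,x_i,\dots)\,x_i\,\rho_i(x_i)\,dx_i
=-\sigma_i^2\int_{\R}h(\dots,x_i,\dots)\,\rho_i'(x_i)\,dx_i.
\]

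Next I would integrate by parts in $x_i$ to move the derivative from $\rho_i$ onto $h$, picking up the claimed factor $\sigma_i^2\,\partial h/\partial x_i$, at which point reassembling with the other factors $\rho_j$ and applying Fubini again yields the identity. The one step that requires genuine care is the vanishing of the boundary terms
\[
\bigl[h(\dots,x_i,\dots)\,\rho_i(x_i)\bigr]_{x_i=-\infty}^{x_i=+\infty},
\]
and this is exactly where the polynomial-growth assumption is used: since $\rho_i$ decays super-polynomially while $h$ grows at most polynomially in each argument, the product tends to $0$ as $x_i\to\pm\infty$ for every fixed choice of the other variables. The same growth control on $h$ and $\partial h/\partial x_i$, combined with the Gaussian weight, guarantees absolute integrability and thus legitimizes both applications of Fubini.

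The main obstacle I anticipate is not the core calculation but the bookkeeping around the growth hypotheses: one has to verify that $h\cdot\rho$ and $(\partial h/\partial x_i)\cdot\rho$ are integrable on $\R^k$, and that for almost every choice of $(x_j)_{j\neq i}$ the boundary terms in $x_i$ vanish and the one-dimensional integration by parts is valid. Once this is in place, the identity is essentially a one-line computation. Finally, the explicit integral formula stated in the proposition follows by writing out $\rho$ and multiplying through by $(2\pi)^{k/2}\sigma_1\cdots\sigma_k$.
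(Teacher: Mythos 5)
Your proposal is correct and follows essentially the same route as the paper's proof: the identity $x\,\rho_i(x)=-\sigma_i^2\,\rho_i'(x)$ followed by integration by parts in the $i$-th coordinate, with the polynomial-growth hypothesis killing the boundary terms. You simply spell out the Fubini/integrability bookkeeping that the paper leaves implicit.
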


\begin{proof}

The main argument happens for $k=1$. Since 
$xe^{-{x^2}/{(2\sigma^2)}}=[-\sigma^2 e^{- {x^2}/{(2\sigma^2})}]'$
we get by partial integration
$$\int_\R xh(x)  e^{-{x^2}/{(2\sigma^2)}} dx=\int_\R h(x) [-\sigma^2 e^{- {x^2}/{(2\sigma^2})}]'dx =
\int_\R h'(x)\sigma^2 e^{-{x^2}/{(2\sigma^2)}}dx;$$
our assumptions on $h$ are just such that the boundary terms vanish.

For general $k$, we just do partial integration for the $i$-th coordinate.

\end{proof}

We want to apply this now to our Gaussian random matrices, with Gaussian random variables $x_{ij}$ ($1\leq i\leq j\leq N$) of variance
$$\sigma_{ij}^2=\begin{cases}
\frac 1N, & i\not= j\\
\frac 2N, & i=j
\end{cases}$$
and for the function
$$h(x_{ij}\mid i\leq j)=h(A)=[R_A(z)]_{lk},\qquad\text{where}\qquad
R_A(z)=\frac 1{A-z1}.$$
To use  Stein's identitiy \ref{prop:5.3} in this case we need the partial derivatives of the resolvents.

\begin{lemma}\label{lem:5.4}
For $A=(x_{ij})_{i,j=1}^N$ with $x_{ij}=x_{ji}$ for all $i,j$, we have for all $i,j,k,l$:
$$\frac\partial{\partial x_{ij}} [R_A(z)]_{lk}=
\begin{cases}
-[R_A(z)]_{li}\cdot [R_A(z)]_{ik},& i=j,\\
-[R_A(z)]_{li}\cdot [R_A(z)]_{jk}- [R_A(z)]_{lj}\cdot [R_A(z)]_{ik},& i\not= j.
\end{cases}
$$

\end{lemma}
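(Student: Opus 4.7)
The plan is to reduce this to the standard formula for the derivative of a matrix inverse, namely $\partial_t M^{-1} = -M^{-1}(\partial_t M)M^{-1}$, and then to be careful with the symmetry constraint $x_{ij}=x_{ji}$.

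First I would fix $z\in\C^+$ and view $R_A(z)=(A-z\mathbf{1})^{-1}$ as a function of the independent variables $\{x_{ij}\mid i\leq j\}$ of Definition \ref{def:5.1}. Differentiating the identity $(A-z\mathbf{1})R_A(z)=\mathbf{1}$ with respect to $x_{ij}$ and using the product rule gives
\[
\frac{\partial R_A(z)}{\partial x_{ij}}=-R_A(z)\,\frac{\partial A}{\partial x_{ij}}\,R_A(z),
\]
so everything comes down to computing $\partial A/\partial x_{ij}$ in a way that respects the symmetry.

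The key point, and the only place where one has to be careful, is that for $i<j$ the single free parameter $x_{ij}$ controls both entries $A_{ij}$ and $A_{ji}$. Denoting by $E^{(pq)}$ the matrix unit with $1$ in position $(p,q)$ and $0$ elsewhere, one therefore has
\[
\frac{\partial A}{\partial x_{ii}}=E^{(ii)},\qquad \frac{\partial A}{\partial x_{ij}}=E^{(ij)}+E^{(ji)}\quad (i\neq j).
\]
Plugging this into the resolvent identity and reading off the $(l,k)$-entry using $[R_A E^{(pq)} R_A]_{lk}=[R_A]_{lp}[R_A]_{qk}$ gives
\[
\frac{\partial [R_A(z)]_{lk}}{\partial x_{ii}}=-[R_A(z)]_{li}[R_A(z)]_{ik}
\]
and
\[
\frac{\partial [R_A(z)]_{lk}}{\partial x_{ij}}=-[R_A(z)]_{li}[R_A(z)]_{jk}-[R_A(z)]_{lj}[R_A(z)]_{ik}\quad (i\neq j),
\]
which is exactly the claim.

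The main (and only) obstacle is the symmetry bookkeeping in the off-diagonal case; the two-term formula on the second line is entirely an artefact of the fact that $x_{ij}$ with $i<j$ sits in two places of $A$. Everything else is the textbook identity for the derivative of $M^{-1}$, which in turn is a direct consequence of differentiating $MM^{-1}=\mathbf{1}$. No analytic subtleties arise, since for $z\in\C^+$ the matrix $A-z\mathbf{1}$ is invertible and $R_A(z)$ depends smoothly on the entries of $A$.
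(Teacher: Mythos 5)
Your proof is correct and follows essentially the same route as the paper: differentiate the identity $(A-z\mathbf 1)R_A(z)=\mathbf 1$ to get $\partial R_A(z)/\partial x_{ij}=-R_A(z)\,(\partial A/\partial x_{ij})\,R_A(z)$, observe that $\partial A/\partial x_{ij}$ is $E_{ii}$ on the diagonal and $E_{ij}+E_{ji}$ off the diagonal, and read off the $(l,k)$-entry. Nothing is missing.
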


\begin{proof}
Note first that
$$\frac {\partial A}{\partial x_{ij}}=
\begin{cases}
E_{ii},& i=j\\
E_{ij}+E_{ji},& i\not=j
\end{cases}
$$
where $E_{ij}$ is a matrix unit with 1 at position $(i,j)$ and 0 elsewhere.

We have $R_A(z)\cdot (A-z1)=1$, which yields by differentiating
$$\frac {\partial R_A(z)}{\partial x_{ij}} \cdot (A-z1)+
R_a(z)\cdot \frac{\partial A}{\partial x_{ij}}=0,$$
and thus
$$\frac {\partial R_A(z)}{\partial x_{ij}}=-R_A(z)\cdot \frac{\partial A}{\partial x_{ij}} \cdot R_A(z).$$
This gives, for $i=j$,
$$ \frac\partial{\partial x_{ii}} [R_A(z)]_{lk}=-[R_A(z)\cdot E_{ii}\cdot R_A(z)]_{lk}
=-[R_A(z)]_{li}\cdot [R_A(z)]_{ik},$$
and, for $i\not= j$,
\begin{align*}
\frac\partial{\partial x_{ij}} [R_A(z)]_{lk}&=-[R_A(z)\cdot E_{ij} \cdot R_A(z)]_{lk}
-[R_A(z)\cdot E_{ji}\cdot R_A(z)]_{lk}\\
&=-[R_A(z)]_{li}\cdot [R_A(z)]_{jk}-[R_A(z)]_{lj}\cdot [R_A(z)]_{ik}.
\end{align*}
\end{proof}

\section{Semicircle law for GOE}

\begin{theorem}\label{thm:5.5}
Let $A_N$ be \GOE\ random matrices as in \ref{def:5.1}. Then its averaged eigenvalue distribution $\mu_N:=\ev{\mu_{A_N}}$ converges weakly to the semicircle distribution: $\mu_N\tow \mu_W$.
\end{theorem}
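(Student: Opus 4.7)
My plan is to use the analytic tool developed in the previous chapter: by Theorem~\ref{thm:4.15} it suffices to show, for every fixed $z\in\C^+$, that
\[
S_{\mu_N}(z)\;=\;\ev{\tr R_{A}(z)}\ \xrightarrow{N\to\infty}\ S_{\mu_W}(z).
\]
Since $S_{\mu_W}$ is uniquely characterised in $\C^+$ by the quadratic equation $S^2+zS+1=0$ (Proposition~\ref{prop:4.6}), it will be enough to establish an \emph{approximate} semicircle equation for $S_{\mu_N}(z)$ with vanishing right-hand side, and then conclude by a compactness-and-uniqueness argument.

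To derive that equation I would start from the resolvent identity in Remark~\ref{rem:5.2},
\[
\ev{\tr R_A(z)}\;=\;-\frac{1}{z}+\frac{1}{zN}\sum_{k,l=1}^{N}\ev{x_{kl}\,[R_A(z)]_{lk}},
\]
and integrate each summand by parts via Stein's identity (Proposition~\ref{prop:5.3}), treating $\{x_{ij}\mid i\le j\}$ as the underlying independent Gaussians with $\sigma_{ii}^2=2/N$ and $\sigma_{ij}^2=1/N$ for $i<j$. Plugging in the derivative formulas from Lemma~\ref{lem:5.4}, exploiting the symmetry $[R_A]_{ij}=[R_A]_{ji}$, and regrouping via the two identities $\sum_{i,j}[R_A]_{ii}[R_A]_{jj}=(\Tr R_A)^2$ and $\sum_{i,j}[R_A]_{ij}^2=\Tr(R_A^2)$ should collapse the bookkeeping to the clean relation
\[
S_{\mu_N}(z)^{2}+z\,S_{\mu_N}(z)+1\;=\;-\frac{1}{N}\,\ev{\tr R_A(z)^{2}}\;-\;\Bigl(\ev{(\tr R_A(z))^{2}}-\ev{\tr R_A(z)}^{2}\Bigr).
\]

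The first error term is harmless: deterministically $\|R_A(z)\|\le 1/|\Im z|$, so $|\tr R_A(z)^{2}|\le 1/|\Im z|^{2}$ and the term is $O(1/N)$. The second error, which is the main obstacle, is a genuine concentration statement---it says that the normalised trace $\tr R_A(z)$ is self-averaging. I would control it with the Gaussian Poincar\'e inequality applied separately to the real and imaginary parts of $f:=\tr R_A(z)$. A short chain-rule calculation from Lemma~\ref{lem:5.4} gives the partial derivatives $\partial_{x_{ii}}f=-\tfrac{1}{N}[R_A^{2}]_{ii}$ and $\partial_{x_{ij}}f=-\tfrac{2}{N}[R_A^{2}]_{ij}$ for $i<j$; weighting their squared moduli by $\sigma_{ij}^{2}$ and summing over $i\le j$ bounds the honest variance $\ev{|\tr R_A-\ev{\tr R_A}|^{2}}$ by
\[
\frac{4}{N^{3}}\,\ev{\sum_{i,j}|[R_A(z)^{2}]_{ij}|^{2}}
\;=\;\frac{4}{N^{3}}\,\ev{\Tr\!\bigl(R_A(z)^{2}(R_A(z)^{2})^{*}\bigr)}\;\le\;\frac{4}{N^{2}|\Im z|^{4}},
\]
using the operator-norm bound one more time. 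The elementary estimate $|\ev{(X-\ev X)^{2}}|\le\ev{|X-\ev X|^{2}}$ then transfers this to the \emph{complex covariance} appearing on the right-hand side of the approximate semicircle equation, so the whole error goes to $0$.

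With both errors under control, $(S_{\mu_N}(z))_N$ is uniformly bounded by $1/|\Im z|$, and every subsequential limit $S^{\ast}$ lies in $\overline{\C^{+}}$ and satisfies $(S^{\ast})^{2}+zS^{\ast}+1=0$. A boundary value $\Im S^{\ast}=0$ is impossible: then $S^{\ast}\neq 0$ (else $1=0$), and $z=-((S^{\ast})^{2}+1)/S^{\ast}$ would be real, contradicting $z\in\C^{+}$. Hence $S^{\ast}\in\C^{+}$, and Proposition~\ref{prop:4.6} forces $S^{\ast}=S_{\mu_W}(z)$. Since all subsequential limits coincide, the whole sequence converges, and Theorem~\ref{thm:4.15} upgrades this pointwise convergence of Stieltjes transforms to the desired weak convergence $\mu_N\tow\mu_W$.
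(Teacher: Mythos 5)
Your proposal is correct and follows essentially the same route as the paper: resolvent identity, Stein integration by parts with Lemma~\ref{lem:5.4} to get the approximate quadratic equation, Gaussian Poincar\'e inequality for the self-averaging of $\tr R_A(z)$ (which the paper defers to the next chapter), and the compactness-plus-uniqueness argument via Proposition~\ref{prop:4.6}. The only substantive difference is that your variance bound uses $\sum_{i,j}|[R_A(z)^2]_{ij}|^2=\Tr\bigl(R_A(z)^2(R_A(z)^2)^*\bigr)\le N/(\Im z)^4$ rather than the entrywise bound $|[R_A(z)^2]_{ij}|\le\|R_A(z)^2\|$, which yields the sharper rate $O(1/N^2)$ instead of the paper's $O(1/N)$ --- exactly the improvement the paper alludes to as what is needed for almost sure convergence.
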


\begin{proof}
By Theorem \ref{thm:4.15}, it suffices to show $\lim_{N\to\infty} S_{\mu_N}(z)=S_{\mu_W}(z)$ for all $z\in \C^+$. 

In Remark \ref{rem:5.2}\ref{rem:5.2ii} we have seen that (where we write $A$ instead of $A_N$)
$$S_{\mu_N}(z)=\ev{\tr(R_A(z)]}=-\frac 1z+\frac 1z\ev{\tr[AR_A(z)]}.$$
Now we calculate, with $A=(x_{ij})_{i,j=1}^N$,
\begin{align*}
\ev{\tr[AR_A(z)]}&=
\frac 1N\sum_{k,l=1}^N\ev{x_{kl}\cdot [R_A(z)]_{lk}}\\
&=\frac 1N\sum_{k,l=1}^N \sigma_{kl}^2\cdot \ev{\frac \partial{\partial x_{kl}}
[R_A(z)]_{lk}}\\
&=-\frac 1N\sum_{k,l=1}^N \frac 1N\cdot\bigl(
[R_A(z)]_{lk}\cdot [R_A(z)]_{lk} + [R_A(z)]_{ll}\cdot [R_A(z)]_{kk}\bigr).
\end{align*}
(Note that the combination of the different covariances and of the different form of the formula in Lemma \ref{lem:5.4} for on-diagonal and for off-diagonal entries gives in the end the same result for all pairs of $(k,l)$.)

Now note that $(A-z1)$ is symmetric, hence the same is true for its inverse
$R_A(z)=(A-z1)^{-1}$ and thus: $[R_A(z)]_{lk}=[R_A(z)]_{kl}$. Thus we get finally
$$\ev{\tr[AR_A(z)]}=-\frac 1N\ev{\tr[R_A(z)^2]}-
\ev{\tr[R_A(z)]\cdot \tr[R_A(z)]}.$$
To proceed further we need to deal with the two summands on the right hand side; we expect
\begin{itemize}
\item
the first term, $\frac 1N\ev{\tr[R_A(z)^2]}$, should go to zero, for $N\to\infty$
\item
the second term, $\ev{\tr[R_A(z)]\cdot \tr[R_A(z)]}$, should be close to its factorized version
$\ev{\tr[R_A(z)]}\cdot \ev{\tr[R_A(z)]}=S_{\mu_N}(z)^2$
\end{itemize}
Both these ideas are correct; let us try to make them rigorous.
\begin{itemize}
\item
$A$ as a symmetric matrix can be diagonalized by an orthogonal matrix $U$,
$$A=U 
\begin{pmatrix}
\lambda_1& \dots& 0\\
\vdots&\ddots&\vdots\\
0&\dots&\lambda_N
\end{pmatrix} U^*,\quad\text{and thus}\quad
R_A(z)^2=\begin{pmatrix}
\frac 1{(\lambda_1-z)^2}& \dots& 0\\
\vdots&\ddots&\vdots\\
0&\dots&\frac 1{(\lambda_N-z)^2}
\end{pmatrix} U^*,
$$
which yields
$$\vert \tr[R_A(z)^2]\vert \leq \frac 1N \sum_{i=1}^N\vert \frac 1{(\lambda_i-z)^2}\vert.$$
Note that for all $\lambda\in\R$ and all $z\in\C^+$
$$\vert \frac 1{\lambda-z}\vert \leq \frac 1{\Im z}$$
and hence
$$\frac 1N \vert \ev{\tr(R_A(z)^2]}\vert\leq \frac 1N \ev{\vert \tr[R_A(z)^2]\vert}\leq
\frac 1N \frac 1{(\Im z)^2} \to 0 \qquad\text{for $N\to\infty$.}
$$
\item
By definition of the variance we have
$$\var{X}:=\ev{(X-\ev{X})^2}=\ev{X^2}-\ev{X}^2,$$
and thus
$$\ev{X^2}=\ev{X}^2+\var{X}.$$
Hence we can replace
$\ev{\tr[R_A(z)]\cdot \tr[R_A(z)]}$
by 
$$\ev{\tr[R_A(z)]}^2+\var{\tr[R_A(z)]}=S_{\mu_N}(z)^2+\var{S_{\mu_A}(z)}.$$
In the next chapter we will show that we have concentration, i.e., the variance 
$\var{S_{\mu_A}(z)}$ goes to zero for $N\to\infty$.
\end{itemize}
With those two ingredients we have then
$$S_{\mu_N}(z)=-\frac 1z+\frac 1z\ev{\tr[AR_A(z)]}=-\frac 1z -\frac 1z S_{\mu_N}(z)^2 +\ee_N,$$
where $\ee_N\to$ for $N\to\infty$.

Note that, as above, for any Stieltjes transform $S_\nu$ we have
$$\vert S_\nu(z)\vert=\vert\int \frac 1{t-z} d\nu(t)\vert
\leq \int \vert\frac 1{t-z}\vert d\nu(t) \leq \frac 1{\Im z},$$
and thus $(S_{\mu_N}(z))_N$ is a bounded sequence of complex numbers. Hence, by compactness, there exists a convergent subsequence 
$(S_{\mu_{N(m)}}(z))_m$, which converges to some $S(z)$. This $S(z)$ must then satisfy the limit $N\to\infty$ of the above equation, thus
$$S(z)=-\frac 1z -\frac 1z S(z)^2.$$
Since all $S_{\mu_N}(z)$ are in $\C^+$, the limit $S(z)$ must be in $\overline{\C^+}$, which leaves for $S(z)$ only the possibility that
$$S(z)=\frac{-z+\sqrt{z^2-4}}2 = S_{\mu_W}(z)$$
(as the other solution is in $\C^-$).

In the same way, it follows that any subsequence of $(S_{\mu_N}(z))_N$ has a convergent subsequence which converges to $S_{\mu_W}(z)$; this forces all cluster points of $(S_{\mu_N}(z))_N$ to be $S_{\mu_W}(z)$. Thus the whole sequence converges to $S_{\mu_N}(z)$. This holds for any $z\in\C^+$, and thus implies that $\mu_N\tow \mu_W$.
\end{proof}

To complete the proof we still have to see the concentration to get the asymptotic vanishing of the variance. We will address such concentration questions in the next chapter.
\chapter{Concentration Phenomena and Stronger Forms of Convergence for the Semicircle Law}

\section{Forms of convergence}
\begin{remark}
\begin{enumerate}
\item
Recall that our random matrix ensemble is given by probability measures $\P_N$ on sets $\Omega_N$ of $N\times N$ matrices and we want to see that $\mu_{A_N}$ converges weakly to $\mu_W$, or, equivalently, that, for all $z\in\C^+$, $S_{\mu_{A_N}}(z)$ converges to $S_{\mu_W}(z)$. There are different levels of this convergence with respect to $\P_N$:
\begin{enumerate}
\item[(i)]
\emph{convergence in average}, i.e.,
$$\ev{S_{\mu_{A_N}}(z)} \Nto S_{\mu_W}(z)$$
\item[(ii)]
\emph{convergence in probability}, i.e.,
$$\P_N\{ A_N \mid \vert S_{\mu_{A_N}}(z)- S_{\mu_W}(z)\vert \geq \ee\} \Nto 0\qquad \text{for all $\ee>0$}$$
\item[(iii)]
\emph{almost sure convergence}, i.e.,
$$\P\{ (A_N)_N\mid \text{$S_{\mu_{A_N}}(z)$ does not converge to $S_{\mu_W}(z)$}\}=0;$$
instead of making this more precise, let us just point out that this almost sure convergence is guaranteed, by the Borel-Cantelli Lemma, if the convergence in (ii) to zero is sufficiently fast in $N$, so that for all $\ee>0$
$$\sum_{N=1}^\infty \P_N\{ A_N \mid \vert S_{\mu_{A_N}}(z)- S_{\mu_W}(z)\vert \geq \ee\} <\infty.$$
\end{enumerate}
\item
Note that we have here convergence of probabilistic quantities to a deterministic limit, thus (ii) and (iii) are saying that for large $N$ the eigenvalue distribution of $A_N$ concentrates in a small neighborhood of $\mu_W$. This is an instance of a quite general ``concentration of measure'' phenomenon; according to a dictum of M. Talagrand:
``A random variable that depends (in a smooth way) on the influence of many independent variables (but not too much on any of them) is essentially constant.''  
\item
Note also that many classical results in probability theory (like law of large numbers) can be seen as instances of this, dealing with linear functions. However, this principle also applies to non-linear functions - like in our case, to $\tr[(A-z1)^{-1}]$, considered as function of the entries of $A$.
\item
Often control of the variance of the considered function is a good way to get concentration estimates. We develop in the following some of the basics for this.
\end{enumerate}
\end{remark}

\section{Markov's and Chebyshev's inequality}

\begin{notation}
A \emph{probability space} $(\Omega,\P)$ consists of a set $\Omega$, equipped with a $\sigma$-algebra of measurable sets, and a probability measure $\P$ on the measurable sets of $\Omega$. A \emph{random variable} $X$ is a measurable function $X:\Omega\to\R$; its \emph{expectation} or \emph{mean} is given by
$$\ev{X}:=\int_\Omega X(\omega) d\P(\omega),$$
and its \emph{variance} is given by
$$\var{X}:=\ev{(X-\ev{X})^2}=\ev{X^2}-\ev{X}^2=\int_\Omega(X(\omega)-\ev(X))^2d\P(\omega).$$
\end{notation}

\begin{theorem}[Markov's Inequality]\label{thm:6.3}
Let $X$ be a random variable taking non-negative values. Then, for any $t>0$,
$$\P\{\omega\mid X(\omega)\geq t\} \leq \frac {\ev{X}} t.$$
\end{theorem}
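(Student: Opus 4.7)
The plan is to exploit a pointwise inequality between $X$ and a scaled indicator function of the event $\{X\geq t\}$, then integrate. This is essentially a one-line argument once the right inequality is written down, so the only real ``step'' is identifying that inequality; the difficulty is nil, but I will still structure the plan cleanly.

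First I would introduce the indicator random variable $\mathbbm{1}_{A}$ of the event $A=\{\omega\mid X(\omega)\geq t\}$, and note the key pointwise bound
\[
t\cdot \mathbbm{1}_{A}(\omega)\leq X(\omega)\qquad\text{for all }\omega\in\Omega.
\]
Indeed, if $\omega\in A$ then the left side equals $t$ and $X(\omega)\geq t$ by definition of $A$; if $\omega\notin A$ then the left side is $0$ and the right side is non-negative by the assumption that $X$ takes only non-negative values. This is the only place where non-negativity enters, and it is exactly why the hypothesis is needed.

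Next I would apply $\ev{\,\cdot\,}$ to both sides and use monotonicity of the Lebesgue integral together with linearity. Since $\ev{\mathbbm{1}_A}=\P(A)$ by definition of the probability measure, this yields $t\,\P(A)\leq \ev{X}$, and dividing by $t>0$ gives the claim. No convergence or measurability subtleties arise since $X$ is assumed measurable and $A=X^{-1}([t,\infty))$ is therefore measurable as well.

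The main (and essentially only) obstacle is conceptual rather than technical: one must see that throwing away the information ``how much larger than $t$ is $X$'' by replacing $X$ on $A$ with the constant $t$ and by $0$ elsewhere gives a lower bound whose expectation is easy to compute. Once this is observed, the proof is immediate, and no further estimates or limiting arguments are required.
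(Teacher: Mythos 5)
Your proof is correct and is essentially the same argument as the paper's: the paper splits $\ev{X}$ into the integrals over $\{X\geq t\}$ and $\{X<t\}$, discards the second using non-negativity, and bounds the first below by $t\,\P(A)$, which is exactly your pointwise inequality $t\cdot\mathbbm{1}_A\leq X$ integrated. Nothing is missing.
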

$\ev{X}$ could here also be $\infty$, but then the statement is not very useful. The Markov inequality only gives useful information if $X$ has finite mean, and then only for $t>\ev{X}$.

\begin{proof}
Since $X(\omega)\geq 0$ for all $\omega\in\Omega$ we can estimate as follows:
\begin{align*}
\ev{X}&=\int_\Omega X(\omega)d\P(\omega)\\
&= \int_{\{X(\omega)\geq t\}} X(\omega)d\P(\omega)+\int_{\{X(\omega)< t\}} X(\omega) d\P(\omega) \\
&\geq
\int_{\{X(\omega)\geq t\}} X(\omega)d\P(\omega)\\
&\geq
\int_{\{X(\omega)\geq t\}}  t d\P(\omega)\\
&=t\cdot \P\{X(\omega)\geq t\}.
\end{align*}

\end{proof}

\begin{theorem}[Chebyshev's Inequality]\label{thm:6.4}
Let $X$ be a random variable with finite mean $\ev{X}$ and variance $\var{X}$. Then, for any $\ee>0$,
$$\P\{\omega\mid \vert X(\omega) -\ev{X}\vert\geq \ee\}\leq \frac {\var{X}}{\ee^2}.$$

\begin{proof}
We use Markov's inequality \ref{thm:6.3} for the positive random variable
$Y:=(X-\ev{X})^2$. Note that
$$\ev{Y}=\ev{(X-\ev{X})^2}=\var{X}.$$
Thus we have for $\ee>0$
\begin{align*}
\P\{\omega\mid \vert X(\omega-\ev(X)\vert\geq\ee\}&=
\P\{\omega\mid (X(\omega)-\ev{X})^2\geq \ee^2\}\\
&=\P[\{\omega
\mid Y(\omega)\geq \ee^2\}\\
&\leq \frac {\ev{Y}}{\ee^2}\\
&=\frac {\var{X}}{\ee^2}.
\end{align*}
\end{proof}

\end{theorem}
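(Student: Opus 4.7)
The plan is to deduce Chebyshev's inequality directly from Markov's inequality (Theorem \ref{thm:6.3}) applied to a suitably chosen auxiliary random variable. The observation that makes this work is that the event $\{\vert X-\ev{X}\vert \geq \ee\}$ can be rewritten, by squaring both sides (which preserves the inequality since both sides are non-negative), as the event $\{(X-\ev{X})^2 \geq \ee^2\}$. This reformulation is the key step, because it turns a two-sided deviation event (which Markov cannot treat directly, as $X-\ev{X}$ can be negative) into a one-sided tail event for a non-negative random variable, which is exactly the setting of Theorem \ref{thm:6.3}.

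Concretely, I would introduce $Y := (X-\ev{X})^2$. Then $Y \geq 0$ pointwise, and by the definition of the variance recorded just above the statement of the theorem, we have $\ev{Y} = \ev{(X-\ev{X})^2} = \var{X}$, which is finite by hypothesis. Applying Markov's inequality to $Y$ with threshold $t = \ee^2 > 0$ yields
\[
\P\bigl\{\omega \bigm| Y(\omega) \geq \ee^2\bigr\} \leq \frac{\ev{Y}}{\ee^2} = \frac{\var{X}}{\ee^2}.
\]
The final step is to identify the event on the left with $\{\vert X-\ev{X}\vert \geq \ee\}$, via the elementary equivalence mentioned above.

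There is no real obstacle here; the entire content of the proof is the choice of auxiliary variable $Y=(X-\ev{X})^2$ and the recognition that $\ev{Y}=\var{X}$. The only thing to be mildly careful about is making the equivalence $\{Y\geq\ee^2\}=\{\vert X-\ev{X}\vert\geq\ee\}$ explicit (so that the argument does not tacitly assume $X-\ev{X}\geq 0$), and noting that the hypothesis of finite variance is what allows Markov's inequality to give a meaningful (finite) bound. The whole argument should fit in four or five lines of display math.
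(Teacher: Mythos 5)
Your proposal is correct and coincides with the paper's own proof: both apply Markov's inequality (Theorem \ref{thm:6.3}) to $Y=(X-\ev{X})^2$ with threshold $\ee^2$, using $\ev{Y}=\var{X}$ and the identification of the events $\{\vert X-\ev{X}\vert\geq\ee\}$ and $\{Y\geq\ee^2\}$. Nothing further is needed.
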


\begin{remark}
Our goal will thus be to control the variance of $X=f(X_1,\dots,X_n)$ for $X_1,\dots,X_n$ independent random variables. (In our case, the $X_i$ will be the entries of the GOE matrix $A$ and $f$ will be the function $f=\tr[(A-z1)^{-1}]$.)
A main idea in this context is to have estimates which go over from separate control of each variable to control of all variables together; i.e., which are stable under tensorization. There are two prominent types of such estimates, namely
\begin{enumerate}
\item[(i)]
Poincar\'e inequality
\item[(ii)]
LSI=logarithmic Sobolev inequality
\end{enumerate}
We will focus here on (i) and say a few words on (ii) later.
\end{remark}

\section{Poincar\'e inequality}

\begin{definition}
A random variable
$X=(X_1,\dots,X_n):\Omega\to\R^n$
satisfies a \emph{Poincar\'e inequality with constant $c>0$} if for any differentiable function $f:\R^n\to\R$ with $\ev{f(X)^2}<\infty$ we have
$$\var{f(X)}\leq c \cdot \ev{\Vert\nabla f(X)\Vert_2^2} \qquad
\text{where}\qquad
\Vert \nabla f\Vert_2^2=\sum_{i=1}^n (\frac {\partial f}{\partial x_i} )^2.$$
\end{definition}

\begin{remark}
Let us write this also ``explicitly'' in terms of the distribution $\mu$ of the random variable $X:\Omega\to\R^n$; recall that $\mu$ is the push-forward of the probability measure $\P$ under the map $X$ to a probability measure on $\R^n$. In terms of $\mu$ we have then
$$\ev{f(X)}=\int_{\R^n} f(x_1,\dots,x_n) d\mu(x_1,\dots,x_n)$$
and the Poincar\'e inequality asks then for 
\begin{multline*}
\int_{\R^n} \bigl(f(x_1,\dots,x_n)-\ev{f(X)}\bigr)^2 d\mu(x_1,\dots,x_n)\\
\leq c \cdot \sum_{i=1}^n\int_{\R^n} \left( \frac {\partial f}{\partial x_i} (x_1,\dots,x_n)\right)^2 d\mu(x_1,\dots,x_n).
\end{multline*}
\end{remark}

\begin{theorem}[Efron-Stein Inequality]\label{thm:6.8}
Let $X_1,\dots,X_n$ be independent random variables and let $f(X_1,\dots,X_n)$ be a square-integrable function of $X=(X_1,\dots,X_n)$. Then we have
$$\var{f(X)}\leq\sum_{i=1}^n \ev{\vari{f(X)}},$$
where $\mathbb{V}\text{ar}^{(i)}$ denotes taking the variance in the $i$-th variable, keeping all the other variables fixed, and the expectation is then integrating over all the other variables.
\end{theorem}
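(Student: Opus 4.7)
The strategy is the classical Doob-martingale proof, which shows that the Efron-Stein bound is really a \emph{tensorization of variance} statement. The point is that the left-hand side depends on $f$ as a function of all $n$ variables jointly, while the right-hand side only sees $f$ one variable at a time; the independence of the $X_i$ is what allows these two viewpoints to be compared.

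Concretely, let $\mathcal{F}_0 \subset \mathcal{F}_1 \subset \cdots \subset \mathcal{F}_n$ be the filtration with $\mathcal{F}_i = \sigma(X_1,\dots,X_i)$, and define the Doob martingale $f_i := \ev{f(X) \mid \mathcal{F}_i}$, so that $f_0 = \ev{f(X)}$ and $f_n = f(X)$. The martingale differences $\Delta_i := f_i - f_{i-1}$ telescope to $f(X) - \ev{f(X)} = \sum_{i=1}^n \Delta_i$, and since martingale differences are orthogonal in $L^2$ one obtains
\[
\var{f(X)} = \sum_{i=1}^n \ev{\Delta_i^2}.
\]
Thus it suffices to show $\ev{\Delta_i^2} \le \ev{\vari{f(X)}}$ for every $i$.

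To do this, let $E_i$ denote the operation of integrating out only the variable $X_i$, so that $E_i f$ is a function of all the $X_j$ with $j\neq i$; note that by independence $E_i f = \ev{f \mid X_1,\dots,X_{i-1},X_{i+1},\dots,X_n}$. Using the tower property together with independence, I would verify the identity
\[
\ev{f \mid \mathcal{F}_{i-1}} = \ev{E_i f \mid \mathcal{F}_i},
\]
which rewrites the martingale difference as the ``centering in the $i$-th coordinate,'' conditioned on $\mathcal{F}_i$:
\[
\Delta_i = \ev{f - E_i f \mid \mathcal{F}_i}.
\]
This is the pivotal step — it converts a difference of conditional expectations over different $\sigma$-algebras into the conditional expectation of a quantity that has an obvious link to $\vari{f}$.

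Once that identity is in hand, conditional Jensen applied to $x\mapsto x^2$ yields pointwise
\[
\Delta_i^2 \le \ev{(f - E_i f)^2 \mid \mathcal{F}_i},
\]
and taking expectations then using the tower property (this time conditioning on all $X_j$ with $j\ne i$) gives
\[
\ev{\Delta_i^2} \le \ev{(f - E_i f)^2} = \ev{\ev{(f - E_i f)^2 \mid (X_j)_{j\ne i}}} = \ev{\vari{f(X)}},
\]
where the last equality is exactly the definition of $\vari{f(X)}$ as the conditional variance of $f$ in the $i$-th variable with all others held fixed. Summing over $i$ yields the claim. The only genuine subtlety is the identity $\ev{f\mid\mathcal{F}_{i-1}}=\ev{E_i f\mid\mathcal{F}_i}$, which fails without independence of the $X_j$ — this is the precise place where the independence hypothesis enters, and it is the single step I would double-check carefully.
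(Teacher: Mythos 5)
Your proof is correct and is essentially the paper's own argument: the same telescoping decomposition of $f(X)-\ev{f(X)}$ into differences of successive partial integrations, the same orthogonality of those differences, the same rewriting of $\Delta_i$ as a conditional expectation of the $i$-th centering $f-E_if$, and the same application of Jensen's inequality. The only difference is cosmetic — you phrase it via the Doob martingale for the filtration $\sigma(X_1,\dots,X_i)$, while the paper integrates out $x_1,\dots,x_i$ explicitly (i.e., works with the mirror-image decreasing filtration) — and your identification of the key identity $\ev{f\mid\mathcal{F}_{i-1}}=\ev{E_i f\mid\mathcal{F}_i}$ as the place where independence enters is exactly right.
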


\begin{proof}
We denote the distribution of $X_i$ by $\mu_i$; this is, for each $i$, a probability measure on $\R$. Since $X_1,\dots,X_n$ are independent, the distribution of $X=(X_1,\dots,X_n)$ is given by the product measure
$\mu_1\times\cdots\times \mu_n$ on $\R^n$.

Putting $Z=f(X_1,\dots,X_n)$, we have
$$\ev{Z}=\int_{\R^n} f(x_1,\dots,x_n)d\mu_1\dots d\mu_n(x_n)$$
and 
$$\var{Z}=\int_{\R^n}\bigl( f(x_1,\dots,x_n)-\ev{Z}\bigr)^2 d\mu_1\dots d\mu_n.$$
We will now do the integration $\mathbb{E}$ by integrating one variable at a time and control each step. For this we write
\begin{align*}
Z-\ev{Z}&=\quad Z-\mathbb{E}_1[Z]\\
&\quad + \mathbb{E}_1[Z] - \mathbb{E}_{1,2}[Z]\\
&\quad + \mathbb{E}_{1,2}[Z] - \mathbb{E}_{1,2,3}[Z]\\
&\qquad \vdots\\
&\quad + \mathbb{E}_{1,2,\dots,n-1}[Z] -\ev{Z},
\end{align*}
where $\mathbb{E}_{1,\dots,k}$ denotes integration over the variables $x_1,\dots,x_k$, leaving a function of the variables $x_{k+1},\dots,x_n$.
Thus, with
$$\Delta_i:=\mathbb{E}_{1,\dots,i-1}[Z]-\mathbb{E}_{1,\dots,i-1,i}[Z]$$
(which is a function of the variables $x_i,x_{i+1},\dots,x_n$), we have
$Z-\ev{Z}=\sum_{i=1}^n \Delta_i$,
and thus
\begin{align*}
\var{Z}&=\var{(Z-\ev{Z})^2}\\
&=\var{\left(\sum_{i=1}^n \Delta_i\right)^2}\\
&=\sum_{i=1}^n \ev{\Delta_i^2}+\sum_{i\not= j}\ev{\Delta_i \Delta_j}.
\end{align*}
Now observe that for all $i\not= j$ we have $\ev{\Delta_i\Delta_j}=0$. Indeed, consider, for example, $n=2$ and $i=1$, $j=2$:
\begin{align*}
&\ev{\Delta_1 \Delta_2}=\ev{(Z-\mathbb{E}_1[Z])\cdot (\mathbb{E}_1[Z]-\mathbb{E}_{1,2}[Z])}\\
&=\int \left[ f(x_1,x_2)-\int f(\tilde x_1,x_2)d\mu_1(\tilde x_1)\right]\cdot \\
&\qquad\qquad\qquad\cdot
\left[ \int f(\tilde x_1,x_2)d\mu_1(\tilde x_1) - \int f(\tilde x_1,\tilde x_2)
d\mu_1(\tilde \mu_1) d\mu_2(\tilde x_2)\right] d\mu_1(x_1) d\mu_2(x_2)
\end{align*}
Integration with respect to $x_1$ now affects only the first factor and integrating this gives zero. The general case $i\not= j$ works in the same way. Thus we get
$$\var{Z}=\sum_{i=1}^n\ev{\Delta_i^2}.$$
We denote now with $\mathbb{E}^{(i)}$ integration with respect to the variable $x_i$, leaving a function of the other variables $x_1,\dots,x_{i-1},x_{i+1},\dots, x_n$, and
$$\vari{Z}:=\mathbb{E}^{(i)}[(Z-\mathbb{E}^{(i)}[Z])^2].$$
Then we have
$$
\Delta_i=\mathbb{E}_{1,\dots,i-1}[Z]-\mathbb{E}_{1,\dots,i}[Z]
=\mathbb{E}_{1,\dots,i-1}[Z-\mathbb{E}^{(i)}[Z]],
$$
and thus by Jensen's inequality (which is here just the fact that variances are non-negative),
$$\Delta_i^2\leq \mathbb{E}_{1,\dots,i-1}[(Z-\mathbb{E}^{(i)}[Z])^2].$$
This gives finally
$$\var{Z}=\sum_{i=1}^n \ev{\Delta_i^2}\leq \sum_{i=1}^n
\ev{\mathbb{E}^{(i)}[(Z-\mathbb{E}^{(i)}[Z])^2]},$$
which is the assertion.
\end{proof}

\begin{theorem}\label{thm:6.9}
Let $X_1,\dots,X_n$ be independent random variables in $\R$, such that each $X_i$ satisfies a Poincar\'e inequality with constant $c_i$. Then $X=(X_1,\dots,X_n)$ satisfies a Poicar\'e inequality in $\R^n$ iwth constant 
$c=\max(c_1,\dots,c_n)$.
\end{theorem}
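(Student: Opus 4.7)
The plan is to combine the Efron--Stein inequality (Theorem \ref{thm:6.8}) with the one-dimensional Poincaré assumption on each $X_i$, applied separately to each coordinate slice. This reduces the multivariate Poincaré inequality to a sum of univariate ones, which is the whole point of the tensorization idea alluded to after Chebyshev's inequality.

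More concretely, fix a differentiable $f\colon\R^n\to\R$ with $\ev{f(X)^2}<\infty$. By Theorem \ref{thm:6.8},
\[
\var{f(X)}\leq \sum_{i=1}^n \ev{\vari{f(X)}},
\]
where $\vari{\cdot}$ is the variance taken only in the $i$-th coordinate, with the other coordinates frozen. For each fixed choice of $(x_1,\dots,x_{i-1},x_{i+1},\dots,x_n)$, the map
\[
t\mapsto g_i(t):=f(x_1,\dots,x_{i-1},t,x_{i+1},\dots,x_n)
\]
is a differentiable function of a single variable, and $\vari{f(X)}$ is exactly the variance of $g_i(X_i)$. Since $X_i$ satisfies a one-dimensional Poincaré inequality with constant $c_i$, we obtain
\[
\vari{f(X)}=\Var[g_i(X_i)]\leq c_i\,\mathbb{E}^{(i)}\!\left[g_i'(X_i)^2\right]=c_i\,\mathbb{E}^{(i)}\!\left[\left(\frac{\partial f}{\partial x_i}(X)\right)^{\!2}\right].
\]

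Taking expectation over the remaining variables and summing gives
\[
\var{f(X)}\leq\sum_{i=1}^n c_i\,\ev{\left(\frac{\partial f}{\partial x_i}(X)\right)^{\!2}}\leq c\sum_{i=1}^n\ev{\left(\frac{\partial f}{\partial x_i}(X)\right)^{\!2}}=c\,\ev{\|\nabla f(X)\|_2^2},
\]
which is the claimed Poincaré inequality on $\R^n$ with constant $c=\max(c_1,\dots,c_n)$.

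There is no real obstacle here; the only subtlety worth mentioning is the measurability/integrability needed to apply the one-dimensional Poincaré inequality slice-by-slice and then integrate (Fubini), but this is automatic under the standing hypothesis $\ev{f(X)^2}<\infty$ together with the polynomial-type regularity we are implicitly assuming on $f$ so that all derivatives are well-defined almost everywhere. Once Theorem \ref{thm:6.8} is in hand, the argument is a one-line tensorization.
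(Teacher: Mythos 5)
Your proof is correct and follows exactly the paper's argument: apply the Efron--Stein inequality, then use the one-dimensional Poincar\'e inequality for $X_i$ on each coordinate slice $x_i\mapsto f(x_1,\dots,x_{i-1},x_i,x_{i+1},\dots,x_n)$, and finally bound each $c_i$ by $c=\max(c_1,\dots,c_n)$. Nothing further is needed.
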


\begin{proof}
By the Efron-Stein inequality \ref{thm:6.8}, we have
\begin{align*}
\var{f(X)}&\leq \sum_{i=1}^n \ev{\vari{f(X)}}\\
&\leq \sum_{i=1}^n \ev{c_i\cdot \mathbb{E}^{(i)}\left[\left(\frac{\partial f}{\partial x_i}\right)^2\right]}\\
&\leq c\cdot \sum_{i=1}^n \ev{\left(\frac {\partial f}{\partial x_i}\right)^2}\\
&=c \cdot\ev{\Vert \nabla f(X)\Vert^2_2}.
\end{align*}
In the step from the first to the second line we have used, for fixed $i$, the Poincar\'e inequality for $X_i$ and the function
$x_i\mapsto f(x_1,\dots,x_{i-1},x_i,x_{i+1},\dots,x_n)$, for each fixed $x_1,\dots,x_{i-1},x_{i+1},\dots,x_n$.
\end{proof}

\begin{theorem}[Gaussian Poincar\'e Inequality]\label{thm:6.10}
Let $X_1,\dots,X_n$ be independent standard Gaussian random variables, $\ev{X_i}=0$ and $\ev{X_i^2}=1$. Then $X=(X_1,\dots,X_n)$ satisfies a Poincar\'e inequality with constant 1; i.e., for each continuously differentiable $f:\R^n\to\R$ we have
$$\var{f(X)}\leq \ev{\Vert \nabla f(X)\Vert^2}.$$
\end{theorem}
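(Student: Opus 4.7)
The plan is a two-step reduction. First invoke Theorem~\ref{thm:6.9} to reduce the $n$-dimensional statement to the single-variable case, and then prove the one-dimensional Gaussian Poincar\'e inequality with sharp constant~$1$ by expanding in Hermite polynomials, where the inequality becomes transparent coefficient by coefficient.

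The reduction is immediate: if each standard Gaussian $X_i$ satisfies a Poincar\'e inequality with constant $c_i=1$, then by Theorem~\ref{thm:6.9} the independent product $X=(X_1,\dots,X_n)$ satisfies one with constant $\max(c_1,\dots,c_n)=1$, which is exactly the claim. So the entire problem reduces to showing
\[
\var{f(X)}\le \ev{f'(X)^2}\qquad \text{for } X\sim\mathcal{N}(0,1) \text{ and } f\in C^1(\R),
\]
where one may of course assume $\ev{f'(X)^2}<\infty$, as otherwise there is nothing to prove.

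For this one-dimensional statement I would use the (probabilist's) Hermite polynomials $H_k(x)=(-1)^k e^{x^2/2}\tfrac{d^k}{dx^k} e^{-x^2/2}$. The three facts I need are: orthogonality $\ev{H_k(X)H_m(X)}=k!\,\delta_{km}$; completeness of $\{H_k/\sqrt{k!}\}_{k\ge 0}$ as an orthonormal basis of $L^2$ with respect to the standard Gaussian measure; and the derivative rule $H_k'=k\,H_{k-1}$. Writing $f=\sum_{k\ge 0} c_k H_k$ with $c_k=\tfrac{1}{k!}\ev{f(X)H_k(X)}$, Parseval gives $\var{f(X)}=\sum_{k\ge 1} c_k^2\, k!$, while differentiating termwise and applying Parseval again gives $\ev{f'(X)^2}=\sum_{k\ge 1} c_k^2\, k\cdot k!$. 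Since $k\ge 1$ throughout the sum, the inequality is clear term by term, with equality iff $c_k=0$ for all $k\ge 2$, i.e., iff $f$ is affine.

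The main obstacle is purely analytic: one has to justify the Hermite expansion and its termwise differentiation for a general $C^1$ function $f$ with $f,f'\in L^2$ against the Gaussian measure. Completeness of the Hermite system follows from the fact that the Gaussian has all exponential moments, which makes polynomials dense in this $L^2$. Termwise differentiation I would handle by first verifying the inequality for polynomials, where everything is finite and explicit, and then extending by approximation in the weighted Sobolev-type norm $\bigl(\ev{f(X)^2}+\ev{f'(X)^2}\bigr)^{1/2}$, which controls both sides of the inequality simultaneously. A slicker but less elementary alternative, in the spirit of the Stein-based arguments already used in Chapter~5, would be to invoke the Ornstein--Uhlenbeck semigroup $P_tf(x)=\ev{f(e^{-t}x+\sqrt{1-e^{-2t}}\,Y)}$ and integration by parts to write $\var{f(X)}=2\int_0^\infty e^{-2t}\ev{(P_t f')(X)^2}\,dt$, bounded above by $\ev{f'(X)^2}$ via Jensen; this also yields the sharp constant~$1$.
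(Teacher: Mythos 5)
Your proof is correct, and the second half takes a genuinely different route from the notes. Both arguments begin identically, reducing to $n=1$ via the tensorization statement of Theorem~\ref{thm:6.9}. For the one-dimensional inequality the notes approximate the standard Gaussian by normalized Bernoulli sums $S_n=(Y_1+\cdots+Y_n)/\sqrt n$, apply the Efron--Stein inequality \ref{thm:6.8} to $f(S_n)$, Taylor-expand the resulting symmetric differences, and pass to the limit via the central limit theorem; this is elementary and fits the Efron--Stein machinery already built, but it is admittedly only sketched for general $f$ (the notes assume $f''$ bounded and wave at the approximation step). Your Hermite expansion instead diagonalizes the problem: with $c_k=\frac{1}{k!}\ev{f(X)H_k(X)}$ one gets $\var{f(X)}=\sum_{k\ge 1}c_k^2\,k!$ and $\ev{f'(X)^2}\ge\sum_{k\ge 1}k\,c_k^2\,k!$, so the inequality is visible coefficientwise, and you get the sharpness and the equality case ($f$ affine) for free. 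One remark that makes your analytic worry about termwise differentiation essentially disappear: you do not need to differentiate the series at all. Stein's identity (Proposition~\ref{prop:5.3}) together with the recurrences $H_{k-1}'=(k-1)H_{k-2}$ and $H_k(x)=xH_{k-1}(x)-(k-1)H_{k-2}(x)$ gives directly $\ev{f'(X)H_{k-1}(X)}=\ev{f(X)H_k(X)}=k!\,c_k$, i.e., the $(k-1)$-st Hermite coefficient of $f'$ is $k\,c_k$; then Bessel's inequality applied to $f'$ yields $\ev{f'(X)^2}\ge\sum_{k\ge1}(kc_k)^2(k-1)!=\sum_{k\ge1}k\,c_k^2\,k!$, while Parseval (completeness) applied to $f$ gives the exact value of the variance. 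This avoids the Sobolev-density argument entirely. The trade-off between the two proofs is the usual one: the notes' route uses only tools already proved in the chapter, whereas yours requires completeness of the Hermite system but delivers a sharper, fully rigorous statement with the extremizers identified; your Ornstein--Uhlenbeck alternative is likewise correct and is the standard semigroup proof.
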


\begin{remark}\label{rem:6.11}
\begin{enumerate}
\item
Note the independence of $n$!
\item
By Theorem \ref{thm:6.9} it suffices to prove the statement for $n=1$. But even in this one-dimensional case the statement is not obvious. Let us see what we are actually claiming in this case: $X$ is a standard Gaussian random variable and $f:\R\to\R$, and the Poincar\'e inequality says
$$\var{f(X)}\leq \ev{f'(X)^2}.$$
We might also assume that $\ev{f(X)}=0$, then this means explicitly:
$$\int_\R f(x)^2 e^{-x^2/2}dx\leq \int_\R f'(x)^2 e^{-x^2/2}dx.$$
\end{enumerate}
\end{remark}

\begin{proof}
As remarked in Remark \ref{rem:6.11}, the general case can, by Theorem \ref{thm:6.9}, be reduced to the one-dimensional case and, by shifting our function $f$ by a constant, we can also assume that $f(X)$ has mean zero. One possible proof is to approximate $X$ via a central limit theorem by independent Bernoulli variables $Y_i$. 

So let $Y_1,Y_2, \dots$ be independent Bernoulli variables, i.e., $\prob{Y_i=1}=1/2=\prob{Y_i=-1}$ and put
$$S_n=\frac {Y_1+\cdots+Y_n}{\sqrt{n}} .$$
Then, by the central limit theorem, the distribution of $S_n$ converges weakly, for $n\to\infty$, to a standard Gaussian distribution. So we can approximate $f(X)$ by
$$g(Y_1,\dots,Y_n)=f(S_n)=f\left(\frac 1{\sqrt{n}}(Y_1+\cdots+ Y_n)\right).$$
By the Efron-Stein inequality \ref{thm:6.8}, we have
\begin{align*}
\var{f(S_n)}&=\var{g(Y_1,\dots,Y_n)}\\&\leq \sum_{i=1}^n \ev{\vari{g(Y_1,\dots,Y_n)}}\\
&=\sum_{i=1}^n \ev{\vari{f(S_n)}}.
\end{align*}
Put
$$S_n^{[i]}:=S_n-\frac 1{\sqrt{n}} Y_i=\frac 1{\sqrt{n}}(Y_1+\cdots Y_{i-1}+
Y_{i+1}+\cdots+Y_n).$$
Then
$$\mathbb{E}^{(i)}[f(S_n)]=\frac 12\left( f\bigl(S_n^{[i]}+\frac 1{\sqrt n}\bigr) +
f\bigl(S_n^{[i]}-\frac 1{\sqrt n}\bigr)\right)$$
and
\begin{align*}
&\vari{f(S_n)}\\&=
\frac 12\left\{\left( f\bigl(S_n^{[i]}+\frac 1{\sqrt n}\bigr)-\mathbb{E}^{(i)}[f(S_n)]\right)^2+
\left( f\bigl(S_n^{[i]}-\frac 1{\sqrt n}\bigr)-\mathbb{E}^{(i)}[f(S_n)]\right)^2\right\}\\
&=\frac 14  \left(f\bigl(S_n^{[i]}+\frac 1{\sqrt n}\bigr) - f\bigl(S_n^{[i]}-\frac 1{\sqrt n}\bigr)\right)^2,
\end{align*}
and thus
$$\var{f(S_n)}\leq \frac 14\sum_{i=1}^n\ev{\left(f\bigl(S_n^{[i]}+\frac 1{\sqrt n}\bigr) - f\bigl(S_n^{[i]}-\frac 1{\sqrt n}\bigr)\right)^2}.$$
By Taylor's theorem we have now
\begin{align*}
f\bigl(S_n^{[i]}+\frac 1{\sqrt n}\bigr)&=f\bigl(S_n^{[i]}\bigr)+\frac 1{\sqrt n} f'\bigl(S_n^{[i]}\bigr)+\frac 1{2n}f''(\xi_+)\\
f\bigl(S_n^{[i]}-\frac 1{\sqrt n}\bigr)&=f\bigl(S_n^{[i]}\bigr)-\frac 1{\sqrt n} f'\bigl(S_n^{[i]}\bigr)+\frac 1{2n}f''(\xi_+)
\end{align*}
We assume that $f$ is twice differentiable and $f'$ and $f''$ are bounded: $\vert f'(\xi)\vert \leq K$ and
$\vert f''(\xi)\vert \leq K$ for all $\xi\in \R$. (The general situation can be approximated by this.)
Then we have
\begin{align*}
\left(f\bigl(S_n^{[i]}+\frac 1{\sqrt n}\bigr) - f\bigl(S_n^{[i]}-\frac 1{\sqrt n}\bigr)\right)^2&=
\left(\frac 2{\sqrt n} f'\bigl(S_n^{[i]}\bigr) + \frac 1{2n} \bigl(f''(\xi_+)-f''(\xi_-)\bigr)\right)^2\\
&=\frac 4n f'\bigl(S_n^{[i]}\bigr)^2 + \frac 2{n^{3/2}} R_1R_2 + \frac 1{4n^2}R_2^2,
\end{align*}
where we have put $R_1:=f'\bigl(S_n^{[i]}\bigr)$ and $R_2:=f''(\xi_+)-f''(\xi_-)$. Note that $\vert R_1\vert \leq K$ and $\vert R_2\vert\leq 2K$, and thus
$$\var{f(S_n)}\leq \frac 14 n\left\{ \frac 4n \ev{f'\bigl(S_n^{[i]}\bigr)^2}+\frac 2{n^{3/2}}2K^2 +
\frac 1{4n^2} 4K^2\right\}.$$
Note that the first term containing $S_n^{[i]}$ is actually independent of $i$. Now we take the limit $n\to\infty$ in this inequality; since both $S_n$ and $S_n^{[i]}$ converge to our standard Gaussian variable $X$ we obtain finally the wanted
$$\var{f(X)}\leq \ev{f'(X)^2}.$$

\end{proof}

\section{Concentration for $\tr[R_A(z)]$ via Poincar\'e inequality}

We apply this Gaussian Poincar\'e inequality now to our random matrix setting $A=(x_{ij})_{i,j=1}^N$, where $\{x_{ijj}\mid i\leq j\}$ are independent Gaussian random variables with 
$\ev{x_{ij}}=0$ and $\ev{x_{ii}}=2/N$ on the diagonal and $\ev{x_{ij}}=1/N$ ($i\not=j$) off the diagonal. Note that by a change of variable the constant in the Poincare inequality for this variances is given by $\max\{\sigma_{ij}^2\mid i\leq j\}=2/N$. Thus we have in our setting for nice real-valued $f$:
$$\var{f(A)}\leq \frac 2N\cdot  \ev{\Vert f(A)\Vert_2^2}.$$
We take now
$$g(A):=\tr[(A-z1)^{-1}]=\tr[R_A(z)]$$
and want to control $\var{g(A_N}$ for $N\to\infty$. Note that $g$ is complex-valued (since $z\in\C^+$), but we can estimate
$$\vert\var{g(A)}\vert=\vert \var{\Re g(A)+\sqrt{-1} \Im g(A)}\vert
\leq 2\bigl( \var{\Re g(A)} + \var{\Im g(A)}\bigr).$$
Thus it suffices to estimate the variance of real and imaginary part of $g(A)$.

We have, for $i<j$,
\begin{align*}
\frac{\partial g(A)}{\partial x_{ij}}&=\frac {\partial}{\partial x_{ij}} \tr[R_A(z)]\\
&=\frac 1N \sum_{k=1}^N \frac {\partial [R_A(z)]_{kk}}{\partial x_{ij}}\\
&=-\frac 1N\sum_{k=1}^N \bigl([R_A(z)]_{ki}\cdot [R_A(z)]_{jk} + [R_A(z)]_{kj}\cdot [R_A(z)]_{ik}\bigr)
\quad\qquad\text{by Lemma \ref{lem:5.4}}\\
&=-\frac 2N \sum_{k=1}^N [R_A(z)]_{ik}\cdot [R_A(z)]_{kj} \quad\quad\text{since $R_A(z)$ is symmetric, see proof of \ref{thm:5.5}}\\
&=-\frac 2N [R_A(z)^2]_{ij},
\end{align*}
and the same for $i=j$ with $2/N$ replaced by $1/N$. 

Thus we get for $f(A):=\Re g(A)=\Re \tr[R_A(z)]$:
$$\vert \frac{\partial f(A)}{\partial x_{ij}}\vert=\vert \Re \frac{\partial g(A)}{\partial x_{ij}}\vert \leq
\frac 2N \vert [R_A(z)^2]_{ij}\vert \leq\frac 2N \Vert R_A(z)^2\Vert \leq \frac 2{N\cdot (\Im z)^2} ,
$$
where in the last step we used the usual estimate for resolvents as in the proof of Theorem \ref{thm:5.5}.
Hence we have
$$\vert \frac {\partial f(A)}{\partial x_{ij}}\vert^2\leq \frac 4{N^2\cdot (\Im z)^4},$$
and thus our Gaussian Poincar\'e inequality \ref{thm:6.10} (with constant $2/N$) yields
$$\var{f(A)}\leq \frac 2N \cdot\sum_{i\leq j} \vert \frac {\partial f(A)}{\partial x_{ij}}\vert^2\leq
\frac 8{N\cdot (\Im z)^4}.
$$
The same estimate holds for the imaginary part and thus, finally, we have for the variance of the trace of the resolvent:
$$\var{\tr[R_A(z)]}\leq \frac {32}{N\cdot (\Im z)^4}.$$

The fact that $\var{\tr[R_A(z)]}$ goes to zero for $N\to\infty$ closes the gap in our proof of Theorem \ref{thm:5.5}. Furthermore, it also improves the type of convergence in Wigner's semicircle law.

\begin{theorem}
Let $A_N$ be \GOE\ random matrices as in \ref{def:5.1}. Then the eigenvalue distribution ${\mu_{A_N}}$ converges in probability to the semicircle distribution. Namely, for each $z\in\C^+$ and all $\ee>0$ we have
$$\lim_{N\to\infty}\P_N\{A_N\mid \vert S_{\mu_{A_N}}(z)-S_{\mu_W}(z)\vert\geq\ee\}=0.$$
\end{theorem}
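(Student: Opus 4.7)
The plan is a classical concentration argument: combine the convergence in average (Theorem \ref{thm:5.5}) with the variance bound $\var{\tr[R_{A_N}(z)]}\leq 32/(N\cdot (\Im z)^4)$ just derived, using Chebyshev's inequality as the bridge. First I would fix $z\in\C^+$ and $\varepsilon>0$, and split the deviation by the triangle inequality:
\[
\vert S_{\mu_{A_N}}(z)-S_{\mu_W}(z)\vert
\leq \vert S_{\mu_{A_N}}(z) - \ev{S_{\mu_{A_N}}(z)}\vert
+ \vert \ev{S_{\mu_{A_N}}(z)} - S_{\mu_W}(z)\vert.
\]
By Theorem \ref{thm:5.5}, the deterministic second term tends to $0$ as $N\to\infty$, hence for $N$ sufficiently large it is smaller than $\varepsilon/2$. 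Consequently, for all large $N$,
\[
\P_N\bigl\{\vert S_{\mu_{A_N}}(z)-S_{\mu_W}(z)\vert\geq\varepsilon\bigr\}
\leq \P_N\bigl\{\vert S_{\mu_{A_N}}(z)-\ev{S_{\mu_{A_N}}(z)}\vert\geq\varepsilon/2\bigr\}.
\]

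Next I would control the remaining (random) term via Chebyshev's inequality \ref{thm:6.4}. Since $S_{\mu_{A_N}}(z) = \tr[R_{A_N}(z)]$ is complex-valued for $z\in\C^+$, I apply Chebyshev separately to $\Re\tr[R_{A_N}(z)]$ and $\Im\tr[R_{A_N}(z)]$, using that $\vert a+ib\vert\geq\varepsilon/2$ implies $\vert a\vert\geq \varepsilon/(2\sqrt{2})$ or $\vert b\vert\geq\varepsilon/(2\sqrt{2})$. As the variance bound derived above applies to both real and imaginary parts (each bounded by $8/(N\cdot(\Im z)^4)$), and in fact to the complex variance with constant $32/(N\cdot (\Im z)^4)$, I get
\[
\P_N\bigl\{\vert S_{\mu_{A_N}}(z)-\ev{S_{\mu_{A_N}}(z)}\vert\geq\varepsilon/2\bigr\}
\leq \frac{4\cdot\var{\tr[R_{A_N}(z)]}}{\varepsilon^2}
\leq \frac{128}{N\cdot (\Im z)^4\cdot \varepsilon^2}.
\]

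Putting these two pieces together, the right-hand side tends to $0$ as $N\to\infty$, yielding the claimed convergence in probability. The calculation itself is routine; there is no real obstacle since all the heavy lifting has already been done (Theorem \ref{thm:5.5} for the expectation, the Gaussian Poincaré inequality \ref{thm:6.10} combined with the resolvent derivative computation from Lemma \ref{lem:5.4} for the variance). The only small technicality is the handling of the complex-valued Stieltjes transform, which is dealt with by the elementary splitting into real and imaginary parts as above. It is worth noting that the bound $O(1/N)$ on the probability is in fact strong enough for a Borel–Cantelli argument to yield almost sure convergence along $N$, though this is not asked for in the statement.
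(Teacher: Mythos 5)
Your proof is correct and follows essentially the same route as the paper: Chebyshev's inequality applied to the deviation of $\tr[R_{A_N}(z)]$ from its mean, using the variance bound $32/(N(\Im z)^4)$, combined with the convergence of the mean from Theorem \ref{thm:5.5}; your explicit triangle-inequality split and the real/imaginary-part handling of the complex-valued Stieltjes transform just spell out steps the paper leaves implicit. However, your closing remark is wrong: a probability bound of order $O(1/N)$ is \emph{not} summable, since $\sum_N 1/N$ diverges, so Borel--Cantelli does not apply and almost sure convergence does not follow from this estimate; as the paper notes afterwards, one needs to improve the variance bound to order $1/N^2$ (or use a logarithmic Sobolev inequality) to get almost sure convergence.
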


\begin{proof}
By the Chebyshev inequality \ref{thm:6.4}, our above estimate for the variance implies for any $\ee>0$ that

\begin{align*}
\P_N\{ A_N\mid \vert
\tr[R_{A_N}(z)]-\ev{\tr[R_{A_N}(z)]}\vert \geq\ee\}&\leq \frac {\var{\tr[R_{A_N}]}}{\ee^2}\\&\leq 
\frac {32}{N\cdot (\Im z)^4\cdot \ee^2}
\Nto 0.
\end{align*}
Since we already know, by Theorem \ref{thm:5.5}, that $\lim_{N\to\infty}\ev{\tr[R_{A_N}(z)]}=S_{\mu_W}(z)$, this gives the assertion.
\end{proof}

\begin{remark}
Note that our estimate $\var{...}\sim 1/N$ is not strong enough to get almost sure convergence; one can, however, improve our arguments to get $\var{...}\sim 1/{N^2}$, which implies then also almost sure convergence.
\end{remark}

\section{Logarithmic Sobolev inequalities}

One actually has typcially even exponential convergence in $N$. Such stronger concentration estimates rely usually on so called logarithmic Sobolev inequalities

\begin{definition}
A probability measure on $\R^n$ satisfies a \emph{logarithmic Sobolev inequality (LSI)} with constant $c>0$, if for all nice $f$:
$$\text{Ent}_\mu(f^2)\leq 2c \int_{\R^n} \Vert \nabla f\Vert_2^2 d\mu,$$
where 
$$\text{Ent}_\mu(f):=\int_{\R^n} f\log f d\mu-\int_{\R^n} f d\mu \cdot \log \int_{\R^n} f d\mu$$
is an entropy like quantity.
\end{definition}

\begin{remark}
\begin{enumerate}
\item
As for Poincar\'e inequalities, logarithmic Sobolev inqualities are stable under tensorization and Gaussian measures satisfy LSI.
\item
From a logarithmic Sobolev inequality one can then derive a concentration inequality for our random matrices of the form
$$P_N\{ A_N\mid \vert \tr[R_{A_N}(z)]-\ev{\tr[R_{A_N}(z)]}\vert\geq \ee\}\leq const\cdot \exp\left(-\frac {N^2 \ee^2}2\cdot (\Im z)^4\right).$$
\end{enumerate}
\end{remark}
\chapter{Analytic Description of the Eigenvalue Distribution of Gaussian Random Matrices}

In Exercise \ref{exercise:7} we showed that the joint distribution of the entries $a_{ij} = x_{ij} + \sqrt{-1}y_{ij}$ of a \GUE\ $A= \left( a_{ij} \right)_{i,j=1}^N$ has density
\[ c \cdot\exp \left( - \frac{N}{2}  \Tr A^2 \right) \td A.
\]
This clearly shows the invariance of the distribution under unitary transformations: Let $U$ be a unitary $N\times N$ matrix and let $B=U^*AU =\left( b_{ij} \right)_{i,j=1}^N$. Then we have $\Tr B^2 = \Tr A^2$ and the volume element is invariant under unitary transformations, $\td B = \td A$.
Therefore, for the joint distributions of entries of $A$ and of $B$, respectively, we have
\[ c\cdot \exp \left( - \frac{N}{2}  \Tr B^2 \right) \td B = c \cdot\exp \left( - \frac{N}{2}  \Tr A^2 \right) \td A.
\]
Thus the joint distribution of entries of a \GUE\ is invariant under unitary transformations, which explains the name \textbf{G}aussian \textbf{U}nitary \textbf{E}nsemble. What we are interested in, however, are not the entries but the eigenvalues of our matrices. Thus we
should transform this density from entries to eigenvalues. Instead of \GUE, we will mainly consider the real case, i.e., \GOE. 

\section{Joint eigenvalue distribution for GOE and GUE}

Let us recall the definition of \GOE, see also
Definition \ref{def:5.1}.

\begin{definition}
	A \emph{Gaussian orthogonal random matrix} (\GOE) $A= \left( x_{ij} \right)_{i,j=1}^N$
	is given by real-valued entries $x_{ij}$ with $x_{ij} = x_{ji}$ for all $i,j=1,\dots, N$ and joint distribution
	\[ c_N \exp \left( - \frac{N}{4}  \Tr A^2 \right) \prod_{i \geq j}\td x_{ij}.
	\]
With $\GOEN$ we denote the \GOE\ of size $N\times N$.
\end{definition}

\begin{remark}
	\begin{enumerate}
		\item This is clearly invariant under orthogonal transformation of the entries.
		\item This is equivalent to independent real Gaussian random variables. Note, however, that the variance for the diagonal entries has to be chosen differently from the off-diagonals; see Remark \ref{rem:5.2}. Let us check this 
	for $N=2$ with 
		\[ A = 
		\begin{pmatrix}
			x_{11} & x_{12} \\
			x_{12} & x_{22}.
		\end{pmatrix}
		\]
		Then
		\begin{align*}
		\exp \left( - \frac{N}{4}  \Tr \begin{pmatrix}
		x_{11} & x_{12} \\
		x_{12} & x_{22}.
		\end{pmatrix}^2 \right) 
		&= \exp \left( - \frac{N}{4}  \left( x_{11}^2 + 2x_{12}^2 +x_{22}^2  \right)  \right)  \\
		&= \exp \left( - \frac{N}{4} x_{11}^2  \right)   
			 \exp \left( - \frac{N}{2} x_{12}^2  \right)    
			 \exp \left( - \frac{N}{4}x_{22}^2  \right);
		\end{align*}
those give the density of a Gaussian of variance $1/N$ for $x_{11}$ and $x_{22}$ and of variance $1/N$ for $x_{12}$.
		\item From this one can easily determine the normalization constant $c_N$ (as a function of $N$). 
	\end{enumerate}
\end{remark}

Since we are usually interested in functions of the eigenvalues, we will now transform this density to eigenvalues.

\begin{example}
	As a warmup, let us consider the \GOE(2) case,
	\[ A = 
	\begin{pmatrix}
	x_{11} & x_{12} \\
	x_{12} & x_{22}
	\end{pmatrix}
\qquad\text{with density}\qquad
 p(A) = c_2 \exp \left( - \frac{N}{4}  \Tr A^2 \right).
	\]
	We parametrize $A$ by its eigenvalues $\lambda_1$ and $\lambda_2$ and an angle $\theta$ by diagonalization $A=O^TDO$, where
	\[ D = 
	\begin{pmatrix}
	\lambda_1 & 0 \\
	0 & \lambda_2
	\end{pmatrix}
	\qquad \text{and} \qquad
	O = 
	\begin{pmatrix}
	\cos \theta & -\sin\theta \\
	\sin \theta & \cos \theta
	\end{pmatrix};
	\]
	explicityly
	\begin{align*}
	x_{11} &= \lambda_1 \cos^2\theta + \lambda_2 \sin^2 \theta, \\
	x_{12} &= (\lambda_1-\lambda_2)\cos\theta\sin\theta, \\
	x_{22} &= \lambda_1 \sin^2 \theta + \lambda_2 \cos^2\theta.
	\end{align*}
	Note that $O$ and $D$ are not uniquely determined by $A$. In particular, if $\lambda_1 = \lambda_2$ then any orthogonal $O$ works. However, this case has probability zero and thus can be ignored (see Remark \ref{rem:7.4}).
	If $\lambda_1\not=\lambda_2$, then we can choose
$\lambda_1 < \lambda_2$; $O$ contains then the normalized eigenvectors for $\lambda_1$ and $\lambda_2$. Those are unique up to a sign, which can be fixed by requiring that $\cos \theta \geq 0$. Hence $\theta$ is not running from $-\pi$ to $\pi$, but instead it can be restricted
	to $\left[ -{\pi}/{2}, {\pi}/{2} \right]$.
	We will now transform
	\[ p(x_{11}, x_{22}, x_{12}) \td x_{11} \td x_{22} \td x_{12}
	\to q(\lambda_1,\lambda_2,\theta) \td \lambda_1 \td \lambda_2 \td \theta
	\]
	by the change of variable formula
$ q = p \abs{\det \text{D} F}$,
	where $\text{D} F$ is the Jacobian of
	\[ F \colon  (x_{11}, x_{22}, x_{12})  \mapsto (\lambda_1,\lambda_2,\theta).
	\]
	We calculate
	\begin{align*}
	\det \text{D} F
	&= \det \begin{pmatrix}
		\cos^2\theta &\sin^2\theta&-2(\lambda_1-\lambda_2)\sin\theta\cos\theta \\
		\cos\theta \sin\theta & - \cos\theta\sin\theta &(\lambda_1-\lambda_2) \left( -\sin^2\theta+\cos^2\theta\right) \\
		\sin^2\theta &\cos^2\theta&2(\lambda_1-\lambda_2)\sin\theta\cos\theta \\
	\end{pmatrix}
	= -(\lambda_1 - \lambda_2),
	\end{align*}
and hence
	 $ \abs{\det \text{D} F}  = \abs{\lambda_1 - \lambda_2}$.  Thus,
	\[ q(\lambda_1,\lambda_2,\theta) 
	= c_2 e^{-\frac{N}{4}(\Tr(A^2))} \abs{\lambda_1 - \lambda_2}=	
c_2 e^{-\frac{N}{4}(\lambda_1^2 + \lambda_2^2)} \abs{\lambda_1 - \lambda_2}.			
	\]
	Note that $q$ is independent of $\theta$, i.e., we have a uniform distribution in $\theta$.
	Consider a function $f=f(\lambda_1,\lambda_2)$ of the eigenvalues. Then
	\begin{align*}
	\ev{f(\lambda_1,\lambda_2)}
	&= \int\int\int  q(\lambda_1,\lambda_2,\theta) f(\lambda_1,\lambda_2) \td \lambda_1 \td \lambda_2 \td \theta \\
	&= \int_{-\frac{\pi}{2}}^{\frac{\pi}{2}} \, \underset{\lambda_1 < \lambda_2}{\int\int}  f(\lambda_1,\lambda_2)  c_2 e^{-\frac{N}{4}(\lambda_1^2 + \lambda_2^2)} \abs{\lambda_1 - \lambda_2} \td \lambda_1 \td \lambda_2 \td \theta \\
	&= \pi c_2 \underset{\lambda_1 < \lambda_2}{\int\int} 
	  f(\lambda_1,\lambda_2)  e^{-\frac{N}{4}(\lambda_1^2 + \lambda_2^2)} \abs{\lambda_1 - \lambda_2} \td \lambda_1 \td \lambda_2.
	\end{align*}
	Thus, the density for the joint distribution of the eigenvalues on 
	$\{ (\lambda_1, \lambda_2); \, \lambda_1 < \lambda_2  \}$ is given by
	\[ \tilde{c}_2  \cdot e^{-\frac{N}{4}(\lambda_1^2 + \lambda_2^2)} \abs{\lambda_1 - \lambda_2} 
	\]
	with $\tilde{c}_2 = \pi c_2$.
\end{example}

\begin{remark}\label{rem:7.4}
	Let us check that the probability of $\lambda_1 = \lambda_2$ is zero.
	
	$\lambda_1, \lambda_2$ are the solutions of the characteristic equation
	\begin{align*}
	0 
	= \det (\lambda I -A)
 	&= (\lambda - x_{11})(\lambda-x_{22}) - x_{12}^2 \\
	&= \lambda^2 - (x_{11} + x_{22}) \lambda + (x_{11}x_{22}-x_{12}^2) \\
	&= \lambda^2 -b\lambda + c.
	\end{align*}
	Then there is only one solution if and only if the discriminant $d = b^2-4ac$ is zero.
	However,
	\[ \left\{ (x_{11}, x_{22}, x_{12}); \, d(x_{11}, x_{22}, x_{12}) = 0 \right\}
	\]
	is a two-dimensional surface in $\R^3$,
	i.e., its Lebesgue measure is zero. 
\end{remark}

Now we consider general \GOEN.

\begin{theorem}
	The joint distribution of the eigenvalues of a \GOEN\ is given by a density
	\[ \tilde{c}_N e^{-\frac{N}{4} (\lambda_1^2 + \cdots + \lambda_N^2) }
			\prod_{k<l} (\lambda_l - \lambda_k)
	\]
	restricted on $\lambda_1 < \cdots < \lambda_N$.
\end{theorem}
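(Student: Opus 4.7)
The plan is to generalize the $N=2$ warmup: parametrize the symmetric matrix via its spectral decomposition and compute the Jacobian of the change of variables, the key nontrivial factor being the Vandermonde. More precisely, write $A = O^T D O$ with $D = \diag(\lambda_1,\dots,\lambda_N)$ and $O\in O(N)$. The space of real symmetric $N\times N$ matrices has dimension $N(N+1)/2 = N + N(N-1)/2$, which matches the $N$ eigenvalues plus the $N(N-1)/2$ parameters of the orthogonal group, so this is a well-posed change of coordinates away from a measure-zero degenerate set.

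First I would handle uniqueness: for generic $A$ the eigenvalues are distinct (the set where the discriminant of the characteristic polynomial vanishes is an algebraic subvariety of $\Omega_N$ of positive codimension and hence has Lebesgue, and thus $\P_N$-measure, zero, exactly as in Remark \ref{rem:7.4}). On the complement we can order $\lambda_1 < \dots < \lambda_N$ and fix the signs of the normalized eigenvectors by a convention, so that $(\lambda_1,\dots,\lambda_N,O)$ is uniquely determined by $A$. The factor $\Tr A^2 = \lambda_1^2+\cdots+\lambda_N^2$ in the density is manifestly invariant under $A = O^T D O$, so only the Jacobian $\abs{\det DF}$ needs to be computed.

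The main technical step — and the main obstacle — is the Jacobian calculation. The cleanest way is to work infinitesimally at a point $A=D$ (i.e.\ $O=I$); by orthogonal invariance of both the Lebesgue measure on symmetric matrices and of the Haar measure on $O(N)$, the Jacobian at a general point equals the Jacobian at $O=I$. Writing $O = I + X + O(X^2)$ with $X$ antisymmetric, one gets
\[
dA = dD + [X,D] + \text{(higher order)},
\]
so that $(dA)_{ii} = d\lambda_i$ and, for $i<j$, $(dA)_{ij} = (\lambda_j - \lambda_i)X_{ij}$. Since the independent coordinates on $O(N)$ near $I$ are exactly the entries $X_{ij}$ with $i<j$, the Jacobian of $(\lambda_1,\dots,\lambda_N, \{X_{ij}\}_{i<j}) \mapsto \{A_{ij}\}_{i\le j}$ is $\prod_{i<j}(\lambda_j-\lambda_i)$; since we restricted to $\lambda_1<\dots<\lambda_N$, this is automatically positive and equals $\prod_{i<j}\abs{\lambda_j-\lambda_i}$.

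Finally I would push forward the density: for any symmetric function $f$ of the eigenvalues,
\[
\ev{f(\lambda_1,\dots,\lambda_N)} = c_N \int_{\lambda_1<\dots<\lambda_N} \int_{O(N)/\sim} f(\lambda)\, e^{-\frac{N}{4}\sum_i \lambda_i^2} \prod_{i<j}(\lambda_j-\lambda_i)\, dO\, d\lambda_1\cdots d\lambda_N,
\]
where $O(N)/\sim$ is the fundamental domain enforced by the sign convention. The inner integral over orthogonal matrices is a finite constant (the normalized volume of this domain), so absorbing it into the constant yields the stated density
\[
\tilde c_N\, e^{-\frac{N}{4}(\lambda_1^2+\cdots+\lambda_N^2)} \prod_{k<l}(\lambda_l-\lambda_k)
\]
on $\{\lambda_1<\dots<\lambda_N\}$. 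The one point requiring care is the $X_{ij}$ coordinatization of a neighborhood of $I$ in $O(N)$ and the invariance argument that justifies computing the Jacobian only at $O=I$; everything else is linear algebra and the change-of-variables formula.
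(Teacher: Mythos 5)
Your proposal is correct and follows essentially the same route as the paper: parametrize $A=O^TDO$ near the identity of $O(N)$ by skew-symmetric matrices (the paper uses $O=e^{-H}$, you use the first-order expansion $O=I+X$, which is the same infinitesimal computation), reduce the Jacobian to the point $O=I$ by invariance, and read off $\prod_{i<j}(\lambda_j-\lambda_i)$ from $dA = dD + [X,D]$. The remaining steps (measure-zero degenerate set, parameter count, integrating out the angular variables into the constant) also match the paper's argument.
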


\begin{proof}
	In terms of the entries of the \GOE\ matrix $A$ we have density
	\[ p\left(x_{kl} \, | \, k \geq l  \right)
	= c_N e^{-\frac{N}{4}\Tr A^2},
	\]
	where $A=(x_{kl})_{k,l=1}^N$ with $x_{kl}$ real and $x_{kl} = x_{lk}$ for all $l,k$.
	Again we diagonalize $A=O^TDO$ with $O$ orthogonal and $D=\diag(\lambda_1, \dots, \lambda_N)$ with $\lambda_1 \leq \cdots \leq \lambda_N$.
	As before, degenerated eigenvalues have probability zero, hence this case can be neglected and we assume $\lambda_1 < \cdots < \lambda_N$. We parametrize $O$ via $O=e^{-H}$ by a \emph{skew-symmetric} matrix $H$, that is, $H^T=-H$, i.e., $H=(h_{ij})_{i,j=1}^N$ with $h_{ij}\in \R$ and
	$h_{ij} = -h_{ji}$ for all $i,j$. In particular, $h_{ii} = 0$ for all $i$.
	We have
	\[ O^T = \left( e^{-H} \right)^T = e^{-H^T} = e^H
	\]
	and thus $O$ is indeed orthogonal:
	\[ O^TO = e^{H}e^{-H} = e^{H-H} = e^0 = I = OO^T.
	\]
$O=e^{-H}$ is actually a parametrization of the Lie group $\text{SO}(N)$ by the Lie algebra $\text{so}(N)$ of skew-symmetric matrices.

	Note that our parametrization $A=e^HDe^{-H}$ has the right number of parameters. For $A$ we have the variables $\{ x_{ij} ; \, j \leq i \} $
	and for $e^HDe^{-H}$ we have the $N$ eigenvalues
	$\{ \lambda_1, \dots, \lambda_N  \} $ and the $\frac 12(N^2-N)$  many parameters $\{ h_{ij}; \, i >j  \}$.
In both cases we have $\frac 12N (N+1)$ many variables.
	This parametrization is locally bijective; 
	so we need to compute the Jacobian of the map
 $S \colon A \mapsto e^HDe^{-H}$.
	We have
	\begin{align*}
	\td A
	&= (\td e^{H}) D e^{-H} + e^H (\td D) e^{-H} + e^HD(\td e^{-H}) \\
	&= e^H \left[e^{-H}(\td e^{H}) D  + \td D -  D(\td e^{-H}) e^H \right] e^{-H}.
	\end{align*}
	This transports the calculation of the derivative at any arbitrary point $e^H$ to the identity element $I=e^0$ in the Lie group. Since the Jacobian is preserved under this transformation, it suffices to calculate the Jacobian at $H=0$, i.e., for $e^H=I$ and $\td e^H = \td H$. Then
	\[ \td A = \td H \cdot D - D\cdot \td H + \td D,
	\]
	i.e.,
	\[ \td x_{ij} = \td h_{ij} \lambda_j - \lambda_i \td h_{ij} + \td \lambda_i \delta_{ij}
	\]
	This means that we have
	\[ \frac{\partial x_{ij}}{\partial \lambda_k} = \delta_{ij} \delta_{ik}
\qquad\text{and}\qquad
\frac{\partial x_{ij}}{\partial h_{kl}} = \delta_{ik} \delta_{jl} (\lambda_l - \lambda_k).
	\]
	Hence the Jacobian is given by
	\[ J = \det DS 
= \prod_{k<l} (\lambda_l - \lambda_k).
	\]
	Thus,
	\begin{align*}
	q(\lambda_1,\dots, \lambda_N, h_{kl})
	= p(x_{ij} \, | \, i \geq j) J
	&= c_N e^{-\frac{N}{4}\Tr A^2} \prod_{k<l} (\lambda_l - \lambda_k)\\
&= c_N e^{-\frac{N}{4}(\lambda_1^2+\cdots+\lambda_N^2)} \prod_{k<l} (\lambda_l - \lambda_k).
	\end{align*}
	This is independent of the \enquote{angles} $h_{kl}$, so integrating over those variables just changes the constant $c_N$ into another constant $\tilde{c}_N$.
\end{proof}

In a similar way, the complex case can be treated; see Exercise \ref{exercise:19}. One gets the following.

\begin{theorem}\label{thm:7.6}
	The joint distribution of the eigenvalues of a \GUEN\ is given by a density
	\[ \hat{c}_N e^{-\frac{N}{2} (\lambda_1^2 + \cdots + \lambda_N^2) }
	\prod_{k<l} (\lambda_l - \lambda_k)^2
	\]
	restricted on $\lambda_1 < \cdots < \lambda_N$.
\end{theorem}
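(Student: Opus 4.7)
The plan is to mirror the GOE argument, keeping track of the two key differences: the density in terms of entries now carries $-\tfrac{N}{2}\Tr A^2$ instead of $-\tfrac{N}{4}\Tr A^2$, and each off-diagonal entry contributes \emph{two} real degrees of freedom instead of one, which is where the square on the Vandermonde factor will come from.

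First I would write the joint density of the $N^2$ independent real parameters (the $N$ diagonal entries $x_{ii}$ and the real and imaginary parts $x_{ij},y_{ij}$ for $i<j$) as
\[
\hat c_N \exp\Bigl(-\tfrac{N}{2}\Tr A^2\Bigr)\,\td A,
\qquad \td A=\prod_i \td x_{ii}\prod_{i<j}\td x_{ij}\td y_{ij}.
\]
Then I diagonalize $A=UDU^*$ with $U$ unitary and $D=\diag(\lambda_1,\dots,\lambda_N)$, noting (as in the GOE case) that the set of matrices with repeated eigenvalues has measure zero, so I may restrict to $\lambda_1<\cdots<\lambda_N$, in which case the eigenvalues and eigendirections are uniquely determined up to a diagonal unitary phase factor. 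I parametrize $U=e^{-H}$ locally by a skew-Hermitian matrix $H=(h_{ij})$, with the phase ambiguity of $U$ fixed by imposing $h_{ii}=0$, so that off the diagonal the $h_{ij}=-\bar h_{ji}$ give $N(N-1)$ real parameters. Counting: $N$ eigenvalues plus $N(N-1)$ angular parameters gives $N^2$, matching the dimension on the entries side.

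Since the map $A\mapsto (\lambda,H)$ intertwines left-translation by $U$ with the linearization at the identity, it suffices to compute the Jacobian at $H=0$. There,
\[
\td A=\td H\cdot D-D\cdot \td H+\td D,
\]
so $\td a_{ii}=\td\lambda_i$ and, for $i\neq j$, $\td a_{ij}=(\lambda_j-\lambda_i)\td h_{ij}$. The crucial point, different from GOE, is that both $a_{ij}$ and $h_{ij}$ are complex for $i<j$, with independent real and imaginary parts, so splitting $\td a_{ij}=\td x_{ij}+\sqrt{-1}\td y_{ij}$ and $\td h_{ij}=\td \xi_{ij}+\sqrt{-1}\td \eta_{ij}$ yields two identical real scalings by $(\lambda_j-\lambda_i)$, contributing $(\lambda_j-\lambda_i)^2$ rather than $|\lambda_j-\lambda_i|$ to the Jacobian. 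Taking the product over $i<j$ gives the total Jacobian $\prod_{k<l}(\lambda_l-\lambda_k)^2$.

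Combining this with the unitary invariance $\Tr A^2=\sum_i\lambda_i^2$, the transformed density reads
\[
\hat c_N\,\exp\Bigl(-\tfrac{N}{2}\sum_i\lambda_i^2\Bigr)\prod_{k<l}(\lambda_l-\lambda_k)^2,
\]
which is independent of the angular parameters $h_{ij}$; integrating those out (over the compact quotient of the unitary group by its diagonal subgroup) only modifies the constant, leaving the stated formula on $\{\lambda_1<\cdots<\lambda_N\}$. The main subtlety I expect is bookkeeping: justifying that the phase-fixing $h_{ii}=0$ really does give a legitimate local coordinate chart on $U(N)$ modulo diagonal phases, and being careful that the off-diagonal Jacobian block acts on real and imaginary parts separately and hence really squares the Vandermonde factor; everything else is a direct transcription of the GOE proof.
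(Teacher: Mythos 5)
Your proposal is correct and follows essentially the route the paper intends: the text defers the proof of Theorem \ref{thm:7.6} to Exercise \ref{exercise:19}, which asks precisely for an adaptation of the \GOE\ argument with $U=e^{-iH}$, $H$ selfadjoint — equivalent to your $U=e^{-H}$ with $H$ skew-Hermitian. Your dimension count, the phase-fixing $h_{ii}=0$, the reduction of the Jacobian computation to $H=0$, and the observation that each off-diagonal block scales two real coordinates by $(\lambda_j-\lambda_i)$ and hence squares the Vandermonde are all the right ingredients, at the same level of rigor as the paper's \GOE\ proof.
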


\section{Rewriting the Vandermonde}

\begin{definition}
	The function
	\[ \Delta(\lambda_1, \dots, \lambda_N) = \prod_{\substack{k,l=1 \\ k <l}}^{N} (\lambda_l -\lambda_k)
 	\]
 	is called the \emph{Vandermonde determinant}.
\end{definition}

\begin{prop}
	For $\lambda_1, \dots, \lambda_N \in \R$ we have that
	\[\Delta(\lambda_1, \dots, \lambda_N) = \det \left( \lambda_j^{i-1} \right)_{i,j=1}^N
	= \det \begin{pmatrix}
	1 & 1 & \cdots & 1\\
	\lambda_1 & \lambda_2 & \dots &\lambda_N\\
	\vdots & \vdots & \ddots & \vdots \\
	\lambda_1^{N-1} & \lambda_2^{N-1}  & \dots &\lambda_N^{N-1} 
	\end{pmatrix}.
	\]
\end{prop}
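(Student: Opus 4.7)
The plan is to prove this classical identity by the standard two-step argument: first show the determinant, viewed as a polynomial in $\lambda_1,\dots,\lambda_N$, is divisible by the Vandermonde product, and then compare degrees and leading coefficients to conclude equality. Alternatively, one can give an inductive proof via column operations; I would actually present the inductive version as the main line and only sketch the polynomial argument as a sanity check.

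First I would set up induction on $N$. For $N=1$ both sides equal $1$, and for $N=2$ the determinant $\det\begin{pmatrix} 1 & 1 \\ \lambda_1 & \lambda_2\end{pmatrix} = \lambda_2 - \lambda_1$ is exactly $\Delta(\lambda_1,\lambda_2)$. For the inductive step, starting from the matrix $V_N := (\lambda_j^{i-1})_{i,j=1}^N$, I would perform the column-independent row operations $R_i \mapsto R_i - \lambda_1 R_{i-1}$, starting from the bottom row $i=N$ and going up to $i=2$. These operations do not change the determinant. After the operations, the first column becomes $(1,0,0,\dots,0)^T$ (since $\lambda_1^{i-1} - \lambda_1 \cdot \lambda_1^{i-2}=0$), and the $(i,j)$-entry for $i\geq 2$ and $j\geq 2$ becomes $\lambda_j^{i-1} - \lambda_1 \lambda_j^{i-2} = \lambda_j^{i-2}(\lambda_j - \lambda_1)$.

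Next I would expand the determinant along the first column, which, because of the $(1,0,\dots,0)^T$ shape, reduces the determinant to a single $(N-1)\times(N-1)$ minor. From each column $j=2,\dots,N$ of this minor I can factor out $(\lambda_j - \lambda_1)$, yielding
\[
\det V_N = \prod_{j=2}^N (\lambda_j - \lambda_1) \cdot \det \left( \lambda_j^{i-1} \right)_{i,j=1}^{N-1}\bigg|_{\lambda_1,\dots,\lambda_{N-1}\leftarrow \lambda_2,\dots,\lambda_N},
\]
where the remaining minor is exactly the Vandermonde-type determinant on the variables $\lambda_2,\dots,\lambda_N$. By the induction hypothesis, this equals $\prod_{2\leq k<l\leq N}(\lambda_l-\lambda_k)$, and combining with the factored prefactor gives $\prod_{1\leq k<l\leq N}(\lambda_l-\lambda_k) = \Delta(\lambda_1,\dots,\lambda_N)$, completing the induction.

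There is no real obstacle here; the argument is routine linear algebra. The only point worth being careful about is the bookkeeping of the column factorization and making sure the inductive formula matches the product indexing convention $k<l$ used in the definition of $\Delta$. As a cross-check, I would note the polynomial viewpoint: $\det(\lambda_j^{i-1})$ is a polynomial in the $\lambda_j$'s that vanishes whenever two arguments coincide (two columns become equal), hence is divisible by $\prod_{k<l}(\lambda_l-\lambda_k)$; both sides have the same total degree $\binom{N}{2}$ and the same leading monomial $\lambda_2 \lambda_3^2 \cdots \lambda_N^{N-1}$ (with coefficient $+1$), forcing equality.
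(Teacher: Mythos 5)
Your proof is correct, but your main line of argument is genuinely different from the paper's. The paper uses the divisibility argument that you relegate to a ``sanity check'': $\det(\lambda_j^{i-1})$ vanishes whenever $\lambda_k=\lambda_l$, hence is divisible by $\prod_{k<l}(\lambda_l-\lambda_k)$, and then equality follows by comparing total degrees ($\tfrac12 N(N-1)$ on both sides) and the coefficient of $1\cdot\lambda_2\cdot\lambda_3^2\cdots\lambda_N^{N-1}$. Your primary route — row operations $R_i\mapsto R_i-\lambda_1 R_{i-1}$ performed bottom-up, expansion along the first column, factoring $(\lambda_j-\lambda_1)$ out of each remaining column, and induction on $N$ — is the classical constructive proof. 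The trade-off: the paper's argument is shorter and conceptually transparent (it explains \emph{why} the Vandermonde factors appear), but it defers the normalization $c=1$ to a coefficient check that the paper merely asserts; your induction is slightly longer but fully self-contained, producing the correct constant and sign automatically without any separate leading-coefficient comparison. One small detail you handled correctly and that is worth keeping explicit: the row operations must be carried out from $i=N$ down to $i=2$ so that each step subtracts a multiple of a not-yet-modified row.
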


\begin{proof}
	$\det ( \lambda_j^{i-1})$ is a polynomial in $\lambda_1, \dots, \lambda_N$. If $\lambda_l = \lambda_k$ for some $l,k \in \{ 1, \dots, N \}$ then 
$\det ( \lambda_j^{i-1})=0$.
Thus $ \det ( \lambda_j^{i-1})$ contains a factor $\lambda_l-\lambda_k$ for each $k<l$, hence $\Delta(\lambda_1,\dots,\lambda_N)$ divides
$\det ( \lambda_j^{i-1})$.

Since $\det ( \lambda_j^{i-1})$ is a sum of products with one factor from each row, we have that the degree of $\det ( \lambda_j^{i-1})$ is equal to
$$0+1+2+\cdots+(N-1)=\frac 12 N(N-1),$$
which is the same as the degree of $\Delta(\lambda_1,\dots,\lambda_N)$. This shows that 
$$\Delta(\lambda_1,\dots,\lambda_N)=c\cdot \det ( \lambda_j^{i-1})\qquad\text{for some $c\in\R$.}$$
By comparing the coefficient of $1\cdot \lambda_2\cdot \lambda_3^2\cdots \lambda_N^{N-1}$ on both sides one can check that $c=1$.
	
\end{proof}

The advantage of being able to write our density in terms of a determinant comes from the following observation: In $\det ( \lambda_j^{i-1} )$ we can add arbitrary linear combinations of smaller rows to the $k$-th row without changing the value of the determinant, i.e.,
we can replace $\lambda^k$ by any arbitrary monic polynomial 
$p_k(\lambda) = \lambda^k + \alpha_{k-1} \lambda^{k-1} + \cdots + \alpha_1 \lambda + \alpha_0$ 
of degree $k$.
Hence we have the following statement.

\begin{prop}\label{prop:7.9}
	Let $p_0, \dots, p_{N-1}$ be monic polynomials with $\deg p_k = k$. Then we have
	\[ \det \left( p_{i-1}\left(\lambda_j\right)\right)_{i,j=1}^N
	= \Delta (\lambda_1, \dots, \lambda_N)
	= \prod_{\substack{k,l=1 \\ k <l}}^{N} (\lambda_l -\lambda_k).
	\]
\end{prop}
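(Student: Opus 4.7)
The plan is to reduce this to the previous proposition by exploiting that each $p_k$ differs from $\lambda^k$ only by lower-order monomials, so that the $(k+1)$-th row of the matrix $\left(p_{i-1}(\lambda_j)\right)_{i,j=1}^N$ can be cleaned up by elementary row operations without changing the determinant.

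Concretely, since $p_k$ is monic of degree $k$, I would write
\[ p_k(\lambda) = \lambda^k + \sum_{m=0}^{k-1} \alpha_{k,m} \lambda^m \]
for certain scalars $\alpha_{k,m} \in \R$. Then the $(k+1)$-th row of the matrix $M := \left(p_{i-1}(\lambda_j)\right)_{i,j=1}^N$ is exactly
\[ \bigl(\lambda_j^k\bigr)_{j=1}^N \;+\; \sum_{m=0}^{k-1} \alpha_{k,m}\,\bigl(\lambda_j^m\bigr)_{j=1}^N, \]
which is the $(k+1)$-th row of the Vandermonde matrix $V := \left(\lambda_j^{i-1}\right)_{i,j=1}^N$ plus a linear combination of rows $1, 2, \dots, k$ of $V$.

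I would then proceed inductively on $k = 0, 1, \ldots, N-1$, at each step subtracting the appropriate linear combination of the (already cleaned up) earlier rows of $M$ from row $k+1$ of $M$. Since subtracting a scalar multiple of one row from another leaves the determinant invariant, this sequence of row operations transforms $M$ into $V$ without changing $\det M$. Hence $\det M = \det V$, and the previous result (the Vandermonde evaluation) gives $\det V = \Delta(\lambda_1, \dots, \lambda_N)$.

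No step here is really an obstacle; the only thing to be slightly careful about is the order of the row operations (one must clear the lower rows first, so that when cleaning row $k+1$ the rows $1, \dots, k$ of $M$ have already been reduced to the corresponding rows of $V$, matching the coefficients $\alpha_{k,m}$). Alternatively, one can avoid induction entirely by observing that the change of basis from $\{1, \lambda, \ldots, \lambda^{N-1}\}$ to $\{p_0, p_1, \ldots, p_{N-1}\}$ is given by a lower-triangular matrix with $1$'s on the diagonal (by monicity), hence of determinant $1$; factoring this transformation out of the rows of $M$ yields $\det M = 1 \cdot \det V$ directly.
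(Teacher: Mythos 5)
Your proof is correct and is essentially the argument the paper gives: the statement is obtained from the Vandermonde evaluation by adding linear combinations of lower rows to each row, which leaves the determinant unchanged. Your lower-triangular change-of-basis remark is a clean packaging of the same idea.
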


\section{Rewriting the GUE density in terms of Hermite kernels}

In the following, we will make a special choice for the $p_k$. We will choose them as the Hermite polynomials, which are orthogonal with respect to the Gaussian distribution $\frac{1}{c} e^{-\frac{1}{2} \lambda^2}$.

\begin{definition}\label{def:7.10}
	The \emph{Hermite polynomials} $H_n$ are defined by the following requirements.
	\begin{enumerate}
		\item[(i)] $H_n$ is a monic polynomial of degree $n$.
		\item[(ii)] For all $n,m \geq 0$:
		\[ \int_{\R} H_n(x)\overline{H_m(x)}\frac{1}{\sqrt{2\pi}} e^{-\frac{1}{2} x^2} \td x 
		= \delta_{nm} n!
		\]
	\end{enumerate}
\end{definition}

\begin{remark}
	\begin{enumerate}
		\item One can get the $H_n(x)$ from the monomials $1,x,x^2,\dots$ via Gram-Schmidt orthogonalization as follows.
		\begin{itemize}
			\item We define an inner product on the polynomials by
			\[ \langle f,g \rangle  
			= \frac{1}{\sqrt{2\pi}}\int_{\R} f(x) \overline{g(x)}  e^{-\frac{1}{2} x^2} \td x.
			\]
			\item We put $H_0(x) =1$. This is monic of degree $0$ with
			\[ \langle H_0,H_0 \rangle 
			= \frac{1}{\sqrt{2\pi}}\int_{\R}   e^{-\frac{1}{2} x^2} \td x
			= 1 = 0!.
			\]
			\item We put $H_1(x) =x$. This is monic of degree $1$ with
			\[ \langle H_1,H_0 \rangle 
			= \frac{1}{\sqrt{2\pi}}\int_{\R} x  e^{-\frac{1}{2} x^2} \td x
			= 0
			\]
			and
			\[ \langle H_1,H_1 \rangle 
			= \frac{1}{\sqrt{2\pi}}\int_{\R} x^2  e^{-\frac{1}{2} x^2} \td x
			= 1 = 1!.
			\]
			\item For $H_2$, note that
			\[ \langle x^2,H_1 \rangle 
			= \frac{1}{\sqrt{2\pi}}\int_{\R} x^3  e^{-\frac{1}{2} x^2} \td x
			= 0
			\]
			and
			\[ \langle x^2,H_0 \rangle 
			= \frac{1}{\sqrt{2\pi}}\int_{\R} x^2  e^{-\frac{1}{2} x^2} \td x
			= 1.
			\]
			Hence we set $H_2(x) := x^2 - H_0(x) = x^2-1$.
			Then we have
			\[ \langle H_2, H_0 \rangle = 0 = \langle H_2, H_1 \rangle 
			\]
			and
			\begin{align*} \langle H_2, H_2 \rangle 
			&= \frac{1}{\sqrt{2\pi}}\int_{\R} (x^2-1)^2  e^{-\frac{1}{2} x^2} \td x\\
   			&=\frac{1}{\sqrt{2\pi}}\int_{\R} (x^4-2x^2+1)  e^{-\frac{1}{2} x^2} \td x
			= 3-2+1=2!
			\end{align*}
			\item Continue in this way.
		\end{itemize}
		Note that the $H_n$ are uniquely determined by the requirements that $H_n$ is monic and that
		$\langle H_m, H_n \rangle  = 0$ for all $m \neq n$. That we have 
		$\langle H_n, H_n \rangle  = n!$, is then a statement which has to be proved.
		\item The Hermite polynomials satisfy many explicit relations; important is the three-term recurrence relation
		\[ xH_n(x) = H_{n+1}(x) + n H_{n-1}(x)
		\]
		for all $n \geq 1$; see Exercise \ref{exercise:22}.
		\item The first few $H_n$ are
		\begin{align*}
		H_0(x) &= 1, \\
		H_1(x) &= x, \\
		H_2(x) &= x^2-1, \\
		H_3(x) &= x^3-3x, \\
		H_4(x) &= x^4-6x^2+3.
		\end{align*}
		\item By Proposition \ref{prop:7.9}, we can now use the $H_n$ for writing our Vandermonde determinant as
		\[ \Delta (\lambda_1, \dots, \lambda_N)
		= \det \left( H_{i-1}\left(\lambda_j\right)\right)_{i,j=1}^N.
		\]
		We want to use this for our \GUEN\ density
		\begin{align*}
		q(\lambda_1, \dots, \lambda_N)
		&= \hat{c}_N e^{ -\frac{N}{2} ( \lambda_1^2 + \dots + \lambda_N^2) } \Delta (\lambda_1, \dots, \lambda_N)^2 \\
		&= \hat{c}_N e^{ -\frac{1}{2} ( \mu_1^2 + \dots + \mu_N^2) } 
			\Delta \left(\frac{\mu_1}{\sqrt{N}}, \dots, \frac{\mu_N}{\sqrt{N}}  \right)^2. \\
		&= \hat{c}_N e^{ -\frac{1}{2} ( \mu_1^2 + \dots + \mu_N^2) } 
			\Delta (\mu_1, \dots, \mu_N)^2  \left( \frac{1}{\sqrt{N}} \right)^{N(N-1)},
		\end{align*}
		where the $\mu_i = \sqrt{N} \lambda_i$ are the eigenvalues of the \enquote{unnormalized} \GUE\ matrix $\sqrt{N}A_N$. It will be easier to deal with those. We now will also go over from ordered eigenvalues $\lambda_1 < \lambda_2 < \cdots < \lambda_N$ to unordered eigenvalues 
		$(\mu_1, \dots, \mu_N) \in \R^N$. Since in the latter case each ordered tuple shows up $N!$ times, this gives an additional factor $N!$ in our density. We collect all these $N$-dependent factors in our constant $\tilde{c}_N$. So we now have the density
		\begin{align*}
		p(\mu_1, \dots, \mu_N)
		&= \tilde{c}_N e^{ -\frac{1}{2} ( \mu_1^2 + \dots + \mu_N^2) }
				\Delta (\mu_1, \dots, \mu_N)^2 \\
		&= \tilde{c}_N e^{ -\frac{1}{2} ( \mu_1^2 + \dots + \mu_N^2) }
		\left[ \det \left( H_{i-1}\left(\mu_j\right)\right)_{i,j=1}^N \right]^2 \\
		&= \tilde{c}_N 	\left[ \det \left( e^{ -\frac{1}{4} \mu_j^2} H_{i-1}\left(\mu_j\right)\right)_{i,j=1}^N \right]^2.
		\end{align*}
	\end{enumerate}
\end{remark}

\begin{definition}\label{def:7.12}
	The \emph{Hermite functions} $\Psi_n$ are defined by
	\[ \Psi_n(x) = (2\pi)^{-\frac{1}{4}} (n!)^{-\frac{1}{2}} e^{ -\frac{1}{4}x^2}H_n(x).
	\]
\end{definition}

\begin{remark}
	\begin{enumerate}
		\item We have
		\begin{align*}
		\int_\R \Psi_n(x)\Psi_m(x) \td x
		&= \frac{1}{\sqrt{2\pi}} \frac{1}{\sqrt{n!m!}} \int_\R e^{ -\frac{1}{4}x^2}H_n(x) H_m(x) \td x
		= \delta_{nm},
		\end{align*}
		i.e., the $\Psi_n$ are orthonormal with respect to the Lebesgue measure.
		Actually, they form an orthonormal Hilbert space basis of $L^2(\R)$.
		\item Now we can continue the calculation
		\begin{align*}
		p(\mu_1, \dots, \mu_N)
		&= c_N 	\left[ \det \left( \Psi_{i-1}\left(\mu_j\right)\right)_{i,j=1}^N \right]^2
		\end{align*}
		with a new constant $c_N$. Denote $V_{ij} = \Psi_{i-1}(\mu_j)$. Then we have
		\begin{align*}
		(\det V)^2
		&= \det V^T \det V
		= \det (V^TV)
		\end{align*}
		such that
		\begin{align*}
		(V^TV)_{ij}
		&= \sum_{k=1}^{N} V_{ki}V_{kj}
		= \sum_{k=1}^{N} \Psi_{k-1}(\mu_i) \Psi_{k-1}(\mu_j).
		\end{align*}
	\end{enumerate}
\end{remark}

\begin{definition}
	The $N$-th \emph{Hermite kernel} $K_N$ is defined by
	\[ K_N(x,y) = \sum_{k=0}^{N-1} \Psi_k(x) \Psi_k(y).
	\]
\end{definition}

Collecting all our notations and calculations we have thus proved the following.

\begin{theorem}\label{thm:7.15}
	The unordered joint eigenvalue distribution of an unnormalized \GUEN\ is given by the density
	\[ p(\mu_1, \dots, \mu_N)
	= c_N \det \left( K_N(\mu_i, \mu_j) \right)_{i,j=1}^N.
	\]
\end{theorem}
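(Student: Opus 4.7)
The plan is that essentially all the work has already been done in the remarks preceding the theorem; what remains is to assemble the steps cleanly. I would start from Theorem \ref{thm:7.6}, which gives the ordered joint density $\hat c_N e^{-\frac N2(\lambda_1^2+\cdots+\lambda_N^2)}\prod_{k<l}(\lambda_l-\lambda_k)^2$ on the simplex $\lambda_1<\cdots<\lambda_N$. I would then perform the change of variables $\mu_i=\sqrt N\,\lambda_i$ to eliminate the factor $N/2$ in the exponent; the Vandermonde then picks up a factor $N^{-N(N-1)/2}$, which I absorb into the normalization constant. Passing from the ordered simplex to unordered tuples in $\R^N$ produces an extra factor of $N!$, again absorbed.

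Next, I would invoke Proposition \ref{prop:7.9} with the monic Hermite polynomials $H_n$ from Definition \ref{def:7.10} to rewrite
\[
\Delta(\mu_1,\dots,\mu_N)=\det\bigl(H_{i-1}(\mu_j)\bigr)_{i,j=1}^N,
\]
so the density becomes
\[
p(\mu_1,\dots,\mu_N)=\tilde c_N\, e^{-\frac12(\mu_1^2+\cdots+\mu_N^2)}\bigl[\det\bigl(H_{i-1}(\mu_j)\bigr)\bigr]^2.
\]
I would then distribute the Gaussian factor across the determinant by writing $e^{-\frac12\mu_j^2}=\prod_j e^{-\frac14\mu_j^2}\cdot e^{-\frac14\mu_j^2}$, placing one $e^{-\mu_j^2/4}$ inside each of the two copies of the determinant (this is legitimate because it just pulls a diagonal factor $\prod_j e^{-\mu_j^2/4}$ out of each determinant). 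Combined with the normalization constants $(2\pi)^{-1/4}(n!)^{-1/2}$ from Definition \ref{def:7.12}, which again are absorbed into an overall $c_N$, this yields
\[
p(\mu_1,\dots,\mu_N)=c_N\bigl[\det\bigl(\Psi_{i-1}(\mu_j)\bigr)_{i,j=1}^N\bigr]^2.
\]

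Finally, setting $V_{ij}=\Psi_{i-1}(\mu_j)$, the identity $(\det V)^2=\det V^T\det V=\det(V^TV)$ together with
\[
(V^TV)_{ij}=\sum_{k=1}^N \Psi_{k-1}(\mu_i)\Psi_{k-1}(\mu_j)=K_N(\mu_i,\mu_j)
\]
gives $p(\mu_1,\dots,\mu_N)=c_N\det\bigl(K_N(\mu_i,\mu_j)\bigr)_{i,j=1}^N$, as claimed. I do not anticipate a genuine obstacle here; the only real bookkeeping is to keep track of all the constants (powers of $N$, the factorials from normalizing the Hermite polynomials, the factor $N!$ from symmetrization, and the $(2\pi)^{-1/4}$ from the Hermite functions) and verify that they all fold into a single normalization constant $c_N$, which is then fixed implicitly by the requirement that $p$ integrate to one over $\R^N$.
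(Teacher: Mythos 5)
Your proposal is correct and follows exactly the paper's own route: the paper proves Theorem \ref{thm:7.15} by precisely this chain (rescaling and unordering absorbed into the constant, Proposition \ref{prop:7.9} with Hermite polynomials, splitting $e^{-\mu_j^2/2}$ into two factors of $e^{-\mu_j^2/4}$ distributed over the two determinants, and the identity $(\det V)^2=\det(V^TV)$ with $(V^TV)_{ij}=K_N(\mu_i,\mu_j)$), summarized in the remarks preceding the theorem. The bookkeeping of constants is indeed harmless since $c_N$ is fixed afterwards by normalization, exactly as the paper does via Lemma \ref{lem:7.17}.
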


\begin{prop}
	$K_N$ is a \emph{reproducing kernel}, i.e.,
	\[ \int_\R K_N(x,u) K_N(u,y) \td u = K_N(x,y).
	\]
\end{prop}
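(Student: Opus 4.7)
The plan is to expand both factors of $K_N$ using its definition as a finite sum of Hermite function products, exchange the sum and the integral (the sum is finite, so this is unproblematic), and then apply the orthonormality of the Hermite functions $\Psi_k$ in $L^2(\R)$, which was recorded just after Definition \ref{def:7.12}.

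Concretely, I would write
\[
K_N(x,u)K_N(u,y) = \sum_{k=0}^{N-1}\sum_{l=0}^{N-1} \Psi_k(x)\Psi_k(u)\Psi_l(u)\Psi_l(y),
\]
then integrate in $u$ and pull the (finite) double sum outside the integral to obtain
\[
\int_\R K_N(x,u)K_N(u,y)\,du = \sum_{k,l=0}^{N-1} \Psi_k(x)\Psi_l(y) \int_\R \Psi_k(u)\Psi_l(u)\,du = \sum_{k,l=0}^{N-1} \Psi_k(x)\Psi_l(y)\,\delta_{kl},
\]
where the last equality uses orthonormality. Collapsing the Kronecker delta reduces the double sum to $\sum_{k=0}^{N-1}\Psi_k(x)\Psi_k(y) = K_N(x,y)$, as desired.

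There is essentially no obstacle here; the only ingredient beyond bookkeeping is the orthonormality of the $\Psi_k$, which was already established. The statement is in fact an immediate instance of the general fact that the kernel of an orthogonal projection onto a finite-dimensional subspace of $L^2(\R)$ (here the span of $\Psi_0,\dots,\Psi_{N-1}$) is always reproducing, since the projection is idempotent.
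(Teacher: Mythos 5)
Your proof is correct and is essentially identical to the one in the text: expand both kernels as finite sums, interchange sum and integral, and apply the orthonormality $\int_\R \Psi_k(u)\Psi_l(u)\,du=\delta_{kl}$ to collapse the double sum. The closing remark about projection kernels is a nice conceptual addition but not needed.
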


\begin{proof}
	We calculate
	\begin{align*}
	\int_\R K_N(x,u) K_N(u,y) \td u
	&= \int_\R \left(\, \sum_{k=0}^{N-1} \Psi_k(x) \Psi_k(u) \right)
			\left(\, \sum_{l=0}^{N-1} \Psi_l(u) \Psi_l(y) \right) \td u \\
	&= \sum_{k,l=0}^{N-1} \Psi_k(x) \Psi_l(y) \int_\R \Psi_k(u) \Psi_l(u) \td u \\
&= \sum_{k,l=0}^{N-1} \Psi_k(x) \Psi_l(y) \delta_{kl} \\
	&= \sum_{k=0}^{N-1} \Psi_k(x) \Psi_k(y) \\
	&=K_N(x,y).
	\end{align*}
\end{proof}

\begin{lemma}\label{lem:7.17}
	Let $K\colon \R^2 \to \R$ be a reproducing kernel, i.e.,
	\[ \int_\R K(x,u) K(u,y) \td u = K(x,y).
	\]
	Put $d = \int_\R K(x,x) \td x$. Then, for all $n \geq 2$,
	\[ \int_\R \det \left( K(\mu_i, \mu_j) \right)_{i,j=1}^n \td \mu_n
	= (d-n+1) \cdot\det \left( K(\mu_i, \mu_j) \right)_{i,j=1}^{n-1}.
	\]
\end{lemma}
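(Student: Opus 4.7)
The plan is to expand the determinant via the Leibniz formula and then perform the $\mu_n$-integration term by term, exploiting the reproducing property to collapse the contributions in which $n$ moves.

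Write $M = (K(\mu_i,\mu_j))_{i,j=1}^n$ and $M' = (K(\mu_i,\mu_j))_{i,j=1}^{n-1}$. By Leibniz,
\[
\det M = \sum_{\sigma \in S_n} \sgn(\sigma) \prod_{i=1}^n K(\mu_i, \mu_{\sigma(i)}).
\]
The variable $\mu_n$ appears in at most two factors: in $K(\mu_n,\mu_{\sigma(n)})$ (from $i=n$) and in $K(\mu_{\sigma^{-1}(n)},\mu_n)$ (from $i=\sigma^{-1}(n)$). Splitting the sum by whether $\sigma$ fixes $n$ gives the two cases I handle below.

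If $\sigma(n)=n$, then $\sigma$ restricts to an element of $S_{n-1}$ with the same sign, only $K(\mu_n,\mu_n)$ involves $\mu_n$, and integrating out yields the factor $d = \int K(u,u)\,du$. Summing over these $\sigma$'s contributes $d \cdot \det M'$.

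If $\sigma(n) \neq n$, set $j := \sigma(n)$ and $i := \sigma^{-1}(n)$, where possibly $i = j$ (meaning $n$ lies in a $2$-cycle with $j$). In every case the two factors involving $\mu_n$ are $K(\mu_i,\mu_n)\,K(\mu_n,\mu_j)$, and by the reproducing property
\[
\int_\R K(\mu_i,\mu_n)\,K(\mu_n,\mu_j)\,d\mu_n = K(\mu_i,\mu_j).
\]
Define $\tilde\sigma \in S_{n-1}$ by deleting $n$ from its cycle: $\tilde\sigma(i) := j$ and $\tilde\sigma(k) := \sigma(k)$ for $k \in [n-1]\setminus\{i\}$. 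A cycle of length $\ell \geq 2$ becomes one of length $\ell-1$, so $\sgn(\sigma) = -\sgn(\tilde\sigma)$. Conversely, any $\tilde\sigma \in S_{n-1}$ arises from exactly $n-1$ permutations $\sigma \in S_n$ (one for each choice of $i \in [n-1]$ at which to insert $n$ after the image). Hence the sum of contributions from this case equals
\[
-(n-1)\sum_{\tilde\sigma \in S_{n-1}} \sgn(\tilde\sigma) \prod_{k=1}^{n-1} K(\mu_k,\mu_{\tilde\sigma(k)}) = -(n-1)\det M'.
\]

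Adding the two cases gives $\int \det M\, d\mu_n = (d-n+1)\det M'$, as claimed. The only non-routine step is the sign-and-multiplicity bookkeeping for cycles passing through $n$; once one observes that deleting $n$ from its cycle flips the sign and that the map $\sigma \mapsto \tilde\sigma$ is exactly $(n-1)$-to-$1$, the rest is immediate.
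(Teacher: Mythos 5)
Your proof is correct. It takes a genuinely different route from the paper: the paper expands the determinant by cofactors along the last row/column, integrates each cofactor term using the reproducing property, and re-absorbs the resulting factor $K(\mu_i,\mu_j)$ into a row of the minor (this is carried out explicitly only for $n=2$ and $n=3$, with the general case merely asserted). You instead work directly with the Leibniz sum over $S_n$, split according to whether $\sigma$ fixes $n$, and handle the non-fixing case by the cycle-deletion map $\sigma\mapsto\tilde\sigma$. Your sign computation ($\sgn(\sigma)=-\sgn(\tilde\sigma)$, since removing one point from the cycle through $n$ shortens it by one) and your multiplicity count (the map is exactly $(n-1)$-to-$1$, one preimage for each choice of $i=\sigma^{-1}(n)\in[n-1]$) are both right, and you correctly note that the degenerate case $i=j$ (a $2$-cycle $(i\,n)$) causes no trouble because the two factors $K(\mu_i,\mu_n)K(\mu_n,\mu_j)$ still collapse to $K(\mu_i,\mu_i)$ under the reproducing property. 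What your approach buys is a uniform, fully general argument with explicit bookkeeping, in place of the paper's pattern-from-examples; what the paper's approach buys is that the cancellation mechanism is visible concretely in small matrices without any permutation combinatorics.
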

We assume that all those integrals make sense, as it is the case for our Hermite kernels.

\begin{proof}
	Consider the case $n=2$. Then
	\begin{align*}
	\int_\R 
	&\det \begin{pmatrix}
	K(\mu_1, \mu_1) & K(\mu_1, \mu_2) \\ 
	K(\mu_2, \mu_1) & K(\mu_2, \mu_2)
	\end{pmatrix}
	\td \mu_2\\
	&\qquad\qquad= K(\mu_1, \mu_1) \int_\R K(\mu_2, \mu_2) \td \mu_2 
		- \int_\R K(\mu_1, \mu_2)K(\mu_2, \mu_1) \td \mu_2 \\
	&\qquad\qquad= (d-1) K(\mu_1, \mu_1) \\
	&\qquad\qquad=  (d-1) K(\mu_1, \mu_1) \det \left(K(\mu_1, \mu_1) \right).
	\end{align*}
	For $n=3$,
	\begin{align*}
	&\det \begin{pmatrix}
	K(\mu_1, \mu_1) & K(\mu_1, \mu_2) & K(\mu_1, \mu_3) \\ 
	K(\mu_2, \mu_1) & K(\mu_2, \mu_2) & K(\mu_2, \mu_3) \\ 
	K(\mu_3, \mu_1) & K(\mu_3, \mu_2) & K(\mu_3, \mu_3)
	\end{pmatrix}\\
	&= \det \begin{pmatrix}
	K(\mu_2, \mu_1) & K(\mu_2, \mu_2) \\ 
	K(\mu_3, \mu_1) & K(\mu_3, \mu_2)
	\end{pmatrix} K(\mu_1, \mu_3) 
	-\det \begin{pmatrix}
	K(\mu_1, \mu_1) & K(\mu_1, \mu_2) \\ 
	K(\mu_3, \mu_1) & K(\mu_3, \mu_2)
	\end{pmatrix} K(\mu_2, \mu_3) \\
	&\qquad\qquad\qquad\qquad\qquad\qquad\qquad\qquad\qquad+ \det \begin{pmatrix}
	K(\mu_1, \mu_1) & K(\mu_1, \mu_2) \\
	K(\mu_2, \mu_1) & K(\mu_2, \mu_2)
	\end{pmatrix} K(\mu_3, \mu_3),	
	\end{align*}
	with
	\begin{align*}
	\int_\R  \det \begin{pmatrix}
	K(\mu_1, \mu_1) & K(\mu_1, \mu_2) \\
	K(\mu_2, \mu_1) & K(\mu_2, \mu_2)
	\end{pmatrix} K(\mu_3, \mu_3) \td \mu_3
	&=\det \begin{pmatrix}
	K(\mu_1, \mu_1) & K(\mu_1, \mu_2) \\
	K(\mu_2, \mu_1) & K(\mu_2, \mu_2)
	\end{pmatrix}\cdot  d,
	\end{align*}
	and
	\begin{align*}
	- \int_\R \det & \begin{pmatrix}
	K(\mu_1, \mu_1) & K(\mu_1, \mu_2) \\ 
	K(\mu_3, \mu_1) & K(\mu_3, \mu_2)
	\end{pmatrix} K(\mu_2, \mu_3) \td \mu_3 \\
	&\qquad\qquad\qquad= 	- \int_\R \det \begin{pmatrix}
	K(\mu_1, \mu_1) & K(\mu_1, \mu_2) \\ 
	K(\mu_2, \mu_3)  K(\mu_3, \mu_1) & K(\mu_2, \mu_3)  K(\mu_3, \mu_2) 
	\end{pmatrix}  \td \mu_3 \\
	&\qquad\qquad\qquad= - \det\begin{pmatrix}
	K(\mu_1, \mu_1) & K(\mu_1, \mu_2) \\ 
	K(\mu_2, \mu_1) & K(\mu_2, \mu_2)
	\end{pmatrix},
	\end{align*}
	and
	\begin{align*}
	\int_\R  \det &\begin{pmatrix}
	K(\mu_2, \mu_1) & K(\mu_2, \mu_2) \\ 
	K(\mu_3, \mu_1) & K(\mu_3, \mu_2)
	\end{pmatrix} K(\mu_1, \mu_3) \td \mu_3 \\
	&\qquad\qquad\qquad= \int_\R  \det \begin{pmatrix}
	K(\mu_2, \mu_1) & K(\mu_2, \mu_2) \\ 
	K(\mu_1, \mu_3)K(\mu_3, \mu_1) & K(\mu_1, \mu_3)K(\mu_3, \mu_2)
	\end{pmatrix}  \td \mu_3 \\
	&\qquad\qquad\qquad= \det\begin{pmatrix}
	K(\mu_2, \mu_1) & K(\mu_2, \mu_2) \\ 
	K(\mu_1, \mu_1) & K(\mu_1, \mu_2)
	\end{pmatrix} \\
	&\qquad\qquad\qquad= -\det\begin{pmatrix}
	K(\mu_1, \mu_1) & K(\mu_1, \mu_2) \\
	K(\mu_2, \mu_1) & K(\mu_2, \mu_2) 
	\end{pmatrix}.
	\end{align*}
Putting all terms together gives
$$\int_\R \det (K(\mu_i,\mu_j))_{i,j=1}^3\td \mu_3=
(d-2)\det (K(\mu_i,\mu_j))_{i,j=1}^2.$$

	The general case works in the same way.
\end{proof}

Iteration of Lemma \ref{lem:7.17} gives then the following.

\begin{cor}
	Under the assumptions of Lemma \ref{lem:7.17} we have
	\begin{align*}
	\int_\R \cdots \int_\R \det \left(  K(\mu_i,\mu_j) \right)_{i,j =1}^n \td \mu_1 \cdots \td \mu_n
	&= (d-n+1) (d-n+2) \cdots (d-1) d.
	\end{align*}
\end{cor}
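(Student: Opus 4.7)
The plan is to obtain the corollary as a straightforward induction on $n$, with Lemma \ref{lem:7.17} providing the single inductive step. The base case $n=1$ is immediate, since $\int_\R \det(K(\mu_1,\mu_1))\td\mu_1 = \int_\R K(\mu_1,\mu_1)\td\mu_1 = d$, which matches the empty product convention giving just the factor $d$.

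For the inductive step, assume the formula holds for $n-1$. The key observation is that Fubini's theorem lets us carry out the $n$-fold integration one variable at a time, starting with $\mu_n$. Applying Lemma \ref{lem:7.17} to the innermost integral yields
\[
\int_\R \det\bigl(K(\mu_i,\mu_j)\bigr)_{i,j=1}^n \td\mu_n = (d-n+1)\det\bigl(K(\mu_i,\mu_j)\bigr)_{i,j=1}^{n-1}.
\]
Pulling the constant $(d-n+1)$ outside the remaining $n-1$ integrations and invoking the inductive hypothesis on the $(n-1)$-dimensional determinantal integral gives
\[
\int_\R\cdots\int_\R \det\bigl(K(\mu_i,\mu_j)\bigr)_{i,j=1}^n \td\mu_1\cdots\td\mu_n = (d-n+1)\cdot(d-n+2)\cdots(d-1)\cdot d,
\]
as claimed.

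There is really no obstacle here beyond noting that the induction is legitimate, which requires only that the integrability hypotheses on $K$ ensure Fubini applies at each stage (as is clearly the case for the Hermite kernel $K_N$, since the Hermite functions $\Psi_k$ are Schwartz-class). The essence of the calculation is packed into Lemma \ref{lem:7.17}; the corollary is just its repeated application, so a one-paragraph induction argument suffices.
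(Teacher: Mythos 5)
Your proof is correct and is exactly the argument the paper intends: the corollary is stated as following by iteration of Lemma \ref{lem:7.17}, which is precisely your induction on $n$ with base case $\int_\R K(\mu_1,\mu_1)\td\mu_1 = d$. Nothing further is needed.
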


\begin{remark}
	We want to apply this to the Hermite kernel $K=K_N$. In this case we have
	\begin{align*}
	d
	&= \int_\R K_N(x,x) \td x \\
	&= \int_\R  \sum_{k=0}^{N-1} \Psi_k(x) \Psi_k(x)  \td x \\
	&=  \sum_{k=0}^{N-1}  \int_\R  \Psi_k(x) \Psi_k(x)  \td x \\
	&= N,
	\end{align*}
	and thus, since now $d=N=n$,
	\begin{align*}
	\int_\R \cdots \int_\R \det \left(  K_N(\mu_i,\mu_j) \right)_{i,j =1}^N \td \mu_1 \cdots \td \mu_n
	&= N!.
	\end{align*}
	This now allows us to determine the constant $c_N$ in the density $p(\mu_1, \dots, \mu_n)$ in Theorem \ref{thm:7.15}.
	Since $p$ is a probability density on $\R^N$, we have
	\begin{align*}
	1
	&= \int_{\R^N} p(\mu_1, \dots, \mu_n) \td\mu_1 \cdots \td \mu_N \\
	&= c_N \int_\R \cdots \int_\R \det \left(  K_N(\mu_i,\mu_j) \right)_{i,j =1}^N \td \mu_1 \cdots \td \mu_N\\
	&= c_N N!,
	\end{align*}
	and thus $c_N = \frac{1}{N!}$.
\end{remark}

\begin{theorem}\label{thm:7.20}
	The unordered joint eigenvalue distribution of an unnormalized \GUEN\ is given by a density
	\[p(\mu_1, \dots, \mu_N) = \frac{1}{N!} \det \left(  K_N(\mu_i,\mu_j) \right)_{i,j =1}^N,
	\]
	where $K_N$ is the Hermite kernel
	\[ K_N(x,y) = \sum_{k=0}^{N-1} \Psi_k(x) \Psi_k(x) .
	\]
\end{theorem}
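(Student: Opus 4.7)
The statement is essentially the assembly of what has already been built up; all the structural work is done by Theorem \ref{thm:7.15} and the reproducing‑kernel machinery of Lemma \ref{lem:7.17}, so the task reduces to pinning down the normalisation constant.

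The plan is as follows. By Theorem \ref{thm:7.15} we already know the density has the form
\[ p(\mu_1,\dots,\mu_N)=c_N \det\bigl(K_N(\mu_i,\mu_j)\bigr)_{i,j=1}^N, \]
so only the value $c_N=1/N!$ is missing. To extract it, I use the fact that $p$ must be a probability density, hence its integral over $\R^N$ equals $1$, and evaluate this integral explicitly using the reproducing property of the Hermite kernel.

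First I would apply the corollary of Lemma \ref{lem:7.17} to the kernel $K=K_N$, which is reproducing by the proposition just before that lemma. This gives
\[ \int_{\R^N}\det\bigl(K_N(\mu_i,\mu_j)\bigr)_{i,j=1}^N\,d\mu_1\cdots d\mu_N \;=\; d(d-1)\cdots(d-N+1), \]
where $d:=\int_\R K_N(x,x)\,dx$. Next I would compute $d$ by expanding the diagonal of the kernel and invoking orthonormality of the Hermite functions from Definition \ref{def:7.12}:
\[ d=\int_\R\sum_{k=0}^{N-1}\Psi_k(x)^2\,dx=\sum_{k=0}^{N-1}\int_\R \Psi_k(x)^2\,dx=\sum_{k=0}^{N-1}1=N. \]
Substituting $d=N$ into the previous formula yields the integral $N(N-1)(N-2)\cdots 1 = N!$.

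Combining this with $\int p=1$ from Theorem \ref{thm:7.15} forces $c_N\cdot N!=1$, i.e.\ $c_N=1/N!$, which establishes the claimed density. There is no genuine obstacle here, since the heavy lifting (the passage from the Vandermonde squared to a determinant of the reproducing Hermite kernel, and the inductive integration lemma) has already been carried out; the only point demanding minor care is verifying the integrability and Fubini‑type manipulations used when iterating Lemma \ref{lem:7.17}, which is immediate because each $\Psi_k$ is a Schwartz function and so $K_N(\mu_i,\mu_j)$ decays sufficiently fast in every variable.
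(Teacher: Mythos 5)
Your proposal is correct and follows exactly the paper's own route: Theorem \ref{thm:7.15} gives the form $c_N\det(K_N(\mu_i,\mu_j))$, the corollary of Lemma \ref{lem:7.17} together with $d=\int_\R K_N(x,x)\,dx=N$ (from orthonormality of the $\Psi_k$) evaluates the total integral as $N!$, and normalisation forces $c_N=1/N!$. Nothing is missing.
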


\begin{theorem}\label{thm:7.21}
	The averaged eigenvalue density of an unnormalized \GUEN\ is given by
	\begin{align*}
	p_N(\mu) 
	&= \frac{1}{N} K_N(\mu, \mu)=\frac 1N \sum_{k=0}^{N-1}\Psi_k(\mu)^2
	= \frac{1}{\sqrt{2\pi}} \frac{1}{N} \sum_{k=0}^{N-1} \frac{1}{k!} H_k(\mu)^2e^{-\frac{\mu^2}{2}}.
	\end{align*}
\end{theorem}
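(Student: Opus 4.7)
The plan is to obtain $p_N(\mu)$ as the one-point marginal of the joint density from Theorem \ref{thm:7.20} and then collapse the resulting $(N-1)$-fold integral via the reproducing-kernel identity of Lemma \ref{lem:7.17}.

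First I would observe that since the joint density
\[
p(\mu_1,\dots,\mu_N)=\frac{1}{N!}\det\bigl(K_N(\mu_i,\mu_j)\bigr)_{i,j=1}^N
\]
is symmetric in $\mu_1,\dots,\mu_N$, the averaged eigenvalue density coincides with the marginal distribution of any single eigenvalue: indeed,
\[
p_N(\mu)=\mathbb{E}\Bigl[\frac1N\sum_{i=1}^N\delta_{\mu_i}(\mu)\Bigr]
=\int_{\R^{N-1}} p(\mu,\mu_2,\dots,\mu_N)\,\td\mu_2\cdots\td\mu_N.
\]
So the task reduces to integrating out $N-1$ of the variables in the determinantal density.

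Next I would apply Lemma \ref{lem:7.17} iteratively. The Hermite kernel $K_N$ is a reproducing kernel, and its ``trace'' is
\[
d=\int_\R K_N(x,x)\,\td x=\sum_{k=0}^{N-1}\int_\R \Psi_k(x)^2\,\td x=N
\]
by orthonormality of the Hermite functions. Starting from the $N\times N$ determinant and integrating out $\mu_N,\mu_{N-1},\dots,\mu_2$ in turn, the lemma successively produces factors $(d-N+1),(d-N+2),\dots,(d-1)$, i.e.\ $1,2,\dots,N-1$, while the determinant shrinks at each step. After the $(N-1)$-th integration only $\det\bigl(K_N(\mu,\mu)\bigr)_{1\times 1}=K_N(\mu,\mu)$ remains, multiplied by $(N-1)!$. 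Combining with the prefactor $1/N!$ yields
\[
p_N(\mu)=\frac{(N-1)!}{N!}K_N(\mu,\mu)=\frac1N K_N(\mu,\mu).
\]

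The second equality is then just unwrapping the definitions: from $K_N(\mu,\mu)=\sum_{k=0}^{N-1}\Psi_k(\mu)^2$ and Definition \ref{def:7.12} one has $\Psi_k(\mu)^2=(2\pi)^{-1/2}(k!)^{-1}e^{-\mu^2/2}H_k(\mu)^2$, giving the stated formula. There is no real obstacle here—the only thing to be careful with is the bookkeeping of the factors $(d-n+1)$ in the iterated application of Lemma \ref{lem:7.17} and the observation that with $d=N$ the very first integration produces the factor $1$ (rather than $0$), which is precisely what makes the $1\times 1$ remainder nontrivial.
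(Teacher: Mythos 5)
Your proposal is correct and follows essentially the same route as the paper: integrate out $N-1$ variables from the determinantal joint density of Theorem \ref{thm:7.20}, apply Lemma \ref{lem:7.17} iteratively with $d=N$ to collect the factor $(N-1)!$, and cancel against the $1/N!$ prefactor. Your explicit justification via symmetry that the averaged density equals the one-point marginal is a slightly more careful version of what the paper states informally, but the argument is the same.
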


\begin{proof}
Note that $p(\mu_1,\dots,\mu_N)$ is the probability density to have
$N$ eigenvalues at the positions $\mu_1,\dots,\mu_N$. If we are integrating out $N-1$ variables we are left with the probability for one eigenvalue (without caring about the others).
	With the notation $\mu_N = \mu$ we get
	\begin{align*}
	p_N(\mu)
	&= \int_{\R^{N-1}} p(\mu_1, \dots, \mu_{N-1}, \mu) \td \mu_1 \cdots \td\mu_{N-1} \\
	&= \frac{1}{N!} \int_{\R^{N-1}} \det \left(  K_N(\mu_i,\mu_j) \right)_{i,j =1}^N \td \mu_1 \cdots \td\mu_{N-1} \\
	&= \frac{1}{N!} (N-1)! \det (K_N(\mu,\mu)) \\
	&= \frac{1}{N} K_N(\mu,\mu).
	\end{align*}
\end{proof}
\chapter{Determinantal Processes and Non-Crossing Paths: Karlin--McGregor and Gessel--Viennot}

	Our probability distributions for the eigenvalues of \GUE\ have a determinantal structure, i.e., are of the form
	\begin{align*} \label{\star}
	p(\mu_1, \dots, \mu_n) 
	&= \frac{1}{N!} \det \left(  K_N(\mu_i,\mu_j) \right)_{i,j =1}^N.
	\end{align*}
	They describe $N$ eigenvalues which repel each other (via the factor $(\mu_i-\mu_j)^2$).
	If we consider corresponding processes, then the paths of the eigenvalues should not cross; for this see also Section \ref{section:8.3}. There is a quite general relation between determinants as above and non-crossing paths. This appeared in fundamental papers in different contexts:
	\begin{itemize}
		\item in a paper by Karlin and McGregor, 1958, in the context of Markov chains and Brownian motion
		\item in a paper of Lindström, 1973, in the context of matroids
		\item in a paper of Gessel and Viennot, 1985, in combinatorics
	\end{itemize}

\section{Stochastic version à la Karlin--McGregor}
	Consider a random walk on the integers $\Z$:
	\begin{itemize}
		\item $Y_k$: position at time $k$
		\item $\Z$: possible positions
		\item Transition probability (to the two neighbors) might depend on position:
		\[ i-1 \xleftarrow{q_i} i \xrightarrow{p_i} i+1, \qquad q_i + p_1 = 1
		\]
	\end{itemize}
	We now consider $n$ copies of such a random walk, which at time $k=0$ start at different positions $x_i$.	We are interested in the probability that the paths don't cross.
	Let $x_i$ be such that all distances are even, i.e., if two paths cross they have to meet.

\begin{theorem}[Karlin--McGregor]\label{thm:8.1}
	Consider $n$ copies of $Y_k$, i.e., $(Y_k^{(1)}, \dots, Y_k^{(n)})$ with 
	$Y_0^{(i)} =x_i$, where $x_1 > x_2 > \cdots > x_n$. Consider now $t \in \N$ and  $y_1 > y_2 > \cdots > y_n$. Denote by 
	\[ P_t(x_i,y_j)
	= \prob{Y_t = y_j \, | \, Y_0 = x_i}
	\]
	the probability of one random walk to get from $x_i$ to $y_j$ in $t$ steps. Then we have
	\begin{align*}
	\prob{Y_t^{(i)}  = y_i \text{ for all } i, Y_s^{(1)} > Y_s^{(2)} > \cdots > Y_s^{(n)}
		\text{ for all } 0 \leq s \leq t  }
	&= \det \left( P_t(x_i,y_j) \right)_{i,j=1}^n. 
	\end{align*}
\end{theorem}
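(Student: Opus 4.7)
The plan is to rewrite the determinant as a signed sum over the symmetric group and set up a sign-reversing involution on path ensembles, following the standard Karlin--McGregor / Lindström--Gessel--Viennot strategy.

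First I would expand
\[
\det \bigl( P_t(x_i, y_j) \bigr)_{i,j=1}^n = \sum_{\sigma \in S_n} \mathrm{sgn}(\sigma) \prod_{i=1}^n P_t(x_i, y_{\sigma(i)}).
\]
Each term $\prod_i P_t(x_i, y_{\sigma(i)})$ is the total probability mass of all $n$-tuples of (unconstrained) paths $\pi_i \colon x_i \leadsto y_{\sigma(i)}$ of length $t$, i.e., $\sum_{(\pi_1,\dots,\pi_n)} \prod_i w(\pi_i)$, where $w(\pi_i)$ is the product of one-step transition probabilities along $\pi_i$. Grouping all $\sigma$ together, the right-hand side becomes a signed sum over path ensembles, each weighted by the sign of its start-to-end permutation.

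Next I would construct an involution $\Phi$ on the set of path ensembles $(\pi_1,\dots,\pi_n)$ that have at least one intersection. Given such an ensemble, let $s$ be the smallest time at which some two paths occupy the same integer, and among pairs meeting at time $s$ choose the lexicographically smallest pair $(i,j)$ with $i<j$; swap the tails $\pi_i|_{\ge s}$ and $\pi_j|_{\ge s}$. This produces a new ensemble of the same total weight (the weights are products of local transition probabilities, so swapping tails at a common site preserves the product), but whose start-to-end permutation is $\sigma \circ (i,j)$ and therefore carries the opposite sign. Because the rule uses the earliest meeting time and the smallest index pair, applying $\Phi$ twice returns the original ensemble, so $\Phi$ is a sign-reversing involution. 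Here the evenness assumption on the initial distances is essential: the $\pm 1$ random walk preserves the parity of $Y_s^{(i)} - Y_s^{(j)}$, so two paths whose starting gap is even can only cross by actually landing on a common site, providing a legitimate swap point.

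Hence in the signed sum all intersecting ensembles cancel in pairs, leaving only the non-intersecting ones. For a non-intersecting ensemble, the order $x_1 > \cdots > x_n$ at time $0$ is preserved for all times (paths can never swap relative order without meeting), so in particular $\pi_i$ ends at $y_i$ and the start-to-end permutation is the identity, contributing sign $+1$. The surviving contribution is therefore precisely the total weight of ensembles with $\pi_i \colon x_i \leadsto y_i$ and $Y_s^{(1)} > \cdots > Y_s^{(n)}$ for all $0 \le s \le t$, which equals the probability on the left-hand side. The main obstacle in writing this out cleanly is just the bookkeeping for the involution, i.e., fixing a canonical choice of swap point so that $\Phi\circ\Phi = \mathrm{id}$; any lexicographic convention works.
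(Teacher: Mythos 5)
Your proof is correct and follows essentially the same route as the paper's: expand the determinant over $S_n$, interpret each term as a weighted sum over path ensembles, and cancel the intersecting ones by a tail-swapping (reflection) involution at the first meeting point. The paper only works out the $n=2$ case explicitly and waves at the general case, so your careful specification of the canonical swap point (earliest meeting time, lexicographically smallest pair) and the parity remark actually fill in the details the paper omits.
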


\begin{example}
	For one symmetric random walk $Y_t$ we have the following probabilities to go in two steps from 0 to -2,0,2:
	\[ \begin{tikzpicture}[thick,font=\small, node distance=.5cm and 1.0cm, baseline={([yshift=0ex]current bounding box.center)}]
	\tikzset{dot/.style={circle,fill=#1,inner sep=0,minimum size=4pt}}
	
	\node (1) 	[dot=black]				{};
	\node (2)	[dot=black, above right=of 1] 	{};
	\node (3)	[dot=black, above right=of 2] 	{};
	\node (4)	[dot=black, below right=of 2] 	{};
	\node (5)	[dot=black, below right=of 1] 	{};
	\node (6)	[dot=black, below right=of 5] 	{};
	
	\draw [->] (1) -- node [midway,above,align=center] {$p_0$} (2);
	\draw [->] (2) -- node [midway,above,align=center] {$p_1$} (3);
	\draw [->] (2) -- node [midway,above,align=center] {$q_1$} (4);
	\draw [->] (1) -- node [midway,below,align=center] {$q_0$} (5);
	\draw [->] (5) -- node [midway,below,align=center] {$\quad p_{-1}$} (4);
	\draw [->] (5) -- node [midway,below,align=center] {$q_{-1}$} (6);
	
	\node (7) [right=of 3] {$p_0p_1 = \frac{1}{4}$};
	\node (8) [right=of 4] {$p_0q_1 + q_0p_{-1} = \frac{1}{2}$};
	\node (9) [right=of 6] {$q_0q_{-1} = \frac{1}{4}$};
	\end{tikzpicture}
	\]
	Now consider two such symmetric random walks and set $x_1 = 2 = y_1$, $x_2 = 0 = y_2$.
	Then
	\[\prob{ Y_2^{(1)} = 2 = Y_0^{(1)}, Y_2^{(2)} = 0 = Y_0^{(2)}, Y_1^{(1)} > Y_1^{(2)} } 
	\]
	\[ = \prob{ \left\{
		\begin{tikzpicture}[thick,font=\small, node distance=.5cm and 1.0cm, baseline={([yshift=0ex]current bounding box.center)}]
		\tikzset{dot/.style={circle,fill=#1,inner sep=0,minimum size=4pt}}
		
		\node (1) 	[dot=black]						{};
		\node (2)	[dot=black, above right=of 1] 	{};
		\node (3)	[dot=black, below right=of 2] 	{};
		\node (4)	[dot=black, below right=of 1] 	{};
		\node (5)	[dot=black, below left=of 4] 	{};
		\node (6)	[dot=black, below right=of 4] 	{};
		\node (7)	[below=of 5] 	{};
		
		\draw [->] (1) -- (2);
		\draw [->] (2) -- (3);
		\draw [->] (5) -- (4);
		\draw [->] (4) -- (6);
		\end{tikzpicture}, \quad
		\begin{tikzpicture}[thick,font=\small, node distance=.5cm and 1.0cm, baseline={([yshift=0ex]current bounding box.center)}]
		\tikzset{dot/.style={circle,fill=#1,inner sep=0,minimum size=4pt}}
		
		\node (1) 	[dot=black]						{};
		\node (2)	[dot=black, above right=of 1] 	{};
		\node (3)	[dot=black, below right=of 2] 	{};
		\node (7)	[below right=of 1] 				{};
		\node (4)	[dot=black, below left=of 4] 	{};
		\node (5)	[dot=black, below right=of 4] 	{};
		\node (6)	[dot=black, above right=of 5] 	{};
		
		\draw [->] (1) -- (2);
		\draw [->] (2) -- (3);
		\draw [->] (4) -- (5);
		\draw [->] (5) -- (6);
		\end{tikzpicture}, \quad
		\begin{tikzpicture}[thick,font=\small, node distance=.5cm and 1.0cm, baseline={([yshift=0ex]current bounding box.center)}]
		\tikzset{dot/.style={circle,fill=#1,inner sep=0,minimum size=4pt}}
		
		\node (1) 	[dot=black]						{};
		\node (7)	[above right=of 1]				{};
		\node (2)	[dot=black, below right=of 1]	{};
		\node (3)	[dot=black, above right=of 2] 	{};
		\node (4)	[dot=black, below left=of 2]	{};
		\node (5)	[dot=black, below right=of 4] 	{};
		\node (6)	[dot=black, above right=of 5] 	{};
		
		\draw [->] (1) -- (2);
		\draw [->] (2) -- (3);
		\draw [->] (4) -- (5);
		\draw [->] (5) -- (6);
		\end{tikzpicture}
		\right\} }
	= \frac{3}{16}.
	\]
	Note that $
	\begin{tikzpicture}[thick,font=\small, node distance=.5cm and 1.0cm, baseline={([yshift=0ex]current bounding box.center)}]
	\tikzset{dot/.style={circle,fill=#1,inner sep=0,minimum size=4pt}}
	
	\node (1) 	[dot=black]						{};
	\node (2)	[dot=black, below right=of 1]	{};
	\node (3)	[dot=black, above right=of 2] 	{};
	\node (4)	[dot=black, below left=of 2] 	{};
	\node (5)	[dot=black, below right=of 2] 	{};
	
	\draw [->] (1) -- (2);
	\draw [->] (2) -- (3);
	\draw [->] (4) -- (2);
	\draw [->] (2) -- (5);
	\end{tikzpicture}
	$ is not allowed. 
	
	\bigskip
	Theorem \ref{thm:8.1} says that we also obtain this probability from the transition probabilities of one random walk as
	\[ \det \begin{pmatrix}
	1/2 & {1}/{4} \\ {1}/{4} & {1}/{2}
	\end{pmatrix}
	= \frac{1}{4} - \frac{1}{16}
	= \frac{3}{16}.
	\]
\end{example}

\begin{proof}[Proof of Theorem \ref{thm:8.1}]
	Let $\Omega_{ij}$ be the set of all possible paths in $t$ steps from $x_i$ to $y_j$. Denote by $\prob{\pi}$ the probability for such a path $\pi\in\Omega_{ij}$. Then we have
	\[ P_t(x_i,y_j) = \sum_{\pi\in\Omega_{ij}} \prob{\pi}
	\]
	and we have to consider the determinant
	\[\det\left( P_t(x_i,y_j) \right)_{i,j=1}^n
	= \det\left(\,\sum_{\pi\in\Omega_{ij}} \prob{\pi}\right)_{i,j=1}^n.
	\]
	Let us consider the case $n=2$:
	\begin{align*}
	\det \begin{pmatrix}
	\sum_{\pi\in\Omega_{11}} \prob{\pi}
	& \sum_{\pi\in\Omega_{12}} \prob{\pi} \\
	\sum_{\pi\in\Omega_{21}} \prob{\pi}
	& \sum_{\pi\in\Omega_{22}} \prob{\pi}
	\end{pmatrix}
	&= \sum_{\pi\in\Omega_{11}} \prob{\pi} \cdot \sum_{\sigma\in\Omega_{22}} \prob{\sigma} 
		- \sum_{\pi\in\Omega_{12}} \prob{\pi} \cdot\sum_{\sigma\in\Omega_{21}} \prob{\sigma}
	\end{align*}
	Here, the first term counts all pairs of paths $x_1 \to y_1$ and $x_2 \to y_2$; hence non-crossing ones, but also crossing ones.
	However, such a crossing pair of paths is, via the \enquote{reflection principle}  (where we exchange the parts of the two paths after their first crossing), in bijection with a pair of paths from $x_1 \to y_2$ and $x_2\to y_1$; this bijection also preserves the probabilities.
	
	Those paths,  $x_1 \to y_2$ and $x_2\to y_1$, are counted by the second term in the determinant. Hence the second term cancels out all the crossing terms in the first term, leaving only the non-crossing paths.
	
For general $n$ it works in a similar way.
\end{proof}

\section{Combinatorial version à la Gessel--Viennot}
	Let $G$ be a weighted directed graph without directed cycles, e.g.
	\[\begin{tikzpicture}[thick,font=\small, node distance=1.0cm and 1.0cm, baseline={([yshift=0ex]current bounding box.center)}]
	\tikzset{dot/.style={circle,fill=#1,inner sep=0,minimum size=4pt}}
	
	\node (1) 	[dot=black]						{};
	\node (2)	[dot=black, below right=of 1]	{};
	\node (3)	[dot=black, below =of 2] 		{};
	\node (5)	[dot=black, below left=of 1] 	{};
	\node (4)	[dot=black, below =of 5]		{};
	
	\draw [->] (2) -- (1);
	\draw [->] (5) -- (1);
	\draw [->] (4) -- (5);
	\draw [->] (4) -- (3);
	\draw [->] (3) -- (5);
	\draw [->] (3) -- (2);
	\end{tikzpicture}
	\]
	where we have weights $m_{ij} = m_e$ on each edge $i \xrightarrow{e} j$. This gives weights for directed paths
	\[ P = \begin{tikzpicture}[thick,font=\small, node distance=.5cm and 1.0cm, baseline={([yshift=0ex]current bounding box.center)}]
	\tikzset{dot/.style={circle,fill=#1,inner sep=0,minimum size=4pt}}
	
	\node (1) 	[dot=black]						{};
	\node (2)	[dot=black, above right=of 1]	{};
	\node (3)	[dot=black, below right=of 2] 	{};
	\node (4)	[dot=black, above right=of 3] 	{};
	\node (5)	[dot=black, below right=of 4]	{};
	
	\draw [->] (1) -- (2);
	\draw [->] (2) -- (3);
	\draw [->] (3) -- (4);
	\draw [->] (4) -- (5);
	\end{tikzpicture}
	\qquad\text{via}\qquad
	 m(P) = \prod_{e\in P} m_e,
	\]
	and then also a weight for connecting two vertices $a,b$,
	\[ m(a,b) = \sum_{P \colon a \to b} m(P),
	\]
where we sum over all directed paths from $a$ to $b$. Note that this is a finite sum, because we do not have directed cycles in our graph.

\begin{definition}
	Consider two $n$-tuples of vertices $A=(a_1,\dots, a_n)$ and $B=(b_1,\dots,b_n)$.
	A \emph{path system} $P\colon A\to B$ is given by a permutation $\sigma \in S_n$ and paths $P_i \colon a_i \to b_{\sigma(i)}$ for $i=1,\dots, n$. We also put $\sigma(P) = \sigma$ and $\sgn P = \sgn \sigma$.
	A \emph{vertex-disjoint} path system is a path system $(P_1, \dots, P_n)$, where the paths 
	$P_1, \dots, P_n$ do not have a common vertex.

\end{definition}

\begin{lemma}[Gessel--Viennot]
	Let $G$ be a finite acyclic weighted directed graph and let $A=(a_1,\dots, a_n)$ and $B=(b_1,\dots,b_n)$ be two $n$-sets of vertices. Then we have
	\[ \det \left( m(a_i,b_j) \right)_{i,j=1}^n
	= \sum_{\substack{P\colon A \to B \\ \text{vertex-disjoint}}} \sgn \sigma(P) \prod_{i=1}^{n} m(P_i).
	\]
\end{lemma}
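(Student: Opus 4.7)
The plan is to expand the determinant combinatorially and then construct a sign-reversing involution on the "bad" (non-vertex-disjoint) path systems, so that only the vertex-disjoint ones survive. Concretely, by the Leibniz formula,
\[
\det(m(a_i,b_j))_{i,j=1}^n = \sum_{\sigma\in S_n}\sgn(\sigma)\prod_{i=1}^n m(a_i,b_{\sigma(i)})
= \sum_{\sigma\in S_n}\sgn(\sigma)\sum_{P_1,\dots,P_n}\prod_{i=1}^n m(P_i),
\]
where the inner sum runs over all choices of paths $P_i\colon a_i\to b_{\sigma(i)}$. Rewriting this as a sum over all path systems $P=(P_1,\dots,P_n)$ with permutation $\sigma(P)$ gives
\[
\det(m(a_i,b_j))_{i,j=1}^n = \sum_{P\colon A\to B}\sgn\sigma(P)\prod_{i=1}^n m(P_i).
\]
It now suffices to show that the contribution of the non-vertex-disjoint path systems cancels.

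For this, I would fix once and for all a linear order on the vertices of $G$ (possible since $G$ is finite) and define an involution $\Phi$ on the set of non-vertex-disjoint path systems as follows. Given such a $P=(P_1,\dots,P_n)$, let $v$ be the smallest vertex (in the fixed order) that lies on at least two of the paths, let $i<j$ be the two smallest indices whose paths both pass through $v$, and form $\Phi(P)$ by exchanging the tails of $P_i$ and $P_j$ after their respective first visits to $v$. The new paths $P_i'$ and $P_j'$ then go from $a_i$ to $b_{\sigma(j)}$ and from $a_j$ to $b_{\sigma(i)}$, while the other paths are unchanged; importantly, the multiset of edges traversed is exactly the same as before, so $\prod_k m(P_k') = \prod_k m(P_k)$.

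The key point is that the choice of $(v,i,j)$ depends only on the set of edges used, not on how they are grouped into paths, which implies that $\Phi$ is an involution: applying $\Phi$ to $\Phi(P)$ selects the same $(v,i,j)$ and undoes the swap. Meanwhile the associated permutation changes from $\sigma$ to $\sigma\circ(i\,j)$, so $\sgn\sigma(\Phi(P)) = -\sgn\sigma(P)$. Pairing each non-vertex-disjoint $P$ with $\Phi(P)$ therefore cancels all such contributions from the sum, leaving exactly
\[
\det(m(a_i,b_j))_{i,j=1}^n = \sum_{\substack{P\colon A\to B\\ \text{vertex-disjoint}}}\sgn\sigma(P)\prod_{i=1}^n m(P_i),
\]
as claimed.

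The main obstacle is making $\Phi$ well-defined and genuinely involutive; the delicate point is that $\Phi$ must depend only on data intrinsic to the underlying collection of edges, so that applying $\Phi$ twice recovers the original labelling. Fixing the vertex order globally and using the lexicographically smallest index pair $(i,j)$ at the smallest shared vertex $v$ achieves this, because the swap affects neither the set of edges nor which path starts at $a_k$ for $k\notin\{i,j\}$, nor the identity of the conflict vertex $v$ and the pair $(i,j)$. Acyclicity of $G$ is used to ensure that each $m(a_i,b_j)$ is a finite sum, so the formal manipulations above are legitimate.
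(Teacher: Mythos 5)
Your proof is correct and follows essentially the same route as the paper, which expands the determinant and cancels the non-disjoint path systems by the tail-swapping (reflection) argument, merely referring back to the Karlin--McGregor proof for the mechanism. Your version is in fact more careful than the paper's one-line sketch: the verification that the conflict vertex $v$ and the index pair $(i,j)$ are invariant under the swap (using acyclicity to keep all paths simple) is exactly the delicate point that makes the involution well defined.
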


\begin{proof}
	Similar as the proof of Theorem \ref{thm:8.1}; the crossing paths cancel each other out in the determinant.
\end{proof}

This lemma can be useful in two directions. Whereas in the stochastic setting one uses mainly the determinant to count non-crossing paths, one can also count vertex-disjoint path systems to calculate determinants. The following is an example of this.

\begin{example}
	Let $C_n$ be the Catalan numbers
	\[ C_0 =1, C_1=1, C_2=2,C_3=5,C_4=14, \dots
	\]
	and consider
	\[ M_n = \begin{pmatrix}
	C_0 & C_1 & \cdots & C_n \\
	C_1 & C_2 & \cdots & C_{n+1} \\
	\vdots & & \ddots & \vdots \\
	C_n & C_{n+1} & \cdots & C_{2n}
	\end{pmatrix}.
	\]
	Then we have
	\begin{align*}
	\det M_0 &= \det (1)=1, \\
	\det M_1 &= 
\det \begin{pmatrix} 1&1\\1&2\end{pmatrix}=
2-1 = 1, \\
	\det M_2 &=
\det\begin{pmatrix}
1&1&2\\1&2&5\\2&5&14
\end{pmatrix}=
 28 + 10 +10 -8-14-25 = 1. 
	\end{align*}
	This is actually true for all $n$: $\det M_n =1$. This is not obvious directly, but follows easily from Gessel--Viennot, if one chooses the right setting.

	Let us show it for $M_2$. For this, consider the graph
	\[\begin{tikzpicture}[thick,font=\small, node distance=1.0cm and 1.0cm, baseline={([yshift=0ex]current bounding box.center)}]
	\tikzset{dot/.style={circle,fill=#1,inner sep=0,minimum size=4pt}}
	
	\node (11) 	[dot=black]							{};
	\node (21)	[dot=black, above=of 11]			{};
	\node (31)	[dot=black, above=of 21]			{};
	\node (41)	[dot=black, above=of 31]			{};
	\node (51)	[dot=black, above=of 41]			{};
	\node (22)	[dot=black, right=of 21]			{};
	\node (32)	[dot=black, above=of 22]			{};
	\node (42)	[dot=black, above=of 32]			{};
	\node (52)	[dot=black, above=of 42]			{};
	\node (33)	[dot=black, right=of 32]			{};
	\node (43)	[dot=black, above=of 33]			{};
	\node (53)	[dot=black, above=of 43]			{};
	\node (44)	[dot=black, right=of 43]			{};
	\node (54)	[dot=black, above=of 44]			{};
	\node (55)	[dot=black, right=of 54]			{};
	
	\draw [->] (11) edge (21) edge (31) edge (41) edge (51);
	\draw [->] (22) edge (32) edge (42) edge (52);
	\draw [->] (33) edge (43) edge (53);
	\draw [->] (44) edge (54);
	
	\draw [->] (51) edge (52) edge (53) edge (54) edge (55);
	\draw [->] (41) edge (42) edge (43) edge (44);
	\draw [->] (31) edge (32) edge (33);
	\draw [->] (21) edge (22);
	
	\draw (11) -- node [midway,right,align=center] {$a_2$} (11);
	\draw (22) -- node [midway,right,align=center] {$a_1$} (22);
	\draw (33) -- node [midway,right,align=center] {$a_0=b_0$} (33);
	\draw (44) -- node [midway,right,align=center] {$b_1$} (44);
	\draw (55) -- node [midway,right,align=center] {$b_2$} (55);
	\end{tikzpicture}
	\]
The possible directions in the graph are up and right, and all weights are chosen as 1.
	Paths in this graph correspond to Dyck graphs, and thus the weights for connecting the $a$'s with the $b$'s are counted by Catalan numbers; e.g.,
	\begin{align*}
	m(a_0,b_0) &= C_0, \\
	m(a_0,b_1) &= C_1, \\
	m(a_0,b_2) &= C_2, \\
	m(a_2,b_2) &= C_4.
	\end{align*}
Thus
$$M_2=\det\begin{pmatrix}
m(a_0,b_0)& m(a_0,b_1) & m(a_0,b_2)\\
m(a_1,b_0) & m(a_1,b_1) & m(a_1,b_2)\\
m(a_2,b_0) & m(a_2,b_1) & m(a_2,b_2)
\end{pmatrix}$$
and hence, by Gessel-Viennot,
	\begin{align*}
	\det M_2
	&= \det \left( m(a_i,b_j) \right)_{i,j=0}^2
	= \sum_{\substack{P\colon (a_0,a_1,a_2) \to (b_0,b_1,b_2) \\ \text{vertex-disjoint}}} 1
	=1,
	\end{align*}
	since there is only one such vertex-disjoint system of three paths, corresponding to $\sigma = \id$. This is given as follows; note that the path from $a_0$ to $b_0$ is actually a path with 0 steps.

\[\begin{tikzpicture}[thick,font=\small, node distance=1.0cm and 1.0cm, baseline={([yshift=0ex]current bounding box.center)}]
	\tikzset{dot/.style={circle,fill=#1,inner sep=0,minimum size=4pt}}
	
	\node (11) 	[dot=black]							{};
	\node (21)	[dot=black, above=of 11]			{};
	\node (31)	[dot=black, above=of 21]			{};
	\node (41)	[dot=black, above=of 31]			{};
	\node (51)	[dot=black, above=of 41]			{};
	\node (22)	[dot=black, right=of 21]			{};
	\node (32)	[dot=black, above=of 22]			{};
	\node (42)	[dot=black, above=of 32]			{};
	\node (52)	[dot=black, above=of 42]			{};
	\node (33)	[dot=black, right=of 32]			{};
	\node (43)	[dot=black, above=of 33]			{};
	\node (53)	[dot=black, above=of 43]			{};
	\node (44)	[dot=black, right=of 43]			{};
	\node (54)	[dot=black, above=of 44]			{};
	\node (55)	[dot=black, right=of 54]			{};
	
	\draw [->] (11) edge (21) edge (31) edge (41) edge (51);
	\draw [->] (22) edge (32) edge (42);
	%\draw [->] (33) edge (43) edge (53);
	%\draw [->] (44) edge (54);
	
	\draw [->] (51) edge (52) edge (53) edge (54) edge (55);
	\draw [->] (42) edge (43) edge (44);
	%\draw [->] (31) edge (32) edge (33);
	%\draw [->] (21) edge (22);
	
	\draw (11) -- node [midway,right,align=center] {$a_2$} (11);
	\draw (22) -- node [midway,right,align=center] {$a_1$} (22);
	\draw (33) -- node [midway,right,align=center] {$a_0=b_0$} (33);
	\draw (44) -- node [midway,right,align=center] {$b_1$} (44);
	\draw (55) -- node [midway,right,align=center] {$b_2$} (55);
	\end{tikzpicture}
	\]
\end{example}

\section{Dyson Brownian motion and non-intersecting paths}\label{section:8.3}

We have seen that the eigenvalues of random matrices repel each other. This becomes even more apparent when we consider process versions of our random matrices, where the eigenvalue processes yield then non-intersecting paths. Those process versions of our Gaussian ensembles are called \emph{Dyson Brownian motions}. They are defined as 
$A_N(t):=(a_{ij}(t))_{i,j=1}^N$ ($t\geq 0$), where each $a_{ij}(t)$ is a classical Brownian motion (complex or real) and they are independent, apart from the symmetry condition $a_{ij}(t)=\bar a_{ji}(t)$ for all $t\geq 0$ and all $i,j=1,\dots,N$. The eigenvalues $\lambda_1(t),\dots,\lambda_N(t)$ of $A_N(t)$ give then $N$ non-intersecting Brownian motions.

Here are plots for discretized random walk versions of the Dyson Brownian motion, corresponding to
 $\GOE(13)$, $\GUE(13)$ and, for comparision, also 13 independent Brownian motions; see also Exercise \ref{exercise:24}. Guess which is which! 
$$\includegraphics[width=3in]{dyson-13-complex}
\qquad\includegraphics[width=3in]{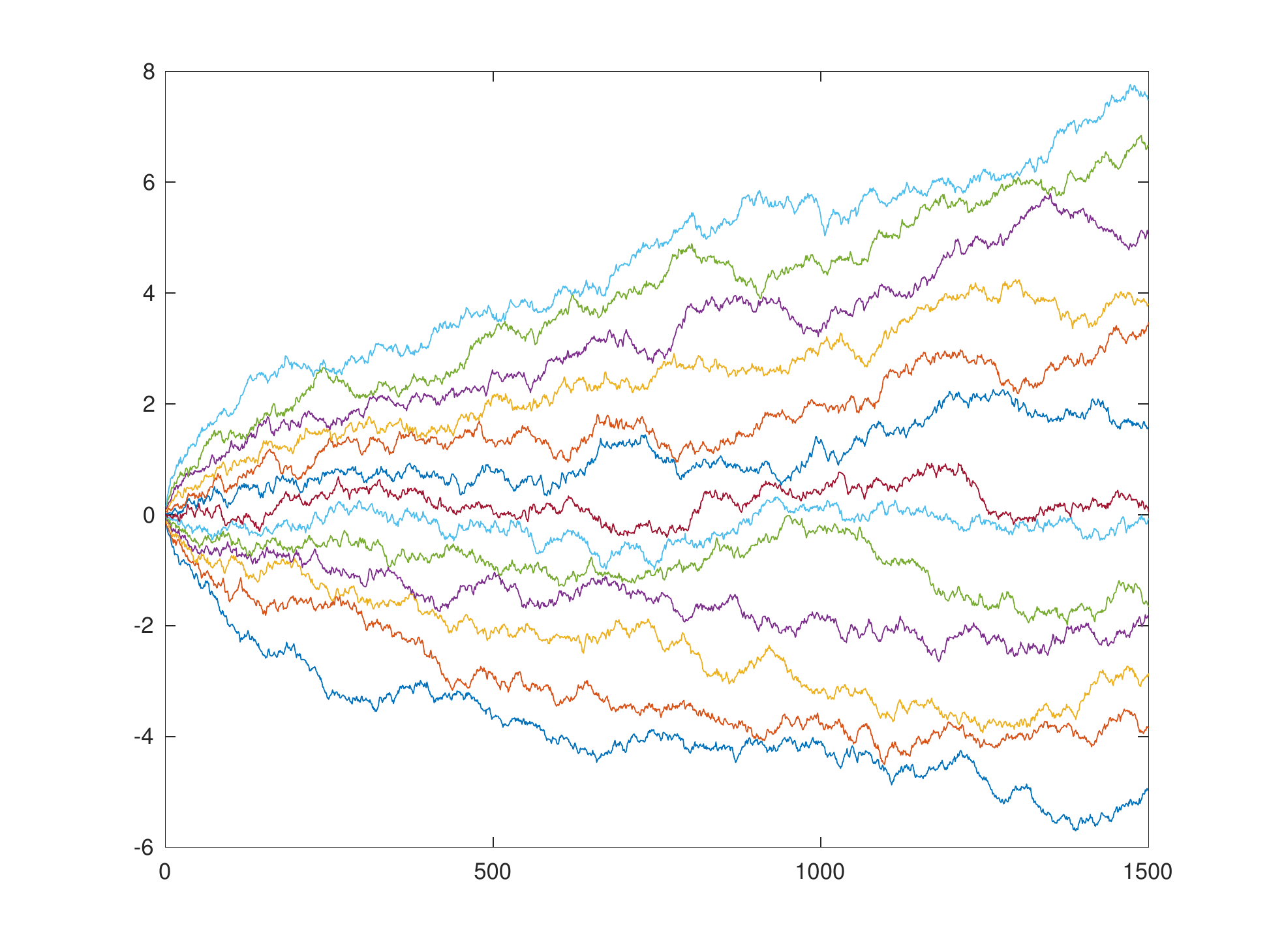}$$

$$\includegraphics[width=3in]{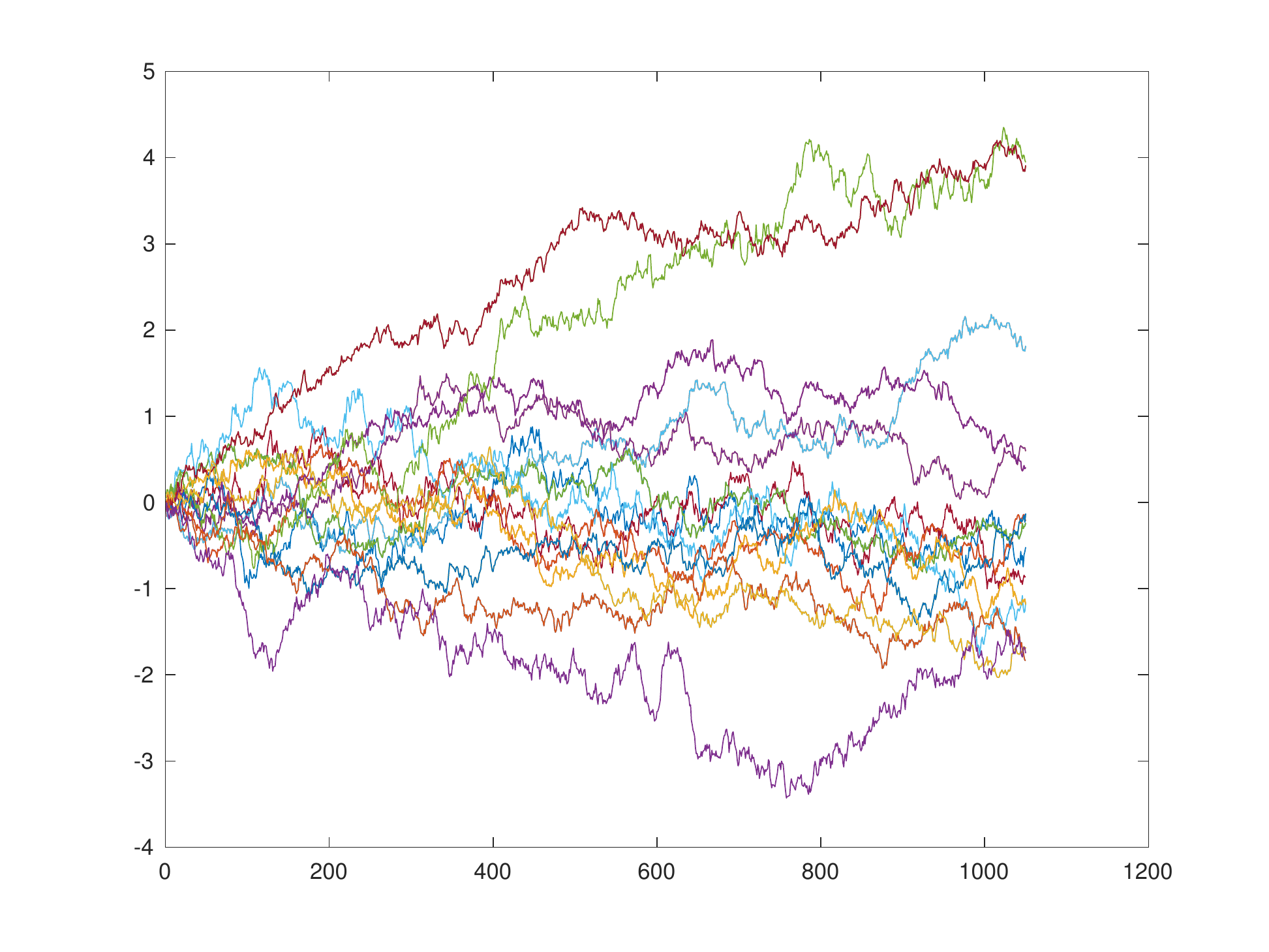}$$

\chapter{Statistics of the Largest Eigenvalue and Tracy--Widom Distribution}

Consider \GUEN\ or \GOEN. For large $N$, the eigenvalue distribution is close to a semircircle with density 
\[ p(x)= \frac{1}{2\pi} \sqrt{4-x^2}.
\]

\begin{tikzpicture}[scale=0.50]
\tikzset{dot/.style={circle,fill=#1,inner sep=0,minimum size=4pt}}

%\node[dot=black] at (20, 0)   (3)  {};
%\node[dot=black] at (21.5, 0)   (4)  {};

%\draw (20,-0.6) -- node {$s$} (20,-0.6);
\draw (21.5,-0.6) -- node {bulk} (21.5,-0.6);
\draw (26.5,-0.6) -- node {edge} (26.5,-0.6);

\draw (16,0) -- (27,0);

\draw (16.5,0) -- (26.5,0) arc(0:180:5) --cycle;

\begin{scope}
\clip (16.5,0) -- (26.5,0) arc(0:180:5) --cycle;
\draw [pattern=north east lines, pattern color=black] (21,0) rectangle (21.5,5);
\draw [pattern=north east lines, pattern color=black] (26,0) rectangle (26.5,5);
\end{scope}
\end{tikzpicture}

We will now zoom to a microscopic level and try to understand the behaviour of a single eigenvalue.
The behaviour in the \textit{bulk} and at the \textit{edge} is different.
We are particularly interested in the largest eigenvalue.
Note that at the moment we do not even know whether the largest eigenvalue sticks close to $2$ with high probability. Wigner's semicircle law implies that it cannot go far below $2$, but it does not prevent it from being very large. We will in particular see that this cannot happen.

\section{Some heuristics on single eigenvalues}
	Let us first check heuristically what we expect as typical order of fluctuations of the eigenvalues. For this we assume (without justification) that the semicircle predicts the behaviour of eigenvalues down to the microscopic level.

	\bigskip
	\textbf{Behaviour in the bulk:} 
	In $[\lambda, \lambda +t ]$ there should be $\sim tp(\lambda)N$ eigenvalues. This is of order $1$ if we choose $t \sim {1}/{N}$. This means that eigenvalues in the bulk have for their position an interval of size $\sim {1}/{N}$, so this is a good guess for the order of fluctuations for an eigenvalue in the bulk.

	\bigskip
	\textbf{Behaviour at the edge:} In $[2-t,2]$ there should be roughly 
	\[ N \int_{2-t}^{2} p(x) \td x 
	= \frac{N}{2\pi} \int_{2-t}^{2} \sqrt{(2-x)(2+x)} \td x
	\]
	many eigenvalues. To have this of order $1$, we should choose $t$ as follows:
	$$
		1
		\approx \frac{N}{2\pi} \int_{2-t}^{2} \sqrt{(2-x)(2+x)} \td x 
		\approx \frac{N}{\pi} \int_{2-t}^{2} \sqrt{2-x} \td x 
		= \frac{N}{\pi} \frac{2}{3} t^\frac{3}{2}
	$$
	Thus $1 \sim t^{{3}/{2}} N$, i.e., $t \sim N^{-{2}/{3}}$.
	Hence we expect for the largest eigenvalue an interval or fluctuations of size 
	$N^{-{2}/{3}}$. Very optimistically, we might expect
	\[ \lambda_{\max} \approx 2 + N^{-{2}/{3}}X,
	\]
	where $X$ has $N$-independent distribution.

\section{Tracy--Widom distribution}
	This heuristics (at least its implication) is indeed true and one has that the limit
	\begin{align*}
		F_\beta(x)
		&:= \lim\limits_{N\to\infty} \prob{  N^{{2}/{3}} (\lambda_{\max} - 2) \leq x }
	\end{align*}
	exists. It is called the \emph{Tracy--Widom distribution}.

\begin{remark}
	\begin{enumerate}
		\item Note the parameter $\beta$! This corresponds to:
		\begin{center}
			\begin{tabular}{c|c|c}
{ensemble}& $\beta$& {repulsion}\\
				\GOE& $1$& $(\lambda_i - \lambda_j)^1$\\ 
				\GUE& $2$& $(\lambda_i - \lambda_j)^2$\\ 
				\GSE& $4$& $(\lambda_i - \lambda_j)^4$
			\end{tabular}
		\end{center}
		It turns out that the statistics of the largest eigenvalue is different for real, complex, quaternionic Gaussian random matrices. The behaviour on the microscopic level is more sensitive to the underlying symmetry than the macroscopic behaviour; note that we get the semicircle as the macroscopic limit for all three ensembles. (In models in physics the choice of $\beta$ corresponds often to underlying physical symmetries; e.g., \GOE\ is used to describe systems which have a time-reversal symmetry.)
		\item On the other hand, when $\beta$ is fixed, there is a large universality class for the corresponding Tracy--Widom distribution. $F_2$ shows up as limiting fluctuations for
		\begin{itemize}
			\item[(a)] largest eigenvalue of \GUE\ (Tracy, Widom, 1993),
			\item[(b)] largest eigenvalue of more general Wigner matrices (Soshnikov, 1999),
			\item[(c)] largest eigenvalue of general unitarily invariant matrix ensembles (Deift et al., 1994-2000),
			\item[(d)] length of the longest increasing subsequence of random permutations
						(Baik, Deift, Johansson, 1999; Okounkov, 2000),
			\item[(e)] arctic cicle for Aztec diamond (Johansson, 2005),
			\item[(f)] various growth processes like ASEP (``asymmetric single exclusion process''), TASEP (``totally asymmetric ...'').
		\end{itemize}
		\item There is still no uniform explanation for this universality. The feeling is that the Tracy-Widom distribution is somehow the analogue of the normal distribution for a kind of central limit theorem, where independence is replaced by some kind of dependence.
		But no one can make this precise at the moment.
		\item Proving Tracy--Widom for \GUE\ is out of reach for us, but we will give some ideas.
		In particular, we try to derive rigorous estimates which show that our $N^{-{2}/{3}}$-heuristic is of the right order and, in particular, we will prove that the largest eigenvalue converges almost surely to $2$.
	\end{enumerate}
\end{remark}

\section{Convergence of the largest eigenvalue to 2}

	We want to derive an estimate, in the \GUE\ case, for the probability $\prob{\lambda_{\max}\geq 2+ \varepsilon}$, which is compatible with our heuristic that $\varepsilon$ should be of the order $N^{-{2}/{3}}$.
	We will refine our moment method for this.
Let	$A_N$ be our normalized \GUEN. We have for all $k\in\N$:
	\begin{align*}
		\prob{\lambda_{\max}\geq 2+ \varepsilon}
		&= \prob{\lambda_{\max}^{2k}\geq (2+ \varepsilon)^{2k}} \\[0.5ex]
		&\leq \prob{ \, \sum_{j=1}^{N} \lambda_j^{2k}\geq (2+ \varepsilon)^{2k}} \\[0.5ex]
		&= \prob{ \tr A_N^{2k}  \geq  \frac{(2+ \varepsilon)^{2k}}{N} } \\[0.5ex]
		&\leq \frac{N}{(2+ \varepsilon)^{2k}} \ev{ \tr A_N^{2k}  }.
	\end{align*}
In the last step we used Markov's inequality \ref{thm:6.3}; note that we have even powers, and hence the random variable $\tr(A_N^{2k})$ is positive.

	In Theorem \ref{thm:2.15} we calculated the expectation in terms of a genus expansion as
	$$
		\ev{\tr(A_N^{2k})}
		=  \sum_{\pi\in\pair(2k)} N^{\#(\gamma\pi)- k -1} 
		= \sum_{g\geq 0} \varepsilon_g(k) N^{-2g},
	$$
	where
	\begin{align*}
		\varepsilon_g(k)
		&= \# \left\{ \pi\in\pair(2k) \mid  \text{$\pi$ has genus $g$}  \right\}.
	\end{align*}
	The inequality
	\begin{align*}
		\prob{\lambda_{\max}\geq 2+ \varepsilon}
		&\leq \frac{N}{(2+ \varepsilon)^{2k}} \ev{ \tr A_N^{2k}  }
	\end{align*}
	is useless if $k$ is fixed for $N\to \infty$, because then the right hand side goes to $\infty$.
	Hence we also have to scale $k$ with $N$ (we will use $k \sim N^{{2}/{3}}$), but then the sub-leading terms in the genus expansion become important.
	Up to now we only know that $\varepsilon_0(k) = C_k$, but now we need some information on the other $\varepsilon_g(k)$. This is provided by a theorem of Harer and Zagier.

\begin{theorem}[Harer--Zagier, 1986]\label{thm:9.2}
	Let us define $b_k$ by
	\begin{align*}
		\sum_{g\geq 0} \varepsilon_g(k) N^{-2g} = C_k b_k,
	\end{align*}
	where $C_k$ are the Catalan numbers. (Note that the $b_k$ depend also on $N$, but we suppress this dependency in the notation.) Then we have the recursion formula
	\begin{align*}
		b_{k+1}
		&= b_k + \frac{k(k+1)}{4N^2} b_{k-1}
	\end{align*}
	for all $k \geq 2$.
\end{theorem}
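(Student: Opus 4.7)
The proof I would give leverages the determinantal structure of GUE developed in Chapter 7. My first move is to reformulate the claim as a recursion on the unnormalized moments $T_k := \mathbb{E}[\Tr((\sqrt{N}A_N)^{2k})] = N^{k+1}M_k$: using the Catalan relation $C_{k+1} = \tfrac{2(2k+1)}{k+2}C_k$, the identity $b_{k+1} = b_k + \tfrac{k(k+1)}{4N^2}b_{k-1}$ is equivalent to
$$(k+2)\,T_{k+1} = 2(2k+1)\,N\,T_k + k(2k-1)(2k+1)\,T_{k-1},$$
which one can check directly in small cases using $T_0=N,\,T_1=N^2,\,T_2=2N^3+N$. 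By Theorem \ref{thm:7.21}, this unnormalized moment has the explicit representation $T_k = \sum_{j=0}^{N-1}\int_\R x^{2k}\Psi_j(x)^2\,dx$.

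The key analytic device is the generating function $\phi(t) := \sum_{k\geq 0}T_k t^{2k}/(2k)! = \sum_{j=0}^{N-1}\int_\R \cosh(tx)\Psi_j(x)^2\,dx$. Applying Mehler's formula $\sum_{j\geq 0}\tfrac{s^j}{j!}H_j(x)^2 = (1-s^2)^{-1/2}\exp\bigl(\tfrac{sx^2}{1+s}\bigr)$ together with a Gaussian integration gives
$$\sum_{j\geq 0}s^j\int_\R e^{tx}\Psi_j(x)^2\,dx = \frac{1}{1-s}\exp\!\Bigl(\frac{t^2(1+s)}{2(1-s)}\Bigr),$$
and extracting the partial sum $\sum_{j=0}^{N-1}$ via the identity $\sum_{j<N}c_j = [s^{N-1}]\tfrac{1}{1-s}\sum_j c_j s^j$ and then substituting $u = (1+s)/(1-s)$ yields the closed contour-integral representation
$$\phi(t) = \frac{1}{4\pi i}\oint\Bigl(\frac{u+1}{u-1}\Bigr)^N e^{t^2 u/2}\,du,$$
the contour encircling $u=1$.

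The final step is to turn this into an ODE for $\phi$. The integrand $f(u) := \bigl(\tfrac{u+1}{u-1}\bigr)^N$ satisfies the first-order equation $(u^2-1)f'(u) + 2Nf(u) = 0$; multiplying by $e^{t^2u/2}$, integrating on the closed contour, and trading $u$-derivatives for $t$-derivatives via $\partial_u e^{t^2u/2} = \tfrac{t^2}{2}e^{t^2u/2}$ together with integration by parts, one arrives at the Bessel-type ODE
$$\phi''(t) + \frac{3}{t}\phi'(t) - (4N + t^2)\phi(t) = 0.$$
Substituting $\phi(t) = \sum_k T_k t^{2k}/(2k)!$ and matching coefficients of $t^{2m}$ then yields precisely the target three-term recursion for $T_k$, which translates back via $M_k = T_k/N^{k+1}$ and $b_k = M_k/C_k$ to the stated identity.

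The main obstacle is coordinating the several independent steps --- Mehler's formula, the contour substitution, the reduction to a second-order ODE, and the Taylor-coefficient matching --- into a single clean argument: each step is elementary but the right choice at each stage is not obvious a priori, and in particular the specific substitution $u=(1+s)/(1-s)$ is what converts the partial-sum obstruction into a clean contour integral. An appealing alternative is the original Harer--Zagier proof, which interprets pairings as ribbon graphs on orientable surfaces and derives the recursion combinatorially via Frobenius's character formula for the symmetric group; this route gives a transparent topological origin for the formula but imports representation-theoretic tools not otherwise used in this manuscript.
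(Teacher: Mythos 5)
Your proof is correct, but it follows a genuinely different route from the one in the text. All the individual steps check out: the translation of the $b_k$-recursion into the three-term recursion $(k+2)T_{k+1}=2(2k+1)N\,T_k+k(2k-1)(2k+1)T_{k-1}$ for $T_k=N^{k+1}\ev{\tr A_N^{2k}}$ is an exact consequence of $C_{k+1}/C_k=2(2k+1)/(k+2)$; the Mehler summation combined with the Gaussian integral does give $\sum_{j\ge 0}s^j\int e^{tx}\Psi_j(x)^2\,dx=\frac 1{1-s}\exp\bigl(\frac{t^2(1+s)}{2(1-s)}\bigr)$; the substitution $u=(1+s)/(1-s)$ turns the coefficient extraction into the stated contour integral; and the first-order relation $(u^2-1)f'+2Nf=0$ for $f(u)=\bigl(\frac{u+1}{u-1}\bigr)^N$, after integration by parts and converting $u$ and $u^2$ into $\frac 1t\partial_t$ and $\frac 1t\partial_t\frac 1t\partial_t$, yields exactly $\phi''+\frac 3t\phi'-(4N+t^2)\phi=0$, whose coefficients of $t^{2m}$ reproduce the target recursion. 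The manuscript instead argues combinatorially: the key input is Lemma \ref{lem:10.1}, that $N^kT(k,N)/(2k-1)!!$ is a polynomial of degree $N-1$ in $k$ (proved from the eigenvalue-density representation), followed by interpreting $\sum_\pi N^{\#(\gamma\pi)}$ as counting colorings of the cycles of $\gamma\pi$, passing to colorings with exactly $L$ colors by inclusion--exclusion, pinning down the resulting polynomial from its $N-1$ known zeros and its leading coefficient (computed from the non-crossing pairings), and finally packaging everything into the generating function $\mathcal T(s,N)=\bigl(\frac{1+s}{1-s}\bigr)^N$ with the first-order equation $(1-s^2)\mathcal T'=2N\mathcal T$. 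It is a pleasant coincidence that both routes arrive at the same rational function $\bigl(\frac{1+s}{1-s}\bigr)^N$, but by entirely different mechanisms and with coefficient matching against different variables. What your approach buys is brevity and a direct link to the determinantal/Hermite machinery of Chapter 7; what it costs is the import of Mehler's formula, which is not established anywhere in the manuscript (it can be derived from the Hermite generating function implicit in Exercise \ref{exercise:22}, but that derivation is comparable in length to the paper's Lemma \ref{lem:10.1}), plus some care about interchanging the sum over $j$ with the integral for $|s|<1$. Both are standard, published proofs of Harer--Zagier, and yours is complete as an outline.
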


We will prove this later; see Section \ref{section:9.6}. For now, let us just check it for small examples.

\begin{example}
	
	From Remark \ref{ex:2.16} we know
	\begin{align*}
	C_1 b_1 
	&= \ev{\tr A_N^2}
	= 1, \\[0.5ex]
	C_2 b_2 
	&= \ev{\tr A_N^4}
	= 2 + \frac{1}{N^2}, \\[0.5ex]
	C_3 b_3
	&= \ev{\tr A_N^6}
	= 5 + \frac{10}{N^2}, \\[0.5ex]
	C_4 b_4
	&= \ev{\tr A_N^8}
	= 14 + \frac{70}{N^2}+ \frac{21}{N^4}, 
	\end{align*}
	which gives
$$
	b_1 = 1, \qquad
	b_2 = 1 + \frac{1}{2N^2}, \qquad
	b_3 = 1+ \frac{2}{N^2}, \qquad
	b_4 = 1 + \frac{5}{N^2} + \frac{3}{2N^4}.
$$
	We now check the recursion from Theorem \ref{thm:9.2} for $k=3$:
	\begin{align*}
	b_3 + \frac{k(k+1)}{4N^2} b_2
	&= 1+ \frac{2}{N^2} + \frac{12}{4N^2} \left( 1 + \frac{1}{2N^2} \right) \\[0.5ex]
	&= 1+ \frac{5}{N^2} + \frac{3}{2N^4} \\[0.5ex]
	&= b_4
	\end{align*}
\end{example}

\begin{cor}
	For all $N,k \in \N$ we have for a \GUEN\ matrix $A_N$ that
	\[ \ev{\tr A_N^{2k}} \leq C_k \exp\left(\frac{k^3}{2N^2}\right).
	\]
\end{cor}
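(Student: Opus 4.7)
The plan is to use the Harer--Zagier recursion of Theorem \ref{thm:9.2} together with the identity $\ev{\tr A_N^{2k}} = C_k b_k$, and show that $b_k \leq \exp(k^3/(2N^2))$ for all $k$. Since all the coefficients in the genus expansion are positive and $C_k = \varepsilon_0(k)$, we have $b_k \geq 1$, so the question is to bound $b_k$ from above.

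My first step would be to establish that the sequence $(b_k)_{k \geq 0}$ is non-decreasing. Setting $b_0 = 1$ (so the recursion is valid also for $k = 1$, as one checks against the values $b_1 = 1$, $b_2 = 1 + 1/(2N^2)$), the recursion
\[
b_{k+1} = b_k + \frac{k(k+1)}{4N^2} b_{k-1}
\]
together with positivity of each $b_j$ gives $b_{k+1} \geq b_k$ immediately once we know $b_k \geq b_{k-1}$; this propagates from the base case $b_1 \geq b_0 = 1$.

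Next, since $b_{k-1} \leq b_k$, the recursion yields the multiplicative bound
\[
b_{k+1} \leq b_k \Bigl(1 + \frac{k(k+1)}{4N^2}\Bigr).
\]
Iterating from $b_1 = 1$ and using $1 + x \leq e^x$ for $x \geq 0$, this produces
\[
b_k \leq \prod_{j=1}^{k-1}\Bigl(1 + \frac{j(j+1)}{4N^2}\Bigr) \leq \exp\Biggl(\,\sum_{j=1}^{k-1} \frac{j(j+1)}{4N^2}\Biggr).
\]
The elementary identity $\sum_{j=1}^{k-1} j(j+1) = \frac{(k-1)k(k+1)}{3}$ gives
\[
\sum_{j=1}^{k-1} \frac{j(j+1)}{4N^2} = \frac{(k-1)k(k+1)}{12N^2} \leq \frac{k^3}{12N^2} \leq \frac{k^3}{2N^2},
\]
so $b_k \leq \exp(k^3/(2N^2))$ and multiplying by $C_k$ finishes the proof.

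There is no real obstacle here; the only subtle point is to justify the monotonicity of $(b_k)$ so as to replace $b_{k-1}$ by $b_k$ in the recursion. Everything else is the elementary estimate $1 + x \leq e^x$ and a polynomial summation. The bound is in fact wasteful by a factor of $6$ in the exponent, which leaves plenty of room for the later application to the tail estimate for $\lambda_{\max}$.
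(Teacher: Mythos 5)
Your proof is correct and follows essentially the same route as the paper's: monotonicity of $(b_k)$ from positivity and the Harer--Zagier recursion, the multiplicative bound $b_{k+1}\leq b_k\bigl(1+\tfrac{k(k+1)}{4N^2}\bigr)$, and $1+x\leq e^x$. The only difference is cosmetic: you sum the exponents exactly to get $\exp\bigl(\tfrac{(k-1)k(k+1)}{12N^2}\bigr)$, which is slightly sharper than the paper's cruder bound $\bigl(1+\tfrac{k^2}{2N^2}\bigr)^k$, but both are then relaxed to the same stated estimate.
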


\begin{proof}
	Note that, by definition, $b_k > 0$ for all $k \in \N$ and hence, by Theorem \ref{thm:9.2}, $b_{k+1} > b_k$. Thus,
	\begin{align*}
	b_{k+1}
	= b_k + \frac{k(k+1)}{4N^2} b_{k-1} 
	\leq b_k \left( 1 +  \frac{k(k+1)}{4N^2}  \right) 
	\leq b_k \left( 1 +  \frac{k^2}{2N^2}  \right);
	\end{align*}
iteration of this yields
	\begin{align*}
	b_k
	&\leq \left( 1 +  \frac{(k-1)^2}{2N^2}  \right) \left( 1 +  \frac{(k-2)^2}{2N^2}  \right) 
		\cdots \left( 1 +  \frac{1^2}{2N^2}  \right) \\[0.5ex]
	& \leq \left( 1 +  \frac{k^2}{2N^2}  \right)^k \\[0.5ex]
	& \leq \exp \left( \frac{k^2}{2N^2}  \right)^k \qquad\qquad\qquad\qquad\text{since $1+x \leq e^x$}\\[0.5ex]
	&= \exp \left( \frac{k^3}{2N^2}  \right)
	\end{align*}
\end{proof}

We can now continue our estimate for the largest eigenvalue.
\begin{align*}
\prob{\lambda_{\max}\geq 2+ \varepsilon}
&\leq \frac{N}{(2+ \varepsilon)^{2k}} \ev{ \tr A_N^{2k}  } \\[0.5ex]
&=\frac{N}{(2+ \varepsilon)^{2k}} C_k b_k\\[0.5ex]
&\leq \frac{N}{(2+ \varepsilon)^{2k}} C_k \exp\left(\frac{k^3}{2N^2}\right)\\[0.5ex]
&\leq \frac{N}{(2+ \varepsilon)^{2k}} \frac{4^k}{k^{{3}/{2}}} \exp\left(\frac{k^3}{2N^2}\right).
\end{align*}
For the last estimate, we used (see Exercise \ref{exercise:26})
\[ C_k \leq \frac{4^k}{\sqrt{\pi}k^{{3}/{2}}}\leq \frac{4^k}{k^{{3}/{2}}}.
\]
Let us record our main estimate in the following proposition.

\begin{prop}\label{prop:9.5}
For a normalized \GUEN\ matrix\ $A_N$ we have for all $N,k\in\N$ and all $\ee>0$
$$\prob{\lambda_{\max}(A_N)\geq 2+ \varepsilon}
\leq \frac{N}{(2+ \varepsilon)^{2k}} \frac{4^k}{k^{{3}/{2}}} \exp\left(\frac{k^3}{2N^2}\right).$$
\end{prop}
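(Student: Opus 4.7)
The plan is essentially to chain together the ingredients already assembled in the discussion preceding the proposition, so I would present this as a short verification. The main input is a reduction of the tail event to a moment bound, followed by the Harer--Zagier-based control of that moment, followed by a classical estimate on Catalan numbers.

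First, I would turn the tail probability into a moment estimate. Since $\lambda_{\max}(A_N)$ is one of the eigenvalues $\lambda_1,\dots,\lambda_N$ of the selfadjoint matrix $A_N$, and all the $\lambda_j^{2k}$ are non-negative, I have
$$\lambda_{\max}(A_N)^{2k} \leq \sum_{j=1}^N \lambda_j^{2k} = \Tr(A_N^{2k}) = N \tr(A_N^{2k}).$$
Therefore $\{\lambda_{\max}(A_N) \geq 2+\varepsilon\} \subseteq \{N\tr(A_N^{2k}) \geq (2+\varepsilon)^{2k}\}$, and Markov's inequality (Theorem \ref{thm:6.3}) applied to the non-negative random variable $N\tr(A_N^{2k})$ yields
$$\prob{\lambda_{\max}(A_N) \geq 2+\varepsilon} \leq \frac{N\,\ev{\tr(A_N^{2k})}}{(2+\varepsilon)^{2k}}.$$

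Next, I would invoke the Harer--Zagier recursion (Theorem \ref{thm:9.2}). Writing $\ev{\tr(A_N^{2k})} = C_k b_k$, the recursion $b_{k+1} = b_k + \frac{k(k+1)}{4N^2}b_{k-1}$ together with the positivity $b_k > 0$ gives $b_{k+1} \geq b_k$, and hence $b_{k-1} \leq b_k$, so that
$$b_{k+1} \leq b_k\left(1 + \frac{k(k+1)}{4N^2}\right) \leq b_k\left(1+\frac{k^2}{2N^2}\right).$$
Iterating from $b_1=1$ and using $1+x \leq e^x$ gives $b_k \leq \exp(k^3/(2N^2))$, hence $\ev{\tr(A_N^{2k})} \leq C_k \exp(k^3/(2N^2))$.

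Finally, I would use the classical bound $C_k \leq 4^k/k^{3/2}$ (Exercise \ref{exercise:26}, which follows from Stirling applied to $C_k = \frac{1}{k+1}\binom{2k}{k}$). Combining the three ingredients gives exactly
$$\prob{\lambda_{\max}(A_N) \geq 2+\varepsilon} \leq \frac{N}{(2+\varepsilon)^{2k}} \cdot \frac{4^k}{k^{3/2}} \cdot \exp\!\left(\frac{k^3}{2N^2}\right).$$
There is no real obstacle here; the substantive step is Theorem \ref{thm:9.2}, which is being assumed at this point and will be proved separately. The only small care needed is to verify that Markov's inequality is legitimately applicable (positivity of $\tr(A_N^{2k})$ for even powers, which is automatic since $A_N$ is selfadjoint).
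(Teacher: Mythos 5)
Your proposal is correct and follows essentially the same route as the paper: the Markov/moment bound $\prob{\lambda_{\max}\geq 2+\varepsilon}\leq N(2+\varepsilon)^{-2k}\ev{\tr A_N^{2k}}$, the Harer--Zagier recursion with monotonicity of the $b_k$ to get $\ev{\tr A_N^{2k}}\leq C_k\exp(k^3/(2N^2))$, and the Catalan bound $C_k\leq 4^k/k^{3/2}$. No gaps.
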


This estimate is now strong enough to see that the largest eigenvalue has actually to converge to 2. For this, let us fix $\varepsilon > 0$ and choose $k$ depending on $N$ as
$k_N := \lfloor N^{{2}/{3}} \rfloor$, where $\floor x$ denotes the smallest integer $\geq x$.
Then
\[ \frac{N}{k_N^{{3}/{2}}} \xrightarrow{N \to \infty} 1
\qquad\text{and}\qquad
\frac{k_N^3}{2N^2}  \xrightarrow{N \to \infty} \frac{1}{2}.
\]
Hence
\begin{align*}
\limsup_{N\to \infty} \prob{\lambda_{\max}\geq 2+ \varepsilon}
&\leq \lim_{N\to \infty} \left( \frac{2}{2+\varepsilon} \right)^{2k_N}\cdot 1\cdot  e^{{1}/{2}}
= 0,
\end{align*}
and thus for all $\varepsilon >0$,
\[ \lim_{N\to \infty} \prob{\lambda_{\max}\geq 2+ \varepsilon} = 0.
\]
This says that the largest eigenvalue $\lambda_{\max}$ of a \GUE\ converges in probability to $2$.

\begin{cor}\label{cor:9.6}
	For a normalized \GUEN\ matrix $A_N$ we have that its largest eigenvalue converges in probability, and also almost surely, to $2$, i.e.,
	\[ \lambda_{\max} (A_N) \xrightarrow{N \to \infty} 2
	\qquad \text{almost surely.}
	\]
\end{cor}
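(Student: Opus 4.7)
The plan is to prove $\limsup_N \lambda_{\max}(A_N) \leq 2$ and $\liminf_N \lambda_{\max}(A_N) \geq 2$ almost surely; since almost sure convergence implies convergence in probability, this yields both claims of the corollary simultaneously. The argument naturally splits into an upper-tail part driven by the Harer--Zagier estimate in Proposition \ref{prop:9.5} and a lower-tail part driven by Wigner's semicircle law combined with concentration.

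For the upper inequality, I will refine the calculation that already appears in the text. With $k_N := \lfloor N^{2/3}\rfloor$, Proposition \ref{prop:9.5} gives
$$\mathbb{P}\bigl(\lambda_{\max}(A_N) \geq 2+\varepsilon\bigr) \leq \frac{N}{k_N^{3/2}}\exp\!\Bigl(\frac{k_N^3}{2N^2}\Bigr)\Bigl(\frac{2}{2+\varepsilon}\Bigr)^{2k_N}.$$
The prefactor $\tfrac{N}{k_N^{3/2}}\exp(k_N^3/(2N^2))$ is bounded (and tends to $e^{1/2}$), while $(2/(2+\varepsilon))^{2k_N}$ decays like $\exp(-c(\varepsilon)N^{2/3})$ for some $c(\varepsilon)>0$. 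Hence the bound is summable in $N$, so the Borel--Cantelli lemma yields $\limsup_N \lambda_{\max}(A_N) \leq 2+\varepsilon$ almost surely. Taking a countable intersection over $\varepsilon = 1/m$ gives $\limsup_N \lambda_{\max}(A_N) \leq 2$ almost surely.

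For the lower inequality I invoke the semicircle law. Fix $\varepsilon \in (0,2)$ and a continuous $f \geq 0$ with $\supp f \subset [2-\varepsilon,2]$ and $\delta := \int f\,d\mu_W > 0$. Since $\lambda_{\max}(A_N) < 2-\varepsilon$ forces $\int f\,d\mu_{A_N} = 0$, we have
$$\mathbb{P}\bigl(\lambda_{\max}(A_N) < 2-\varepsilon\bigr) \leq \mathbb{P}\!\left(\Bigl|\int f\,d\mu_{A_N} - \delta\Bigr| \geq \tfrac{\delta}{2}\right).$$
By Theorem \ref{thm:2.21} together with Theorems \ref{thm:4.12} and \ref{thm:4.15}, the expectation of $\int f\,d\mu_{A_N}$ converges to $\delta$, and the concentration estimates of Chapter 6 (in particular the exponential tail bounds produced by the logarithmic Sobolev inequality, applied to the GUE analogue of the variance computation for $\tr[R_{A_N}(z)]$) make the right-hand side summable in $N$. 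Borel--Cantelli then yields $\liminf_N \lambda_{\max}(A_N) \geq 2-\varepsilon$ almost surely, and letting $\varepsilon \to 0$ along $\varepsilon = 1/m$ concludes.

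The main obstacle is the lower tail. The upper tail is already essentially in the text; the only additional ingredient is Borel--Cantelli, for which our bound is comfortably summable. By contrast, the lower tail requires two inputs that are not automatic: first, the semicircle law in a form valid against continuous test functions supported near the spectral edge (not just convergence of moments, which requires the analytic framework of Chapter 4), and second, concentration strong enough for Borel--Cantelli — the variance bound of order $1/N$ used earlier would suffice only for convergence in probability, and must be upgraded to exponential bounds via the logarithmic Sobolev inequality sketched at the end of Chapter 6 in order to obtain the almost sure statement.
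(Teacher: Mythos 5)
Your proposal is correct, and for the upper tail it is exactly the paper's argument: Proposition \ref{prop:9.5} with $k_N=\lfloor N^{2/3}\rfloor$ gives a bound that decays like $\exp(-c(\varepsilon)N^{2/3})$, Borel--Cantelli gives $\limsup_N\lambda_{\max}(A_N)\leq 2+\varepsilon$ almost surely, and a countable intersection over $\varepsilon$ finishes. This is all the paper's proof explicitly contains (it defers the Borel--Cantelli details to Exercise \ref{exercise:28}).

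Where you genuinely add something is the lower tail. The paper's proof says only that ``convergence in probability was shown above,'' but what was shown above is the upper-tail estimate $\prob{\lambda_{\max}\geq 2+\varepsilon}\to 0$; the fact that $\lambda_{\max}$ cannot stay below $2-\varepsilon$ is only mentioned informally at the start of the chapter as a consequence of the semicircle law and is never written out. Your argument --- pick a nonnegative continuous $f$ supported in $[2-\varepsilon,2]$ with $\int f\,d\mu_W=\delta>0$, note that $\lambda_{\max}<2-\varepsilon$ forces $\int f\,d\mu_{A_N}=0$, and then combine convergence of the mean (Theorems \ref{thm:2.21}, \ref{thm:4.12}) with concentration --- is the standard and correct way to make this rigorous, and you are right that this is the step where the $O(1/N)$ variance bound of Chapter 6 is too weak for Borel--Cantelli. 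Two small remarks: you do not actually need the exponential LSI bounds; the improved $O(1/N^2)$ variance estimate mentioned in the remark at the end of the Poincar\'e section is already summable, so Chebyshev plus Borel--Cantelli suffices. And to apply either concentration inequality you should take $f$ Lipschitz (e.g.\ piecewise linear), since the inequalities are stated for differentiable functions of the entries; this costs nothing. With those caveats your proof is complete and, on the lower tail, more careful than the paper's.
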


\begin{proof}
The convergence in probability was shown above. For the strenghtening to almost sure convergence one has to use Borel--Cantelli and the fact that
	\[ \sum_N  \left( \frac{2}{2+\varepsilon} \right)^{2k_N}  < \infty.
	\]
See Exercise \ref{exercise:28}.
\end{proof}

\section{Estimate for fluctuations}

Our estimate from Proposition \ref{prop:9.5} gives also some information about the fluctuations of $\lambda_{\max}$ about $2$, if we choose $\varepsilon$ also depending on $N$. Let us use there now
\[ k_N = \lfloor N^{{2}/{3}} r \rfloor
\qquad\text{and}\qquad
\varepsilon_N =  N^{-{2}/{3}} t.
\]
Then
$$\frac{N}{k_N^{{3}/{2}}}  \xrightarrow{N \to \infty}  \frac{1}{r^{{3}/{2}}} \qquad\text{and}\qquad
\frac{k_N^3}{2N^2}  \xrightarrow{N \to \infty}  \frac{r^3}{2} ,$$
and
$$\frac{4^{k_N}}{(2+\varepsilon_N)^{2k_N}}
= \left(  \frac{1}{1+ \frac{1}{2N^{{2}/{3}}} t }  \right)^{2\floor{N^{{2}/{3}}r}}
\xrightarrow{N \to \infty} e^{-rt},
$$
and thus
\begin{align*}
\limsup_{N\to \infty} \prob{\lambda_{\max}\geq 2+ tN^{-{2}/{3}}}
&\leq \frac{1}{r^{{3}/{2}}} e^{{r^3}/{2}} e^{-rt}
\end{align*}
for arbitrary $r>0$. We optimize this now by choosing $r = \sqrt{t}$ for $t > 0$ and get

\begin{prop}
For a normalized \GUEN\ matrix $A_N$ we have for all $t>0$
\begin{align*}
\limsup_{N\to \infty} \prob{\lambda_{\max}(A_N)\geq 2+ tN^{-\frac{2}{3}}}
&\leq t^{-{3}/{4}} \exp\left({-\frac{1}{2}t^{{3}/{2}}}\right).
\end{align*}
\end{prop}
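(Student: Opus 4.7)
The plan is to apply Proposition \ref{prop:9.5} with carefully chosen $N$-dependent values of $k$ and compare to $\varepsilon = tN^{-2/3}$, then optimize over a free parameter. Since all the ingredients have been assembled in the discussion immediately preceding the proposition, the proof is essentially a clean bookkeeping of the asymptotics.

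First, I would fix $t > 0$ and a free auxiliary parameter $r > 0$, and set $k_N := \lfloor N^{2/3} r \rfloor$ and $\varepsilon_N := t N^{-2/3}$. Plugging these choices into Proposition \ref{prop:9.5} gives
\[
\prob{\lambda_{\max}(A_N) \geq 2 + t N^{-2/3}}
\leq \frac{N}{k_N^{3/2}} \cdot \frac{4^{k_N}}{(2+\varepsilon_N)^{2k_N}} \cdot \exp\!\left(\frac{k_N^3}{2 N^2}\right).
\]
Then I would analyze each of the three factors separately as $N \to \infty$. The first factor satisfies $N / k_N^{3/2} \to r^{-3/2}$ because $k_N \sim N^{2/3} r$. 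The third factor gives $k_N^3/(2N^2) \to r^3/2$. For the middle factor, I would write
\[
\frac{4^{k_N}}{(2+\varepsilon_N)^{2k_N}} = \left(1 + \tfrac{t}{2} N^{-2/3}\right)^{-2k_N},
\]
and use $(1 + \tfrac{t}{2} N^{-2/3})^{2 N^{2/3} r} \to e^{rt}$, hence this factor converges to $e^{-rt}$.

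Combining these three limits yields, for every $r > 0$,
\[
\limsup_{N\to\infty} \prob{\lambda_{\max}(A_N) \geq 2 + t N^{-2/3}}
\leq \frac{1}{r^{3/2}} \exp\!\left(\tfrac{r^3}{2} - rt\right).
\]
The final step is to specialize $r$ so as to make the right-hand side as small/clean as possible. The choice $r = \sqrt{t}$ (simple and natural because it makes $rt = t^{3/2}$ and $r^3 = t^{3/2}$ match up) gives $r^{-3/2} = t^{-3/4}$ and $r^3/2 - rt = -t^{3/2}/2$, producing exactly the stated bound $t^{-3/4}\exp(-\tfrac{1}{2} t^{3/2})$.

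There is no real obstacle here; all the heavy lifting has been done in Theorem \ref{thm:9.2} and Proposition \ref{prop:9.5}. The only mildly delicate point is the middle factor: one must be careful that $k_N$ is an integer and that $2 k_N / N^{2/3} \to 2r$, so the limit $e^{-rt}$ is genuine and not off by a constant. This is handled by the standard estimate $\lfloor N^{2/3} r\rfloor = N^{2/3} r + O(1)$ and the continuity of $x \mapsto (1 + x/N^{2/3})^{N^{2/3}}$ at the relevant exponent.
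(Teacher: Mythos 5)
Your proposal is correct and follows essentially the same route as the paper: apply Proposition \ref{prop:9.5} with $k_N=\lfloor N^{2/3}r\rfloor$ and $\varepsilon_N=tN^{-2/3}$, compute the three limits $N/k_N^{3/2}\to r^{-3/2}$, $k_N^3/(2N^2)\to r^3/2$, and $4^{k_N}/(2+\varepsilon_N)^{2k_N}\to e^{-rt}$, and then optimize by choosing $r=\sqrt t$. Nothing is missing.
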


Although this estimate does not prove the existence of the limit on the left hand side, it turns out that the right hand side is quite sharp and captures the tail behaviour of the Tracy--Widom distribution quite well.

\section{Non-rigorous derivation of Tracy--Widom distribution}

	For determining the Tracy--Widom fluctuations in the limit $N\to\infty$ one has to use the analytic description of the \GUE\ joint density. Recall from Theorem \ref{thm:7.20} that the joint density of the unordered eigenvalues of an unnormalized \GUEN\ is given by
	\[p(\mu_1, \dots, \mu_N) = \frac{1}{N!} \det \left(  K_N(\mu_i,\mu_j) \right)_{i,j =1}^N,
	\]
	where $K_N$ is the Hermite kernel
	\[ K_N(x,y) = \sum_{k=0}^{N-1} \Psi_k(x) \Psi_k(y)
	\]
	with the Hermite functions $\Psi_k$ from Definition \ref{def:7.12}.
	Because $K_N$ is a reproducing kernel, we can integrate out some of the eigenvalues and get a density of the same form. If we are integrate out all but $r$ eigenvalues we get, by Lemma \ref{lem:7.17},
	\begin{align*}
	\int_\R \cdots \int_\R p(\mu_1, \dots, \mu_N) \td \mu_{r+1} \cdots \td \mu_N
	&= \frac{1}{N!} \cdot 1 \cdot 2 \cdots (N-r) \cdot \det \left(  K_N(\mu_i,\mu_j) \right)_{i,j =1}^r \\
	&= \frac{(N-r)!}{N!} \det \left(  K_N(\mu_i,\mu_j) \right)_{i,j =1}^r \\
	&=: p_N(\mu_1, \dots, \mu_r).
	\end{align*} 
	Now consider
	\begin{align*}
	&\prob{ \mu^{(N)}_{\max} \leq t }
	= \prob{\text{there is no eigenvalue in } (t,\infty)} \\[0.5ex]
	&= 1 - \prob{\text{there is an eigenvalue in } (t,\infty)} \\[0.5ex]
	&= 1- \Big[  N \prob{\mu_1 \in (t,\infty)} 
			- \binom{N}{2} \prob{\mu_1, \mu_2 \in (t,\infty)} 
	+ \text{$\binom{N}{3}$} \prob{\mu_1, \mu_2, \mu_3 \in (t,\infty)} 
			-\cdots
			\Big] \\[0.5ex]
	&= 1 + \sum_{r=1}^{N} (-1)^r \binom{N}{r} \int_{t}^{\infty} \cdots \int_{t}^{\infty}
		p_N(\mu_1, \dots, \mu_r) \td \mu_1 \cdots \td \mu_r \\
	&= 1 + \sum_{r=1}^{N} (-1)^r \frac{1}{r!}\int_{t}^{\infty} \cdots \int_{t}^{\infty}
		\det \left(  K(\mu_i,\mu_j) \right)_{i,j =1}^r \td \mu_1 \cdots \td \mu_r.
	\end{align*}
	Does this have a limit for $N\to \infty$?
	
	\bigskip
	Note that $p$ is the distribution for a \GUEN\ without normalization, i.e.,
	$\mu_{\max}^{(N)} \approx 2\sqrt{N}$.
	More precisely, we expect fluctutations
	\[ \mu_{\max}^{(N)} \approx \sqrt{N} \left( 2+tN^{-{2}/{3}} \right)
	= 2 \sqrt{N} +tN^{-{1}/{6}}.
	\]
	We put
	\[ \tilde{K}_N(x,y) = N^{-{1}/{6}}\cdot  K_N \left( 2\sqrt{N} + x N^{-{1}/{6}}, 2\sqrt{N} + y N^{-{1}/{6}}  \right)
	\]
	so that we have
	\begin{align*}
	\prob{N^{{2}/{3}} \left( \frac{ \mu_{\max}^{(N)} -2 }{\sqrt{N}} -2 \right) \leq t }
	&= \sum_{r=0}^{N} \frac{(-1)^r}{r!} \int_{t}^{\infty} \cdots \int_{t}^{\infty}
	\det \left(  \tilde{K}(x_i,x_j) \right)_{i,j =1}^r \td x_1 \cdots \td x_r.
	\end{align*}
	We expect that the limit
	\begin{align*}
	F_2(t)
	&:= \lim\limits_{N\to\infty} \prob{N^{{2}/{3}} \left( \frac{ \mu_{\max}^{(N)} -2 }{\sqrt{N}} -2 \right) \leq t }
	\end{align*}
	exists. For this, we need the limit $\lim\limits_{N\to\infty}\tilde{K}_N(x,y)$.
	Recall that
	\begin{align*}
	K_N(x,y)
	&= \sum_{k=0}^{N-1} \Psi_k(x) \Psi_k(y).
	\end{align*}
As this involves $\Psi_k$ for all $k=0,1,\dots,N-1$ this is not amenable to taking the limit $N\to\infty$. However, by the Christoffel--Darboux identity
	for the Hermite functions (see Exercise \ref{exercise:27})) 
	\begin{align*}
	\sum_{k=0}^{n-1} \frac {H_k(x)H_k(y)}{k!}=\frac {H_n(x)H_{n-1}(y)-H_{n-1}(x)H_n(y)}{(x-y)\  (n-1)! }
	\end{align*}
	and with
	\[ \Psi_k(x) = (2\pi)^{-{1}/{4}} (k!)^{-{1}/{2}} e^{ -\frac{1}{4}x^2}H_k(x),
	\]
	as defined in Definition \ref{def:7.12}, we can rewrite $K_N$ in the form
	\begin{align*}
	K_N(x,y)
	&= \frac{1}{\sqrt{2\pi}} \sum_{k=0}^{N-1} \frac{1}{k!} e^{ -\frac{1}{4} \left( x^2 + y^2 \right)}
		H_k(x)H_k(y) \\[0.5ex]
	&= \frac{1}{\sqrt{2\pi}} e^{ -\frac{1}{4} \left( x^2 + y^2 \right)}
	\frac {H_N(x)H_{N-1}(y)-H_{N-1}(x)H_N(y)}{(x-y)\  (N-1)! } \\[0.5ex]
	&= \sqrt{N} \cdot \frac {\Psi_N(x)\Psi_{N-1}(y)-\Psi_{N-1}(x)\Psi_N(y)}{x-y}.
	\end{align*}
	Note that the $\Psi_N$ satisfy the differential equation (see Exercise \ref{exercise:30})
	\[\Psi_N'(x)=-\frac{x}{2} \Psi_N(x)+\sqrt{N} \Psi_{N-1}(x),
	\]
and thus
	\begin{align*}
	K_N(x,y)
	&=\frac{
		\Psi_N(x) \left[ \Psi_N'(y) + \frac{y}{2} \Psi_N(y) \right]		
		-  \left[ \Psi_N'(x) + \frac{x}{2} \Psi_N(x) \right] \Psi_N(y)
		}{x-y} \\[0.9ex]
	&=\frac{\Psi_N(x) \Psi_N'(y) - \Psi_N'(x)  \Psi_N(y)}{x-y}
		- \frac{1}{2} \Psi_N(x)\Psi_N(y).
	\end{align*}
	Now put
	\[ \widetilde{\Psi}_N(x): = N^{{1}/{12}} \cdot\Psi_N \left( 2 \sqrt{N} + xN^{-{1}/{6}} \right),
	\]
	thus
	\[ \widetilde{\Psi}_N'(x) = N^{{1}/{12}}\cdot \Psi_N' \left( 2 \sqrt{N} + xN^{-{1}/{6}} \right)\cdot N^{-{1}/{6}}=
N^{-{1}/{12}}\cdot \Psi_N' \left( 2 \sqrt{N} + xN^{-{1}/{6}} \right).
	\]
	Then
	\begin{align*}
	\tilde{K}(x,y)
	&= \frac{ \widetilde{\Psi}_N(x)\widetilde{\Psi}_N'(y) - \widetilde{\Psi}_N'(x)\widetilde{\Psi}_N(y)  }{x-y} - \frac{1}{2N^{{1}/{3}}} \widetilde{\Psi}_N'(x)\widetilde{\Psi}_N'(y).
	\end{align*}
	One can show, by a quite non-trivial steepest descent method, that $\widetilde{\Psi}_N(x)$ converges to a limit. Let us call this limit the \emph{Airy function}
	\[ \Ai(x) = \lim\limits_{N\to\infty}\widetilde{\Psi}_N(x).
	\]
	The convergence is actually so strong that also
	\[ \Ai'(x) = \lim\limits_{N\to\infty}\widetilde{\Psi}_N'(x),
	\]
	and hence
	\begin{align*}
	\lim\limits_{N\to\infty}\tilde{K}(x,y)
	&= \frac{ \Ai(x)\Ai'(y)-\Ai'(x)\Ai(y) }{x-y} 
	=: \text{A}(x,y).
	\end{align*}
	A is called the \emph{Airy kernel}.

Let us try, again non-rigorously, to characterize this limit function $\Ai$.
	For the Hermite functions we have (see Exercise \ref{exercise:30})
	\[\Psi_N''(x)+\left(N+\frac{1}{2} -\frac{x^2}{4}\right)\Psi_N(x)=0.
	\]
	For the $\widetilde{\Psi}_N$ we have
	\[ \widetilde{\Psi}_N'(x) = N^{-{1}/{12}} \cdot \Psi_N' \left( 2\sqrt{N} + x N^{-{1}/{6}} \right)
	\]
	and
	\[ \widetilde{\Psi}_N''(x) = N^{-{1}/{4}} \cdot \Psi_N'' \left( 2\sqrt{N} + x N^{-{1}/{6}} \right).
	\]
	Thus,
	\begin{align*}
	\widetilde{\Psi}_N''(x) 
	&= - N^{-{1}/{4}} \left[ N + \frac{1}{2} - \frac{\left(2\sqrt{N}+xN^{-{1}/{6}}\right)^2}{4}  \right]
	\Psi_N \left( 2\sqrt{N} + x N^{-{1}/{6}} \right) \\[0.5ex]
	&= - N^{-{1}/{3}} \left[ N + \frac{1}{2} - \frac{  4N + 4 \sqrt{N}xN^{-{1}/{6}} + x^2 N^{-{1}/{3}} }{4}  \right]	\widetilde{\Psi}_N(x) \\[0.5ex]
	&= - N^{-{1}/{3}} \left[ \frac{1}{2} - \frac{ 4 xN^{{1}/{3}} + x^2 N^{-{1}/{3}} }{4}  \right]	\widetilde{\Psi}_N(x) \\[0.5ex]
	&\approx x \widetilde{\Psi}_N(x)\qquad\qquad\qquad\text{for large $N$}.
	\end{align*}
	Hence we expect that $\Ai$ should satisfy the differential equation
	\[ \Ai''(x)-x\Ai(x) = 0.
	\]
This is indeed the case, but the proof is again beyond our tools. Let us just give the formal definition of the Airy function and formulate the final result.

\begin{definition}
	The \emph{Airy function} $\Ai \colon \R \to \R$ is a solution of the \emph{Airy ODE}
	\[ u''(x) = xu(x)
	\]
	determined by the following asymptotics as $x \to \infty:$
	\[ \Ai(x) \sim \frac{1}{2\sqrt{\pi}} x^{-{1}/{4}} \exp\left(-\frac{2}{3} x^{{3}/{2}}\right).
	\]
	The \emph{Airy kernel} is defined by
	\[ \text{A}(x,y) = \frac{ \Ai(x)\Ai'(y)-\Ai'(x)\Ai(y) }{x-y}.
	\]
\end{definition}

\begin{theorem}\label{thm:9.9}
	The random variable $N^{{2}/{3}}(\lambda_{\max}(A_N) -2)$ of a normalized \GUEN\ has a limiting distribution as $N\to\infty$. Its limiting distribution function is
	\begin{align*}
	F_2(t):
	&= \lim\limits_{N\to\infty} \prob{N^\frac{2}{3} \left( \lambda_{\max} -2 \right) \leq t }\\[0.5ex]
	&= \sum_{r=0}^{N} \frac{(-1)^r}{r!} \int_{t}^{\infty} \cdots \int_{t}^{\infty}
	\det \left(  \text{A}(x_i,x_j) \right)_{i,j =1}^r \td x_1 \cdots \td x_r.
	\end{align*}
\end{theorem}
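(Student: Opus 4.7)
The plan is to follow the computation sketched in the preceding non-rigorous derivation and turn it into a rigorous argument by supplying two non-trivial inputs: the edge asymptotics of Hermite functions and a dominated-convergence-type justification for exchanging limits with the inclusion-exclusion sum.

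First, I would start from Theorem \ref{thm:7.20}, which gives the joint density of the unordered eigenvalues of an unnormalized $\GUEN$ as $p(\mu_1,\dots,\mu_N) = \frac{1}{N!}\det(K_N(\mu_i,\mu_j))_{i,j=1}^N$. Using Lemma \ref{lem:7.17} iteratively I would produce, for each $r\leq N$, the marginal $r$-point intensity $p_N^{(r)}(\mu_1,\dots,\mu_r) = \frac{(N-r)!}{N!}\det(K_N(\mu_i,\mu_j))_{i,j=1}^r$. Then a standard inclusion–exclusion identity (applied to the events ``at least $r$ eigenvalues exceed $t$'') gives the identity already displayed in the excerpt:
\[
\prob{\mu_{\max}^{(N)}\le t}=\sum_{r=0}^N \frac{(-1)^r}{r!}\int_t^\infty\!\!\cdots\!\!\int_t^\infty \det(K_N(\mu_i,\mu_j))_{i,j=1}^r\,d\mu_1\cdots d\mu_r,
\]
which is just the Fredholm determinant $\det(I-K_N\mathbf{1}_{(t,\infty)})$ on $L^2((t,\infty))$. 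The unnormalized largest eigenvalue is $\sqrt N$ times the normalized one, so writing $t\rightsquigarrow 2\sqrt N+sN^{-1/6}$ and changing variables $\mu_i=2\sqrt N+x_iN^{-1/6}$ rescales the kernel to $\tilde K_N(x,y)=N^{-1/6}K_N(2\sqrt N+xN^{-1/6},2\sqrt N+yN^{-1/6})$ and gives exactly the displayed formula with $\tilde K_N$ in place of $K_N$ and integration over $(s,\infty)$.

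Second, I would rewrite $K_N$ in a ``closed'' form amenable to asymptotics. Using the Christoffel–Darboux identity for Hermite polynomials and the derivative identity $\Psi_N'(x)=-\frac{x}{2}\Psi_N(x)+\sqrt N\,\Psi_{N-1}(x)$ from Exercise \ref{exercise:30}, I obtain
\[
K_N(x,y)=\frac{\Psi_N(x)\Psi_N'(y)-\Psi_N'(x)\Psi_N(y)}{x-y}-\tfrac{1}{2}\Psi_N(x)\Psi_N(y),
\]
so that, with $\tilde\Psi_N(x):=N^{1/12}\Psi_N(2\sqrt N+xN^{-1/6})$, one has
\[
\tilde K_N(x,y)=\frac{\tilde\Psi_N(x)\tilde\Psi_N'(y)-\tilde\Psi_N'(x)\tilde\Psi_N(y)}{x-y}-\frac{1}{2N^{1/3}}\tilde\Psi_N(x)\tilde\Psi_N(y).
\]
The core analytic step — and the main obstacle — is to prove the Plancherel–Rotach edge asymptotics
\[
\tilde\Psi_N(x)\longrightarrow \Ai(x),\qquad \tilde\Psi_N'(x)\longrightarrow \Ai'(x),
\]
with convergence uniform on compact sets (and with effective Gaussian-type decay as $x\to+\infty$). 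The heuristic shown in the excerpt — that the Hermite ODE $\Psi_N''(x)+(N+\tfrac12-\tfrac{x^2}{4})\Psi_N(x)=0$ rescales to $u''(x)=x\,u(x)$ — gives the correct limit, but a rigorous proof requires either a steepest-descent analysis of the integral representation of $H_N$ or a careful Liouville–Green/turning-point WKB argument for the rescaled ODE together with matching of the prescribed decaying solution to the Airy asymptotics. Granted this, continuity of the divided difference yields $\tilde K_N(x,y)\to A(x,y)$ locally uniformly, with the lower-order term $\tfrac{1}{2N^{1/3}}\tilde\Psi_N(x)\tilde\Psi_N(y)$ vanishing.

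Finally, I would pass to the limit in the sum. The term-by-term limit gives the determinant of the Airy kernel inside the integral, so the only remaining task is to justify interchanging $\lim_{N\to\infty}$ with the infinite sum $\sum_r$ and with the $r$-fold integral over $(s,\infty)^r$. For this I would use Hadamard's inequality $|\det(\tilde K_N(x_i,x_j))|\le r^{r/2}\prod_i\tilde K_N(x_i,x_i)$ together with the known uniform decay bound $|\tilde\Psi_N(x)|\le C e^{-c x^{3/2}}$ for $x$ large (which one obtains as a byproduct of the steepest descent analysis or directly from the tail bound already used in Proposition \ref{prop:9.5}); this makes $\tilde K_N(x,x)$ integrable on $(s,\infty)$ uniformly in $N$, so that Hadamard combined with Stirling gives a summable, $N$-uniform majorant $\frac{r^{r/2}}{r!}\|\tilde K_N(x,x)\|_{L^1(s,\infty)}^r$ and dominated convergence applies. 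The result is exactly the Fredholm-determinant expression $F_2(s)=\sum_{r\ge 0}\frac{(-1)^r}{r!}\int_s^\infty\!\cdots\!\int_s^\infty \det(A(x_i,x_j))_{i,j=1}^r\,dx_1\cdots dx_r=\det(I-A\mathbf{1}_{(s,\infty)})$, which is the Tracy–Widom distribution function, proving the theorem.
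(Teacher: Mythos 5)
Your outline follows essentially the same route as the paper's own Section~9.6-adjacent derivation (inclusion--exclusion on the determinantal $r$-point intensities, Christoffel--Darboux plus the derivative identity of Exercise~\ref{exercise:30}, rescaling to $\tilde K_N$, and passage to the Airy kernel); note that the paper explicitly presents this only as a \emph{non-rigorous} derivation and states that a full proof of Tracy--Widom for \GUE\ is beyond its scope. Your added value is that you correctly isolate the two genuine analytic inputs a rigorous proof needs: the Plancherel--Rotach edge asymptotics $\tilde\Psi_N\to\Ai$, $\tilde\Psi_N'\to\Ai'$ (locally uniformly, with decay at $+\infty$), and an $N$-uniform Hadamard-plus-dominated-convergence argument for the Fredholm expansion. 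Both of these are the standard way to complete the argument, so the plan is sound; but be aware that the first input is still only \emph{assumed} in your write-up, exactly as in the paper, and your suggestion that the uniform bound $|\tilde\Psi_N(x)|\leq Ce^{-cx^{3/2}}$ could be extracted ``directly from the tail bound already used in Proposition~\ref{prop:9.5}'' does not work: that proposition is a moment estimate on $\prob{\lambda_{\max}\geq 2+\varepsilon}$ and gives no pointwise control of Hermite functions; the decay must come from the steepest-descent (or Liouville--Green) analysis itself. Also, since $K_N$ is a finite-rank projection kernel, the matrix $(\tilde K_N(x_i,x_j))_{i,j}$ is positive semidefinite and the cleaner bound $\det\leq\prod_i\tilde K_N(x_i,x_i)$ (no $r^{r/2}$ factor) is available, though your weaker bound also suffices for summability.
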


The form of $F_2$ from Theorem \ref{thm:9.9} is more of a theoretical nature and not very convenient for calculations. A main contribution of Tracy--Widom in this context was that they were able to derive another, quite astonishing, representation of $F_2$.

\begin{theorem}[Tracy--Widom, 1994]\label{thm:9.10}
	The distribution function $F_2$ satisfies
	\begin{align*}
	F_2(t)
	&= \exp \left( - \int_{t}^{\infty} (x-t) q(x)^2 \td x  \right),
	\end{align*}
	where $q$ is a solution of the Painlevé II equation
	$ q''(x) -xq(x) + 2q(x)^3 = 0$
	with $q(x) \sim \Ai(x)$ as $x \to \infty$.
\end{theorem}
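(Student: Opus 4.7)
The plan is to exploit the Fredholm determinant representation of $F_2$ already exhibited in Theorem \ref{thm:9.9}. I would first recognize the series
$$F_2(t)=\sum_{r\geq 0}\frac{(-1)^r}{r!}\int_t^\infty\cdots\int_t^\infty\det(\text{A}(x_i,x_j))_{i,j=1}^r\,\td x_1\cdots\td x_r$$
as $\det(I-\mathbf{A}_t)$, where $\mathbf{A}_t$ is the Airy integral operator acting on $L^2((t,\infty))$. Standard Fredholm-determinant calculus then yields
$$\frac{d}{dt}\log F_2(t)=-R(t,t;t),$$
with $R(\cdot,\cdot;t)$ the kernel of the resolvent $\mathbf{A}_t(I-\mathbf{A}_t)^{-1}$; the appearance of the diagonal value at $t$ is the boundary contribution from the retreating domain of integration.

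Next I would invoke the ``integrable'' structure of the Airy kernel in the sense of Its--Izergin--Korepin--Slavnov: one has $\text{A}(x,y)=(\phi(x)\psi(y)-\psi(x)\phi(y))/(x-y)$ with $\phi=\Ai$, $\psi=\Ai'$ satisfying the Airy relations $\phi'=\psi$ and $\psi'=x\phi$. A classical computation then shows that the resolvent inherits the same structure with $(\phi,\psi)$ replaced by $(Q,P)$, where $Q(x;t):=[(I-\mathbf{A}_t)^{-1}\phi](x)$ and $P(x;t):=[(I-\mathbf{A}_t)^{-1}\psi](x)$. Setting $q(t):=Q(t;t)$ and $p(t):=P(t;t)$ and evaluating the integrable form on the diagonal via L'H\^opital gives a clean expression for $R(t,t;t)$ in terms of these boundary values and their partial $x$-derivatives.

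The heart of the argument is to reduce this to an ODE in $t$. Differentiating the integral equations $(I-\mathbf{A}_t)Q=\phi$ and $(I-\mathbf{A}_t)P=\psi$ both in $x$ (using the Airy ODE to replace $\phi''$ and $\psi''$) and in $t$ (where the $t$-dependence enters only through the lower limit of integration, producing terms involving the diagonal of the resolvent), one obtains a closed system for $q$, $p$ and the auxiliary quantities $u=\langle\phi,Q\rangle$, $v=\langle\psi,Q\rangle=\langle\phi,P\rangle$, $w=\langle\psi,P\rangle$. Algebraic elimination of $p$, $u$, $v$, $w$ leaves the Painlev\'e II equation $q''(t)=tq(t)+2q(t)^3$, and the same bookkeeping yields the key identity $R(t,t;t)=\int_t^\infty q(s)^2\,\td s$. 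The asymptotic $q(t)\sim\Ai(t)$ as $t\to\infty$ follows from the Neumann series $Q=\phi+\mathbf{A}_t\phi+\cdots$, which collapses to $\phi$ in the limit since $\mathbf{A}_t$ is small in trace norm for large $t$ owing to the super-exponential decay of $\Ai$.

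Combining these ingredients, $-\frac{d}{dt}\log F_2(t)=\int_t^\infty q(s)^2\,\td s$; integrating once more from $t$ to $\infty$ (where $\log F_2\to 0$) and swapping the order of integration via $\int_t^\infty\int_s^\infty q(x)^2\,\td x\,\td s=\int_t^\infty(x-t)q(x)^2\,\td x$ yields, after exponentiation, the claimed Painlev\'e II representation. The main obstacle will be the algebraic elimination step: the quantities $q,p,u,v,w$ together with their Wronskian-type relations must be manipulated so that a single closed second-order ODE for $q$ alone emerges, and the delicate diagonal limit in the integrable representation of the resolvent must be justified carefully so that this bookkeeping stays consistent at the boundary $x=y=t$.
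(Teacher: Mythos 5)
The paper itself does not prove this theorem: the remark preceding Section 9.3 states explicitly that proving Tracy--Widom for \GUE\ is out of reach for the course, and Theorem \ref{thm:9.10} is quoted from the literature without proof. So there is no in-paper argument to compare against; what you have written is a roadmap of the original Tracy--Widom derivation (Fredholm determinant, logarithmic derivative via the resolvent, the integrable structure of the Airy kernel with $\phi=\Ai$, $\psi=\Ai'$, $\phi'=\psi$, $\psi'=x\phi$, and the closed system in $q,p,u,v,w$). That is the right strategy, and your reading of Theorem \ref{thm:9.9} as $F_2(t)=\det(I-\mathbf{A}_t)$ on $L^2((t,\infty))$ is correct. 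But as a proof it is incomplete precisely where the theorem lives: the derivation of the coupled ODE system, its first integrals, the elimination down to $q''=tq+2q^3$, and the identity $R(t,t;t)=\int_t^\infty q(s)^2\,\td s$ are all asserted rather than carried out, and you acknowledge this yourself. Those computations are the entire content of the Tracy--Widom result.

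There is also a concrete sign error that, as written, breaks the conclusion. Differentiating $\operatorname{Tr}(\mathbf{A}_t^n)=\int_t^\infty\!\cdots\!\int_t^\infty K(x_1,x_2)\cdots K(x_n,x_1)\,\td x_1\cdots\td x_n$ in $t$ brings down $-n\,(\mathbf{A}_t^n)(t,t)$, so
\begin{equation*}
\frac{\td}{\td t}\log\det(I-\mathbf{A}_t)=-\sum_{n\ge1}\frac1n\,\frac{\td}{\td t}\operatorname{Tr}(\mathbf{A}_t^n)=\sum_{n\ge1}(\mathbf{A}_t^n)(t,t)=+R(t,t;t),
\end{equation*}
not $-R(t,t;t)$. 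With your sign, combining $-\frac{\td}{\td t}\log F_2=\int_t^\infty q^2$ with $\log F_2(\infty)=0$ gives $\log F_2(t)=+\int_t^\infty(x-t)q(x)^2\,\td x$, i.e.\ $F_2\ge1$, which is absurd for a distribution function. The correct chain is $\frac{\td}{\td t}\log F_2(t)=R(t,t;t)=\int_t^\infty q(s)^2\,\td s\ge0$ (so $F_2$ is increasing, as it must be), and integration then yields the stated $F_2(t)=\exp\bigl(-\int_t^\infty(x-t)q(x)^2\,\td x\bigr)$. Your closing asymptotic argument $q\sim\Ai$ via the Neumann series is fine.
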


Here is a plot of the 
Tracy--Widom distribution $F_2$, via solving the Painlev\'e II equation from above, and a comparision with the histogram for the rescaled largest eigenvalue of 5000 \GUE(200); see also Exercise \ref{exercise:31}.

\includegraphics[width=10cm]{tw-200}

\section{Proof of the Harer--Zagier recursion}\label{section:9.6}

We still have to prove the recursion of Harer--Zagier, Theorem \ref{thm:9.2}.

Let us denote
	\[ T(k,N) := \ev{\tr A_N^{2k}} = \sum_{g\geq 0} \varepsilon_g(k) N^{-2g}.
	\]
	The genus expansion shows that $T(k,N)$ is, for fixed $k$, a polynomial in $N^{-1}$.
	Expressing it in terms of integrating over eigenvalues reveals the surprising fact that, up to a Gaussian factor, it is also a polynomial in $k$ for fixed $N$.
We show this in the next lemma.
This is actually the only place where we need
the random matrix interpretation of this quantity.

\begin{lemma}\label{lem:10.1}
	The expression
	\[ N^k \frac{1}{(2k-1)!!} T(k,N)
	\]
	is a polynomial of degree $N-1$ in $k$.
\end{lemma}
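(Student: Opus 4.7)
My plan is to bring in the analytic description of the averaged GUE eigenvalue density from Theorem \ref{thm:7.21} and then use orthogonality of the Hermite polynomials. Note that if $B=\sqrt N A_N$ denotes the unnormalized GUE, then the eigenvalues of $B$ are $\sqrt N$ times the eigenvalues of $A_N$, so $\ev{\tr B^{2k}}=N^k\ev{\tr A_N^{2k}}=N^kT(k,N)$. On the other hand $\ev{\tr B^{2k}}=\int_\R \mu^{2k} p_N(\mu)\,d\mu$ with $p_N$ as in Theorem \ref{thm:7.21}. Plugging in the explicit form of $p_N$ yields
\[
N^{k+1}T(k,N)=\sum_{j=0}^{N-1}\frac{1}{j!}\cdot\frac{1}{\sqrt{2\pi}}\int_\R \mu^{2k}H_j(\mu)^2 e^{-\mu^2/2}\,d\mu.
\]
Thus the lemma will follow if I can show that for every fixed $j$, the quantity
\[
I_j(k):=\frac{1}{(2k-1)!!\sqrt{2\pi}}\int_\R \mu^{2k}H_j(\mu)^2 e^{-\mu^2/2}\,d\mu
\]
is a polynomial of degree exactly $j$ in $k$; for then $\frac{1}{N}\cdot N^k\cdot\frac{T(k,N)}{(2k-1)!!}=\frac{1}{N}\sum_{j=0}^{N-1}\frac{1}{j!}I_j(k)$ is a polynomial of degree $N-1$ in $k$, and multiplying by the $k$-independent constant $N$ preserves the degree.

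To analyze $I_j(k)$, I would expand either of the two factors in Hermite polynomials. A convenient route is the standard Hermite expansion of the monomial,
\[
\mu^{2k}=\sum_{i=0}^{k}\frac{(2k)!}{2^{k-i}(k-i)!(2i)!}H_{2i}(\mu),
\]
together with the product formula
\[
H_j(\mu)^2=\sum_{\ell=0}^{j}\binom{j}{\ell}^2 \ell!\,H_{2j-2\ell}(\mu),
\]
both of which are routine consequences of the three-term recursion $xH_n=H_{n+1}+nH_{n-1}$ from Definition \ref{def:7.10}. Then Definition \ref{def:7.10}(ii), i.e.\ the orthogonality $\frac{1}{\sqrt{2\pi}}\int H_{2i}H_{2j-2\ell}e^{-\mu^2/2}d\mu=(2i)!\,\delta_{i,j-\ell}$, collapses the double sum and, after the identity $(2k)!/((2^k k!)(2k-1)!!)=1$, yields the closed form
\[
I_j(k)=\sum_{i=0}^{j}2^{\,i}\,k^{\underline i}\,\binom{j}{i}^{\!2}(j-i)!,
\]
where $k^{\underline i}=k(k-1)\cdots(k-i+1)$ is the falling factorial. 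Each term is a polynomial in $k$ of degree $i$, and the $i=j$ term contributes $2^j k^{\underline j}$, so $I_j(k)$ has degree exactly $j$.

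Summing $\frac{1}{j!}I_j(k)$ over $j=0,\dots,N-1$ gives a polynomial of degree $N-1$ in $k$, which (divided by $N$) is precisely $N^k T(k,N)/(2k-1)!!$; this proves the lemma. The only real obstacle is being careful with the two combinatorial expansions above and the bookkeeping with $(2k-1)!!=(2k)!/(2^k k!)$ that converts the prefactor into the falling factorial $k^{\underline i}$; once those are in place everything reduces to polynomial degree counting. Note incidentally that the falling factorial form makes it manifest that $I_j(k)=0$ for nonnegative integers $k<j/2$ with $k<i$ in the top term — but this is not needed for the proof, only for the subsequent extraction of the recursion.
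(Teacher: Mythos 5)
Your proof is correct, but it follows a genuinely different route from the one in the text. The paper also starts from the eigenvalue-integral representation of $T(k,N)$, but it integrates the squared Vandermonde over $\lambda_2,\dots,\lambda_N$ to obtain an \emph{unspecified} even polynomial $p_N(\lambda_1)=\sum_{l=0}^{N-1}\alpha_l\lambda_1^{2l}$ of degree $2(N-1)$, and then only needs the single observation that $(2k+2l-1)!!/(2k-1)!!=(2k+1)(2k+3)\cdots(2k+2l-1)$ is a polynomial of degree $l$ in $k$; no information about the $\alpha_l$ is required. You instead identify the one-point density explicitly via the Hermite kernel (Theorem \ref{thm:7.21}) and push through the Hermite combinatorics (monomial expansion, linearization of $H_j^2$, orthogonality), arriving at the closed form $I_j(k)=\sum_{i=0}^{j}2^i k^{\underline{i}}\binom{j}{i}^2(j-i)!$. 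Your computations check out (e.g.\ for $N=2$ your formula gives $2^kT(k,2)/(2k-1)!!=k+1$, consistent with $T(2,2)=2+\tfrac14$ and $T(3,2)=5+\tfrac{10}{4}$), and the one point deserving a remark — that the sum naturally terminates at $\min(k,j)$ but extends harmlessly to $j$ because $k^{\underline{i}}=0$ for integers $k<i$ — you address at the end. The trade-off: the paper's argument is softer and shorter, needing no Hermite identities; yours is more computational but yields an explicit polynomial with manifestly positive leading coefficient $2^{N-1}/(N\,(N-1)!)$, so it pins down the degree as \emph{exactly} $N-1$ rather than at most $N-1$.
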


\begin{proof}
	First check the easy case $N=1$:
	$T(k,1)=(2k-1)!!$ is the $2k$-th moment of a normal variable and
	\[ \frac{T(k,1)}{(2k-1)!!} = 1
	\]
	is a polynomial of degree $0$ in $k$.
	
	\bigskip For general $N$ we have
	\begin{align*}
	T(k,N) 
	&= \ev{\tr A_N^{2k}} \\
	&= c_N \int_{\R^N} \left( \lambda_1^{2k} + \cdots + \lambda_N^{2k} \right)
		e^{-\frac{N}{2} \left( \lambda_1^{2} + \cdots + \lambda_N^{2} \right) }
		\prod_{i < j} (\lambda_i - \lambda_j)^2
		\td \lambda_1 \cdots \td \lambda_N \\
	&= N c_N \int_{\R^N} \lambda_1^{2k} 
		e^{-\frac{N}{2} \left( \lambda_1^{2} + \cdots + \lambda_N^{2} \right) }
		\prod_{i \neq j} \abs{\lambda_i - \lambda_j}
		\td \lambda_1 \cdots \td \lambda_N \\
	&= N c_N \int_{\R} \lambda_1^{2k} e^{-\frac{N}{2} \lambda_1^{2} } p_N(\lambda_1)
	\td \lambda_1,
	\end{align*}
	where $p_N$ is the result of integrating the Vandermonde over $\lambda_2, \dots, \lambda_N$.
	It is an even polynomial in $\lambda_1$ of degree $2(N-1)$, whose coefficients depend only on $N$ and not on $k$. Hence
	\[ p_N(\lambda_1) = \sum_{l=0}^{N-1} \alpha_l \lambda_1^{2l}
	\]
	with $\alpha_l$ possibly depending on $N$. Thus,
	\begin{align*}
	T(k,N) 
	&= N  c_N  \sum_{l=0}^{N-1} \alpha_l \int_{\R} \lambda_1^{2k + 2l} e^{-\frac{N}{2} \lambda_1^{2} }
	\td \lambda_1\\
	&= N  c_N  \sum_{l=0}^{N-1} \alpha_l \cdot k_N\cdot (2k+2l-1)!! \cdot N^{-(k+l)},
	\end{align*}
since the integral over $\lambda_1$ gives the $(2k+2l)$-th moment of a Gauss variable of variance $N^{-1}$,
where $k_N$ contains the $N$-dependent normalization constants of the Gaussian measure;
	hence
	\[ \frac{N^kT(k,N)}{(2k-1)!!}
	\]
	is a linear combination (with $N$-dependent coefficients) of terms of the form
	\[ \frac{(2k+2l-1)!!}{(2k-1)!!}.
	\]
	These terms are polynomials in $k$ of degree $l$.
\end{proof}

	We now have that
	\begin{align*}
	N^k \frac{1}{(2k-1)!!} T(k,N)
	&= N^k \frac{1}{(2k-1)!!} \sum_{\pi\in \pair(2k)} N^{\# (\gamma\pi) -k -1 } \\
	&= \frac{1}{N} \frac{1}{(2k-1)!!} \sum_{\pi\in \pair(2k)} N^{\# (\gamma\pi)} \\
	&= \frac{1}{N} \frac{1}{(2k-1)!!} t(k,N),
	\end{align*}
where the last equality defines $t(k,N)$.

	By Lemma \ref{lem:10.1}, ${t(k,N)}/{(2k-1)!!} $ is a polynomial of degree $N-1$ in $k$.
	We interpret it as follows:
	\begin{align*}
	t(k,N) 
	&= \sum_{\pi\in \pair(2k)} \# \left\{\text{coloring cycles of $\gamma\pi$ with at most $N$ different colors}\right\}.
	\end{align*}
	Let us introduce
	\begin{align*}
	\tilde{t}(k,L) 
	&= \sum_{\pi\in \pair(2k)} \# \left\{\text{coloring cycles of $\gamma\pi$ with exactly $L$ different colors}\right\},
	\end{align*}	
	then we have
	\begin{align*}
	t(k,N) 
	&= \sum_{L=1}^{N} \tilde{t}(k,L)  \binom{N}{L},
	\end{align*}	
because if we want to use at most $N$ different colors, then we can do this by using exactly $L$ different colors (for any $L$ between 
$1$ and $N$), and after fixing $L$ we have $\binom NL$ many possibilities to choose the $L$ colors among the $N$ colors.

	This relation can be inverted by
	\[ \tilde{t}(k,N) = \sum_{L=1}^{N} (-1)^{N-L}  \binom{N}{L} t(k,L)
	\]
	and hence ${\tilde{t}(k,N)}/{(2k-1)!!}$ is also a polynomial in $k$ of degree $N-1$. 
	But now we have
	\begin{align*}
	0
	&= \tilde{t}(0,N)
	= \tilde{t}(1,N)
	= \cdots
	= \tilde{t}(N-2,N),
	\end{align*}
	since $\gamma\pi$ has, by Proposition \ref{prop:2.20}, at most $k+1$ cycles for $\pi\in\pair(2k)$; and thus
	$\tilde{t}(k+1,N)=0$ if $k+1<N$, as we need at least $N$ cycles if we want to use $N$ different colors.

 So, ${\tilde{t}(k,N)}/{(2k-1)!!}$ is a polynomial in $k$ of degree $N-1$ and we know $N-1$ zeros; hence it must be of the form
	\begin{align*}
	\frac{\tilde{t}(k,N)}{(2k-1)!!}
	&= \alpha_N k (k-1) \cdots (k-N+2)
	= \alpha_N \binom{k}{N-1}(N-1)!.
	\end{align*}
	Hence,
	\begin{align*}
	t(k,N) 
	&= \sum_{L=1}^{N}  \binom{N}{L} \binom{k}{L-1} (L-1)! \alpha_L (2k-1)!!.
	\end{align*}
	To identify $\alpha_N$ we look at
	\begin{align*}
	\alpha_{N+1} \binom{N}{N}N!(2N-1)!!
	&=\tilde{t}(N,N+1)
	=C_N (N+1)!.
	\end{align*}
	Note that only the NC pairings can be colored with exactly $N+1$ colors, and for each such $\pi$ there are $(N+1)!$ ways of doing so. We conclude
	\begin{align*}
	\alpha_{N+1}
	&= \frac{C_N(N+1)!}{N!(2N-1)!!} \\[0.5ex]
	&= \frac{C_N(N+1)}{(2N-1)!!} \\[0.5ex]
	&=  \frac{1}{N+1} \binom{2N}{N} \frac{N+1}{(2N-1)!!} \\[0.5ex]
	&=  \frac{(2N)!}{N!N!(2N-1)!!} \\[0.5ex]
	&= \frac{2^N}{N!}.
	\end{align*}
	Thus we have
	\begin{align*}
	T(k,N)
	&= \frac{1}{N^{k+1}} t(k,N) \\[0.5ex]
	&= \frac{1}{N^{k+1}} \sum_{L=1}^{N} \binom{N}{L} \binom{k}{L-1} (L-1)! \frac{2^{L-1}}{(L-1)!}(2k-1)!! \\[0.5ex]
	&= (2k-1)!! \frac{1}{N^{k+1}} \sum_{L=1}^{N} \binom{N}{L} \binom{k}{L-1} 2^{L-1}.
	\end{align*}
	To get information from this on how this changes in $k$ we consider a generating function in $k$,
	\begin{align*}
	\mathcal{T}(s,N)
	&= 1 + 2\sum_{k=0}^{\infty} \frac{T(k,N)}{(2k-1)!!}(Ns)^{k+1} \\[0.5ex]
	&= 1 + 2\sum_{k=0}^{\infty}  \sum_{L=1}^{N} \binom{N}{L} \binom{k}{L-1} 2^{L-1} s^{k+1} \\[0.5ex]
	&= \sum_{L=0}^{N} \binom{N}{L} 2^{L}  \sum_{k=L-1}^{\infty} \binom{k}{L-1} s^{k+1} \\[0.5ex]
	&= \sum_{L=0}^{N} \binom{N}{L} 2^{L}  \left( \frac{s}{1-s} \right)^L \\[0.5ex]
	&= \sum_{L=0}^{N} \binom{N}{L} \left( \frac{2s}{1-s} \right)^L \\[0.5ex]
	&= \left(1 + \frac{2s}{1-s} \right)^N \\[0.5ex]
	&= \left( \frac{1+s}{1-s} \right)^N.
	\end{align*}
	Note that (as in our calculation for the $\alpha_N$)
	\begin{align*}
	\frac{1}{(2k-1)!!} 
	&= \frac{2^k}{k!C_k(k+1)}
	\end{align*}
	and hence $T(s,N)$ can also be rewritten as a generating function in our main quantity of interest,
	\begin{align*}
	b_k^{(N)}
	&= \frac{T(k,N)}{C_k}\qquad\text{(we make now the dependence of $b_k$ on $N$ explicit)}
	\end{align*}
	as
	\begin{align*}
	\mathcal{T}(s,N)
	&= 1 + 2\sum_{k=0}^{\infty} \frac{T(k,N)}{(k+1)! C_k} 2^k (Ns)^{k+1} \\[0.5ex]
	&= 1 + \sum_{k=0}^{\infty} \frac{b_k^{(N)}}{(k+1)! }  (2Ns)^{k+1}.
	\end{align*}
	In order to get a recursion for the $b_k^{(N)}$, we need some functional relation for $\mathcal{T}(s,N)$.  Note that the recursion in Harer--Zagier involves $b_k$, $b_{k+1}$, $b_{k-1}$ for the same $N$, thus we need a relation which does not change the $N$. For this we look on the derivative with respect to $s$. From
$$ \mathcal{T}(s,N)=\left( \frac{1+s}{1-s} \right)^N$$
we get
	\begin{align*}
	\frac{\td}{\td s} \mathcal{T}(s,N)
	&= N \left( \frac{1+s}{1-s} \right)^{N-1} \frac{(1-s)+(1+s)}{(1-s)^2} \\[0.5ex]
	&= 2N \left( \frac{1+s}{1-s} \right)^{N} \frac{1}{(1-s)(1+s)}\\[0.5ex]
&=2N \cdot\mathcal{T}(s,N)\frac 1{1-s^2},
	\end{align*}
	and thus
	\[ (1-s^2) \frac{\td}{\td s} \mathcal{T}(s,N) = 2N \cdot\mathcal{T}(s,N).
	\]
	Note that we have
	\begin{align*}
	\frac{\td}{\td s} \mathcal{T}(s,N)
	&= \sum_{k=0}^{\infty} \frac{b_k^{(N)}}{k! }  (2Ns)^{k} 2N.
	\end{align*}
	Thus, by comparing coefficients of $s^{k+1}$ in our differential equation from above, we conclude
	\begin{align*}
	\frac{b_{k+1}^{(N)}}{(k+1)!} (2N)^{k+2} - \frac{b_{k-1}^{(N)}}{(k-1)!} (2N)^{k}
	&= 2N \frac{b_{k+1}^{(N)}}{(k+1)!}  (2N)^{k+1},
	\end{align*}
	and thus, finally,
	\[b_{k+1}^{(N)} = b_{k}^{(N)} + b_{k-1}^{(N)} \frac{(k+1)k}{(2N)^2}.
	\]

\chapter{Statistics of the Longest Increasing Subsequence}

\section{Complete order is impossible}
\begin{definition}
	A permutation $\sigma \in S_n$ is said to have an \emph{increasing subsequence of length $k$} if there exist indices $1 \leq i_1 < \cdots < i_k \leq n$ such that
	$\sigma(i_1) < \cdots < \sigma(i_k)$.
	For a \emph{decreasing subsequence of length k} the above holds with the second set of inequalities reversed. For a given $\sigma\in S_n$ we denote the length of an increasing subsequence of maximal length by $L_n(\sigma)$.
\end{definition}

\begin{example}
	\begin{enumerate}
		\item 
Maximal length is achieved for  the identity permutation
$$\sigma=\id=
\begin{pmatrix}
		1 & 2 & \cdots & n-1 & n \\
		1&  2 & \cdots&  n-1&  n
		\end{pmatrix};$$
this has an increasing subsequence of length $n$, hence $L_n(\id) = n$. In this case,
		all decreasing subsequences have length $1$.
		\item 
Minimal length is achieved for the permutation
$$\sigma = \begin{pmatrix}
		1 & 2 & \cdots & n-1 & n \\
		n&  n-1 & \cdots&  2&  1
		\end{pmatrix};$$
in this case all increasing subsequences have length 1, hence $L_n(\sigma)=1$; but there is a decreasing subsequence of length $n$.
		\item Consider a more ``typical'' permutation
$$\sigma = \begin{pmatrix}
		1 & 2 & 3 & 4 & 5 & 6 & 7 \\
		4 & 2 & 3 & 1 & 6 & 5 & 7
		\end{pmatrix};$$
		this has $(2,3,5,7)$ and $(2,3,6,7)$ as longest increasing subsequences, thus $L_7(\sigma) = 4$.
		Its longest decreasing subsequences are $(4,2,1)$ and $(4,3,1)$ with length $3$.
		In the graphical representation		
		\[ \begin{tikzpicture}
		\tikzset{dot/.style={circle,fill=#1,inner sep=0,minimum size=6pt},scale=0.8}
		
		\tkzInit[xmax=7,ymax=7,xmin=0,ymin=0]
		\tkzGrid
		\tkzAxeXY
		
		\node at (0, 0)   (0) {};
		\node[dot=black] at (1, 4)   (1) {};
		\node[dot=black] at (2, 2)   (2) {};
		\node[dot=black] at (3, 3)   (3) {};
		\node[dot=black] at (4, 1)   (4) {};
		\node[dot=black] at (5, 6)   (5) {};
		\node[dot=black] at (6, 5)   (6) {};
		\node[dot=black] at (7, 7)   (7) {};
		
		\draw[thick] (0) -- (2); 
		\draw[thick] (2) -- (3); 
		\draw[thick] (3) -- (6); 
		\draw[thick] (6) -- (7); 
		\end{tikzpicture}
		\]		
		an increasing subsequence corresponds to a path that always goes up.
	\end{enumerate}
\end{example}

\begin{remark}
\begin{enumerate}
\item
		Longest increasing subsequences are relevant for sorting algorithms.
		Consider a library of $n$ books, labeled bijectively with numbers $1,\dots, n$,
		arranged somehow on a single long bookshelf.
		The configuration of the books corresponds to a permutation $\sigma \in S_n$.
		How many operations does one need to sort the books in a canonical ascending order $1,2, \dots, n$?
		It turns out that the minimum number is $n-L_n(\sigma)$. One can sort around an increasing subsequence.
		\begin{example*} Around the longest increasing subsequence $(1,2,6,8)$ we sort
			\[\begin{matrix}
			&4&1&9&3&2&7&6&8&5\\
			\to&4&1&9&2&3&7&6&8&5\\
			\to&1&9&2&3&4&7&6&8&5\\
			\to&1&9&2&3&4&5&7&6&8\\
			\to&1&9&2&3&4&5&6&7&8\\
			\to&1&2&3&4&5&6&7&8&9
			\end{matrix}
			\]
			in $9-4=5$ operations.
		\end{example*}

\item
	One has situations with only small increasing subsequences,
	but then one has long decreasing subsequences. This is true in general; one cannot avoid both long decreasing and long increasing subsequences at the same time. According to the slogan
	\begin{center}
		\textit{\enquote{Complete order is impossible.}} (Motzkin)
	\end{center}
\end{enumerate}
\end{remark}

\begin{theorem}[Erdős, Szekeres, 1935]
	Every permutation $\sigma\in S_{n^2+1}$ has a monotone subsequence of length more than $n$.
\end{theorem}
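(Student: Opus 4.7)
The plan is to use the classical pigeonhole argument, assigning to each position a pair of statistics that records the best monotone extensions ending there. Concretely, for a permutation $\sigma\in S_{n^2+1}$ and each index $i\in\{1,\dots,n^2+1\}$, I would define
\[
a_i := \text{length of the longest increasing subsequence of $\sigma$ ending at position $i$},
\]
\[
b_i := \text{length of the longest decreasing subsequence of $\sigma$ ending at position $i$}.
\]
Both are at least $1$, since the singleton $(\sigma(i))$ is trivially both increasing and decreasing.

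Next I would argue for contradiction: suppose every monotone subsequence has length at most $n$. Then $a_i,b_i\in\{1,2,\dots,n\}$ for all $i$, so the pair $(a_i,b_i)$ lives in a set of cardinality $n^2$. Since there are $n^2+1$ indices, the pigeonhole principle produces indices $i<j$ with $(a_i,b_i)=(a_j,b_j)$.

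The key step is then to derive a contradiction from this coincidence using the fact that $\sigma$ is a permutation, so $\sigma(i)\neq\sigma(j)$. If $\sigma(i)<\sigma(j)$, then appending $\sigma(j)$ to a longest increasing subsequence ending at position $i$ gives an increasing subsequence of length $a_i+1$ ending at position $j$, forcing $a_j\geq a_i+1>a_i$, contradicting $a_i=a_j$. If instead $\sigma(i)>\sigma(j)$, the symmetric argument with decreasing subsequences yields $b_j\geq b_i+1$, contradicting $b_i=b_j$. Either way we reach a contradiction, so the assumption fails and some monotone subsequence has length at least $n+1$.

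There is no real obstacle here; the only subtlety is to notice that one must track both increasing and decreasing lengths simultaneously (a single statistic would not suffice), and to use injectivity of $\sigma$ to rule out the trivial equal case. The bound $n^2+1$ is also sharp: the permutation obtained by listing $n$ decreasing blocks of length $n$ in increasing order of their minima has no monotone subsequence longer than $n$, which is worth mentioning as a remark.
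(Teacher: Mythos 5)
Your proof is correct and follows essentially the same route as the paper: the same labeling of each position by the pair (longest increasing, longest decreasing) subsequence lengths ending there, the pigeonhole step on the $n^2$ possible labels, and the contradiction via injectivity of $\sigma$. The added remark on sharpness of the bound $n^2+1$ is a nice supplement but not part of the paper's argument.
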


\begin{proof}
	Write $\sigma = a_1 a_2 \cdots a_{n^2+1}$. Assign labels $(x_k,y_k)$, where $x_k$ is the length of a longest increasing subsequence ending at $a_k$; and $y_k$ is the length of a longest decreasing subsequence ending at $a_k$.
	Assume now that there is no monotone subsequence of length $n+1$. Hence we have for all $k$: $ 1 \leq x_k, y_k \leq n$;
	i.e., there are only $n^2$ possible labels. By the pigeonhole principle there are $i < j$ with $(x_i,y_i) = (x_j, y_j)$.
	If $a_i < a_j$ we can append $a_j$ to a longest increasing subsequence ending at $a_i$, but then $x_j > x_i$.
	If $a_i > a_j$ we can append $a_j$ to a longest decreasing subsequence ending at $a_i$, but then $y_j > y_i$.
	In both cases we have a contradiction.
\end{proof}

\section{Tracy--Widom for the asymptotic distribution of $L_n$}
	We are now interested in the distribution of $L_n(\sigma)$ for $n\to\infty$. This means, we put the uniform distribution on permutations, i.e.,
$ \prob{\sigma} = {1}/{n!}$
	for all $\sigma\in S_n$, and consider $L_n\colon S_n\to \R$ as a random variable.
	What is the asymptotic distribution of $L_n$? This question is called Ulan's problem and was raised in the 1960's.
	In 1972, Hammersley showed that the limit
	\[ \Lambda = \lim\limits_{n\to\infty} \frac{\ev{L_n}}{\sqrt{n}}
	\]
	 exists and that ${L_n}/\sqrt{n}$ converges to  $\Lambda$
	in probability. In 1977, Vershik--Kerov and Logan--Shepp showed independently that $\Lambda = 2$. Then in 1998, Baik, Deift and Johansson proved the asymptotic behaviour of the fluctuations of $L_n$; quite surprisingly, this is also captured by the Tracy--Widom distribution: 
	\[ \lim\limits_{n\to\infty} \prob{ \frac{L_n - 2\sqrt{n}}{n^{{1}/{6}}} \leq t } = F_2(t).
	\]

\section{Very rough sketch of the proof of the Baik, Deift, Johansson theorem}
Again, we have no chance of giving a rigorous proof of the BDJ theorem. Let us give at least a possible route for a proof, which gives also an idea why the statistics of the length of the longest subsequence could be related to the statistics of the largest eigenvalue.
	\begin{enumerate}
		\item The RSK correspondence relates permutations to Young diagrams. $L_n$ goes under this mapping to the length of the first row of the diagram.
		\item These Young diagrams correspond to non-intersecting paths.
		\item Via Gessel--Viennot the relevant quantities in terms of NC paths have a determinantal form.
		\item Then one has to show that the involved kernel, suitably rescaled, converges to the Airy kernel.
	\end{enumerate}

In the following we want to give some idea of the first two items in the above list; the main (and very hard part of the proof) is to show the convergence to the Airy kernel.

\subsection{RSK correspondence}
	RSK stands for Robinson--Schensted--Knuth after papers from 1938, 1961 and 1973. It gives a bijection
	\[ S_n \longleftrightarrow\bigcup_{\substack{\lambda \\ \text{Young diagram} \\ \text{of size $n$}}} \left( \text{Tab} \, \lambda \times \text{Tab} \, \lambda \right),
	\]
	where $\text{Tab} \, \lambda$ is the set of Young tableaux of shape $\lambda$.

\begin{definition}
	\begin{enumerate}
		\item Let $n\geq 1$. A \emph{partition} of $n$ is a sequence of natural numbers
		$\lambda = (\lambda_1,\dots,\lambda_r)$ such that
		\[ \lambda_1 \geq \lambda_2 \geq \cdots \geq \lambda_r
		\qquad \text{and} \qquad
		\sum_{i=1}^{r} \lambda_i =n.
		\]
		We denote this by $\lambda \vdash n$. Graphically, a partition $\lambda\vdash n$ is represented by a \emph{Young diagram} with $n$ boxes.
		\begin{center}
			\begin{tabular}{l}
				\yng(5,3) \\
				$\vdots$ \\
				\yng(1)
			\end{tabular}
		\end{center}
		\item A \emph{Young tableau} of shape $\lambda$ is the Young diagram $\lambda$ filled with numbers $1,\dots,n$ such that in any row the numbers are increasing from left to right and in any column the numbers are increasing from top to bottom.
		We denote the set of all Young tableaux of shape $\lambda$ by $\text{Tab} \, \lambda$.
	\end{enumerate}
\end{definition}

\begin{example}
	\begin{enumerate}
		\item 
		For $n=1$ there is only one Young diagram, $\yng(1)$, and one corresponding Young tableau: $ \young(1)$.

For $n=2$, there are two Young diagrams, 
$$ \yng(2)\qquad\text{and}\qquad \yng(1,1)\, ,$$
each of them having one corresponding Young tableau
		$$ \young(12)  \qquad\text{and}\qquad
		  \young(1,2)\, . $$
For $n=3$, there are three Young diagrams
		$$\yng(3) \qquad\yng(2,1) \qquad\yng(1,1,1)\, ;$$
the first and the third have only one tableau, but the middle one has two:
$$ \young(12,3) \qquad\young(13,2)  $$

		\item Note that a tableau of shape $\lambda$ corresponds to a walk from $\emptyset$ to $\lambda$ by adding one box in each step and only visiting Young diagrams. For example, the Young tableau
		
		$$\young(1248,37,5,6)$$ corresponds to the walk
$$\young(1)
\to \young(12)
\to \young(12,3)
\to \young(124,3)
\to \young(124,3,5)
\to \young(124,3,5,6)
\to \young(124,37,5,6)
\to \young(1248,37,5,6)$$

\end{enumerate}
\end{example}

\begin{remark}
		
		Those objects are extremely important since they parametrize the irreducible representations of $S_n$:
		\[ \lambda \vdash n \longleftrightarrow \text{irreducible representation $\pi_\lambda$ of $S_n$}.
		\]
Furthermore, the dimension of such a representation $\pi_\lambda$ is given by the number of tableaux of shape $\lambda$.
If one recalls that for any finite group one has the general statement that the sum of the squares of the dimensions over all irreducible representations of 
the group gives the number of elements in the group, then one has for the symmetric group the statement that
$$\sum_{\lambda\vdash n} (\# Tab\lambda)^2=\# S_n=n!.$$
This shows that there is a bijection between elements in $S_n$ and pairs of tableaux of the same shape $\lambda\vdash n$.
The RSK correspondence is such a concrete bijection, given by an explicit algorithm. It has the property, that $L_n$ goes under this bijection over to the length of 
the first row of the corresponding Young diagram $\lambda$.
\end{remark}

\begin{example}\label{ex:10.8}

For example,
under the RSK correspondence, the permutation
		\[ \sigma = \begin{pmatrix}
		1&2&3&4&5&6&7\\
		4&2&3&6&5&1&7
		\end{pmatrix}
		\]
		corresponds to the pair of Young tableaux
		\[ \young(1357,26,4), \quad \young(1347,25,6) .
		\]
		Note that $L_7(\sigma) = 4$ is the length of the first row.
	
\end{example}

\subsection{Relation to non-intersecting paths}
	Pairs $(Q,P) \in \Tab \lambda \times \Tab \lambda$ can be identified with
	$r = \# \text{rows}(\lambda)$ paths. $Q$ gives the positions of where to go up and $P$ of where to go down; the conditions on the Young tableau 
guarantee that the paths will be non-intersecting. For example,
the pair corresponding to the $\sigma$ from Example \ref{ex:10.8} above gives the following non-intersecting paths:
	\[ \begin{tikzpicture}
	\tikzset{dot/.style={circle,fill=#1,inner sep=0,minimum size=4pt},scale=.58}
	
	\tkzInit[xmax=15, ymax=7]
	\begin{scope}[dashed]
		\tkzGrid
	\end{scope}
	
	\node[dot=black] at (1, 0)   (1)  {};
	\node[dot=black] at (2, 0)   (2)  {};
	\node[dot=black] at (3, 0)   (3)  {};
	\node[dot=black] at (4, 0)   (4)  {};
	\node[dot=black] at (5, 0)   (5)  {};
	\node[dot=black] at (6, 0)   (6)  {};
	\node[dot=black] at (7, 0)   (7)  {};
	\node[dot=black] at (8, 0)   (8)  {};
	\node[dot=black] at (9, 0)   (9)  {};
	\node[dot=black] at (10, 0)  (10) {};
	\node[dot=black] at (11, 0)  (11) {};
	\node[dot=black] at (12, 0)  (12) {};
	\node[dot=black] at (13, 0)  (13) {};
	\node[dot=black] at (14, 0)  (14) {};
	
	\draw (1,-0.5) -- node {$1$} (1,-0.5);
	\draw (2,-0.5) -- node {$2$} (2,-0.5);
	\draw (3,-0.5) -- node {$3$} (3,-0.5);
	\draw (4,-0.5) -- node {$4$} (4,-0.5);
	\draw (5,-0.5) -- node {$5$} (5,-0.5);
	\draw (6,-0.5) -- node {$6$} (6,-0.5);
	\draw (7,-0.5) -- node {$7$} (7,-0.5);
	\draw (8,-0.5) -- node {$7$} (8,-0.5);
	\draw (9,-0.5) -- node {$6$} (9,-0.5);
	\draw (10,-0.5) -- node {$5$} (10,-0.5);
	\draw (11,-0.5) -- node {$4$} (11,-0.5);
	\draw (12,-0.5) -- node {$3$} (12,-0.5);
	\draw (13,-0.5) -- node {$2$} (13,-0.5);
	\draw (14,-0.5) -- node {$1$} (14,-0.5);
	
	\draw[very thick] (0,0) -- (15,0);
	\draw[very thick] (0,1) -| (4,2) -| (9,1) -- (15,1);
	\draw[very thick] (0,2) -| (2,3) -| (6,4) -| (10,3) -| (13,2) -- (15,2);
	\draw[very thick] (0,3) -| (1,4) -| (3,5) -| (5,6) -| (7,7) -| (8,6) -| (11,5) -| (12,4) -| (14,3) -- (15,3);
	\end{tikzpicture}
	\]

\chapter{The Circular Law}

\section{Circular law for Ginibre ensemble}
The non-selfadjoint analogue of \GUE\ is given by the Ginibre ensemble, where all entries are independent and complex Gaussians.
A standard complex Gaussian is of the form
\[ z = \frac{x+iy}{\sqrt{2}},
\]
where $x$ and $y$ are independent standard real Gaussians, i.e., with joint distribution
\[ p(x,y) \td x \td y = \frac{1}{2\pi} e^{-\frac{x^2}{2}}e^{-\frac{y^2}{2}}\td x \td y.
\]
If we rewrite this in terms of a density with respect to the Lebesgue measure for real and imaginary part
\[ z = \frac{x+iy}{\sqrt{2}} = t_1 + i t_2,
\qquad
\overline{z} = \frac{x-iy}{\sqrt{2}} = t_1 - i t_2,
\]
we get
\begin{align*}
p(t_1,t_2) \td t_1 \td t_2
&= \frac{1}{\pi} e^{-(t_1^2+t_2^2)} \td t_1 \td t_2
= \frac{1}{\pi} e^{-\abs{z}^2} \td^2z,
\end{align*}
where $\td^2z = \td t_1 \td t_2$.

\begin{definition}
	A \emph{(complex) unnormalized Ginibre ensemble} $A_N = \left( a_{ij} \right)_{i,j=1}^N$
	is given by complex-valued entries with joint distribution
	\[ \frac{1}{\pi^{N^2}} \exp \left(- \sum_{i,j=1}^{N} \abs{a_{ij}}^2 \right) \td A
	= \frac{1}{\pi^{N^2}} \exp \left(-\Tr(AA^*)\right) \td A,
\qquad	\text{where }
 \td A = \prod_{i,j=1}^N \td^2a_{ij}.
 	\]
\end{definition}

As for the \GUE\ case, Theorem \ref{thm:7.6}, we can rewrite the density in terms of eigenvalues. Note that the eigenvalues are now complex.

\begin{theorem}\label{thm:11.2}
	The joint distribution of the complex eigenvalues of an $N\times N$ Ginibre ensemble is given by a density
	\begin{align*}
	p(z_1,\dots,z_N)
	&= c_N \exp \left( -\sum_{k=1}^{N} \abs{z_k}^2 \right) \prod_{1\leq i<j\leq N} \abs{z_i-z_j}^2.
	\end{align*}
\end{theorem}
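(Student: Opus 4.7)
The plan is to mimic the proof of Theorem \ref{thm:7.6} for GUE, but to replace unitary diagonalization with the Schur decomposition $A = U(D+T)U^*$, since a generic non-Hermitian matrix cannot be diagonalized by unitary conjugation alone. Here $U \in U(N)$ is unitary, $D = \diag(z_1, \dots, z_N)$ contains the complex eigenvalues, and $T$ is strictly upper triangular. For $A$ with pairwise distinct eigenvalues (the complementary locus has Lebesgue measure zero), this decomposition is essentially unique: $U$ is determined up to permutation of the eigenvalues and multiplication on the right by a diagonal unitary, and these redundancies will ultimately be absorbed into the normalization constant $c_N$.

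First I would rewrite the density in the new variables. Using unitary invariance of the Frobenius norm,
\[
\Tr(AA^*) = \Tr\bigl((D+T)(D+T)^*\bigr) = \sum_{i=1}^N |z_i|^2 + \sum_{i<j} |t_{ij}|^2,
\]
which cleanly separates the eigenvalues from $T$ and exhibits the Gaussian factor $\exp(-\sum_k|z_k|^2)$ of the claim.

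The main technical obstacle is the Jacobian of the change of variables. Differentiating $A = U(D+T)U^*$ and conjugating by $U^*$, with $H := U^*\,\td U$ (which is anti-Hermitian, its imaginary diagonal being the gauge that will be integrated out trivially), one obtains
\[
\tilde A := U^*\,\td A\, U = \td D + \td T + [D+T, H].
\]
I would match differentials as follows: diagonal entries of $\tilde A$ come from $\td D$, strictly upper entries from $\td T$, and strictly lower entries from the independent (upper triangular) entries of $H$ via the anti-Hermicity relation $H_{ji} = -\overline{H_{ij}}$. For $i > j$ one computes $\tilde A_{ij} = (z_i - z_j)H_{ij} + [T,H]_{ij}$, and an inspection shows that $[T,H]_{ij}$ only involves entries $H_{mn}$ with $m - n$ strictly larger than $i - j$; ordering the lower triangular entries by increasing $i-j$ therefore makes this block triangular with diagonal entries $(z_i - z_j)$. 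Accounting for the anti-holomorphic dependence (each off-diagonal $H_{ij}$ contributes a pair of real parameters and the linear map $H_{ij}\mapsto -(z_j-z_i)\overline{H_{ij}}$ has real Jacobian $|z_i-z_j|^2$), the full real Lebesgue Jacobian acquires the Vandermonde factor $\prod_{i<j}|z_i - z_j|^2$.

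Finally, I would integrate out the $t_{ij}$ for $i<j$ (independent complex Gaussians, contributing an $N$-dependent constant) and the remaining unitary variables (a compact integral over $U(N)$ modulo the diagonal gauge, yielding another constant since the integrand no longer depends on $U$). Together with the $N!$ factor for passing from ordered to unordered eigenvalues, all these constants collapse into the single normalization constant $c_N$, yielding the claimed joint density.
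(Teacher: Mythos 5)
Your proposal is correct and follows essentially the same route the paper takes (see Remark after Theorem \ref{thm:11.2}): Schur decomposition $A=U(D+T)U^*$, the identity $\Tr(AA^*)=\sum_k|z_k|^2+\sum_{i<j}|t_{ij}|^2$, and integrating out the strictly upper-triangular part and the unitary group. You actually supply more detail than the text does on the crucial Jacobian step, and your block-triangularity argument (ordering the lower-triangular entries by increasing $i-j$ so that the $[T,H]$ terms fall below the diagonal, with each antiholomorphic factor $H_{ij}\mapsto c\,\overline{H_{ij}}$ contributing $|z_i-z_j|^2$) is sound, up to immaterial sign conventions in $H=U^*\,\td U$ and the commutator.
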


\begin{remark}
	\begin{enumerate}
		\item Note that typically Ginibre matrices are not normal, i.e., 
		$AA^* \neq A^*A$.
		This means that one loses the relation between functions in eigenvalues and traces of functions in the matrix.
		The latter is what we can control, the former is what we want to understand.
		\item As in the selfadjoint case the eigenvalues repel, hence there will almost surely be no multiple eigenvalues. Thus we can also in the Ginibre case diagonalize our matrix, i.e.,
		$A = VDV^{-1}$, where $D = \diag(z_1, \dots, z_N)$ contains the eigenvalues.
		However, $V$ is now not unitary anymore, i.e., eigenvectors for different eigenvalues are in general not orthogonal. We can also diagonalize $A^*$ via 
		$ A^*= (V^{-1})^*D^*V^*$,
		but since $V^{-1} \neq V^*$ (if $A$ is not normal) we cannot diagonalize $A$ and $A^*$ simultaneously.
		This means that in general, for example $\Tr(AA^*A^*A)$ has no clear relation to
		$\sum_{i=1}^{N} z_i \bar{z_i}\bar{z_i} z_i$.
		(Note that $\Tr(AA^*A^*A) \neq \Tr(AA^*AA^*)$ if $AA^*\neq A^*A$, but of course
	$\sum_{i=1}^{N} z_i \bar{z_i}\bar{z_i} z_i
		= \sum_{i=1}^{N} z_i \bar{z_i} z_1 \bar{z_i}$.)
		\item In Theorem \ref{thm:11.2} it seems that we have rewritten the density $\exp(- \Tr(AA^*))$ as
		$\exp ( -\sum_{k=1}^{N} \abs{z_k}^2)$.
However, this is more subtle.
		On can bring any matrix via a unitary conjugation in a triangular form: $A=UTU^*$, where $U$ is unitary and
		\[ T= \begin{pmatrix}
		z_1 & \star& \cdots & \star\\
		0 & \ddots & \ddots& \vdots\\
		\vdots & \ddots & \ddots & \star \\
		0& \cdots & 0 & z_n
		\end{pmatrix}
	\]
contains on the diagonal the eigenvalues $z_1,\dots,z_n$ of $A$ (this is usually called Schur decomposition).
		Then $A^*=UT^*U^*$ with
		\[ T^*= \begin{pmatrix}
		\bar{z_1} & 0& \cdots & 0\\
		\star & \ddots & \ddots& \vdots\\
		\vdots & \ddots & \ddots & 0 \\
		\star& \cdots & \star & \bar{z_n}
		\end{pmatrix}
		\]
		and
		\begin{align*}
		\Tr(AA^*)
		=\Tr(TT^*)
		= \sum_{k=1}^{N} \abs{z_k}^2+\sum_{j>i}t_{ij}\bar t_{ij}.
		\end{align*}
Integrating out the $t_{ij}$ ($j>i$) gives then the density for the $z_i$.

		\item As for the \GUE\ case (Theorem \ref{thm:7.15}) we can write the Vandermonde density in a determinantal form. The only difference is that we have to replace the Hermite polynomials $H_k(x)$, which orthogonalize the real Gaussian distribution, by monomials $z^k$, which orthogonalize the complex Gaussian distribution.
	\end{enumerate}
\end{remark}

\begin{theorem}
	The joint eigenvalue distribution of the Ginibre ensemble is of the determinantal form
	$p(z_1,\dots,z_n)
	= \frac{1}{N!} \det  (K_N(z_i,z_j))_{i,j=1}^N$
	with the kernel
	\[
	K_N(z,w)
	= \sum_{k=0}^{N-1}\ff_k(z) \bar\ff_k(w),
\qquad\text{where}\qquad
 \ff_k(z) = \frac{1}{\sqrt{\pi}} e^{-\frac{1}{2} \abs{z}^2} \frac{1}{\sqrt{k!}} z^k.
	\]
	In particular, for the averaged eigenvalue density of an unnormalized Ginibre eigenvalue matrix	
	we have the density
	\begin{align*}
	p_N(z)
	&= \frac{1}{N} K_N(z,z)
	= \frac{1}{N\pi}e^{-\abs{z}^2} \sum_{k=0}^{N-1} \frac{\abs{z}^{2k}}{k!}.
	\end{align*}
\end{theorem}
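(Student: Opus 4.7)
The plan is to mirror step-by-step the derivation used for the \GUEN\ in Section 7.3 (Theorems \ref{thm:7.15}, \ref{thm:7.20}, \ref{thm:7.21}), with the Hermite functions $\Psi_k$ replaced by the weighted monomials $\ff_k$. Starting from the joint density obtained in Theorem \ref{thm:11.2}, I first rewrite the Vandermonde as a determinant of monomials: $\prod_{i<j}(z_j-z_i)=\det(z_j^{i-1})_{i,j=1}^N$ by Proposition \ref{prop:7.9}, so that
\[
\prod_{i<j}\abs{z_i-z_j}^2=\det(z_j^{i-1})\cdot\overline{\det(z_j^{i-1})}=\abs{\det M}^2,\qquad M_{ij}:=z_j^{i-1}.
\]
Splitting the Gaussian weight symmetrically as $\exp(-\sum_k\abs{z_k}^2)=\prod_k e^{-\abs{z_k}^2/2}\cdot\prod_k e^{-\abs{z_k}^2/2}$ and absorbing one half into each copy of the Vandermonde, together with the factors $1/\sqrt{\pi(k-1)!}$, turns $M$ into the matrix $V_{ki}=\ff_{k-1}(z_i)$. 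Hence the density from Theorem \ref{thm:11.2} takes the form
\[
p(z_1,\dots,z_N)=\tilde c_N\,\abs{\det V}^2=\tilde c_N\,\det(V^*V)=\tilde c_N\,\det\bigl(K_N(z_i,z_j)\bigr)_{i,j=1}^N,
\]
since $(V^*V)_{ij}=\sum_{k=0}^{N-1}\bar\ff_k(z_i)\ff_k(z_j)=K_N(z_j,z_i)$ and the determinant is unchanged under transposition.

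Next I would verify that the functions $\ff_k$ form an orthonormal family in $L^2(\C,d^2z)$. A short computation in polar coordinates $z=re^{i\theta}$ gives
\[
\int_\C \ff_k(z)\overline{\ff_l(z)}\,d^2z=\frac{1}{\pi\sqrt{k!\,l!}}\int_0^{2\pi}\!\!\int_0^\infty r^{k+l+1}e^{i(k-l)\theta}e^{-r^2}\,dr\,d\theta=\delta_{kl},
\]
where the angular integral gives $2\pi\delta_{kl}$ and $\int_0^\infty r^{2k+1}e^{-r^2}dr=k!/2$. This orthonormality immediately implies the reproducing property $\int_\C K_N(z,u)K_N(u,w)\,d^2u=K_N(z,w)$, as in the Hermite case, and gives the trace
\[
d:=\int_\C K_N(z,z)\,d^2z=\sum_{k=0}^{N-1}\int_\C\abs{\ff_k(z)}^2\,d^2z=N.
\]

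The analogue of Lemma \ref{lem:7.17} holds verbatim for any reproducing kernel (real or complex), since its proof uses only multilinear expansion of the determinant and the reproducing identity. Iterating it $N$ times yields
\[
\int_{\C^N}\det\bigl(K_N(z_i,z_j)\bigr)_{i,j=1}^N d^2z_1\cdots d^2z_N=(d-N+1)(d-N+2)\cdots d=N!,
\]
so that the requirement $\int p=1$ forces $\tilde c_N=1/N!$, yielding the claimed determinantal form. Finally, for the averaged one-point density we integrate out $N-1$ of the variables using Lemma \ref{lem:7.17} exactly as in the proof of Theorem \ref{thm:7.21}, obtaining $p_N(z)=\frac{1}{N}K_N(z,z)$; substituting the explicit formula for $\ff_k$ produces the stated closed expression. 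There is no real obstacle here — the argument is a direct translation of the \GUE\ case; the only place where one must be a bit careful is bookkeeping the normalization constants and checking the polar-coordinate orthogonality of the $\ff_k$, which replaces the Gaussian orthogonality of the Hermite polynomials used in Definition \ref{def:7.10}.
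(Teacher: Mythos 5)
Your proof is correct and follows exactly the route the paper indicates (the paper itself only sketches this theorem in the preceding remark, saying to repeat the \GUE\ argument with Hermite polynomials replaced by monomials orthogonal for the complex Gaussian). Your elaboration — Vandermonde as $\det(z_j^{i-1})$, absorbing the weight into $\ff_k$, the polar-coordinate orthonormality, and the reproducing-kernel Lemma \ref{lem:7.17} to fix $\tilde c_N=1/N!$ and to integrate out $N-1$ variables — is a faithful and complete version of that argument.
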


\begin{theorem}[Circular law for the Ginibre ensemble]
	The averaged eigenvalue distribution for a normalized Ginibre random matrix $\frac{1}{\sqrt{N}} A_N$ converges for $N\to\infty$ weakly to the uniform distribution on the unit disc of $\C$ with density
$\frac{1}{\pi} 1_{ \{ z \in \C \mid \abs{z} \leq 1 \} }$.
\end{theorem}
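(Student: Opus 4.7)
The plan is to work directly with the explicit density formula for the averaged eigenvalue distribution and show pointwise convergence, which combined with the fact that both sides are probability densities gives weak convergence via Scheffé's theorem.

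First I would rescale: since the eigenvalues of $A_N/\sqrt{N}$ are $z_i/\sqrt{N}$ with $z_i$ the eigenvalues of the unnormalized Ginibre matrix, the change of variable $w = z/\sqrt{N}$ gives the averaged density
\[
\tilde p_N(w) \;=\; N\cdot p_N(\sqrt N\, w) \;=\; \frac{1}{\pi}\, e^{-N|w|^2}\sum_{k=0}^{N-1}\frac{(N|w|^2)^k}{k!}.
\]
The crucial observation is that this expression has a probabilistic meaning: if $X_{Nx}$ denotes a Poisson random variable with parameter $Nx$ where $x = |w|^2$, then
\[
\tilde p_N(w) \;=\; \frac{1}{\pi}\,\P\bigl(X_{N|w|^2} \le N-1\bigr).
\]
This turns the problem of analyzing a partial exponential sum into a question about the tail of a Poisson distribution with growing parameter.

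Next I would show that this Poisson tail concentrates around the indicator of the unit disc. A Poisson($Nx$) variable has mean $Nx$ and variance $Nx$, so by the law of large numbers (or Chebyshev), for every $|w|^2 = x \ne 1$:
\[
\P(X_{Nx} \le N-1) \;\longrightarrow\; \begin{cases} 1, & x < 1,\\ 0, & x > 1.\end{cases}
\]
Explicit bounds are available via Chernoff's inequality for Poisson variables, yielding exponential decay in $N$ on either side of $|w|=1$. This gives pointwise convergence $\tilde p_N(w) \to \frac{1}{\pi}\,\mathbf 1_{\{|w|<1\}}$ for every $w$ with $|w|\ne 1$, hence almost everywhere in $\C$ since $\{|w|=1\}$ is a Lebesgue null set.

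Finally, I would upgrade pointwise convergence to weak convergence of the associated probability measures. Both $\tilde p_N$ and the limit $\frac{1}{\pi}\mathbf 1_{\{|w|\le 1\}}$ are probability densities on $\C$ (the latter because the unit disc has area $\pi$; for $\tilde p_N$ this follows from Theorem \ref{thm:11.2} together with the calculations above). By Scheff\'e's theorem, a.e.\ convergence of probability densities implies convergence in $L^1$, which in turn implies weak convergence of the corresponding measures. This gives the circular law. The main conceptual obstacle, once one identifies the Poisson-tail representation, is the Poisson concentration step, but this is standard and quantitative bounds are easy to write down; the boundary set $|w|=1$ carries no mass, so no additional care is needed there.
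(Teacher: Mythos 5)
Your proof is correct, and it follows the same overall strategy as the paper --- rescale to get the explicit density $q_N(w)=\frac{1}{\pi}e^{-N|w|^2}\sum_{k=0}^{N-1}\frac{(N|w|^2)^k}{k!}$ and show it converges pointwise (off the unit circle) to $\frac1\pi 1_{\{|w|\le 1\}}$ --- but the key analytic step is handled by a genuinely different argument. The paper proves the pointwise limit by hand: for $|w|<1$ it bounds the tail $\sum_{k\ge N}(N|w|^2)^k/k!$ by a geometric series and then invokes Stirling's lower bound $N!\ge \sqrt{2\pi}\,N^{N+1/2}e^{-N}$ to show the error is $O\bigl(N^{-1/2}\exp[N(-|w|^2+\ln|w|^2+1)]\bigr)\to 0$, with a symmetric estimate for $|w|>1$. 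You instead recognize $e^{-Nx}\sum_{k=0}^{N-1}(Nx)^k/k!$ as the Poisson tail probability $\P(X_{Nx}\le N-1)$ and conclude by Chebyshev (or Chernoff) concentration of a Poisson variable around its mean $Nx$. Your route is shorter and conceptually cleaner, and as a bonus it explains \emph{why} the transition happens exactly at $|w|=1$ (the Poisson mean crosses the threshold $N$); the paper's route is more self-contained and, via the explicit exponential rate $-|w|^2+\ln|w|^2+1<0$, gives quantitative decay for free. You are also more careful than the paper about the final step: the paper stops at pointwise convergence of densities, whereas you explicitly invoke Scheff\'e's theorem to pass from a.e.\ convergence of probability densities to $L^1$ and hence weak convergence, which is exactly the right way to close the argument.
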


\begin{proof}
	The density $q_N$ of the normalized Ginibre is given by
	\begin{align*}
	q_N(z)
	= N\cdot  p_N( \sqrt{N} z) 
	= \frac{1}{\pi} e^{-N\abs{z}^2} \sum_{k=0}^{N-1} \frac{(N\abs{z}^2)^k}{k!}.
	\end{align*}
We have to show that this converges to the circular density.
	For $\abs{z}<1$ we have
	\begin{align*}
	e^{N\abs{z}^2} - \sum_{k=0}^{N-1} \frac{(N\abs{z}^2)^k}{k!}
	&= \sum_{k=N}^{\infty} \frac{(N\abs{z}^2)^k}{k!} \\[0.5ex]
	&\leq \frac{(N\abs{z}^2)^N}{N!} \sum_{l=0}^{\infty}  \frac{(N\abs{z}^2)^l}{(N+1)^l} \\[0.5ex]
	&\leq \frac{(N\abs{z}^2)^N}{N!} \frac{1}{1- \frac{N\abs{z}^2}{N+1}},
	\end{align*}	
	Furthermore, using the lower bound
	$ N! \geq \sqrt{2\pi} N^{N+\frac{1}{2}} e^{-N} $
	for $N!$, we calculate
	\begin{align*}
	e^{-N\abs{z}^2} \frac{(N\abs{z}^2)^N}{N!}
	&\leq e^{-N\abs{z}^2} N^N\abs{z}^{2N} \frac{1}{\sqrt{2\pi}} \frac{1}{N^{N+\frac{1}{2}}}
		e^{N} \\[0.5ex]
	&=  \frac{1}{\sqrt{2\pi}}\frac{1}{\sqrt{N}}  e^{-N\abs{z}^2} e^{N \ln \abs{z}^2}e^{N}\\[0.5ex]
	&= \frac{1}{\sqrt{2\pi}}\frac{\exp[N ( -\abs{z}^2 + \ln \abs{z}^2 +1 )]}{\sqrt{N}} 
	\Nto 0.
	\end{align*}
	Here, we used that
	 $-\abs{z}^2 + \ln \abs{z}^2 +1 < 0$
	for $\abs{z} < 1$. Hence we conclude
	\begin{align*}
	1-e^{-N\abs{z}^2} \sum_{k=0}^{N-1} \frac{(N\abs{z}^2)^k}{k!}
	&\leq e^{-N\abs{z}^2} \frac{(N\abs{z}^2)^N}{N!} \frac{1}{1- \frac{N\abs{z}^2}{N+1}}
	\Nto 0.
	\end{align*}
	Similarly, for $\abs{z}> 1$,
	\begin{align*}
	\sum_{k=0}^{N-1} \frac{(N\abs{z}^2)^k}{k!}
	\leq \frac{(N\abs{z}^2)^{N-1}}{(N-1)!} \sum_{l=0}^{N-1} \frac{(N-1)^l}{(N\abs{z}^2)^l} 
	\leq \frac{\left(N\abs{z}\right)^{N-1}}{(N-1)!} \frac{1}{1- \frac{N-1}{N\abs{z}^2}},
	\end{align*}
which shows that
$$E^{-N\vert z\vert^2} 	\sum_{k=0}^{N-1} \frac{(N\abs{z}^2)^k}{k!}\Nto 0.$$
\end{proof}

\begin{remark}
	\begin{enumerate}
		\item The convergence also holds almost surely. Here is a plot of the 3000 eigenvalues of one realization of a $3000\times 3000$ Ginibre matrix.
$$\includegraphics[width=8cm]{circular-3000}$$
		\item The circular law also holds for non-Gaussian entries, but proving this is much harder than the extension for the semicircle law from the Gaussian case to Wigner matrices.
	\end{enumerate}
\end{remark}

\section{General circular law}

\begin{theorem}[General circular law]
	Consider a complex random matrix
	$ A_N = \frac{1}{\sqrt{N}} \left( a_{ij} \right)_{i,j=1}^N$,
	where the $a_{ij}$ are independent and identically distributed complex random variables
	with variance $1$, i.e., $\mathbb{E}[\abs{a_{ij}}^2] - \ev{a_{ij}}^2 =1$.
	Then the eigenvalue distribution of $A_N$ converges weakly almost surely for $N\to \infty$ to the uniform distribution on the unit disc.
\end{theorem}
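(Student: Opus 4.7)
The plan is to follow Girko's \emph{Hermitization} strategy, which reduces the problem for the non-normal matrix $A_N$ to a family of problems about Hermitian matrices parametrized by $z \in \C$. The starting point is that the empirical eigenvalue distribution $\mu_{A_N} = \frac{1}{N}\sum_{i=1}^N \delta_{\lambda_i}$ is determined (up to a standard inversion) by its logarithmic potential, and
$$U_{\mu_{A_N}}(z) := -\int_\C \log\vert z - w\vert\, d\mu_{A_N}(w) = -\frac{1}{N}\log\vert\det(A_N - zI)\vert.$$
Writing $\vert\det(A_N - zI)\vert^2 = \det\bigl((A_N - zI)^*(A_N - zI)\bigr)$ and letting $\nu_{N,z}$ denote the empirical distribution of the eigenvalues of the positive Hermitian matrix $H_N(z) := (A_N - zI)^*(A_N - zI)$, we obtain the key identity
$$U_{\mu_{A_N}}(z) = -\frac{1}{2}\int_0^\infty \log x\, d\nu_{N,z}(x).$$
Thus to identify the weak limit of $\mu_{A_N}$ it suffices, for a.e.\ $z \in \C$, to control the right-hand side as $N \to \infty$.

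First I would fix $z$ and analyse $H_N(z)$ by the Stieltjes transform machinery of Chapters 4 and 5. One expects $\nu_{N,z}$ to converge (in probability, and indeed almost surely) to a deterministic probability measure $\nu_z$ on $[0,\infty)$, characterised by a self-consistent fixed-point equation for its Stieltjes transform obtained via Stein's identity together with variance concentration. This is essentially a shifted Marchenko--Pastur-type calculation of the same level of difficulty as the semicircle law, and the resulting $\nu_z$ depends only on $\vert z\vert$. A direct (if not short) computation then shows that the function $z \mapsto -\frac{1}{2}\int_0^\infty \log x\, d\nu_z(x)$ is exactly the logarithmic potential of the uniform measure on the unit disc, so the candidate limit law is correct.

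The hard part will be justifying the passage to the limit inside $\int_0^\infty \log x\, d\nu_{N,z}(x)$: the integrand is singular at $0$, and the smallest eigenvalue of $H_N(z)$ — equivalently $s_{\min}(A_N - zI)^2$, the square of the smallest singular value — can come arbitrarily close to zero. What is needed is a quantitative invertibility bound of the form
$$\prob{s_{\min}(A_N - zI) \leq N^{-B}} \Nto 0$$
for some constant $B > 0$, plus control on the density of small (but not smallest) singular values to rule out any escape of mass into the logarithmic singularity at intermediate scales. Together these give uniform integrability of $\log x$ against $\nu_{N,z}$. For Gaussian entries the explicit joint density makes this manageable, but for general i.i.d.\ entries with variance $1$ this is exactly the deep content of Rudelson--Vershynin and Tao--Vu, built on small-ball (Littlewood--Offord inverse) estimates and the arithmetic structure of approximate null vectors. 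In a write-up I would quote these smallest-singular-value bounds as a black box.

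Given these inputs, the final step combines them in a standard way: pointwise convergence of $U_{\mu_{A_N}}(z)$ to the logarithmic potential of the uniform disc law holds for a.e.\ $z \in \C$, and since a compactly supported probability measure on $\C$ is uniquely determined by its logarithmic potential, a distributional argument yields $\mu_{A_N} \tow \mu_{\text{circ}}$ in probability. The upgrade to almost-sure convergence comes from a concentration estimate showing that $\log\vert\det(A_N - zI)\vert$ concentrates around its mean at a polynomial rate — obtained either by applying a Poincar\'e-type inequality to the Hermitization $H_N(z)$ in the spirit of Chapter 6, or by a martingale-difference argument along the rows/columns of $A_N$ — together with Borel--Cantelli and reduction to a countable dense family of test functions in $C_b(\C)$.
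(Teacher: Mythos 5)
Your outline follows exactly the route the paper itself indicates: note that the paper does \emph{not} prove the general circular law; it states it as a deep result, gives a historical remark, sketches Girko's hermitization identity $\int_{\C}\log\abs{\lambda-z}\,d\mu_A(z)=\int_0^\infty\log t\, d\mu_{\abs{A-\lambda 1}}(t)$, remarks that ``estimates of probabilities of small singular values are crucial,'' and refers to the Bordenave--Chafa\"i survey. Your proposal is a correct and somewhat more detailed version of that same roadmap: the logarithmic-potential identity, the Marchenko--Pastur-type limit $\nu_z$ for the Hermitization (correctly noted to depend only on $\abs{z}$), the identification of its log-integral with the potential of the uniform disc law, and the reduction of the whole difficulty to uniform integrability of $\log x$ against $\nu_{N,z}$ via least-singular-value and intermediate-scale bounds. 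Two small points worth making explicit in a write-up: (a) to invoke unique determination by the logarithmic potential and to pass from pointwise a.e.\ convergence of potentials to weak convergence of measures, you also need tightness of $(\mu_{A_N})_N$, which follows from $\frac 1N\sum_i\abs{\lambda_i}^2\leq \tr(A_NA_N^*)\to 1$ almost surely; (b) the phrase ``quote the smallest-singular-value bounds as a black box'' should be understood as conceding the actual mathematical content of the theorem --- the polynomial lower bound on $s_{\min}(A_N-zI)$ and the counting of singular values at all scales down to $N^{-B}$ under only a second-moment hypothesis is precisely what took the field from Bai's 1997 result to Tao--Vu 2010. So your proposal is the right skeleton, at the same level of completeness as the paper's own treatment, but it is a proof outline with the hard analysis delegated to the literature rather than a self-contained proof.
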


Note that only the existence of the second moment is required, higher moments don't need to be finite.

\begin{remark}
	\begin{enumerate}
\item
It took quite a while to prove this in full generality. Here is a bit about the history of the proof.
	\begin{itemize}
		\item 60's, Mehta proved it (see his book) in expectation for Ginibre ensemble;
		\item 80's, Silverstein proved almost sure convergence for Ginibre;
		\item 80's, 90's, Girko outlined the main ideas for a proof in the general case;
		\item 1997, Bai gave the first rigorous proof, under additional assumptions on the distribution
		\item papers by Tao--Vu, Götze--Tikhomirov, Pan--Zhou and others, weakening more and more the assumptions;
		\item 2010, Tao--Vu gave final version under the assumption of the existence of the second moment.
	\end{itemize}
		\item For measures on $\C$ one can use $*$-moments or the Stieltjes transform to describe them, but controlling the convergence properties is the main problem.
		\item For a matrix $A$ its $*$-moments are all expressions of the form
		$\tr ( A^{\varepsilon(1)} \cdots A^{\varepsilon(m)} )$, where $m\in\N$ and 
		$\varepsilon(1), \dots, \varepsilon(m) \in \{1,*\}$.
		The eigenvalue distribution
		\[ \mu_A = \frac{1}{N} \left( \delta_{z_1} + \cdots + \delta_{z_N} \right)
	\qquad \text{($z_1,\dots,z_n$ are complex eigenvalues of $A$)}\]
		of $A$ is uniquely determined by the knowledge of all $*$-moments of $A$, but convergence of $*$-moments does not necessarily imply convergence of the eigenvalue distribution.
		\begin{example*}
			Consider
			\[ A_N = \begin{pmatrix}
			0 & 1 & 0 & \cdots & 0 \\
			\vdots & \ddots &  \ddots & \ddots & \vdots \\
			\vdots & & \ddots & \ddots &  0 \\
			\vdots & & & \ddots & 1 \\
			0 & \cdots & \cdots & \cdots & 0
			\end{pmatrix} 
			\qquad \text{and} \qquad
			B_N = \begin{pmatrix}
			0 & 1 & 0 & \cdots & 0 \\
			\vdots & \ddots &  \ddots & \ddots & \vdots \\
			\vdots & & \ddots & \ddots &  0 \\
			0 & & & \ddots & 1 \\
			1 & 0 & \cdots & \cdots & 0
			\end{pmatrix}.
			\]
			Then $\mu_{A_N} = \delta_0$, but $\mu_{B_N}$ is the uniform distribution on the $N$-th roots of unity. Hence $\mu_{A_N} \to \delta_0$, whereas $\mu_{B_N}$ converges to the uniform distribution on the unit circle.
			However, the limits of the $*$-moments are the same for $A_N$ and $B_N$.
		\end{example*}
		\item For each measure $\mu$ on $\C$ one has the Stieltjes transform
		\begin{align*}
		S_\mu(w)
		&= \int_\C \frac{1}{z-w} \td \mu(z).
		\end{align*}
		This is almost surely defined. However, it is analytic in $w$ only outside the support of $\mu$. In order to recover $\mu$ from $S_\mu$ one also needs the information about $S_\mu$ inside the support.
		In order to determine and deal with $\mu_A$ one reduces it via Girko's \enquote{hermitization method}
		\begin{align*}
		\int_{\C} \log \abs{\lambda-z} \td\mu_A(z)
		&= \int_0^t \log t \td \mu_{ \abs{A-\lambda 1} } (t)
		\end{align*}
		to selfadjoint matrices. The left hand side for all $\lambda$ determines $\mu_A$ and the right hand side is about selfadjoint matrices
		\[ \abs{A-\lambda 1} = \sqrt{(A-\lambda 1) (A-\lambda 1)^*}.
		\]
		Note that the eigenvalues of $\abs{B}$ are related to those of
		\[ \begin{pmatrix}
		0 & B \\ B^* & 0
		\end{pmatrix}.
		\]
		In this analytic approach one still needs to control convergence properties.
		For this, estimates of probabilities of small singular values are crucial.
		
For more details on this one should have a look at the survey of Bordenave-- Chafai, \textit{Around the circular law}.
	\end{enumerate}
\end{remark}

\chapter{Several Independent GUEs and Asymptotic Freeness}

Up to now, we have only considered limits $N\to\infty$ of one random matrix $A_N$. But often one has several matrix ensembles and would like to understand the \enquote{joint} distribution; 
e.g., in order to use them as building blocks  for more complicated random matrix models.

\section{The problem of non-commutativity}

\begin{remark}
	\begin{enumerate}
		\item Consider two random matrices $A_1^{(N)}$ and $A_2^{(N)}$
%$$A^{(N)}_1 = (a_{ij}^{(1)})_{i,j=1}^N\qquad\text{and}\qquad A^{(N)}_2 = (a_{ij}^{(2)})_{i,j=1}^N,$$
such that their entries are defined on the same probability space. What is now the ``joint'' information about the two matrices which survies in the limit  $N\to\infty$?
		Note that in general our analytical approach breaks down if $A_1$ and $A_2$ do not commute, since then we cannot diagonalize them simultaneously. Hence it makes no sense to talk about a joint eigenvalue distribution of $A_1$ and $A_2$. The notion
		$\mu_{A_1,A_2}$ has no clear analytic meaning.

		What still makes sense in the multivariate case is the combinatorial approach via ``mixed'' moments with respect to the normalized trace $\tr$. Hence we consider the collection of all \emph{mixed moments} 
		$\tr ( A_{i_1}^{(N)} \cdots A_{i_m}^{(N)} )
		$
		in $A_1$ and $A_2$, with $m \in \N$, $i_1, \dots, i_m \in \{1,2\}$, 
		as the joint distribution of $A_1$ and $A_2$ and denote this by $\mu_{A_1,A_2}$.
		We want to understand, in interesting cases, the behavior of $\mu_{A_1,A_2}$ as $N\to\infty$.
		\item In the case of one selfadjoint matrix $A$, the notion $\mu_A$ has two meanings:
		
		\emph{analytic} as $\mu_A = \frac{1}{N} \left( \delta_{\lambda_1} + \cdots + \delta_{\lambda_N} \right)$, which is a probability measure on $\R$;
		
		\emph{combinatorial}, where $\mu_A$ is given by all moments $\tr[A^k]$ for all $k \geq 1$.
		
		These two points of view are the same (at least when we restrict to cases where the proabability measure $\mu$ is determined by its moments) via 
		\[ \tr (A^k) = \int t^k \td \mu_A(t).
		\]
		In the case of two matrices $A_1$, $A_2$ the notion $\mu_{A_1, A_2}$ has only one meaning, namely the collection of all mixed moments
	 $\tr[ A_{i_1} \cdots A_{i_m}]$
		with $m \in \N$ and $i_1, \dots, i_m \in \{1,2\}$.
		If $A_1$ and $A_2$ do not commute then there exists no probability measure $\mu$ on $\R^2$ such that
		\[\tr [ A_{i_1} \cdots A_{i_m} ] = \int t_{i_1} \cdots t_{i_m} \td\mu(t_1,t_2)
		\]
		for all  $m \in \N$ and $i_1, \dots, i_m \in \{1,2\}$.
	\end{enumerate}
\end{remark}

\section{Joint moments of independent GUEs}

We will now consider the simplest case of several random matrices, namely $r$ \GUE s $A_1^{(N)}, \dots, A_r^{(N)}$, which we assume to be independent of each other,
i.e., we have
$ A_i^{(N)} = \frac{1}{\sqrt{N}} ( a_{kl}^{(i)} )_{k,l=1}^N$,
where $i=1, \dots, r$, each $A_i^{(N)}$ is a \GUEN\ and
\[ \left\{ a_{kl}^{(1)}; \, k,l =1, \dots, N \right\},
\dots, 
\left\{ a_{kl}^{(r)}; \, k,l =1, \dots, N \right\}
\]
are independent sets of Gaussian random variables.
Equivalently, this can be characterized by the requirement that all entries of all matrices together form a collection of independent standard Gaussian variables (real on the diagonal, complex otherwise).
Hence we can express this again in terms of the Wick formula \ref{thm:2.8} as
\begin{align*}
\ev{ a_{k_1l_1}^{(i_1)} \cdots a_{k_ml_m}^{(i_m)}  }
&= \sum_{\pi \in \pair(m)} \evpartition{\pi}{   a_{k_1l_1}^{(i_1)}, \dots, a_{k_ml_m}^{(i_m)}   }
\end{align*}
for all $m \in \N$, $1 \leq k_1,l_1, \dots,k_m,l_m \leq N$ and $1 \leq i_1,\dots, i_m\leq r$
and where the second moments are given by
\[ \ev{ a_{pq}^{(i)} a_{kl}^{(j)} } = \delta_{pl} \delta_{qk} \delta_{ij}.
\]
Now we can essentially repeat the calculations from Remark \ref{rem:2.14}
for our mixed moments:
\begin{align*}
\ev{\tr(A_{i_1} \cdots A_{i_m} )}
&= \frac{1}{N^{1+\frac{m}{2}}} \sum_{k_1,\dots,k_m=1}^{N}
	\ev{ a_{k_1k_2}^{(i_1)} a_{k_2k_3}^{(i_2)} \cdots a_{k_mk_1}^{(i_m)} } \\
&= \frac{1}{N^{1+\frac{m}{2}}} \sum_{k_1,\dots,k_m=1}^{N} \sum_{\pi\in\pair(m)}
\evpartition{\pi}{ a_{k_1k_2}^{(i_1)}, a_{k_2k_3}^{(i_2)}, \dots, a_{k_mk_1}^{(i_m)} } \\
&= \frac{1}{N^{1+\frac{m}{2}}} \sum_{k_1,\dots,k_m=1}^{N} \sum_{\pi\in\pair(m)} \prod_{(p,q) \in \pi} \ev{ a_{k_pk_{p+1}}^{(i_p)} a_{k_qk_{q+1}}^{(i_q)}} \\
&= \frac{1}{N^{1+\frac{m}{2}}} \sum_{k_1,\dots,k_m=1}^{N} \sum_{\pi\in\pair(m)} \prod_{(p,q) \in \pi}  \left[ k_p = k_{q+1} \right] \left[ k_q = k_{p+1} \right] \left[ i_p = i_q \right]\\
&= \frac{1}{N^{1+\frac{m}{2}}} \sum_{\substack{\pi\in\pair(m) \\ (p,q) \in \pi \\ i_p = i_q}} \sum_{k_1,\dots,k_m=1}^{N}  \prod_{p}  \left[ k_p = k_{\gamma\pi(p)} \right] \\
&= \frac{1}{N^{1+\frac{m}{2}}} \sum_{\substack{\pi\in\pair(m) \\ (p,q) \in \pi \\ i_p = i_q}}
	N^{\#(\gamma\pi)},
\end{align*}
where $\gamma= (1\ 2 \dots\ m)\in S_m$ 
is the shift by $1$ modulo $m$.
Hence we get the same kind of genus expansion for several \GUE s as for one \GUE. The only difference is, that in our pairings we only allow to connect the same matrices.

\begin{notation}
	For a given $i=(i_1,\dots,i_m)$ with $1\leq i_1,\dots, i_m \leq r$ we say that $\pi\in\pair(m)$ \emph{respects} $i$ if we have $i_p=i_q$ for all $(p,q) \in\pi$. We put
	\[ \pair^{[i]}(m) := \left\{  \pi \in \pair(m) \mid \pi \text{ respects } i \right\}
	\]
	and also
	\[ \ncpair^{[i]}(m) := \left\{  \pi \in \ncpair(m) \mid \pi \text{ respects } i \right\}.
	\]
\end{notation}

\begin{theorem}[Genus expansion of independent \GUE s]
	Let $A_1,\dots, A_r$ be $r$ independent \GUEN. Then we have for all $m\in \N$ and all $i_1, \dots, i_m \in [r]$ that
	\[ \ev{ \tr (A_{i_1} \cdots A_{i_m} ) }
		= \sum_{\pi \in \pair^{[i]}(m) } N^{\#(\gamma\pi) - \frac{m}{2} -1}
	\]
	and thus
	\[ \lim\limits_{N\to\infty} \ev{ \tr (A_{i_1} \cdots A_{i_m} ) }
		= \# \ncpair^{[i]}(m).
	\]
\end{theorem}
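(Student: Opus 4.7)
The proof plan is essentially a bookkeeping exercise: the first identity has already been derived in the computation immediately preceding the theorem statement, and the asymptotic statement then follows by invoking Proposition \ref{prop:2.20}. Let me sketch the steps as I would organize them.

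First, I would start from the definition, expanding the trace and the matrix product to write
\[
\ev{\tr(A_{i_1}\cdots A_{i_m})} = \frac{1}{N^{1+m/2}} \sum_{k_1,\dots,k_m=1}^{N} \ev{a^{(i_1)}_{k_1k_2} a^{(i_2)}_{k_2k_3}\cdots a^{(i_m)}_{k_mk_1}}.
\]
I would then apply the Wick formula (Theorem \ref{thm:2.8}) to the joint Gaussian variables, using that the covariance is given by $\ev{a^{(i)}_{pq}a^{(j)}_{kl}} = \delta_{pl}\delta_{qk}\delta_{ij}$. The presence of $\delta_{ij}$ is precisely what restricts the sum over all pairings to the subset $\pair^{[i]}(m)$ of those $\pi$ which respect the coloring $i$, while the remaining $\delta$-factors give, as in Remark \ref{rem:2.14}, the indicator $\prod_p [k_p = k_{\gamma\pi(p)}]$.

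Next, I would identify a pairing $\pi$ with the corresponding fixed-point-free involution in $S_m$ and use exactly the argument from Remark \ref{rem:2.14}: the inner sum over $k_1,\dots,k_m$ counts the functions $[m]\to[N]$ that are constant on the cycles of $\gamma\pi$, and therefore equals $N^{\#(\gamma\pi)}$. This yields
\[
\ev{\tr(A_{i_1}\cdots A_{i_m})} = \sum_{\pi\in \pair^{[i]}(m)} N^{\#(\gamma\pi) - m/2 - 1},
\]
which is the first claim. The only new ingredient compared to the single-\GUE\ case is that $\pair(m)$ has been replaced by $\pair^{[i]}(m)$; everything else is identical.

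For the asymptotic statement, I would appeal to Proposition \ref{prop:2.20}, which tells us that for every $\pi \in \pair(m)$ we have $\#(\gamma\pi) - m/2 - 1 \leq 0$, with equality if and only if $\pi \in \ncpair(m)$. Since $\pair^{[i]}(m) \subset \pair(m)$, the same bound applies, and passing to the limit $N\to\infty$ kills every term except those with exponent $0$. These are precisely the non-crossing pairings that also respect $i$, i.e.\ elements of $\ncpair^{[i]}(m)$, each contributing $1$. There is no genuine obstacle here: the combinatorial heart of the argument (the inequality for $\#(\gamma\pi)$) is already established, and independence has been neatly encoded into the single extra constraint that each block of $\pi$ must connect two matrices of the same color.
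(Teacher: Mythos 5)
Your proposal is correct and follows the same route as the paper: the Wick formula with covariance $\delta_{pl}\delta_{qk}\delta_{ij}$ restricts the genus expansion to pairings respecting the colour tuple $i$, and the limit is then read off from Proposition \ref{prop:2.20} exactly as in the single-\GUE\ case. No gaps.
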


\begin{proof}
	The genus expansion follows from our computation above. The limit for $N\to\infty$ follows as for Wigner's semicircle law \ref{thm:2.21} from the fact that
	\[ \lim\limits_{N\to\infty} N^{\#(\gamma\pi) - \frac{m}{2} -1} = \begin{cases}
	1, & \pi \in \ncpair(m), \\
	0, & \pi \not\in \ncpair(m).
	\end{cases}
	\]
	The index tuple $(i_1,\dots,i_m)$ has no say in this limit.
\end{proof}

\section{The concept of free independence}

\begin{remark}
	We would like to find some structure in those limiting moments. We prefer to talk directly about the limit instead of making asymptotic statements. In the case of one \GUE, we had the semicircle $\mu_W$ as a limiting analytic object. Now we do not have an analytic object in the limit, but we can organize our distribution as the limit of moments in a more algebraic way.
\end{remark}

\begin{definition}
	\begin{enumerate}
		\item Let $\A = \C\langle s_1, \dots, s_r\rangle$ be the algebra of polynomials in non-commuting variables $s_1,\dots, s_r$; this means $\A$ is the free unital algebra generated by $s_1,\dots,s_r$ (i.e., there are no non-trivial relations between $s_1,\dots, s_r$ and $\A$ is the linear span of the monomials $s_{i_1} \cdots s_{i_m}$ for $m \geq 0$ and $i_1,\dots, i_m \in [r]$;
		multiplication for monomials is given by concatenation).
		\item On this algebra $\A$ we define a unital linear functional $\varphi \colon\A\to\C$ by $\varphi(1) = 1$ and
		\[ \varphi (s_{i_1} \cdots s_{i_m}) 
		=  \lim\limits_{N\to\infty} \ev{ \tr (A_{i_1} \cdots A_{i_m} ) }
		= \# \ncpair^{[i]}(m).
		\]
		\item We also address $(\A,\varphi)$ as a \emph{non-commutative probability space}
		and $s_1,\dots,s_r\in\A$ as \emph{(non-commutative) random variables}.
		The\emph{ moments} of $s_1,\dots,s_r$ are the $\varphi(s_{i_1}\cdots s_{i_m})$ and the collection of those moments is the \emph{(joint) distribution} of $s_1,\dots,s_r$.
	\end{enumerate} 
\end{definition}

\begin{remark}
	\begin{enumerate}
		\item Note that if we consider only one of the $s_i$, then its distribution is just the collection of Catalan numbers, hence correspond to the semicircle, which we understand quite well.
		\item If we consider all $s_1,\dots,s_r$, then their joint distribution is a large collection of numbers. We claim that the following theorem discovers some important structure in those.
	\end{enumerate}
\end{remark}

\begin{theorem}\label{thm:12.7}
	Let $\A = \C\langle s_1, \dots, s_r\rangle$ and let $\varphi \colon\A\to\C$ be defined by
	$ \varphi (s_{i_1} \cdots s_{i_m}) = \# \ncpair^{[i]}(m)
	$
	as before. Then for all $m\geq 1$, $i_1,\dots,i_m \in[r]$ with
	$ i_1 \neq i_2, \;  i_2 \neq i_3, \; \dots, \; i_{m-1} \neq i_m
	$
	and all polynomials $p_1,\dots, p_m$ in one variable such that
	$ \varphi \left( p_k(s_{i_k}) \right) = 0$
	we have:
	\[ \varphi \left( p_1(s_{i_1}) p_2(s_{i_2})\cdots p_m(s_{i_m}) \right) = 0.
	\]
	In words: the alternating product of centered variables is centered.
\end{theorem}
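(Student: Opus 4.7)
The plan is to prove the claim combinatorially, turning it into a statement about non-crossing pairings, and then dispatching it via an inclusion-exclusion combined with a short induction. First I would reduce by linearity. Since each $p_k$ is a polynomial in $s_{i_k}$ with $\varphi(p_k(s_{i_k})) = 0$, the constant coefficient of $p_k$ is forced to cancel the contributions of the higher-degree terms, so we may write $p_k(x) = \sum_{n \ge 1} \alpha_{k,n}(x^n - \varphi(s_{i_k}^n))$. Expanding the product and using linearity of $\varphi$, it suffices to treat the case $p_k(s_{i_k}) = s_{i_k}^{n_k} - c_k$ with $n_k \ge 1$ and $c_k := \varphi(s_{i_k}^{n_k}) = \#\ncpair(n_k)$.

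Next, expanding $\prod_{k=1}^m (s_{i_k}^{n_k} - c_k)$ and applying $\varphi$ gives
\[
\varphi\!\left(\prod_{k=1}^m (s_{i_k}^{n_k} - c_k)\right) = \sum_{T \subseteq [m]} (-1)^{|T|} \prod_{k \in T} c_k \cdot \varphi\!\left(\prod_{k \notin T} s_{i_k}^{n_k}\right).
\]
For fixed $T$, the factor $\varphi(\prod_{k \notin T} s_{i_k}^{n_k})$ counts label-respecting non-crossing pairings of the concatenated points from the blocks $k \notin T$, while $\prod_{k \in T} c_k$ counts tuples of non-crossing pairings, one within each removed block $k \in T$. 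A direct glueing argument (the internal bubbles of blocks in $T$ can be placed inside any non-crossing pairing on the remaining points without creating crossings) gives a bijection with the set of label-respecting non-crossing pairings $\pi$ of the full sequence of $N = n_1 + \cdots + n_m$ points for which $T \subseteq \Int(\pi)$, where $\Int(\pi) := \{k \in [m] : \text{every pair of } \pi \text{ with an endpoint in block } k \text{ stays inside block } k\}$. Swapping sums and using $\sum_{T \subseteq \Int(\pi)} (-1)^{|T|} = \mathbbm{1}[\Int(\pi) = \emptyset]$ then yields
\[
\varphi\!\left(\prod_{k=1}^m (s_{i_k}^{n_k} - c_k)\right) = \#\{\pi \text{ label-respecting NC pairing} : \Int(\pi) = \emptyset\}.
\]

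It remains to prove that this set is empty when $i_k \neq i_{k+1}$ for all $k$, and this is the main obstacle. I would establish it by induction on $N$. By Remark~\ref{rem:2.19}(ii), any non-empty non-crossing pairing $\pi$ contains a pair of consecutive positions $(p, p+1)$. Since pairs in $\pi$ must connect positions of equal label but adjacent blocks carry distinct labels, both $p$ and $p+1$ lie in the same block, say block $k$. If $n_k = 2$ then this single pair exhausts block $k$, so $k \in \Int(\pi)$ and we are done immediately. Otherwise $n_k \ge 3$; removing $(p, p+1)$ yields a label-respecting non-crossing pairing $\pi'$ on $N - 2$ points with the same alternating-index pattern and with $n_k$ replaced by $n_k - 2 \ge 1$. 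The inductive hypothesis provides a block $k_0 \in \Int(\pi')$, and $k_0$ remains internal in $\pi$ since the pair we reinstated is also internal to block $k$. Therefore $\Int(\pi) \neq \emptyset$ for every such $\pi$, the count on the right is zero, and the theorem follows. The delicate step to watch during the induction is the case $n_k = 2$, but as noted it is handled directly without invoking the induction, which avoids any issue of block-merging when $i_{k-1} = i_{k+1}$.
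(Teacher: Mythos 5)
Your proposal is correct and follows essentially the same route as the paper: reduce to centered powers by linearity, expand the product, recognize the resulting inclusion--exclusion as counting label-respecting non-crossing pairings in which no interval $I_j$ is paired entirely within itself, and show that this set is empty. The only difference is that you justify the final emptiness claim by an explicit induction on the number of points (removing a nearest-neighbour pair, which must lie inside a single block because adjacent blocks carry distinct labels), whereas the paper merely asserts that no such pairing exists; your induction is a correct filling-in of that terse step.
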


We say that $s_1,\dots,s_r$ are \emph{free (or freely independent)}; in terms of the independent \GUE\ random matrices, we say that $A^{(N)}_1,\dots,A^{(N)}_r$ are \emph{asymtotially free}. Those notions and the results above are all due to Dan Voiculescu.

\begin{proof}
	It suffices to prove the statement for polynomials of the form
	\[ p_k(s_{i_k}) = s_{i_k}^{p_k} - \varphi \left( s_{i_k}^{p_k}   \right)
	\]
	for any power $p_k$, since general polynomials can be written as linear combinations of those. The general statement then follows by linearity.
	So we have to prove that
	\[ \varphi \left[  \left( s_{i_1}^{p_1} - \varphi \left( s_{i_1}^{p_1}  \right) \right)
	\cdots \left(  s_{i_m}^{p_m} - \varphi \left( s_{i_m}^{p_m} \right) \right) \right]
	= 0.
	\]
	We have
	\begin{align*}
	\varphi \left[  \left( s_{i_1}^{p_1} - \varphi \left( s_{i_1}^{p_1}  \right) \right)
	\cdots \left(  s_{i_m}^{p_m} - \varphi \left( s_{i_m}^{p_m} \right) \right) \right]
	&= \sum_{ M \subset [m] } (-1)^{\abs{M}} \prod_{j\in M}  \varphi \left( s_{i_j}^{p_j}  \right)
	 \varphi \left( \, \prod_{j \not\in M} s_{i_j}^{p_j}  \right)
	\end{align*}
	with
	\[ \varphi \left( s_{i_j}^{p_j}  \right)
	=  \varphi \left( s_{i_j} \cdots s_{i_j}  \right)
	= \# \ncpair(p_j)
	\]
	and
	\begin{align*}
	\varphi \left( \, \prod_{j \not\in M} s_{i_j}^{p_j}  \right)
	&= \# \ncpair^{[\text{respects indices}]} \left( \, \sum_{j\not\in M} p_j \right).
	\end{align*}
	Let us put 
	\begin{align*}
	I_1
	&= \{ 1, \dots, p_1 \} \\
	I_2
	&= \{ p_1+1, \dots, p_1+p_2 \} \\
	&\hspace{0.25cm}\vdots \\
	I_m
	&= \{ p_1+p_2 + \cdots + p_{m-1}+1, \dots, p_1+p_2 + \cdots + p_m \} 
	\end{align*}
	and $I = I_1 \cup I_2 \cup \cdots \cup I_m$. 
	Denote
	\[ [\dots] = [i_1,\dots, i_1, i_2, \dots, i_2, \dots, i_m, \dots, i_m].
	\]
	Then
	\begin{align*}
	\prod_{j\in M}  \varphi \left( s_{i_j}^{p_j}  \right)
	\varphi \left( \, \prod_{j \not\in M} s_{i_j}^{p_j}  \right)
	&= \# \{ \pi \in \ncpair^{[\dots]}(I) \mid \text{for all $j \in M$ all elements} \\
	&\qquad \qquad \text{in $I_j$ are only paired amongst each other} \}
	\end{align*}
	Let us denote
	\begin{align*}
	\ncpair^{[\dots]}(I:j)
	&:= \{ \pi \in \ncpair^{[\dots]}(I) \mid \text{elements in $I_j$ are only paired amongst each other} \}.
	\end{align*}
	Then, by the inclusion-exclusion formula, 
	\begin{align*}
	\varphi \left[  \left( s_{i_1}^{p_1} - \varphi \left( s_{i_1}^{p_1}  \right) \right)
	\cdots \left(  s_{i_m}^{p_m} - \varphi \left( s_{i_m}^{p_m} \right) \right) \right]
	&= \sum_{M\subset[m]} (-1)^{\abs{M}} \cdot \# \left(  \bigcap_{j \in M} \ncpair^{[\dots]}(I:j) \right) \\
	&= \# \left( \ncpair^{[\dots]}(I) \bs \bigcup_j \ncpair^{[\dots]}(I:j)  \right).
	\end{align*}
	These are $\pi \in \ncpair^{[\dots]}(I)$ such that at least one element of each interval $I_j$ is paired with an element from another interval $I_k$.
	Since
	$ i_1 \neq i_2, \;  i_2 \neq i_3, \; \dots, \; i_{m-1} \neq i_m
	$
	we cannot connect neighboring intervals and each interval must be connected to another interval in a non-crossing way. But there is no such $\pi$, hence
	\[\varphi \left[  \left( s_{i_1}^{p_1} - \varphi \left( s_{i_1}^{p_1}  \right) \right)
	\cdots \left(  s_{i_m}^{p_m} - \varphi \left( s_{i_m}^{p_m} \right) \right) \right]
	=\# \left( \ncpair^{[\cdots]}(I) \bs \bigcup_j \ncpair^{[\cdots]}(I:j)  \right) = 0,
	\]
	as claimed.
\end{proof}

\begin{remark}
	\begin{enumerate}
		\item Note that in Theorem \ref{thm:12.7} we have traded the explicit description of our moments for implicit relations between the moments.		
		\item For example, the simplest relations from Theorem \ref{thm:12.7} are
		\[ \varphi \left( 
		[  s_{i}^{p} - \varphi ( s_{i}^{p}  ) 1]
		[  s_{j}^{q} - \varphi ( s_{j}^{q}  ) 1] 
		\right) = 0,
		\]
		for $i \neq j$,	which can be reformulated to
		\begin{align*}
		\varphi (s_i^p s_j^q) - \varphi(s_i^p 1) \varphi(s_j^q) - \varphi(s_i^p) \varphi(s_j^q 1) 
		+ \varphi(s_i^p)\varphi(s_j^q) \varphi(1)
		&= 0,
		\end{align*}
		i.e.,
		\[ \varphi (s_i^p s_j^q) = \varphi(s_i^p)\varphi(s_j^q).
		\]
		Those relations are quickly getting more complicated. For example,
		\begin{align*}
		\varphi \left[
		(s_1^{p_1}- \varphi(s_1^{p_1})1)
		(s_2^{q_1}- \varphi(s_2^{q_1})1)
		(s_1^{p_2}- \varphi(s_1^{p_2})1)
		(s_2^{q_2}- \varphi(s_2^{q_2})1)
		\right]
		= 0
		\end{align*}
		leads to
		\begin{align*}
		\varphi \left( s_1^{p_1}  s_2^{q_1}  s_1^{p_2}  s_2^{q_2} \right)
		&= \varphi\left( s_1^{p_1+p_2} \right) \varphi\left( s_2^{q_1} \right) \varphi\left( s_2^{q_2} \right) \\
		&\quad+\varphi\left( s_1^{p_1} \right) \varphi\left( s_1^{p_2} \right) \varphi\left( s_2^{q_1+q_2} \right) \\
		&\quad
		-\varphi\left( s_1^{p_1} \right) \varphi\left( s_2^{q_1} \right)\varphi\left( s_1^{p_2} \right) \varphi\left( s_2^{q_2} \right).
		\end{align*}
These relations are to be considered as non-commutative versions for the factoriziation rules of expectations of independent random variables.
		\item One might ask: What is it good for to find those relations between the moments, if we know the moments in a more explicit form anyhow?
		
		Answer: Those relations occur in many more situations. For example, independent Wishart matrices satisfy the same relations, even though the explicit form of their mixed moments is quite different from the \GUE\ case.
		
Furthermore, we can control what happens with these relations much better than with the explicit moments if we deform our setting or construct new random matrices out of other ones.

		Not to mention that those relations also show up in very different corners of mathematics (like operator algebras).
		
		To make a long story short: Those relations from Theorem \ref{thm:12.7} are really worth being investigated further, not just in a random matrix context, but also for its own sake.
		This is the topic of a course on \textit{Free Probability Theory}, which can, for example, be found here:\newline
\url{rolandspeicher.files.wordpress.com/2019/08/free-probability.pdf}

	\end{enumerate}
\end{remark}

\chapter{Exercises}

\section{Assignment 1}

\begin{exercise}\label{exercise:1}
Make yourself familiar with MATLAB (or any other programming language which allows you to generate random matrices and calculate eigenvalues).
In particular, you should try to generate random matrices and calculate and plot their eigenvalues. 
\end{exercise}

\begin{exercise}\label{exercise:2}
In this exercise we want to derive the explicit formula for the Catalan numbers. We define numbers $c_k$ by the recursion
\begin{align}
c_k=\sum\limits_{l=0}^{k-1} c_lc_{k-l-1}\label{recursion}
\end{align}
for $k>0$, with the initial data $c_0=1$.
\begin{enumerate}
\item
Show that the numbers $c_k$ are uniquely defined by the recursion \eqref{recursion} and its initial data.
\item
Consider the (generating) function
\begin{align*}
f(z)=\sum\limits_{k=0}^\infty c_kz^k
\end{align*}
and show that the recursion \eqref{recursion} implies the relation
\begin{align*}
f(z)=1+zf(z)^2.
\end{align*}
\item
Show hat $f$ is a power series representation for
\begin{align*}
z\mapsto \frac{1-\sqrt{1-4z}}{2z}.
\end{align*}
\textit{Note: You may use the fact that the formal power series $f$, defined in (2), has a positive radius of convergence.}
\item
Conclude that
\begin{align*}
c_k=C_k=\frac{1}{k+1}\binom{2k}{k}.
\end{align*}
\end{enumerate}
\end{exercise}

\begin{exercise}\label{exercise:3}
Consider the semicircular distribution, given by the density function
\begin{align}
\frac{1}{2\pi}\sqrt{4-x^2}\mathbbm{1}_{[-2,2]},\label{density}
\end{align}
where $\mathbbm{1}_{[-2,2]}$ denotes the indicator function of the interval $[-2,2]$. Show that \eqref{density} indeed defines a probability measure, i.e.
\begin{align*}
\frac{1}{2\pi}\int\limits_{-2}^2\sqrt{4-x^2}\,\mathrm{d}x=1.
\end{align*}
Moreover show that the even moments of the measure are given by the Catalan numbers and the odd ones vanish, i.e.
\begin{align*}
\frac{1}{2\pi}\int\limits_{-2}^2 x^n \sqrt{4-x^2}\,\mathrm{d}x=\begin{cases}
0   & n\textrm{ is odd}\\
C_k & n=2k
\end{cases}.
\end{align*}
\end{exercise}

\section{Assignment 2}

\begin{exercise}\label{exercise:4}
Using your favorite programing language or computer algebra system, generate $N\times N$ random matrices for $N=3,9,100$. Produce a plot of the eigenvalue distribution for a single random matrix and as well as a plot for the average over a reasonable number of matrices of given size. The entries should be independent and identically distributed (i.i.d.) according to
\begin{enumerate}
\item
the Bernoulli distribution $\frac{1}{2}(\delta_{-1}+\delta_{1})$, where $\delta_x$ denotes the Dirac measure with atom $x$.
\item
the normal distribution.
\end{enumerate}
\end{exercise}

\begin{exercise}\label{exercise:5}
Prove Proposition \ref{prop:2.2}, i.e. compute the moments of a standart Gaussian random variable:
\begin{align*}
\frac{1}{\sqrt{2 \pi}}\int\limits_{-\infty}^\infty t^ne^{-\frac{t^2}{2}}\mathrm{d}t=\begin{cases} 0 & \textrm{$n$ odd},\\
(n-1)!! & \textrm{$n$ even}.\end{cases}
\end{align*}
\end{exercise}

\begin{exercise}\label{exercise:6}
Let $Z,Z_{1},Z_{2},\dots,Z_{n}$ be independent standard complex Gaussian random variables with mean $0$ and $\mathbb{E}[|Z_{i}|]=1$ for $i=1,\dots,n$.
\begin{enumerate}
\item
Show that
\begin{align*}
\mathbb{E}[Z_{i_1},\dots,Z_{i_{r}}\bar Z_{j_{1}}\dots\bar Z_{j_{r}}]=\#\{\sigma\in S_r\colon i_k=j_{\sigma(k)}\textrm{ for }k=1,\dots,r\}.
\end{align*}
\item
Show that
\begin{align*}
\mathbb{E}[Z^n\bar Z^m]=\begin{cases} 0 & m\neq n,\\
n! & m=n.
\end{cases}
\end{align*}
\end{enumerate}
\end{exercise}

\begin{exercise}\label{exercise:7}
Let $A=(a_{ij})_{i,j=1}^N$ be a Gaussian (\GUEN) random matrix with entries $a_{ii}=x_{ii}$ and $a_{ij}=x_{ij}+\sqrt{-1}y_{ij}$, i.e. the $x_{ij},y_{ij}$ are real i.i.d.~Gaussian random variables, normalized such that $\mathbb{E}[|a_{ij}^2|]={1}/{N}$. Consider the $N^2$ random vector
\begin{align*}
(x_{11},\dots,x_{NN},x_{12},\dots,x_{1N},\dots,x_{N-1N},y_{12},\dots,y_{1N},\dots,y_{N-1N})
\end{align*}
and show that it has the density
\begin{align*}
C\exp(-N\frac{\mathrm{Tr}(A^2)}{2})\textrm{d}A,
\end{align*}
where $C$ is a constant and
\begin{align*}
\textrm{d}A=\prod_{i=1}^{N}\textrm{d}x_{ii}\prod_{i<y}{\textrm{d}x_{ij}\textrm{d}y_{ij}}.
\end{align*}
\end{exercise}

\section{Assignment 3}

\begin{exercise}\label{exercise:8}
Produce histograms for various random matrix ensembles.
\begin{enumerate}
\item
Produce histograms for the averaged situation: average over 1000 realizations
for the eigenvalue distribution of a an $N\times N$ Gaussian random matrix
(or alternatively $\pm 1$ entries) and compare this with one random realization
for $N=5, 50,500,1000$.

\item 
Check via histograms that Wigner's semicircle law is insensitive to the common distribution of the entries as long as those are independent; compare
typical realisations for $N=100$ and $N=3000$ for different distributions of the entries: $\pm 1$, 
Gaussian, uniform distribution on the interval $[-1,+1]$.

\item
Check what happens when we give up the constraint that the the entries are
centered; take for example the uniform distribution on $[0,2]$.

\item
Check whether the semicircle law is sensitive to what happens on the diagonal of the matrix. Choose one distribution (e.g. Gaussian) for the off-diagonal elements and another distribution for the elements on the diagonal
(extreme case: put the diagonal equal to zero).

\item
Try to see what happens when we take a distribution for the entries which does not have finite second moment; for example, the Cauchy distribution.
\end{enumerate}
\end{exercise}

\begin{exercise}\label{exercise:9}
In the proof of Theorem \ref{thm:3.9} we have seen that the $m$-th moment of a Wigner matrix is asymptotically counted by the number of partitions $\sigma\in \PP(m)$, for which the corresponding graph $\G_\sigma$ is a tree; then the corresponding walk $i_1\to i_2\to\cdots\to i_m\to i_1$ (where $\text{ker } i=\sigma$) uses each edge exactly twice, in opposite directions. Assign to such a $\sigma$ a pairing by opening/closing a pair when an edge is used for the first/second time in the corresponding walk.
\begin{enumerate}
\item
 Show that this map gives a bijection between the $\sigma\in \PP(m)$ for which $\G_\sigma$ is a tree and non-crossing pairings $\pi\in NC_2(m)$.
\item
Is there a relation between $\sigma$ and $\gamma\pi$, under this bijection?

\end{enumerate}
\end{exercise}

\begin{exercise}\label{exercise:10}
For a probability measure $\mu$ on $\R$ we define its Stieltjes transform $S_\mu$ by
$$S_\mu(z):=\int_\R \frac 1{t-z} d\mu(t)$$
for all $z\in\C^+:=\{z\in\C\mid \Im(z)>0\}$. Show the following for a Stieltjes transform $S=S_\mu$.
\begin{enumerate}
\item
$S:\C^+\to C^+$.

\item
$S$ is analytic on $\C^+$.

\item
We have
$$\lim_{y\to\infty} iy S(iy)=-1\qquad\text{and}\qquad
\sup_{y>0,x\in\R} y\vert S(x+iy)\vert =1.$$

\end{enumerate}
\end{exercise}

\section{Assignment 4}

\begin{exercise}\phantomsection{\label{exercise:11}}
\begin{enumerate}
\item
Let $\nu$ be the Cauchy distribution, i.e., 
$$d\nu(t)=\frac 1\pi \frac 1{1+t^2}dt.$$
Show that the Stieltjes transform of $\nu$ is given by
$$S(z)=\frac 1{-i-z}\qquad \text{for $z\in \C^+$}.$$
(Note that this formula is not valid in $\C^-$.)\\
Recover from this the Cauchy distribution via the Stieltjes inversion formula.

\item
Let  $A$ be a selfadjoint matrix in $M_N(\C)$ and consider its spectral distribution $\mu_A = \frac{1}{N} \sum_{i=1}^N \delta_{\lambda_i}$, where $\lambda_1, \dots, \lambda_N$ are the eigenvalues (counted with multiplicity) of $A$.   Prove that for any $z\in \mathbb{C}^+$ the Stieltjes transform $S_{\mu_A}$ of $\mu_A$  is given  by
\[ S_{\mu_A}(z) =  \tr[(A-zI)^{-1}].
\]

\end{enumerate}
\end{exercise}

\begin{exercise}\label{exercise:12}
Let $(\mu_N)_{N\in\N}$ be a sequence of probability measures on $\mathbb{R}$ which converges vaguely to $\mu$. Assume that $\mu$ is also a probablity measure. Show the following.
\begin{enumerate}
\item
The sequence $(\mu_N)_{N\in\N}$ is tight, i.e., for each $\varepsilon >0$ there is a compact interval $I=[-R,R]$ such that $\mu_N(\R\backslash I)\leq \varepsilon$ for all $N\in\N$.
\item
$\mu_N$ converges to $\mu$ also weakly.
\end{enumerate}
\end{exercise}

\begin{exercise}\label{exercise:13}
The problems with being determined by moments and whether convergence in moments implies weak convergence are mainly coming from the behaviour of our probability measures around infinity. If we restrict everything to a compact interval, then the main statements follow quite easily by relying on the 
Weierstrass theorem for approximating continuous functions by polynomials.
In the following you should not use Theorem \ref{thm:4.12}.

In the following let $I=[-R,R]$ be a fixed compact interval in $\R$.
\begin{enumerate}

\item Assume that $\mu$ is a probability measure on $\R$ which has its support in $I$ (i.e., $\mu(I)=1$). Show that all moments of $\mu$ are finite and that $\mu$ is determined by its moments (among all probability measures on $\R$).

\item Consider in addition a sequence of probability measures $\mu_N$, such that 
$\mu_N(I)=1$ for all $N$. Show that the following are equivalent:
\begin{itemize}
\item
$\mu_N$ converges weakly to $\mu$;
\item
the moments of $\mu_N$ converge to the corresponding moments of $\mu$.
\end{itemize}

\end{enumerate}
\end{exercise}

\section{Assignment 5}

In this assignment we want to investigate the behaviour of the limiting eigenvalue distribution of matrices under certain perturbations. In order to do so, it is crucial to deal with different kinds of matrix norms. We recall the most important ones for the following exercises. Let $A\in M_N(\C)$, then we define the following norms.
\begin{itemize}
\item
The spectral norm (or operator norm): 
\begin{align*}
\|A\|=\max\{\sqrt{\lambda}\colon \textrm{$\lambda$ is an eigenvalue of $AA^\ast$}\}.
\end{align*}
Some of its important properties are:
\begin{enumerate}
\item[(i)]
 It is submultiplicative, i.e. for $A, B\in M_N(\C)$ one has
\begin{align*}
\|AB\|\leq \|A\|\cdot \|B\|.
\end{align*}
\item[(ii)]
It is also given as the operator norm
\begin{align*}
\|A\|=\sup\limits_{\substack{x\in\C^N \\ x\neq 0}}\frac{\|Ax\|_2}{\|x\|_2},
\end{align*}
where $\Vert x\Vert_2$ is here the Euclidean 2-norm of the vector $x\in\C^N$.
\end{enumerate}
\item
The Frobenius (or Hilbert-Schmidt or $L^2$) norm:
\begin{align*}
\|A\|_2=\bigl(\Tr(A^* A)\bigr)^{1/2}=\sqrt{\sum_{1\leq i,j\leq N}|a_{ij}|^2}
\end{align*}
\end{itemize}

\begin{exercise}\label{exercise:14}
In this exercise we will prove some useful facts about these norms, which you will  have to use in the next exercise when adressing the problem of perturbed random matrices.

Prove the following properties of the matrix norms.
\begin{enumerate}
\item
For $A,B\in M_N(\C)$ we have
$|\Tr(AB)|\leq\|A\|_2\cdot \|B\|_2$.
\item
Let $A\in M_N(\C)$ be positive and $B\in M_N(\C)$ arbitrary. Prove that 
\begin{align*}
|\Tr(AB)|\leq \|B\|\Tr(A).
\end{align*}
($A\in M_N(\C)$ is positive if there is a matrix $C\in M_N(\C)$ such that $A=C^\ast C$; this is equivalent to the fact that $A$ is selfadjoint and all the eigenvalues of $A$ are positive.) 
\item
Let $A\in M_N(\C)$ be normal, i.e. $AA^*=A^*A$, and $B\in M_N(\C)$ arbitrary. Prove that
\end{enumerate}
\begin{align*}
\max\{\|AB\|_2,\|BA\|_2\}\leq \|B\|_2\cdot \|A\|
\end{align*}
\textit{Hint: normal matrices are unitarily diagonalizable.}
\end{exercise}

\begin{exercise}\label{exercise:15}
In this main exercise we want to investigate the behaviour of the eigenvalue distribution of selfadjoint matrices with respect to certain types of perturbations. 
\begin{enumerate}
\item
Let $A\in M_N(\C)$ be selfadjoint, $z\in\C^+$ and $R_A(z)=(A-zI)^{-1}$. Prove that
\begin{align*}
 \|R_A(z)\|\leq\frac{1}{\textrm{Im}(z)}
\end{align*}
and that $R_A(z)$ is normal.
\item
First we study a general perturbation by a selfadjoint matrix.

Let, for any $N\in\N$, $X_N=(X_{ij})_{i,j=1}^N$ and  $Y_N=(Y_{ij})_{i,j=1}^N$  be selfadjoint matrices in $M_N(\C)$ and define $\tilde{X}_N=X_N+Y_N$. Show that
\begin{align*}
\bigg|\tr(R_{\frac{1}{\sqrt{N}}X_N}(z))-\tr(R_{\frac{1}{\sqrt{N}}\tilde{X}_N})(z)\bigg|\leq\frac{1}{(\textrm{Im}(z))^2}\sqrt{\frac{\tr(Y_N^2)}{N}}
\end{align*}
\item
In this part we want to show that the diagonal of a matrix does not contribute to the eigenvalue distribution in the large $N$ limit, if it is not too ill-behaved.

As before, consider a selfadjoint matrix $X_N=(X_{ij})_{i,j=1}^N\in M_N(\C)$; let $X_N^D=\textrm{diag}(X_{11},\dots,X_{NN})$ be the diagonal part of $X_N$ and
$X_N^{(0)}=X_N-X_N^D$ the part of $X_N$ with zero diagonal. Assume that $\|X_N^D\|_2\leq N$ for all $N\in\N$. Show that
\begin{align*}
\bigg|\tr(R_{\frac{1}{\sqrt{N}}X_N}(z))-\tr(R_{\frac{1}{\sqrt{N}}X_N^{(0)}})(z)\bigg|\to 0,\quad\textrm{as }N\to \infty.
\end{align*}

\end{enumerate}

\end{exercise}

\section{Assignment 6}

\begin{exercise}\label{exercise:16}
We will address here concentration estimates for the law of large numbers, and see that control of higher moments allows stronger estimates. 
Let $X_i$ be a sequence of independent and identically distributed random variables with common mean $\mu=E[X_i]$. We put
$$S_n:=\frac 1n \sum_{i=1}^n X_i.$$
\begin{enumerate}
\item
Assume that the variance $\var{X_i}$ is finite. Prove that we have then the weak law of large numbers, i.e., convergence in probability of $S_n$ to the mean: for any $\epsilon>0$
$$\P(\omega\mid \vert S_n(\omega)-\mu\vert \geq\epsilon)\to 0, \qquad \text{for $n\to\infty$}.$$
\item
Assume that the fourth moment of the $X_i$ is finite, $\ev{X_i^4}<\infty$. Show that we have then the strong law of large numbers, i.e., 
$$S_n\to\mu,\qquad\text{almost surely}.$$
(Recall that by Borel--Cantelli it suffices for almost sure convergence to show
that
$$\sum_{n=1}^\infty \P(\omega\mid \vert S_n(\omega)-\mu\vert \geq\epsilon)<\infty.)
$$
One should also note that our assumptions for the weak and strong law of large numbers are far from optimal. Even the existence of the variance is not needed for them, but for proofs of such general versions one needs other tools then our simple consequences of Cheyshev's inequality.
\end{enumerate}
\end{exercise}

\begin{exercise}\label{exercise:17}
Let $X_N=\frac 1{\sqrt N} (x_{ij})_{i,j=1}^N$, where the $x_{ij}$ are all (without symmetry condition) independent and identically distributed with standard complex Gaussian distribution. We denote the adjoint (i.e., congugate transpose) of $X_N$ by $X_N^*$. 
\begin{enumerate}
\item
By following the ideas from our proof of Wigner's semicircle law for the \GUE\ in Chapter \ref{ch:3} show the following: the averaged trace of any $*$-moment in $X_N$ and $X_N^*$, i.e., 
$$E[\tr(X_N^{p(1)}\cdots X_N^{p(m)})]\qquad
\text{where $p(1),\dots,p(m)\in \{1,*\}$}$$
is for $N\to\infty$ given by the number of non-crossing pairings $\pi$ in $NC_2(m)$ which satisfy the additional requirement that each block of $\pi$ connects an $X$ with an $X^*$.
\item
Use the result from part (1) to show that the asymptotic averaged eigenvalue distribution of $W_N:=X_NX_N^*$ is the same as the square of the semicircle distribution, i.e. the distribution of $Y^2$ if $Y$ has a semicircular distribution.
\item
Calculate the explicit form of the asymptotic averaged eigenvalue distribution of $W_N$.
\item
Again, the convergence is here also in probability and almost surely. Produce histograms of samples of the random matrix $W_N$ for large $N$ and compare it with the analytic result from (3).
\end{enumerate}
\end{exercise}

\begin{exercise}\label{exercise:18}

We consider now random matrices $W_N=X_NX_N^*$ as in Exercise \ref{exercise:17}, but now we allow the $X_N$ to be rectangular matrices, i.e., of the form
$$X_N=\frac 1{\sqrt p} (x_{ij})_{1\leq i\leq N\atop
1\leq j\leq p},$$
where again all $x_{ij}$ are independent and identically distributed. We allow now real or complex entries. (In case the entries are real, $X_N^*$ is of course just the transpose $X_N^T$.) Such matrices are called \emph{Wishart matrices}.
Note that we can now not multiply $X_N$ and $X_N^*$ in arbitrary order, but alternating products as in $W_N$ make sense.
\begin{enumerate}
\item
What is the general relation between the eigenvalues of $X_NX_N^*$ and the eigenvalues of $X_N^*X_N$. Note that the first is an $N\times N$ matrix, whereas the second is a $p\times p$ matrix.

\item
Produce histograms for the eigenvalues of $W_N:=X_NX_N^*$ for $N=50$, $p=100$ as well as for $N=500$, $p=1000$, for different distributions of the $x_{ij}$;
\begin{itemize}
\item
standard real Gaussian random variables
\item
standard complex Gaussian random variables
\item
Bernoulli random variables, i.e., $x_{ij}$ takes on values $+1$ and $-1$, each with probability $1/2$.
\end{itemize}

\item
Compare your histograms with the density, for $c=0.5=N/p$, of the \emph{Marchenko--Pastur} distribution which is given by
$$\frac {\sqrt{(\lambda^+-x)(x-\lambda^-)}}{2\pi c x} \mathbbm{1}_{[\lambda^-,\lambda^+]}(x),\qquad \text{where}\qquad \lambda^{\pm}:=\left( 1\pm\sqrt c\right)^2.$$

\end{enumerate}
\end{exercise}

\section{Assignment 7}

\begin{exercise}\label{exercise:19}
Prove -- by adapting the proof for the \GOE\ case and parametrizing a unitary matrix in the form $U=e^{-iH}$, where $H$ is a selfajoint matrix -- Theorem \ref{thm:7.6}: The joint eigenvalue distribution of the eigenvalues of a \GUEN\ is given by a density
$$\hat c_N e^{-\frac N2 (\lambda_1^2+\cdots+\lambda_N^2)}
\prod_{k<l} (\lambda_l-\lambda_k)^2,$$
restricted on $\lambda_1<\lambda_2<\dots <\lambda_N$,

\end{exercise}

\begin{exercise}\label{exercise:20}
In order to get a feeling for the repulsion of the eigenvalues of \GOE\ and \GUE\ compare histograms for the following situations:
\begin{itemize}
\item
the eigenvalues of a \GUEN\ matrix for one realization
\item
the eigenvalues of a \GOEN\ matrix for one realization
\item
$N$ independently chosen realizations of a random variable with semicircular distribution
\end{itemize}
for a few suitable values of $N$ (for example, take $N=50$ or $N=500$).
\end{exercise}

\begin{exercise}\label{exercise:21}
For small values of $N$ (like $N=2,3,4,5,10$) plot the histogram of averaged versions of \GUEN\ and of \GOEN\ and notice the fine structure in the \GUE\ case. In the next assignment we will compare this with the analytic expression for the \GUEN\ density from class.
\end{exercise}

\section{Assignment 8}

\begin{exercise}\label{exercise:22}
In this exercise we define the Hermite polynomials $H_n$ by
$$H_n(x)=(-1)^n e^{x^2/2} \frac{d^n}{dx^n} e^{-x^2/2}$$
and want to show that they are the same polynomials we defined in Definition \ref{def:7.10} and that
they satisfy the recursion relation. So, starting from the above definition show the following.
\begin{enumerate}
\item For any $n\geq 1$, 
$$xH_n(x)=H_{n+1}(x)+n H_{n-1}(x).$$

\item $H_n$ is a monic polynomial of degree $n$. Furthermore, it is an even function if $n$ is even and an odd function if $n$ is odd.

\item The $H_n$ are orthogonal with respect to the Gaussian measure
$$d\gamma(x)=(2\pi)^{-1/2} e^{-x^2/2}dx.$$
More precisely, show the following:
$$\int_\R H_n(x)H_m(x) d\gamma(x)=\delta_{nm} n!$$

\end{enumerate}
\end{exercise}

\begin{exercise}\label{exercise:23}
Produce histograms for the averaged eigenvalue distribution of a \GUEN\ and compare this with the exact analytic density from Theorem \ref{thm:7.21}.
\begin{enumerate}

\item Rewrite first the averaged eigenvalue density 
$$p_N(\mu)=\frac 1N K_N(\mu,\mu)=\frac 1{\sqrt{2\pi}} \frac 1N\sum_{k=0}^{N-1} \frac 1{k!} H_k(\mu)^2 e^{-\mu^2/2}$$
for the unnormalized \GUEN\ to the density $q_N(\lambda)$ for the normalized
\GUEN\ (with second moment normalized to 1).

\item Then average over sufficiently many normalized \GUEN, plot their histograms, and compare this to the analytic density $q_N(\lambda)$. Do this at least for $N=1,2,3,5,10,20,50$. 

\item Check also numerically that $q_N$ converges, for $N\to \infty$, to the semicircle.

\item For comparison, also average over \GOEN\ and over Wigner ensembles with non-Gaussian distribution for the entries, for some small $N$.
\end{enumerate}
\end{exercise}

\begin{exercise}\label{exercise:24}

In this exercise we will approximate the Dyson Brownian motions from Section \ref{section:8.3} by their discretized random walk versions and plot the corresponding walks of the eigenvalues.

\begin{enumerate}
\item
Approximate the Dyson Brownian motion by its discretized random walk version
$$A_N(k):=\sum_{i=1}^k \Delta\cdot A_N^{(i)},\qquad\text{for $1\leq k\leq K$}$$
where $A_N^{(1)},\dots,A_N^{(K)}$ are $K$ independent normalized \GUEN\ random matrices. $\Delta$ is a time increment. Generate a random realization of such a Dyson random walk $A_N(k)$ and plot the $N$ eigenvalues $\lambda_1(k),\dots,\lambda_N(k)$ of $A_N(k)$ versus $k$ in the same plot to see the time evolution of the $N$ eigenvalues. Produce at least plots for three different values of $N$.\\
\textit{Hint:} Start with $N=15$, $\Delta=0.01$, $K=1500$, but also play around with those parameters.
\item For the same parameters as in part (i) consider the situation where you replace \GUE\ by \GOE\ and produce corresponding plots. What is the effect of this on the behaviour of the eigenvalues?

\item For the three considered cases of $N$ in parts (1) and 2i), plot also $N$ independent random walks in one plot, i.e.,
$$\tilde\lambda_N(k):=\sum_{i=1}^k \Delta\cdot x^{(i)}, \qquad\text{for $1\leq k\leq K$}$$
where $x^{(1)},\dots,x^{(K)}$ are $K$ independent real standard Gaussian 
random variables.
\end{enumerate}
\end{exercise}
You should get some plots like in Section \ref{section:8.3}.

\section{Assignment 9}

\begin{exercise}\label{exercise:25}
Produce histograms for the Tracy--Widom distribution by plotting $(\lambda_{\max} -2) N^{2/3}$.
\begin{enumerate}
\item Produce histograms for the largest eigenvalue of \GUEN, for $N=50$, $N=100$, $N=200$, with at least 5000 trials in each case.

\item Produce histograms for the largest eigenvalue of \GOEN, for $N=50$, $N=100$, $N=200$, with at least 5000 trials in each case.

\item Consider also real and complex Wigner matrices with non-Gaussian distribution for the entries.

\item Check numerically whether putting the diagonal equal to zero (in \GUE\ or Wigner) has an effect on the statistics of the largest eigenvalue.

\item Bonus: Take a situation where we do not have convergence to semicircle, e.g., Wigner matrices with Cauchy distribution for the entries. Is there a reasonable guess for the asymptotics of the distribution of the largest eigenvalue?

\item Superbonus: Compare the situation of repelling eigenvalues with ``independent'' eigenvalues. Produce $N$ independent copies $x_1,\dots,x_N$ of variables distributed according to the semicircle distribution and take then
the maximal value $x_{\max}$ of these. Produce a histogram of the statistics of $x_{\max}$. Is there a limit of this for $N\to\infty$; how does one have to scale with $N$?
\end{enumerate}
\end{exercise}

\begin{exercise}\label{exercise:26}
Prove the estimate for the Catalan numbers
$$C_k\leq \frac {4^k}{k^{3/2} \sqrt\pi}\qquad \forall k\in \N.$$
Show that this gives the right asymptotics, i.e., prove that
$$\lim_{k\to\infty} \frac {4^k} { k^{3/2}C_k} =\sqrt\pi.$$
\end{exercise}

\begin{exercise}\label{exercise:27}
Let $H_n(x)$ be the Hermite polynomials. The Christoffel-Darboux identity says that
$$\sum_{k=0}^{n-1} \frac {H_k(x)H_k(y)}{k!}=\frac {H_n(x)H_{n-1}(y)-H_{n-1}(x)H_n(y)}{(x-y)\  (n-1)! }.$$
\begin{enumerate}
\item Check this identity for $n=1$ and $n=2$.

\item Prove the identity for general $n$.
\end{enumerate}
\end{exercise}

\section{Assignment 10}

\begin{exercise}\label{exercise:28}
Work out the details for the ``almost sure'' part of Corollary \ref{cor:9.6}, i.e., prove that
almost surely the largest eigenvalue of \GUEN\ converges, for $N\to\infty$, to 2.
\end{exercise}

\begin{exercise}\label{exercise:29}
Consider the rescaled Hermite functions
$$\tilde\Psi(x):=N^{1/{12}}\Psi_N(2\sqrt N + x N^{- 1/6}).$$
\begin{enumerate}
\item Check numerically that the rescaled Hermite functions have a limit for $N\to\infty$ by plotting them for different values of $N$.

\item Familarize yourself with the Airy function. Compare the above plots of $\tilde \Psi_N$ for large $N$ with
a plot of the Airy function.
\end{enumerate}
\textit{Hint:}  MATLAB has an implementation of the Airy function, see
\begin{center}
\url{https://de.mathworks.com/help/symbolic/airy.html}
\end{center}
\end{exercise}

\begin{exercise}\label{exercise:30}
Prove that the Hermite functions satisfy the following differential equations:
$$\Psi_n'(x)=-\frac x2 \Psi_n(x)+\sqrt n \Psi_{n-1}(x)$$
and
 $$\Psi_n''(x)+(n+\frac 12 -\frac {x^2}4)\Psi_n(x)=0.$$
\end{exercise}

\section{Assignment 11}

\begin{exercise}\label{exercise:31}
Read the notes ``Random Matrix Theory and its Innovative Applications'' by A. Edelman and Y. Wang,
\begin{center}
\small\url{http://math.mit.edu/~edelman/publications/random_matrix_theory_innovative.pdf}
\end{center}
and implement its ``Code 7'' for calculating the Tracy--Widom distribution (via solving the Painlev\'e II equation) and compare the output with the histogram for the rescaled largest eigenvalue for the \GUE\ from Exercise \ref{exercise:25}.
You should get a plot like after Theorem \ref{thm:9.10}.
\end{exercise}

\begin{exercise}\label{exercise:32}
For $N=100,1000,5000$ plot in the complex plane the eigenvalues of one $N\times N$ random matrix $\frac{1}{\sqrt{N}}A_N$, where all entries (without symmetry condition) are independent and identically distributed according to the
\begin{enumerate}
\item[(i)]
standard Gaussian distribution;
\item[(ii)]
symmetric Bernoulli distribution;
\item[(iii)]
Cauchy distribution.
\end{enumerate}
\end{exercise}

\chapter{Literature}
\vskip-.5cm
\subsection*{\qquad Books}
\begin{enumerate}
\item
    Gernot Akemann, Jinho Baik, Philippe Di Francesco: \textit{The Oxford Handbook of Random Matrix Theory}, Oxford Handbooks in Mathematics, 2011.
    
\item
    Greg Anderson, Alice Guionnet, Ofer Zeitouni: \textit{An Introduction to Random Matrices}, Cambridge University Press, 2010.
    
\item
    Zhidong Bai, Jack Silverstein: \textit{Spectral Analysis of Large Dimensional Random Matrices}, Springer-Verlag 2010.
    
\item
Patrick Billingsley: \textit{Probability and Measure},  John Wiley \&\ Sons, 3rd edition, 1995.

\item
St\'ephane Boucheron, G\'abor Lugosi, Pascal Massart:\textit{ Concentration inequalities: A nonasymptotic theory of independence},
Oxford University Press, Oxford, 2013. 

 \item 
    Alice Guionnet: \textit{Large Random Matrices: Lectures on Macroscopic Asymptotics}, Springer-Verlag 2009.
    
 \item 
    Madan Lal Mehta: \textit{Random Matices}, Elsevier Academic Press, 3rd edition, 2004.
    
 \item 
    James Mingo, Roland Speicher: \textit{Free Probability and Random Matrices}, Springer-Verlag, 2017.
    
 \item 
    Alexandru Nica, Roland Speicher: \textit{Lectures on the Combinatorics of Free Probability}, Cambridge University Press 2006.
    \vskip-1cm
\subsection*{Lecture Notes and Surveys}

\item
Nathana\"el Berstycki: \textit{Notes on Tracy--Widom Fluctuation Theory}, 2007.
\item
	Charles Bordenave, Djalil Chafa\"i: \textit{Around the circular law},
Probability Surveys 9 (2012) 1-89.
   \item  
    Alan Edelman, Raj Rao: \textit{Random matrix theory},
Acta Numer. 14 (2005), 233-297.
 \item 
    Alan Edelman, Raj Rao: \textit{The polynomial method for random matrices},
Found. Comput. Math. 8 (2008), 649-702.
  \item 
    Todd Kemp: \textit{MATH 247A: Introduction to Random Matrix Theory}, lecture notes, UC San Diego, fall 2013.

\end{enumerate}

\end{document}